\newtheorem{cor}[subsubsection]{Corollary}
\newtheorem{lem}[subsubsection]{Lemma}
\newtheorem{prop}[subsubsection]{Proposition}
\newtheorem{propconstr}[subsubsection]{Proposition-Construction}
\newtheorem{thmconstr}[subsubsection]{Theorem-Construction}
\newtheorem{conj}[subsubsection]{Conjecture}
\newtheorem{thm}[subsubsection]{Theorem}
\theoremstyle{definition}
\theoremstyle{remark}
\newtheorem{rem}[subsubsection]{Remark}
\newcommand{\thmref}[1]{Theorem~\ref{#1}}
\newcommand{\secref}[1]{Sect.~\ref{#1}}
\newcommand{\lemref}[1]{Lemma~\ref{#1}}
\newcommand{\propref}[1]{Proposition~\ref{#1}}
\newcommand{\corref}[1]{Corollary~\ref{#1}}
\newcommand{\conjref}[1]{Conjecture~\ref{#1}}
\numberwithin{equation}{section}
\newcommand{\nc}{\newcommand}
\nc{\renc}{\renewcommand}
\nc{\ssec}{\subsection}
\nc{\sssec}{\subsubsection}
\nc{\on}{\operatorname}
\DeclareMathOperator\Par{Par}
\DeclareMathOperator\Span{Span}
\DeclareMathOperator\Ad{Ad}
\DeclareMathOperator*\colim{colim}
\DeclareMathOperator\sign{sign}
\nc{\dR}{\on{dR}}
\nc\ol{\overline}
\nc\ul{\underline}
\nc\wt{\widetilde}
\nc\tboxtimes{\wt{\boxtimes}}
\nc\tstar{\wt{\star}}
\nc{\alp}{\alpha}
\nc{\ZZ}{{\mathbb Z}}
\nc{\NN}{{\mathbb N}}
\nc{\OO}{{\mathbb O}}
\renc{\SS}{{\mathbb S}}
\nc{\DD}{{\mathbb D}}
\nc{\GG}{{\mathbb G}}
\nc{\Fq}{{\mathbb F}_q}
\nc{\Fqb}{\ol{{\mathbb F}_q}}
\nc{\Ql}{\ol{{\mathbb Q}_\ell}}
\nc{\id}{\text{id}}
\nc\X{\mathcal X}
\nc{\Hom}{\on{Hom}}
\nc{\Lie}{\on{Lie}}
\nc{\Loc}{\on{Loc}}
\nc{\Pic}{\on{Pic}}
\nc{\Bun}{\on{Bun}}
\nc{\IC}{\on{IC}}
\nc{\Aut}{\on{Aut}}
\nc{\rk}{\on{rk}}
\nc{\Sh}{\on{Sh}}
\nc{\Perv}{\on{Perv}}
\nc{\pos}{{\on{pos}}}
\nc{\Conv}{\on{Conv}}
\nc{\Sph}{\on{Sph}}
\nc{\Sym}{\on{Sym}}
\nc{\BunBb}{\overline{\Bun}_B}
\nc{\BunNb}{\overline{\Bun}_N}
\nc{\BunTb}{\overline{\Bun}_T}
\nc{\BunBbm}{\overline{\Bun}_{B^-}}
\nc{\BunBbel}{\overline{\Bun}_{B,el}}
\nc{\BunBbmel}{\overline{\Bun}_{B^-,el}}
\nc{\Buno}{\overset{o}{\Bun}}
\nc{\BunPb}{{\overline{\Bun}_P}}
\nc{\BunBM}{\Bun_{B(M)}}
\nc{\BunBMb}{\overline{\Bun}_{B(M)}}
\nc{\BunPbw}{{\widetilde{\Bun}_P}}
\nc{\BunBP}{\widetilde{\Bun}_{B,P}}
\nc{\GUb}{\overline{G/U}}
\nc{\GUPb}{\overline{G/U(P)}}
\nc{\Hhom}{\underline{\on{Hom}}}
\nc\syminfty{\on{Sym}^{\infty}}
\nc\lal{\ol{\lambda}}
\nc\xl{\ol{x}}
\nc\thl{\ol{\theta}}
\nc\nul{\ol{\nu}}
\nc\mul{\ol{\mu}}
\nc{\oX}{\overset{o}{X}{}}
\nc{\hl}{\overset{\leftarrow}h{}}
\nc{\hr}{\overset{\rightarrow}h{}}
\nc{\M}{{\mathcal M}}
\nc{\N}{{\mathcal N}}
\nc{\F}{{\mathcal F}}
\nc{\D}{{\mathcal D}}
\nc{\Q}{{\mathcal Q}}
\nc{\Y}{{\mathcal Y}}
\nc{\G}{{\mathcal G}}
\nc{\E}{{\mathcal E}}
\nc{\CalC}{{\mathcal C}}
\nc\Dh{\widehat{\D}}
\nc{\C}{{\mathcal C}}
\nc{\K}{{\mathcal K}}
\renewcommand{\H}{{\mathcal H}}
\nc{\T}{{\mathcal T}}
\nc{\V}{{\mathcal V}}
\renc{\P}{{\mathcal P}}
\nc{\A}{{\mathcal A}}
\nc{\B}{{\mathcal B}}
\nc{\U}{{\mathcal U}}
\nc{\Gr}{{\on{Gr}}}
\nc{\fn}{{\check{\mathfrak u}(P)}}
\nc{\fC}{\mathfrak C}
\nc{\p}{\mathfrak p}
\nc{\q}{\mathfrak q}
\nc\f{{\mathfrak f}}
\nc{\qo}{{\mathfrak q}}
\nc{\po}{{\mathfrak p}}
\nc{\s}{{\mathfrak s}}
\nc\w{\text{w}}
\renewcommand{\mod}{{\on{-mod}}}
\nc\Spec{\on{Spec}}
\nc\Mod{\on{Mod}}
\nc{\tw}{\widetilde{\mathfrak t}}
\nc{\pw}{\widetilde{\mathfrak p}}
\nc{\qw}{\widetilde{\mathfrak q}}
\nc{\jw}{\widetilde j}
\nc{\grb}{\overline{\Gr}}
\nc{\I}{\mathcal I}
\nc{\lambdach}{{\check\lambda}}
\nc{\Lambdach}{{\check\Lambda}{}}
\nc{\much}{{\check\mu}}
\nc{\omegach}{{\check\omega}}
\nc{\nuch}{{\check\nu}}
\nc{\etach}{{\check\eta}}
\nc{\alphach}{{\check\alpha}}
\nc{\oblvtach}{{\check\oblvta}}
\nc{\rhoch}{{\check\rho}}
\nc{\ch}{{\check h}}
\nc{\Hb}{\overline{\H}}
\nc{\BA}{{\mathbb{A}}}
\nc{\BC}{{\mathbb{C}}}
\nc{\BE}{{\mathbb{E}}}
\nc{\BG}{{\mathbb{G}}}
\nc{\BM}{{\mathbb{M}}}
\nc{\BO}{{\mathbb{O}}}
\nc{\BD}{{\mathbb{D}}}
\nc{\BL}{{\mathbb{L}}}
\nc{\BN}{{\mathbb{N}}}
\nc{\BP}{{\mathbb{P}}}
\nc{\BR}{{\mathbb{R}}}
\nc{\BZ}{{\mathbb{Z}}}
\nc{\BS}{{\mathbb{S}}}
\nc{\CA}{{\mathcal{A}}}
\nc{\CB}{{\mathcal{B}}}
\nc{\CE}{{\mathcal{E}}}
\nc{\CF}{{\mathcal{F}}}
\nc{\CH}{{\mathcal{H}}}
\nc{\CL}{{\mathcal{L}}}
\nc{\CC}{{\mathcal{C}}}
\nc{\CM}{{\mathcal{M}}}
\nc{\CN}{{\mathcal{N}}}
\nc{\CK}{{\mathcal{K}}}
\nc{\CO}{{\mathcal{O}}}
\nc{\CP}{{\mathcal{P}}}
\nc{\CQ}{{\mathcal{Q}}}
\nc{\CR}{{\mathcal{R}}}
\nc{\CS}{{\mathcal{S}}}
\nc{\CT}{{\mathcal{T}}}
\nc{\CU}{{\mathcal{U}}}
\nc{\CV}{{\mathcal{V}}}
\nc{\CW}{{\mathcal{W}}}
\nc{\CX}{{\mathcal{X}}}
\nc{\CY}{{\mathcal{Y}}}
\nc{\CZ}{{\mathcal{Z}}}
\nc{\CI}{{\mathcal{I}}}
\nc{\oM}{{\overset{\circ}{\mathcal M}}{}}
\nc{\obM}{{\overset{\circ}{\mathbf M}}{}}
\nc{\obC}{{\overset{\circ}{\mathbf C}}{}}
\nc{\oCA}{{\overset{\circ}{\mathcal A}}{}}
\nc{\obA}{{\overset{\circ}{\mathbf A}}{}}
\nc{\ooM}{{\overset{\circ}{M}}{}}
\nc{\vM}{{\overset{\bullet}{\mathcal M}}{}}
\nc{\nM}{{\underset{\bullet}{\mathcal M}}{}}
\nc{\oD}{{\overset{\circ}{\mathcal D}}{}}
\nc{\obD}{{\overset{\circ}{\mathbf D}}{}}
\nc{\obd}{{\overset{\circ}{\mathbf d}}{}}
\nc{\oA}{{\overset{\circ}{\mathbb A}}{}}
\nc{\op}{{\overset{\bullet}{\mathbf p}}{}}
\nc{\cp}{{\overset{\circ}{\mathbf p}}{}}
\nc{\oU}{{\overset{\bullet}{\mathcal U}}{}}
\nc{\oZ}{{\overset{\circ}{Z}}{}}
\nc{\oCZ}{{\overset{\circ}{\mathcal Z}}{}}
\nc{\ofZ}{{\overset{\circ}{\mathfrak Z}}{}}
\nc{\oF}{{\overset{\circ}{\fF}}}
\nc{\osF}{{\overset{\circ}{\sF}}}
\nc{\fa}{{\mathfrak{a}}}
\nc{\fb}{{\mathfrak{b}}}
\nc{\fd}{{\mathfrak{d}}}
\nc{\ff}{{\mathfrak{f}}}
\nc{\fg}{{\mathfrak{g}}}
\nc{\fgl}{{\mathfrak{gl}}}
\nc{\fh}{{\mathfrak{h}}}
\nc{\fj}{{\mathfrak{j}}}
\nc{\fl}{{\mathfrak{l}}}
\nc{\fm}{{\mathfrak{m}}}
\nc{\frn}{{\mathfrak{n}}}
\nc{\fu}{{\mathfrak{u}}}
\nc{\fp}{{\mathfrak{p}}}
\nc{\fr}{{\mathfrak{r}}}
\nc{\fs}{{\mathfrak{s}}}
\nc{\ft}{{\mathfrak{t}}}
\nc{\fz}{{\mathfrak{z}}}
\nc{\fsl}{{\mathfrak{sl}}}
\nc{\hsl}{{\widehat{\mathfrak{sl}}}}
\nc{\hgl}{{\widehat{\mathfrak{gl}}}}
\nc{\hg}{{\widehat{\mathfrak{g}}}}
\nc{\chg}{{\widehat{\mathfrak{g}}}{}^\vee}
\nc{\hn}{{\widehat{\mathfrak{n}}}}
\nc{\chn}{{\widehat{\mathfrak{n}}}{}^\vee}
\nc{\fA}{{\mathfrak{A}}}
\nc{\fB}{{\mathfrak{B}}}
\nc{\fD}{{\mathfrak{D}}}
\nc{\fE}{{\mathfrak{E}}}
\nc{\fF}{{\mathfrak{F}}}
\nc{\fG}{{\mathfrak{G}}}
\nc{\fK}{{\mathfrak{K}}}
\nc{\fL}{{\mathfrak{L}}}
\nc{\fM}{{\mathfrak{M}}}
\nc{\fN}{{\mathfrak{N}}}
\nc{\fP}{{\mathfrak{P}}}
\nc{\fU}{{\mathfrak{U}}}
\nc{\fV}{{\mathfrak{V}}}
\nc{\fZ}{{\mathfrak{Z}}}
\nc{\bb}{{\mathbf{b}}}
\nc{\bc}{{\mathbf{c}}}
\nc{\bd}{{\mathbf{d}}}
\nc{\bbf}{{\mathbf{f}}}
\nc{\be}{{\mathbf{e}}}
\nc{\bi}{{\mathbf{i}}}
\nc{\bj}{{\mathbf{j}}}
\nc{\bn}{{\mathbf{n}}}
\nc{\bp}{{\mathbf{p}}}
\nc{\bq}{{\mathbf{q}}}
\nc{\bu}{{\mathbf{u}}}
\nc{\bv}{{\mathbf{v}}}
\nc{\bx}{{\mathbf{x}}}
\nc{\bs}{{\mathbf{s}}}
\nc{\by}{{\mathbf{y}}}
\nc{\bw}{{\mathbf{w}}}
\nc{\bA}{{\mathbf{A}}}
\nc{\bK}{{\mathbf{K}}}
\nc{\bB}{{\mathbf{B}}}
\nc{\bC}{{\mathbf{C}}}
\nc{\bG}{{\mathbf{G}}}
\nc{\bD}{{\mathbf{D}}}
\nc{\bH}{{\mathbf{H}}}
\nc{\bI}{{\mathbf{I}}}
\nc{\bM}{{\mathbf{M}}}
\nc{\bN}{{\mathbf{N}}}
\nc{\bO}{{\mathbf{O}}}
\nc{\bV}{{\mathbf{V}}}
\nc{\bW}{{\mathbf{W}}}
\nc{\bX}{{\mathbf{X}}}
\nc{\bZ}{{\mathbf{Z}}}
\nc{\bS}{{\mathbf{S}}}
\nc{\sA}{{\mathsf{A}}}
\nc{\sB}{{\mathsf{B}}}
\nc{\sC}{{\mathsf{C}}}
\nc{\sD}{{\mathsf{D}}}
\nc{\sF}{{\mathsf{F}}}
\nc{\sK}{{\mathsf{K}}}
\nc{\sM}{{\mathsf{M}}}
\nc{\sN}{{\mathsf{N}}}
\nc{\sO}{{\mathsf{O}}}
\nc{\sW}{{\mathsf{W}}}
\nc{\sQ}{{\mathsf{Q}}}
\nc{\sP}{{\mathsf{P}}}
\nc{\sZ}{{\mathsf{Z}}}
\nc{\sfp}{{\mathsf{p}}}
\nc{\sr}{{\mathsf{r}}}
\nc{\bk}{{\mathsf{k}}}
\nc{\sg}{{\mathsf{g}}}
\nc{\sff}{{\mathsf{f}}}
\nc{\sfb}{{\mathsf{b}}}
\nc{\sfc}{{\mathsf{c}}}
\nc{\sd}{{\mathsf{d}}}
\nc{\BK}{{\bar{K}}}
\nc{\tA}{{\widetilde{\mathbf{A}}}}
\nc{\tB}{{\widetilde{\mathcal{B}}}}
\nc{\tg}{{\widetilde{\mathfrak{g}}}}
\nc{\tG}{{\widetilde{G}}}
\nc{\TM}{{\widetilde{\mathbb{M}}}{}}
\nc{\tO}{{\widetilde{\mathsf{O}}}{}}
\nc{\tU}{{\widetilde{\mathfrak{U}}}{}}
\nc{\TZ}{{\tilde{Z}}}
\nc{\tx}{{\tilde{x}}}
\nc{\tbv}{{\tilde{\bv}}}
\nc{\tfP}{{\widetilde{\mathfrak{P}}}{}}
\nc{\tz}{{\tilde{\zeta}}}
\nc{\tmu}{{\tilde{\mu}}}
\nc{\urho}{\underline{\rho}}
\nc{\uB}{\underline{B}}
\nc{\uC}{{\underline{\mathbb{C}}}}
\nc{\ui}{\underline{i}}
\nc{\uj}{\underline{j}}
\nc{\ofP}{{\overline{\mathfrak{P}}}}
\nc{\oB}{{\overline{\mathcal{B}}}}
\nc{\og}{{\overline{\mathfrak{g}}}}
\nc{\oI}{{\overline{I}}}
\nc{\osM}{\overset{\circ}{\mathsf{M}}{}}
\nc{\osN}{\overset{\circ}{\mathsf{N}}{}}
\nc{\eps}{\varepsilon}
\nc{\hrho}{{\hat{\rho}}}
\nc{\one}{{\mathbf{1}}}
\nc{\two}{{\mathbf{t}}}
\nc{\Rep}{{\mathop{\operatorname{\rm Rep}}}}
\nc{\Tot}{{\mathop{\operatorname{\rm Tot}}}}
\nc{\Ker}{{\mathop{\operatorname{\rm Ker}}}}
\nc{\Hilb}{{\mathop{\operatorname{\rm Hilb}}}}
\nc{\End}{{\mathop{\operatorname{\rm End}}}}
\nc{\Ext}{{\mathop{\operatorname{\rm Ext}}}}
\nc{\CHom}{{\mathop{\operatorname{{\mathcal{H}}\it om}}}}
\nc{\GL}{{\mathop{\operatorname{\rm GL}}}}
\nc{\gr}{{\mathop{\operatorname{\rm gr}}}}
\nc{\Id}{{\mathop{\operatorname{\rm Id}}}}
\nc{\de}{{\mathop{\operatorname{\rm def}}}}
\nc{\length}{{\mathop{\operatorname{\rm length}}}}
\nc{\supp}{{\mathop{\operatorname{\rm supp}}}}
\nc{\Cliff}{{\mathsf{Cliff}}}
\nc{\Fl}{\on{Fl}}
\nc{\Fib}{{\mathsf{Fib}}}
\nc{\Coh}{{\on{Coh}}}
\nc{\QCoh}{{\on{QCoh}}}
\nc{\IndCoh}{{\on{IndCoh}}}
\nc{\FCoh}{{\mathsf{FCoh}}}
\nc{\reg}{{\text{\rm reg}}}
\nc{\cplus}{{\mathbf{C}_+}}
\nc{\cminus}{{\mathbf{C}_-}}
\nc{\cthree}{{\mathbf{C}_*}}
\nc{\Qbar}{{\bar{Q}}}
\nc\Eis{\on{Eis}}
\nc\Eisb{\ol\Eis{}}
\nc\Eisr{\on{Eis}^{rat}{}}
\nc\wh{\widehat}
\nc{\Def}{\on{Def_{\check{\fb}}(E)}}
\nc{\barZ}{\overline{Z}{}}
\nc{\barbarZ}{\overline{\barZ}{}}
\nc{\barpi}{\overline\pi}
\nc{\barbarpi}{\overline\barpi}
\nc{\barpip}{\overline\pi{}^+}
\nc{\barpim}{\overline\pi{}^-}
\nc{\fq}{\mathfrak q}
\nc{\fqb}{\ol{\fq}{}}
\nc{\fpb}{\ol{\fp}{}}
\nc{\fpr}{{\fp^{rat}}{}}
\nc{\fqr}{{\fq^{rat}}{}}
\nc{\hattimes}{\wh\otimes}
\nc{\bh}{{{\mathbf h}}}
\nc{\bOmega}{{\overline{\Omega(\check \fn)}}}
\nc{\seq}[1]{\stackrel{#1}{\sim}}
\nc{\cT}{{\check{T}}}
\nc{\cG}{{\check{G}}}
\nc{\cM}{{\check{M}}}
\nc{\cB}{{\check{B}}}
\nc{\cP}{{\check{P}}}
\nc{\ct}{{\check{\mathfrak t}}}
\nc{\cg}{{\check{\fg}}}
\nc{\cb}{{\check{\fb}}}
\nc{\cn}{{\check{\fn}}}
\nc{\cLambda}{{\check\Lambda}}
\nc{\cla}{{\check\lambda}}
\nc{\cmu}{{\check\mu}}
\nc{\cnu}{{\check\nu}}
\nc{\ceta}{{\check\eta}}
\nc{\DefbE}{{\on{Def}_{\cB}(E_\cT)}}
\nc{\imathb}{{\ol{\imath}}}
\nc{\rlr}{\overset{\longrightarrow}{\underset{\longrightarrow}\longleftarrow}}
\nc{\dgSch}{\on{DGSch}}
\nc{\dgindSch}{\on{DGindSch}}
\nc{\indSch}{\on{indSch}}
\nc{\Sch}{\on{Sch}}
\nc{\affdgSch}{\on{DGSch}^{\on{aff}}}
\nc{\affSch}{\on{Sch}^{\on{aff}}}
\nc{\oBun}{\overset{\circ}\Bun}
\nc{\LocSys}{\on{LocSys}}
\nc{\BunBbb}{\ol{\ol{Bun}}_B}
\nc{\BunBr}{\Bun_B^{rat}}
\nc{\BunBrp}{\Bun_B^{rat,polar}}
\nc{\BunTrp}{\Bun_T^{rat,polar}}
\nc{\BunNr}{\Bun_N^{rat}}
\nc{\BunNre}{\Bun_N^{enh,rat}}
\nc{\BunTr}{\Bun_T^{rat}}
\nc{\Vect}{\on{Vect}}
\nc{\Whit}{\on{Whit}}
\nc{\CTb}{\ol{\on{CT}}}
\nc{\Ran}{\on{Ran}}
\nc{\CTr}{\on{CT}^{rat}{}}
\nc\jmathr{\jmath^{rat}{}}
\nc{\ux}{\underline{x}}
\nc{\clambda}{{\check\lambda}}
\nc{\calpha}{{\check\alpha}}
\nc{\ind}{{\mathbf{ind}}}
\nc{\oblv}{{\mathbf{oblv}}}
\nc{\coeff}{\on{W-coeff}}
\nc{\Poinc}{\on{Poinc}}
\nc{\Dmod}{\on{D-mod}}
\nc{\dr}{{\on{dR}}}
\nc{\oCY}{\overset{\circ}\CY}
\nc{\Sing}{\on{Sing}}
\nc{\Maps}{\on{Maps}}
\nc{\Proj}{\on{Proj}}
\nc{\bGamma}{\mathbf{\Gamma}}
\nc{\bLoc}{\mathbf{Loc}}
\nc{\mmod}{\on{-}\mathbf{mod}}
\nc{\oInd}{\overset{\circ}\IndCoh}
\nc{\osupp}{\overset{\circ}\supp}
\nc{\ossupp}{\overset{\circ}{\on{SingSupp}}}
\nc{\coind}{{\mathbf{coind}}}
\nc{\cores}{{\mathbf{cores}}}
\nc{\sotimes}{\overset{!}\otimes}
\nc{\StinftyCat}{\on{DGCat}}
\nc{\bDelta}{\mathbf{\Delta}}
\nc{\Spr}{\on{Spr}}
\nc{\sR}{\mathsf R}
\nc{\sS}{\mathsf S}
\nc{\sX}{\mathsf X}
\nc{\sJ}{\mathsf J}
\nc{\gm}{\mathbb{G}m}
\nc{\BT}{\mathbb{BT}}
\newcommand*{\@old@slash}{}\let\@old@slash\slash
\def\slash{\relax\ifmmode\delimiter'502F30E\mathopen{}\else\@old@slash\fi}
\begin{document}

\title[The category of singularities as a crystal]{The category of singularities as a crystal \\ and 
global Springer fibers}

\author{D.~Arinkin and D.~Gaitsgory}

\email{arinkin@math.wisc.edu, gaitsgde@math.harvard.edu} 

\date{\today}

\begin{abstract}
We prove the `Gluing Conjecture' on the spectral side of the categorical geometric Langlands conjecture.
The key tool is the structure of crystal on the category of singularities, which allows to reduce the conjecture
to the question of homological triviality of certain homotopy types. These homotopy types are obtained by gluing 
from a global version of Springer fibers.
\end{abstract}

\maketitle

\tableofcontents

\section*{Introduction}

\ssec{What is done in this paper?}

The main result of this paper is a proof of the `Gluing Conjecture' (\cite[Conjecture 9.3.7]{Ga3}), which 
constitutes one of the main steps towards the proof of the categorical geometric Langlands conjecture. To 
prove the conjecture, we develop certain techniques for working with the singular support of (ind)-coherent
sheaves. The techniques are quite general and may be of independent interest.

The paper is divided into three parts; Parts I and II contain general techniques, which are then used to 
prove the Gluing Conjecture in Part III. Here is a brief outline of the paper; we provide a more detailed
description below.

\sssec{} In Part I, we study the notion of singular support for coherent sheaves (or complexes) on a local complete intersection scheme (or, more generally, on a quasi-smooth derived scheme or stack). This notion was introduced in our previous paper, \cite{AG}; the main idea of refining the notion of support of coherent sheaves
using cohomological operators originated in \cite{BIK}. Roughly speaking, to a local complete intersection scheme $Y$, one can attach a scheme $\Sing(Y)$ equipped with a $\BG_m$-action, and to any coherent sheaf 
$\CF$ one can assign its singular support, which is a conical (that is, $\BG_m$-invariant) subset $\on{SingSupp}(\CF)\subset\Sing(Y)$.

Compared to \cite{AG}, Part I introduces two ideas:

Firstly, we work with the category of singularities in place of the category of coherent sheaves. The main
effect of this relatively minor change is that while the singular support of a coherent sheaf
is a conical subset of $\Sing(Y)$, the singular support of an object in the category
of singularities is a subset of the fiberwise projectivization $\BP\Sing(Y)$. Explicitly, $\BP\Sing(Y)$
is obtained from $\Sing(Y)$ by removing the fixed locus of $\BG_m$ (which is identified with $Y$) and
then taking the quotient by $\BG_m$.

The second and main new idea is an `upgrade' of the notion of singular support. Specifically, we show that
the category of singularities of $Y$ carries a natural structure of a \emph{crystal of categories} over $\BP\Sing(Y)$. (Informally, a crystal of categories is a local system of categories over a space.)
This construction is crucial for the rest of the paper: it provides a way to translate some complicated questions about ind-coherent sheaves into topological claims concerning $\BP\Sing(Y)$, which tend to be easier.  

\sssec{} In Part II, we develop a gluing formalism, which is motivated by applications to the geometric Langlands program. Informally, this may be viewed as a kind of descent: given a covering family 
$f_i:Z_i\to Y$ (of quasi-smooth stacks) satisfying certain conditions, we 
show that one can recover an ind-coherent sheaf $\CF$ on $Y$ from some extra structure on ind-coherent sheaves $\CF_i$ on $Z_i$. There are two key properties of this formalism:

Firstly, there are non-trivial restrictions on the singular support of sheaves $\CF$ and $\CF_i$. 
Generally speaking, the singular support of $\CF_i$'s is required to be `smaller' than the singular support
of $\CF$. In this way, the  formalism describes `complicated' (that is, having large singular support) object 
$\CF$  using
`simple' (that is, having small singular support) sheaves $\CF_i$. In fact, in the application to the Gluing Conjecture, the singular support of all $\CF_i$'s is zero, which means that $\CF_i$'s are usual \emph{quasi-coherent} sheaves, which describe the more exotic \emph{ind-coherent} sheaf $\CF$.

Secondly, the main condition on the cover $f_i:Z_i\to Y$ 
has topological nature. Specifically, the condition concerns the topology of certain natural 
correspondences between the spaces $\Sing(Z_i)$ and $\Sing(Y)$. Thus, questions about ind-coherent sheaves on stacks $Z_i$ and $Y$
are reduced to the more transparent claims about the topology of correspondences between $\Sing(Z_i)$ and 
$\Sing(Y)$. This relies on the crystal structure constructed in Part I.

\sssec{} Finally, in Part III, we prove the Gluing Conjecture. Using the gluing formalism developed in 
Part II, we reduce the conjecture to a topological statement concerning (homological) contractibility of 
certain homotopy types. These homotopy types are obtained by gluing generalized Springer fibers, which
parametrize reductions of a local system together with a nilpotent infinitesimal symmetry
to various parabolic subgroups. If the local system is trivial, we obtain the usual Springer fibers
and then the required homological contractibility follows from the Springer correspondence. The general case
relies on the study of the Bruhat-Tits stratification on the generalized Springer fibers, which is 
the main technical result of Part III.

\sssec{Remark} When one works with the derived category of an algebraic variety or a stack $Y$, one has to make
a choice between the `large' derived category (the unbounded quasicoherent derived category) and
the `small' category of perfect complexes. The same choice applies to various modifications of the derived
category: the `large' category of ind-coherent sheaves versus the `small' category of coherent sheaves (or, more
precisely, the bounded coherent derived category), and the `large' category of singularities 
(the quotient of the category of ind-coherent sheaves by the quasicoherent derived category) versus the 
`small' category of singularities (the quotient of the bounded coherent derived category by the category
of perfect complexes). If the stack $Y$ is `reasonable', the `large' categories are compactly generated by the
respective `small' categories; for this reason, it is sometimes possible to work with the more explicit
`small' categories. However, the crystal structure from Part I, as well as most results of Parts II and III,
make sense only for the `large' categories.

\ssec{The goal: the Gluing Conjecture}
We now provide more details on the content of the paper. First, to explain our motivation, let us informally
describe the Gluing Conjecture and its place in the geometric Langlands program. A precise
statement of the Gluing Conjecture can be found in \secref{ss:setting for main}.

\sssec{} 

Let $X$ be a smooth and complete curve, and $G$ a reductive group over an algebraically closed ground field $k$ of 
characteristic $0$. We work on the \emph{spectral side} of geometric Langlands for $G$, which concerns the stack 
$\LocSys_G$ that classifies $G$-local systems on $X$. 

\medskip

As was suggested in \cite{AG}, the category on the spectral side of geometric Langlands is a certain modification of 
the category of quasi-coherent sheaves on $\LocSys_G$. Namely, it is the full subcategory of $\IndCoh(\LocSys_G)$,
consisting of objects whose \emph{singular support} is contained in the global nilpotent cone. We refer the reader to \cite[Sect. 11]{AG},
where the precise meaning of these words is explained. 

\medskip

The resulting category is denoted $\IndCoh_{\on{Nilp}_{\on{glob}}}(\LocSys_G)$; the categorical geometric Langlands conjecture
predicts an equivalence between $\IndCoh_{\on{Nilp}_{\on{glob}}}(\LocSys_G)$ and 
the category $\Dmod(\Bun_\cG)$ of D-modules on the stack $\Bun_\cG$
that classifies principal $\cG$-bundles on $X$ (here $\cG$ is the Langlands dual group of $G$). 

\medskip

The category $\IndCoh_{\on{Nilp}_{\on{glob}}}(\LocSys_G)$ contains the usual category 
$\QCoh(\LocSys_G)$ of quasi-coherent sheaves as a full subcategory.  

\medskip

The Gluing Conjecture aims to express 
$\IndCoh_{\on{Nilp}_{\on{glob}}}(\LocSys_G)$
in terms of the categories $\QCoh(\LocSys_P)$, where $P$ runs through the set of standard parabolic subgroups of $G$
(including $P=G$). Essentially, the goal is to compensate for the modification 
$$\QCoh(\LocSys_G)\rightsquigarrow \IndCoh_{\on{Nilp}_{\on{glob}}}(\LocSys_G)$$
by considering all parabolic subgroups of $G$, and working with usual quasi-coherent sheaves on the
corresponding moduli stacks. 

\sssec{}

More precisely, for a standard parabolic $P$, there is a natural map
\begin{equation} \label{e:map of LocSys}
\LocSys_P\to \LocSys_G
\end{equation} 
induced by the embedding $P\hookrightarrow G$.
We  consider the category of \emph{quasi-coherent sheaves on $\LocSys_P$, equipped with a connection along
the fibers of \eqref{e:map of LocSys}}; denote this category temporarily by $\QCoh(\LocSys_P)_{\on{conn}/\LocSys_G}$. 

\medskip

Below we  make a brief digression to explain what exactly we mean by such a category. As this may appear 
too technical for an introduction, the reader may choose to skip the explanation, take the existence of a well-defined
category $\QCoh(\LocSys_P)_{\on{conn}/\LocSys_G}$ on faith, and proceed to \secref{sss:skip conn}.

\sssec{}

First off, it is \emph{impossible} to makes sense of `quasi-coherent sheaves on a stack equipped with a connection along
a fibration' without resorting to derived algebraic geometry\footnote{Unless some very stringent smoothness conditions 
are satisfied, such as the map being smooth and schematic.}.
So, for the rest of the introduction, when we say `scheme' (reps., `algebraic stack', `prestack'), we 
mean a derived scheme (reps., derived algebraic stack, prestack within derived algebraic geometry). 

\medskip

It is more natural to consider ind-coherent 
sheaves first. Given a map of prestacks $f:\CZ\to \CY$, we let $\IndCoh(\CZ)_{\on{conn}/\CY}$ be the category
of \emph{ind-coherent sheaves on $\CZ$ equipped with a connection along the fibers of $f$}, which we
define to be
$$\IndCoh(\CZ)_{\on{conn}/\CY}:=\IndCoh(\CZ_\dr\underset{\CY_\dr}\times \CY).$$
Here $\IndCoh(\CW)$ is the category of ind-coherent sheaves on a prestack $\CW$ (which is defined for any prestack  \emph{locally almost of finite type}, see 
\cite[Sect. 10]{Ga1}), and $\CW_\dr$ is the de Rham prestack corresponding to a prestack $\CW$ (see \secref{ss:dR}).

\medskip

Pullback along the map $\CZ\to \CZ_\dr\underset{\CY_\dr}\times \CY$ defines a functor
$$\IndCoh(\CZ_\dr\underset{\CY_\dr}\times \CY)\to \IndCoh(\CZ),$$
which can be viewed as the functor of forgetting the connection.

\medskip

Suppose now that $\CZ$ is a quasi-smooth algebraic stack (a.k.a. derived locally complete intersection);
see \cite[Sect. 8.1]{AG} for the definition. For example, $\CZ=\LocSys_P$ is quasi-smooth.
We then define the full subcategory $\QCoh(\CZ)_{\on{conn}/\CY}\subset\IndCoh(\CZ)_{\on{conn}/\CY}$  of 
quasi-coherent sheaves on $\CZ$ equipped with 
a connection along the fibers of $f$ by the condition that it fits into the following pullback diagram of 
categories:
$$
\CD
\QCoh(\CZ)_{\on{conn}/\CY}  @>>>  \IndCoh(\CZ)_{\on{conn}/\CY}  \\
@VVV    @VVV   \\
\QCoh(\CZ)  @>{\Xi_\CZ}>>  \IndCoh(\CZ).
\endCD
$$
Here $\Xi_\CZ$ is the tautological functor of embedding $\QCoh$ into $\IndCoh$  of \cite[Sect. 1.5]{Ga1}  
(extended to algebraic stacks in \cite[Sect. 11.7.3]{Ga1}). Note that the essential image of $\Xi_\CZ$ is 
 the full subcategory of objects with zero singular support, see \cite[Corollary 8.2.8]{AG}.

\sssec{}  \label{sss:skip conn}

Returning to the situation of $\LocSys_G$, the assignment
$$P\rightsquigarrow \QCoh(\LocSys_P)_{\on{conn}/\LocSys_G}$$
can be viewed as a diagram of categories, indexed by the poset $\on{Par}(G)$ of standard parabolics of $G$.

\medskip

Hence, we can talk about the category 
\begin{equation}  \label{e:glued category}
\on{Glue}(\QCoh(\LocSys_P)_{\on{conn}/\LocSys_G},P\in \on{Par}(G)),
\end{equation} 
obtained by \emph{gluing} the categories $\QCoh(\LocSys_P)_{\on{conn}/\LocSys_G}$.
The definition of the operation of gluing is reminded in \secref{ss:gluing}. 

\medskip

The name `gluing' is motivated by the following example: given a stratified topological space 
$Y=\underset{a\in A}\cup\, Y_a$ for a finite poset $A$, there is an equivalence
between the category $\on{Shv}(Y)$ of sheaves on $Y$ 
and the glued category
$\on{Glue}(\on{Shv}(Y_a),a\in A)$, see Example~\ref{ex:gluing sheaves on strat}.

\medskip

Informally, an object of \eqref{e:glued category} is a collection
of objects
$$\CF_P\in \QCoh(\LocSys_P)_{\on{conn}/\LocSys_G}\quad\text{for all }P\in \on{Par}(G),$$ 
plus a homotopy-coherent system of compatibility maps (but not necessarily isomorphisms)
$$\CF_{P_2}|_{\LocSys_{P_1}}\to \CF_{P_1}\quad\text{for all }P_1\subset P_2.$$ 

\sssec{}

For every $P$, pullback defines a functor
$$\IndCoh(\LocSys_G)\to \IndCoh(\LocSys_P)_{\on{conn}/\LocSys_G}.$$

Consider the composition
\begin{multline*} 
\IndCoh_{\on{Nilp}_{\on{glob}}}(\LocSys_G)\hookrightarrow \IndCoh(\LocSys_G)\to \\
\to \IndCoh(\LocSys_P)_{\on{conn}/\LocSys_G}\to
\QCoh(\LocSys_P)_{\on{conn}/\LocSys_G},
\end{multline*} 
where the last arrow is the right adjoint to the inclusion $$\QCoh(\LocSys_P)_{\on{conn}/\LocSys_G}\hookrightarrow 
\IndCoh(\LocSys_P)_{\on{conn}/\LocSys_G}.$$

As $P\in\on{Par}(G)$ varies, we obtain a functor
\begin{equation} \label{e:gluing functor}
\IndCoh_{\on{Nilp}_{\on{glob}}}(\LocSys_G)\to \on{Glue}(\QCoh(\LocSys_P)_{\on{conn}/\LocSys_G},P\in \on{Par}(G)).
\end{equation}

The Gluing Conjecture reads:

\begin{conj} \label{c:gluing prev}
The functor \eqref{e:gluing functor} is \emph{fully faithful}. 
\end{conj}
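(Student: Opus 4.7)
The plan is to translate the fully faithfulness statement into the vanishing of an explicit cone, and then reduce that vanishing, via the crystal structure on the category of singularities, to a topological statement about global Springer fibers. Writing $F$ for the functor \eqref{e:gluing functor} and $F^R$ for its right adjoint, fully faithfulness is equivalent to the unit $\CF\to F^R\circ F(\CF)$ being an equivalence for every $\CF\in \IndCoh_{\on{Nilp}_{\on{glob}}}(\LocSys_G)$. My first step is to compute $F^R\circ F$ explicitly. By the general description of gluing as a lax limit over $\on{Par}(G)$, the composition $F^R\circ F(\CF)$ takes the form of a totalization over $\on{Par}(G)$ of functors built from $\IndCoh$-pushforward along the maps $\LocSys_P\to \LocSys_G$, the connection-along-fibers being encoded by the $\dr$-fiber products appearing in $\IndCoh(\LocSys_P)_{\on{conn}/\LocSys_G}=\IndCoh((\LocSys_P)_\dr\underset{(\LocSys_G)_\dr}\times \LocSys_G)$. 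The resulting cone on the unit then lies in $\IndCoh_{\on{Nilp}_{\on{glob}}}(\LocSys_G)$.

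The second step is to dispose of the $\QCoh$-summand of the cone and reduce to a calculation in the \emph{category of singularities}, i.e.\ in the Verdier-style quotient of $\IndCoh_{\on{Nilp}_{\on{glob}}}(\LocSys_G)$ by $\QCoh(\LocSys_G)=\Xi_{\LocSys_G}(\QCoh)$. The $P=G$ term in the totalization accounts for the $\QCoh$ part, since for $P=G$ the connection condition is vacuous and the inclusion $\QCoh(\LocSys_G)\hookrightarrow \IndCoh(\LocSys_G)$ is recovered. What remains is a statement entirely about singularity data, and here the key input from the title of the paper enters: the category of singularities on the quasi-smooth stack $\LocSys_G$ carries a natural crystal structure, i.e.\ a module structure over D-modules on $\LocSys_G$. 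This crystal structure reflects that singularity data on a quasi-smooth stack depends only on formal neighborhoods of points in the $[-1]$-shifted cotangent direction; concretely, it reduces the vanishing of the cone (viewed in the category of singularities) to a formal-local check at $k$-points $\sigma\in \LocSys_G$ together with a choice of singular-support direction in $\on{Nilp}_{\on{glob}}|_\sigma$.

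The third step is to identify this formal-local computation with gluing of global Springer fibers. At a fixed $\sigma$ with a chosen nilpotent cotangent direction $N$, the fibers of $\LocSys_P\to\LocSys_G$ parametrize $P$-reductions of $\sigma$, and imposing compatibility with $N$ cuts out exactly what deserves to be called a \emph{global Springer fiber} $\Spr_P(\sigma,N)$. The totalization over $\on{Par}(G)$ produced by $F^R\circ F$, when read off in the category of singularities at $(\sigma,N)$, becomes the homotopy-limit (with its natural face maps from inclusions $P_1\subset P_2$) of a diagram of such Springer fibers, and fully faithfulness reduces to showing that this resulting homotopy type is homologically trivial.

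The final step, and the one I expect to be the main obstacle, is to prove this acyclicity. It is the concrete geometric/topological claim toward which all the abstract machinery is aimed, and it is the place where the combinatorics of the poset $\on{Par}(G)$ genuinely interacts with the nontrivial geometry of parabolic reductions. I would attempt it by stratifying the Springer fibers according to the Lusztig--Spaltenstein-type class of the pair $(\sigma,N)$, reducing to an acyclicity statement for a total complex of parabolic flag varieties indexed by $\on{Par}(G)$ (weighted by local contributions), and then recognizing the latter as a known instance of acyclicity for the bar complex built out of parabolic restriction. The crystal-theoretic reduction in steps two and three is what makes this classical statement applicable; absent it, the global gluing data would be entirely out of reach.
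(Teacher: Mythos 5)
Your high-level strategy — use the crystal structure on the category of singularities to transpose the fully faithfulness question into D-modules, then identify the relevant object as a gluing of global Springer fibers, and finally prove a homological contractibility statement — is exactly the structure of the paper's proof. But several of your intermediate steps are either stated in a way that would not work as written, or leave out ideas that turn out to be essential.

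First, you compute $F^R\circ F$ and speak of a "totalization" and later a "homotopy-limit ... of a diagram of such Springer fibers." The paper instead uses the \emph{left} adjoint $\sF^L$, which exists because the maps $\LocSys_P\to\LocSys_G$ are proper; and the explicit description (\propref{p:left adj as geom real}) expresses $\sF^L\circ\sF$ as a \emph{colimit} over the category $\on{String}(\on{Par}(G))$. This is not a cosmetic point: the whole reduction to homological contractibility runs through the formula $\on{C}_*(\colim W_a)=\colim\on{C}_*(W_a)$, which has no counterpart for limits. Moreover, the resulting prestack is not a gluing of the single family $\Spr_P(\sigma,N)$; it is the colimit $\Spr_{\on{Glued,unip}}^{\sigma,A}$ of schemes $\Spr^\sigma_{P_0}\underset{\Spr^\sigma_{P_n}}\times\Spr^{\sigma,A}_{P_n,\on{unip}}$, where both $\Spr_P^\sigma$ and the \emph{unipotent} Springer fibers $\Spr^{\sigma,A}_{P,\on{unip}}$ appear, connected by correspondences rather than maps. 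Your description also omits the precise dictionary (\thmref{t:express category}) identifying $\QCoh(Z)_{\on{conn}/Y}/\QCoh(Z_\dr\underset{Y_\dr}\times Y)$ with $\Dmod(\BP(\Sing(f)^{-1}(\{0\})))\underset{\Dmod(\BP\Sing(Y))}\otimes\oInd(Y)$, which is the nontrivial content of the crystal structure being put to work; without it, "formal-local check at $k$-points" remains only a heuristic.

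Second, on the acyclicity step: the paper does not stratify by Lusztig--Spaltenstein classes, and it does not invoke a bar complex of parabolic restriction. Two concrete ideas are missing from your outline. (i) There is a nontrivial reduction from the correspondence-glued prestack $\Spr_{\on{Glued,unip}}^{\sigma,A}$ to the simpler colimit $\Spr_{\on{Glued}}^{\sigma,A}=\on{colim}_{P\in\Par'(G)}\Spr^{\sigma,A}_P$ (\propref{p:another version}); this step is not formal and requires an \emph{induction on Levi subgroups}, i.e., the contractibility statement for proper Levis (including the case $A=0$). (ii) The actual contractibility proof stratifies each $\Spr^{\sigma,A}_P$ by \emph{Schubert cells}, namely by relative position to the canonical Jacobson--Morozov parabolic $P_0$ attached to $A$, and proceeds by induction on $w\in\sW$, using the combinatorics of which parabolics $P_\sJ$ admit a reduction in a given relative position (Lemmas \ref{lm:sch} and \ref{l:unip radical}). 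Only in the special case of the trivial local system does a Springer-correspondence argument of the flavor you gesture at actually apply (\secref{s:Springer}); in general, $\sigma$ is nontrivial and one cannot invoke the Springer $\sW$-action on $\CS$.

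In short, you have correctly identified the paradigm, but the proposal as written has a wrong variance (limit vs.\ colimit), a wrong description of the glued prestack, omits the $\Spr_{\on{Glued,unip}}\rightsquigarrow\Spr_{\on{Glued}}$ reduction and its Levi induction, and proposes a stratification and final argument that do not match what actually closes the proof.
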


As was mentioned earlier, the goal of the present paper is to prove this conjecture. 

\ssec{The automorphic side of Langlands duality}

Let us now explain the counterpart of the Gluing Conjecture on the automorphic side of the categorical geometric
Langlands conjecture. 

\medskip

As the contents of this subsection play a motivational role only, the reader may skip it and proceed 
to \secref{ss:methods}.

\sssec{}

On the automorphic side of the categorical Langlands conjecture, we are dealing with the category $\Dmod(\Bun_\cG)$.
As explained in \cite[Sect. 8]{Ga3}, the category is equipped with the functor of \emph{extended Whittaker coefficient}
$$\coeff^{\on{ext}}_{\cG,\cG}:\Dmod(\Bun_\cG)\to \Whit^{\on{ext}}(\cG,\cG),$$
where $\Whit^{\on{ext}}(\cG,\cG)$ is the \emph{extended Whittaker category}. 

\medskip

Recall that the category $\Whit^{\on{ext}}(\cG,\cG)$ is obtained by gluing:
$$\Whit^{\on{ext}}(\cG,\cG)\simeq \on{Glue}(\Whit(\cG,\cP),P\in  \on{Par}(G)),$$
where for a parabolic $P$, we denote by $\Whit(\cG,\cP)$ the $P$-degenerate Whittaker category
(see \cite[Sect. 7]{Ga3}). 

\medskip

For example, for $P=G$, the category $\Whit(\cG,\cG)$ is the \emph{usual} (that is, non-degenerate) 
Whittaker category 
of \cite[Sect. 5]{Ga3}, and for $P=B$, the category $\Whit(\cG,\cB)$ is the \emph{principal series} category $\on{I}(\cG,\cB)$
of \cite[Sect. 6]{Ga3}. 

\sssec{}

In \cite{Ga3} there were formulated several `quasi-theorems'\footnote{By `quasi-theorems' we mean plausible 
statements within reach of current methods.}
that jointly provide a canonically defined fully faithful functor
\begin{multline*}
\on{Glue}(\QCoh(\LocSys_P)_{\on{conn}/\LocSys_G},P\in \on{Par}(G)) \hookrightarrow  \\
\on{Glue}(\Whit(\cG,\cP),P\in  \on{Par}(G))\simeq \Whit^{\on{ext}}(\cG,\cG).
\end{multline*} 

\medskip

Assuming the quasi-theorems hold, we obtain a diagram
\begin{equation} \label{e:fund diagram}
\CD
\on{Glue}(\QCoh(\LocSys_P)_{\on{conn}/\LocSys_G},P\in \on{Par}(G))   @>>>    \Whit^{\on{ext}}(\cG,\cG) \\
@A{\text{\eqref{e:gluing functor}}}AA   @AA{\coeff^{\on{ext}}_{\cG,\cG}}A   \\
\IndCoh_{\on{Nilp}_{\on{glob}}}(\LocSys_G)    & &   \Dmod(\Bun_\cG).
\endCD
\end{equation} 

The categorical Langlands conjecture claims that there exists an equivalence
$$\BL_G:\IndCoh_{\on{Nilp}_{\on{glob}}}(\LocSys_G)  \to  \Dmod(\Bun_\cG)$$
complementing \eqref{e:fund diagram} to a commutative diagram. 

\medskip

In \cite{Ga3}, the following strategy for proving the categorical Langlands conjecture is suggested. First,
one would show that the vertical arrows of \eqref{e:fund diagram} are fully faithful. Then, one would 
identify the essential images of $\IndCoh_{\on{Nilp}_{\on{glob}}}(\LocSys_G)$ and $\Dmod(\Bun_\cG)$ in $\Whit^{\on{ext}}(\cG,\cG)$ by
using some explicit generators of both categories. 

\sssec{}

Thus, one of the key steps in the proof of the categorical Langlands conjecture is to show that the vertical
arrows in \eqref{e:fund diagram} are fully faithful. At this point, we do not know whether the functor 
$\coeff^{\on{ext}}_{\cG,\cG}$ (the right vertical arrow of the diagram) is fully faithful for an arbitrary group
$G$; for $G=\GL_n$, it is a theorem, established in \cite{Be}. 

\medskip

On the other hand, the full faithfulness of the functor \eqref{e:gluing functor} (the left vertical arrow)
is precisely the Gluing Conjecture, which we prove in the present paper. 

\ssec{The methods: crystal structure}  \label{ss:methods}

We derive the Gluing Conjecture from a topological statement. Informally, the key idea is to study both sides
`microlocally'. The word `microlocally' refers here to 
the correspondence between ind-coherent sheaves on a quasi-smooth
scheme (or a stack) $Y$ and conical subsets in the `scheme of singularities' $\Sing(Y)$. We then relate
certain categories obtained by gluing categories of ind-coherent (and quasi-coherent) sheaves 
to homotopy types obtained by gluing conical subsets of schemes of singularities. In particular, this applies
to the category \eqref{e:glued category}: as a result, the Gluing Conjecture reduces to homological triviality of
certain homotopy types. Let us provide some details.

\sssec{}

In \cite[Sect.~2.3]{AG}, we explain how to associate to a quasi-smooth scheme $Y$ a classical scheme of 
singularities $\Sing(Y)$
equipped with a $\BG_m$-action. The scheme $\Sing(Y)$ measures how far $Y$ is from being smooth. 

\medskip

The main construction of the paper \cite{AG} assigns to an object
$\CF\in \IndCoh(Y)$ its singular support, denoted $\on{SingSupp}(\CF)$, which is a conical Zariski-closed subscheme of
$\Sing(Y)$. 

\medskip

It is technically easier for us to work with the category of singularities $\oInd(Y)$ instead of 
$\IndCoh(Y)$, where
$$\oInd(Y):=\IndCoh(Y)/\QCoh(Y).$$
To an object $\CF\in \oInd(Y)$ we can attach its singular support $\BP\on{SingSupp}(\CF)$, which is now a closed 
subscheme of the projectivization $\BP\Sing(Y)$ of $\Sing(Y)$, see also \cite{Ste}. 

\medskip

A key observation, articulated in \secref{s:sing as crystal} of the present paper is that the assignment 
$$\CF\rightsquigarrow \BP\on{SingSupp}(\CF)$$
can be upgraded to a certain categorical structure: $\oInd(Y)$ is in fact a \emph{crystal of categories} over $\BP\Sing(Y)$ (\thmref{t:crystal structure}). Here is a reformulation of this statement:

\begin{thm} \label{t:crystal prev}
There exists a canonical action of the (symmetric) monoidal category $\Dmod(\BP\Sing(Y))$ on $\oInd(Y)$. 
\end{thm}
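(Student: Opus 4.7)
Since $\Dmod(\BP\Sing(Y))\simeq\QCoh((\BP\Sing(Y))_\dR)$, giving an action of the left-hand side on $\oInd(Y)$ is equivalent to giving a $\QCoh(\BP\Sing(Y))$-action together with descent data along $\BP\Sing(Y)\to(\BP\Sing(Y))_\dR$, i.e., a flat connection. My plan is to construct these two layers in turn, and the bulk of the work will be in producing the connection.

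For the underlying $\QCoh$-action, I would start from the canonical HKR-type map $\O_{\Sing(Y)}\to\on{HH}^\bullet(Y)=\End(\Id_{\IndCoh(Y)})$, which for quasi-smooth $Y$ identifies the functions on the singularity cone with a natural subring of Hochschild cohomology; this provides a $\QCoh(\Sing(Y))$-module structure on $\IndCoh(Y)$, manifestly $\BG_m$-equivariant for the scaling on $\Sing(Y)$. By \cite[Corollary 8.2.8]{AG}, the subcategory $\QCoh(Y)\hookrightarrow\IndCoh(Y)$ is exactly the full subcategory of objects annihilated by the augmentation ideal of $\O_{\Sing(Y)}\twoheadrightarrow\O_Y$ (equivalently, singular support in the zero section), so passing to the quotient $\oInd(Y)$ kills the zero section. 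Since $\BG_m$-equivariant quasi-coherent sheaves on the complement $\Sing(Y)\setminus 0$ are precisely $\QCoh(\BP\Sing(Y))$, one obtains the desired $\QCoh(\BP\Sing(Y))$-action on $\oInd(Y)$.

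Upgrading this to a $\Dmod$-action is the main obstacle. My proposal is that the required flat connection is supplied by the Connes $S^1$-action on the Hochschild complex: on $\IndCoh(Y)$ this delivers a BV-type operator that does not commute with $\Xi_Y$ and so plays no role in the $\QCoh$-action, but on $\oInd(Y)$, where $\QCoh(Y)$-phenomena have been killed, I expect it to descend to a genuine de Rham differential on the HKR map, promoting $\O_{\Sing(Y)}\to\on{HH}^\bullet(Y)$ to a map of mixed complexes whose $S^1$-piece yields the connection. Verifying flatness and the higher coherences needed for descent to $(\BP\Sing(Y))_\dR$ should reduce, by Zariski descent, to the local model in which $Y$ is the derived zero locus of a regular section of a vector bundle on a smooth scheme; there the claim becomes a Koszul/matrix-factorization calculation relating $\Sym$ of the obstruction bundle (which acts via HKR) to differential operators along its projectivization. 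That final identification is where I expect the bulk of the technical difficulty to lie.
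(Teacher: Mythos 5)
Your construction of the underlying $\QCoh(\BP\Sing(Y))$-action is essentially the same as the paper's: the paper also starts from the grading-preserving homomorphism $A=\Gamma(\Sing(Y),\CO_{\Sing(Y)})\to H^\bullet(\on{HC}(Y))$, passes to $\oInd(Y)=\IndCoh(Y)/\QCoh(Y)$ (killing the zero section), and obtains an action of $\QCoh(\BP\Sing(Y))$. Up to this point you are on track.

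The gap is in the upgrade to a $\Dmod$-action. You propose to build a flat connection using the Connes $S^1$-action on the Hochschild complex and then verify flatness and higher coherences by a local Koszul/matrix-factorization computation. This is not what the paper does, and I do not think it can be made to work as stated, for two reasons. First, the BV/Connes operator lives on Hochschild \emph{homology} (or on the trace), whereas the map $\CO_{\Sing(Y)}\to\End(\Id_{\IndCoh(Y)})$ that drives singular support lives in Hochschild \emph{cohomology}; there is no canonical square-zero operator on the latter that would realize the de Rham differential on $\CO_{\Sing(Y)}$, and in any case you would still have to produce the full $\infty$-categorical descent datum, not merely a flat connection in the heart. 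Second, and more importantly, you are trying to construct by hand a structure that the paper shows comes \emph{for free}. The paper's key observation is that singular support is a set-theoretic (closed-subset) invariant: for an affine $S$ mapping to $(\BP\Sing(Y))_\dR$ — i.e., with $^{\on{red}}S\to\BP\Sing(Y)$ — one defines $\bGamma(S,\oInd(Y)^\sim)$ to be the full subcategory of $\QCoh(S)\otimes\oInd(Y)$ of objects whose singular support is set-theoretically contained in the graph of $^{\on{red}}S\to\BP\Sing(Y)$. Because the graph only depends on $^{\on{red}}S$, this assignment automatically descends along $S\to{}^{\on{red}}S$ and hence defines a sheaf of categories over the de Rham prestack, with no differential or flatness to verify. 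The actual technical work in the paper is elsewhere: showing that the global sections of this sheaf of categories recover $\oInd(Y)$, which is where the reduction to a global complete intersection (via the auxiliary $\BE_2$-algebra $\CB=\Gamma(U,\CO_U)\otimes\Sym(V[-2])$) is used — not, as you anticipated, to carry out a cyclic-homology computation. So while your local model is the right one to reduce to, the claim you would be proving there is different from what you expect, and the $S^1$-action mechanism you lean on is not part of the argument and is unlikely to yield the required descent data.
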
  

In other words, this theorem says that $\oInd(Z)$ can be `localized' onto $\BP\Sing(Z)$. 

\sssec{} The Gluing Conjecture concerns categories of sheaves with connection along fibers of a morphism. Let
us define versions of the categories $\IndCoh(Z)$, $\QCoh(Z)$, and $\oInd(Z)$ for sheaves with connection.

\medskip

Let $f:Z\to Y$ be a map of schemes. Consider the category
$$\IndCoh(Z)_{\on{conn}/Y}:=\IndCoh(Z_\dr\underset{Y_\dr}\times Y),$$
introduced above. 

\medskip 

In \secref{ss:rel as ten} (\propref{p:rel cryst as tensored up}), we  show that this category identifies with
$$\QCoh(Z_\dr\underset{Y_\dr}\times Y)\underset{\QCoh(Y)}\otimes \IndCoh(Y),$$
and therefore contains 
$$\QCoh(Z_\dr\underset{Y_\dr}\times Y)\simeq
\QCoh(Z_\dr\underset{Y_\dr}\times Y)\underset{\QCoh(Y)}\otimes \QCoh(Y)$$
as a full subcategory. 

\medskip

We are interested in the quotient $\IndCoh(Z_\dr\underset{Y_\dr}\times Y)/\QCoh(Z_\dr\underset{Y_\dr}\times Y)$,
which can be thought of as a version of the category of singularities.

\medskip

Assume now that $Y$ is quasi-smooth. In this case, we show in \secref{ss:rel as ten} 
(\propref{p:tensor up via Sing}), that the above quotient
can be expressed in terms of $\oInd(Y)$ by a \emph{topological operation} using the above-mentioned 
crystal structure on $\oInd(Y)$. 
Namely, we have
\begin{equation} \label{e:topological descr}
\IndCoh(Z_\dr\underset{Y_\dr}\times Y)/\QCoh(Z_\dr\underset{Y_\dr}\times Y) \simeq
\Dmod(Z\underset{Y}\times \BP\Sing(Y))\underset{\Dmod(\BP\Sing(Y))}\otimes \oInd(Y).
\end{equation}

The word `topological' refers to the fact that we are dealing with D-modules rather than (quasi)-coherent sheaves. 

\sssec{}

Assume now that in the above situation the scheme $Z$ is quasi-smooth as well. Recall from \cite[Sect. 2.4]{AG} that
in this case we have a canonically defined map
$$\Sing(f):Z\underset{Y}\times \Sing(Y)\to \Sing(Z),$$
called the \emph{singular codifferential} of $f$. 

\medskip

Furthermore, recall the category
$$\QCoh(Z)_{\on{conn}/Y}\subset \IndCoh(Z_\dr\underset{Y_\dr}\times Y).$$ 

We have:
$$\QCoh(Z_\dr\underset{Y_\dr}\times Y)\subset \QCoh(Z)_{\on{conn}/Y}\subset
\IndCoh(Z_\dr\underset{Y_\dr}\times Y),$$
where all the inclusions are, generally speaking, strict. 

\medskip

The key point for us is that the quotient 
$$\QCoh(Z)_{\on{conn}/Y}/\QCoh(Z_\dr\underset{Y_\dr}\times Y)\subset 
\IndCoh(Z_\dr\underset{Y_\dr}\times Y)/\QCoh(Z_\dr\underset{Y_\dr}\times Y)$$
can be described explicitly \emph{in topological terms} using the equivalence \eqref{e:topological descr}.
Namely, in \thmref{t:express category} we  prove: 

\begin{thm}
Under the identification \eqref{e:topological descr}, the full subcategory
$$\QCoh(Z)_{\on{conn}/Y}/\QCoh(Z_\dr\underset{Y_\dr}\times Y)\subset 
\IndCoh(Z_\dr\underset{Y_\dr}\times Y)/\QCoh(Z_\dr\underset{Y_\dr}\times Y)$$
corresponds to 
\begin{multline*}
\Dmod(\BP(\Sing(f)^{-1}(\{0\})))\underset{\Dmod(\BP\Sing(Y))}\otimes \oInd(Y)\subset \\
\Dmod(Z\underset{Y}\times \BP\Sing(Y))\underset{\Dmod(\BP\Sing(Y))}\otimes \oInd(Y).
\end{multline*} 
In particular, we have a canonical equivalence:
\begin{multline}  \label{e:ket top expr}
\QCoh(Z)_{\on{conn}/Y}/\QCoh(Z_\dr\underset{Y_\dr}\times Y)\simeq \\
\simeq \Dmod(\BP(\Sing(f)^{-1}(\{0\})))\underset{\Dmod(\BP\Sing(Y))}\otimes \oInd(Y).
\end{multline}
\end{thm}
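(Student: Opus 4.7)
The plan is to realize $\QCoh(Z)_{\on{conn}/Y}/\QCoh(Z_\dr\underset{Y_\dr}\times Y)$ as the kernel of a canonical functor $\Phi$ to $\oInd(Z)$, and then compute this kernel on the topological side of \eqref{e:topological descr}. From the defining pullback square and \cite[Corollary 8.2.8]{AG}, which identifies $\QCoh(Z)\subset \IndCoh(Z)$ with the full subcategory of objects of zero singular support, the subcategory $\QCoh(Z)_{\on{conn}/Y}\subset\IndCoh(Z_\dr\underset{Y_\dr}\times Y)$ is exactly the kernel of the composition of the forget-connection functor $\IndCoh(Z_\dr\underset{Y_\dr}\times Y)\to \IndCoh(Z)$ with the projection $\IndCoh(Z)\to \oInd(Z)$. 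This composition kills $\QCoh(Z_\dr\underset{Y_\dr}\times Y)$, so it factors through a functor $\Phi$ on the quotient, and by construction $\ker(\Phi)=\QCoh(Z)_{\on{conn}/Y}/\QCoh(Z_\dr\underset{Y_\dr}\times Y)$.

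Next, I would give a topological description of $\Phi$ under the identification \eqref{e:topological descr} supplied by \propref{p:tensor up via Sing}. The target $\oInd(Z)$ carries the crystal structure of \thmref{t:crystal prev} over $\BP\Sing(Z)$, and by functoriality of singular support under $!$-pullback (\cite[Sect.~7]{AG}), the singular codifferential $\Sing(f)\colon Z\underset{Y}\times \Sing(Y)\to \Sing(Z)$ controls how microlocal information is transported from $\oInd(Y)$ to $\oInd(Z)$: an object whose singular support lies over a covector in $Z\underset{Y}\times \Sing(Y)$ not killed by $\Sing(f)$ retains nonzero singular support in $\Sing(Z)$, whereas one with support contained in $\Sing(f)^{-1}(\{0\})$ becomes quasi-coherent after pullback. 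Accordingly, $\Phi$ should be identified with the functor
$$\Dmod(Z\underset{Y}\times \BP\Sing(Y))\underset{\Dmod(\BP\Sing(Y))}\otimes \oInd(Y)\to \oInd(Z)$$
obtained by tensoring $!$-pushforward along the partially defined map
$$\BP\Sing(f)\colon \BP(Z\underset{Y}\times \Sing(Y))\setminus \BP(\Sing(f)^{-1}(\{0\}))\to \BP\Sing(Z)$$
with $\oInd(Y)$. This identification is the technical heart of the proof; I would establish it by testing on generators of $\IndCoh(Z_\dr\underset{Y_\dr}\times Y)$ arising from objects of $\IndCoh(Y)$ under the $\Dmod(\BP\Sing(Y))$-linear action, where the compatibility with $\Sing(f)$ is essentially definitional.

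With $\Phi$ identified, the kernel computation becomes formal. The open--closed recollement for D-modules on the base gives a fiber sequence
$$\Dmod(\BP(\Sing(f)^{-1}(\{0\})))\to \Dmod(Z\underset{Y}\times\BP\Sing(Y))\to \Dmod(U),$$
where $U$ denotes the complement. Tensoring over $\Dmod(\BP\Sing(Y))$ with $\oInd(Y)$ preserves such fiber sequences of module categories (a general property of stable $\infty$-categorical tensor products with respect to Bousfield localizations). Since $\Phi$ factors through restriction to $U$, its kernel on the topological side is exactly $\Dmod(\BP(\Sing(f)^{-1}(\{0\})))\underset{\Dmod(\BP\Sing(Y))}\otimes \oInd(Y)$, yielding the asserted equivalence \eqref{e:ket top expr}.

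I expect the principal obstacle to reside in the second paragraph: matching $\Phi$ with its purported topological avatar. The subtlety is that $\Sing(f)$ is a map of affine cones over $Z$ taking the zero section to $0$, so the induced map on projectivized singular loci is only defined away from $\Sing(f)^{-1}(\{0\})$, and the relative crystal structure along $Z\to Y$ must correctly encode what happens at this zero fiber. Once this functor-level identification is settled, the remaining kernel computation is essentially recollement bookkeeping inside the module $2$-category over $\Dmod(\BP\Sing(Y))$.
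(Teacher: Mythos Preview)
Your first and third paragraphs are fine: recognizing $\QCoh(Z)_{\on{conn}/Y}/\QCoh(Z_\dr\underset{Y_\dr}\times Y)$ as $\ker(\Phi)$ for the induced functor to $\oInd(Z)$ is exactly the definition of $\IndCoh_{\{0\}}(Z_\dr\underset{Y_\dr}\times Y)$ used in the paper, and the recollement step in paragraph three is standard once the identification of $\Phi$ is in hand.

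The gap is precisely where you locate it. You propose to identify $\Phi$, under the equivalence \eqref{e:topological descr}, with a functor built from $!$-pushforward along the partially defined projectivized map $\BP\Sing(f)$, and you say this is ``essentially definitional'' on generators. It is not. The crystal structures on $\oInd(Y)$ and $\oInd(Z)$ are constructed separately in \secref{s:proof of cryst} via the abstract $\BE_2$-support machinery of \cite{AG}; there is no a priori functor $\Dmod(U)\underset{\Dmod(\BP\Sing(Y))}\otimes \oInd(Y)\to \oInd(Z)$ to compare $\Phi$ with, and producing one compatible with both crystal structures is essentially equivalent to the theorem itself. The statements in \cite[Sect.~7]{AG} control the \emph{support} of $f^!\CF$ in terms of that of $\CF$, not the crystal structure, so ``testing on generators'' does not reduce to anything definitional.

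The paper sidesteps this entirely. Rather than identify $\Phi$, it verifies the equality of subcategories directly, in two steps. First (\propref{p:preservation of support by monad}) it shows that the monad $(p_{\dr/Y,Z})^!\circ (p_{\dr/Y,Z})^\IndCoh_*$ on $\IndCoh(Z)$ preserves $\IndCoh_\CN(Z)$, using the PBW-type filtration of \cite[Part~4.4, Theorem~11.3.2]{GR2} whose associated graded is tensoring by symmetric powers of the relative tangent complex; this yields a Barr--Beck description of $\IndCoh_\CN(Z_\dr\underset{Y_\dr}\times Y)$. Second, it factors $f$ as a closed embedding followed by a smooth projection $Y'=Y\times W\to Y$. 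For the closed embedding, $\IndCoh(Z_\dr\underset{Y_\dr}\times Y)\simeq \IndCoh(Y)_Z$, and the required equality of subcategories reduces to the singular-support estimates for $f^!$ and $f^\IndCoh_*$ in \cite[Proposition~7.1.3]{AG}. The passage from $Y'$ to $Y$ is then a conservativity argument using that the forgetful functor $\QCoh(W_\dr)\to\QCoh(W)$ has a well-behaved left adjoint. At no point does one need to name $\Phi$ on the topological side.
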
  

\sssec{} We have now set up an abstract framework for handling \conjref{c:gluing prev}. For simplicity, we work with schemes rather than stacks.

\medskip

Let $Z_i\overset{f_i}\to Y$ be a diagram of quasi-smooth schemes, indexed by some category $I$.
Suppose that the maps $f_i$ are proper.
Let $\CN\subset \Sing(Y)$ be a fixed conical Zariski-closed subset. For each $i\in I$, we consider the composition
\[\IndCoh_\CN(Y)\hookrightarrow \IndCoh(Y)\to \IndCoh(Z_i)_{\on{conn}/Y}\to \QCoh(Z_i)_{\on{conn}/Y}.\]
Taken together, these functors yield a functor
\begin{equation} \label{e:gluing abstract}
\IndCoh_\CN(Y)\to \on{Glue}(\QCoh(Z_i)_{\on{conn}/Y},i\in I).
\end{equation} 
We want to determine whether \eqref{e:gluing abstract} is fully faithful.

\medskip

In \thmref{t:top to IndCoh} we prove the following sufficient condition.

\begin{thm} Suppose the following two conditions hold:

\begin{enumerate}

\item
The corresponding functor
$$\QCoh(Y) \to \underset{i}{\on{lim}}\, \QCoh(Z_{i,\dr}\underset{Y_\dr}\times Y)$$ is
fully faithful.

\item
the corresponding functor
\begin{equation} \label{e:top ff abs}
\Dmod(\BP(\CN))\to \on{Glue}\left(\Dmod(\BP(\Sing(f_i)^{-1}(\{0\}))), i\in I\right)
\end{equation} 
is fully faithful.
\end{enumerate}

Then the functor \eqref{e:gluing abstract} is fully faithful as well.
\end{thm}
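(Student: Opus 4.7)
The plan is to leverage the two-step nature of both sides of \eqref{e:gluing abstract}. On the source, $\IndCoh_\CN(Y)$ admits a semi-orthogonal decomposition with subcategory $\QCoh(Y)$ (via $\Xi_Y$) and quotient $\oInd_\CN(Y) := \IndCoh_\CN(Y)/\QCoh(Y)$. On the target, for each $i$ the category $\QCoh(Z_i)_{\on{conn}/Y}$ sits in a semi-orthogonal sequence with subcategory $\QCoh(Z_{i,\dr}\underset{Y_\dr}\times Y)$ and quotient $\QCoh(Z_i)_{\on{conn}/Y}/\QCoh(Z_{i,\dr}\underset{Y_\dr}\times Y)$; since gluing is a (lax) limit operation that is compatible with semi-orthogonal decompositions whenever the transition functors are, $\on{Glue}(\QCoh(Z_i)_{\on{conn}/Y},i\in I)$ inherits a semi-orthogonal decomposition with sub $\on{Glue}(\QCoh(Z_{i,\dr}\underset{Y_\dr}\times Y))$ and quotient $\on{Glue}(\QCoh(Z_i)_{\on{conn}/Y}/\QCoh(Z_{i,\dr}\underset{Y_\dr}\times Y))$. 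The functor \eqref{e:gluing abstract} preserves these decompositions by construction, so it suffices to check full faithfulness on each piece. Since right adjoints commute with limits, \textbf{ff} on the subcategory piece amounts to full faithfulness of $\QCoh(Y)\to \underset{i}{\on{lim}}\,\QCoh(Z_{i,\dr}\underset{Y_\dr}\times Y)$, which is exactly hypothesis (1).

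For the quotient piece, I invoke the topological description \eqref{e:ket top expr} pointwise in $i$, obtaining
\[
\on{Glue}\!\left(\QCoh(Z_i)_{\on{conn}/Y}/\QCoh(Z_{i,\dr}\underset{Y_\dr}\times Y)\right)\simeq \on{Glue}\!\left(\Dmod(\BP(\Sing(f_i)^{-1}(\{0\})))\right)\underset{\Dmod(\BP\Sing(Y))}\otimes \oInd(Y),
\]
where the tensor factors out of the gluing since $\Dmod(\BP\Sing(Y))$ acts through the crystal structure of \thmref{t:crystal prev} on each term uniformly. On the source side, by the crystal structure applied to $Y$ and the fact that objects of $\oInd_\CN(Y)$ are precisely those ``supported'' on $\BP\CN$, one identifies $\oInd_\CN(Y)\simeq \Dmod(\BP(\CN))\underset{\Dmod(\BP\Sing(Y))}\otimes \oInd(Y)$. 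Under these identifications, the quotient functor becomes precisely the base change of the functor \eqref{e:top ff abs} by $-\underset{\Dmod(\BP\Sing(Y))}\otimes \oInd(Y)$. Full faithfulness is preserved under such a base change (the unit of the adjunction tensors up to the new unit, using continuity of the tensor product of presentable stable categories and the fact that $\oInd(Y)$ is a dualizable $\Dmod(\BP\Sing(Y))$-module), so the quotient piece is ff by hypothesis (2).

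The main technical obstacle I expect is to justify cleanly the interchange of gluing with the tensor product by $\oInd(Y)$ over $\Dmod(\BP\Sing(Y))$, and to verify that the semi-orthogonal decomposition on $\on{Glue}(\QCoh(Z_i)_{\on{conn}/Y})$ genuinely restricts from the one on $\on{Glue}(\IndCoh(Z_{i,\dr}\underset{Y_\dr}\times Y))$ in a way compatible with the map from $\IndCoh_\CN(Y)$; this relies on the properness of the $f_i$ to ensure adjointness and base-change properties behave as expected, and on the formalism of right adjoints in gluing categories to see that the colocalization onto the $\QCoh$ piece commutes with the gluing limit. Once this bookkeeping is set up, the result follows by assembling the two pieces and invoking a stability-category analogue of the five lemma for functors preserving semi-orthogonal decompositions.
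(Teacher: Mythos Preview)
Your overall strategy matches the paper's: reduce to two pieces via a semi-orthogonal-style decomposition, handle the $\QCoh$ piece by hypothesis (1), and handle the quotient piece by rewriting both sides via the crystal structure and \eqref{e:ket top expr}, then tensoring hypothesis (2) by $\oInd(Y)$ over $\Dmod(\BP\Sing(Y))$. This is exactly how the paper proceeds (Section~5).

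However, two of your justifications are not quite right and hide real work. First, the claim that $\on{Glue}(\QCoh(Z_i)_{\on{conn}/Y})$ inherits a semi-orthogonal decomposition from the termwise ones is false in general: the paper explicitly notes that the inclusion $\on{Glue}(\obC_i)\subset \on{Glue}(\bC'_i)^\perp$ is typically \emph{strict}. So there is no clean ``five lemma for SODs'' available. What the paper actually proves (\propref{p:ff ses}) is a weaker statement that suffices: if each $\sF_i$ has a left adjoint $\sF_i^L$ \emph{and} $\sF_i^L(\obC_i)\subset \obD$, then full faithfulness on the two pieces implies full faithfulness of $\sF$. That extra hypothesis is not automatic; the paper spends Step~1 of Section~5.3 verifying it, using properness of $f_i$ to get the left adjoint and then checking it preserves the right orthogonal. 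Your proposal gestures at properness but does not isolate this condition.

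Second, your argument that full faithfulness survives $-\underset{\Dmod(\BP\Sing(Y))}\otimes \oInd(Y)$ via dualizability of $\oInd(Y)$ is not how the paper argues, and dualizability over $\Dmod(\BP\Sing(Y))$ is not established. The paper instead observes that the D-module gluing functor $\sF_{\Dmod}$ has a $\Dmod(\BP\Sing(Y))$-\emph{linear} left adjoint (built from $(f_i)_{\dr,*}$ and the projection formula, again using properness), so the counit isomorphism $\sF_{\Dmod}^L\circ\sF_{\Dmod}\to\on{Id}$ tensors up directly. Once you replace your two shortcuts by these two facts, your argument becomes the paper's.
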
 

Let us note that in the formation of the category $\on{Glue}\left(\Dmod(\BP(\Sing(f_i)^{-1}(\{0\}))), i\in I\right)$, the functors
$$\Dmod(\BP(\Sing(f_j)^{-1}(\{0\}))\to \Dmod(\BP(\Sing(f_i)^{-1}(\{0\}))$$
for an arrow $i\to j$ in $I$ \emph{are not mere pullbacks}, but rather are given by pull-push along the \emph{correspondence}
$$
\CD
Z_i \underset{Z_j}\times\BP(\Sing(f_j)^{-1}(\{0\}) @>>>  \BP(\Sing(f_i)^{-1}(\{0\})   \\
@VVV   \\
\BP(\Sing(f_j)^{-1}(\{0\}).
\endCD
$$

\sssec{}  \label{sss:W}

Finally, assume that in the above situation, the schemes $Z_i$ are \emph{proper} over $Y$. In this case, in 
\corref{c:pointwise bis} we  show that the question of full faithfulness of the functor \eqref{e:top ff abs} can be 
reduced to that of \emph{homological contractibility} of certain homotopy types. 

\medskip

Namely, for a $k$-point $\nu\in \CN$ let $W_{i,\nu}$ denote the preimage of $\nu$ under the map
$$\Sing(f_i)^{-1}(\{0\})\hookrightarrow Z_i\underset{Y}\times \Sing(Y)\to \Sing(Y).$$

\medskip

For an arrow $i\to j$ in the category of indices $I$, the schemes $W_{i,\nu}$ and $W_{j,\nu}$ are related by the correspondence
$$
\CD
Z_i \underset{Z_j}\times W_{j,\nu}   @>>>    W_{i,\nu}   \\
@VVV   \\
W_{j,\nu}.
\endCD
$$

In \secref{sss:glued prestack} we show how such a data gives rise to a prestack, denoted $W_{\on{Glued},\nu}$. Namely,
$W_{\on{Glued},\nu}$ is the prestack colimit over the category of \emph{strings} 
$$i_0\to i_1\to\dots\to i_n, \quad n\in \BN, \quad i_j\in I$$ 
of the diagram of schemes that assigns to a string as above the scheme
$$Z_{i_0}\underset{Z_{i_n}}\times W_{i_n,\nu}.$$

\medskip

We will prove:

\begin{thm}  \label{t:fiberwise prev}
The functor \eqref{e:top ff abs} is fully faithful if and only if for every $\nu$ not in the zero-section, the prestack $W_{\on{Glued},\nu}$
is \emph{homologically contractible}, i.e., the map
$$\on{C}_*(W_{\on{Glued},\nu})\to k$$
is an isomorphism.
\end{thm}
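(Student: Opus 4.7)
Write $L$ for the functor in \eqref{e:top ff abs} and $R$ for its right adjoint. Full faithfulness of $L$ is equivalent to the comonad $R\circ L$ on $\Dmod(\BP(\CN))$ being canonically isomorphic to the identity. My plan is to express $R\circ L$ as a single pull-push along a map of prestacks $\pi\colon\BP W\to\BP(\CN)$ whose fibers over points $\nu\in\BP(\CN)$ recover exactly the prestacks $W_{\on{Glued},\tilde\nu}$ (for any lift $\tilde\nu\in\CN$), and then to apply a fiberwise contractibility criterion.

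To set up the identification $R\circ L\simeq\pi_{\dR,*}\pi^{!}$, I would first describe the glued category as a lax limit over $I$ of the categories $\Dmod(\BP(\Sing(f_i)^{-1}(\{0\})))$ with transition functors given by the pull-push correspondences recalled in \secref{sss:W}. Unwinding the right adjoint of a functor into such a lax limit produces a totalization indexed by the category of strings $\sigma=(i_0\to\dots\to i_n)$ in $I$; each string contributes pull-push along
\[
\BP(\CN)\longleftarrow \BP\bigl(Z_{i_0}\underset{Z_{i_n}}\times \Sing(f_{i_n})^{-1}(\{0\})\bigr)\longrightarrow \BP(\CN).
\]
Iterated base change for D-modules (legitimate because the $f_i$, and hence the maps $Z_i\to Z_j$, are proper) then assembles these correspondences into a single pull-push along the prestack colimit $\BP W$ over strings of the above projectivizations.

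With $R\circ L\simeq \pi_{\dR,*}\pi^{!}$ in hand, I conclude via a fiberwise criterion: for a reasonable morphism $\pi\colon\CW\to S$ of prestacks over $k$ with $S$ locally of finite type, the unit $\Id\to\pi_{\dR,*}\pi^{!}$ is an equivalence on $\Dmod(S)$ if and only if for every $k$-point $s\in S$ the fiber $\CW_s$ is homologically contractible. The "only if" direction follows from $!$-base change along $\{s\}\hookrightarrow S$, which yields $i_s^!\,\pi_{\dR,*}\pi^{!}\CF \simeq \on{C}_*(\CW_s)\otimes_k i_s^!\CF$; the "if" direction uses that equivalences in $\Dmod(S)$ are detected by $!$-restriction to all $k$-points. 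Applied to $\pi$, since the formation of $\BP W$ as a prestack colimit commutes with base change, the fiber over $\nu$ (choosing a lift $\tilde\nu\in\CN$, possible precisely because $\nu$ is off the zero section) is the prestack colimit over strings of $Z_{i_0}\underset{Z_{i_n}}\times W_{i_n,\tilde\nu}$, which is exactly $W_{\on{Glued},\tilde\nu}$ by the definition in \secref{sss:W}.

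The technically most delicate step is matching $R\circ L$ with pull-push along the single map $\pi$. This requires both a careful description of the right adjoint of a functor into a lax limit of DG categories over $I$ as a cosimplicial totalization indexed by strings, and iterated proper base change that collapses the higher iterated fiber products arising in this totalization down to the form $Z_{i_0}\underset{Z_{i_n}}\times W_{i_n,\nu}$ used in the construction of $W_{\on{Glued},\nu}$. Once this bookkeeping is in place, the remaining steps are formal consequences of $!$-base change and the interaction of $\Dmod$ with prestack colimits.
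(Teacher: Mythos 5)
Your overall strategy --- describe an adjoint of $L$ as pull-push along a single map to $\BP(\CN)$ whose fibers are the prestacks $W_{\on{Glued},\nu}$, and then use $!$-base change to reduce to fiberwise homological contractibility --- does match the strategy the paper uses (see \secref{ss:expl left adjoint} and \secref{ss:ff Dmod via contr}, culminating in \corref{c:pointwise bis}). However, you have chosen the wrong adjoint and the wrong direction for the comparison map, and as written the key identification would not go through.

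The paper works with the \emph{left} adjoint $\sF^L$ of the functor $\sF = L$ into the glued category. This left adjoint exists because the $f_i$ are proper and, crucially, because the evaluation functors $\on{ev}_i$ of a \emph{lax} limit admit \emph{left} adjoints $\on{ins}_i$ (see \secref{sss:insert}; this is a feature specific to lax limits). The composite $\sF^L\circ\sF$ is then computed by a \emph{colimit} (geometric realization) over the category of strings, \propref{p:left adj as geom real}, and this colimit assembles into $(f_{\on{Glued}})_{\dr,!}\circ(f_{\on{Glued}})^{\dr,!}$ for the pseudo-proper map $f_{\on{Glued}}$ from the prestack colimit over strings. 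The relevant comparison map is the \emph{counit} $\sF^L\circ\sF\to\on{Id}$, which after the geometric identification becomes the trace map $(f_{\on{Glued}})_{\dr,!}\circ(f_{\on{Glued}})^{\dr,!}\to\on{Id}$.

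You instead invoke the right adjoint $R$ of $L$ and propose to describe $R\circ L$ as a ``totalization indexed by strings,'' and then you want to check a ``unit $\on{Id}\to\pi_{\dr,*}\pi^!$.'' Neither step is available. The right adjoint of a functor into a lax limit does not admit the string description --- that description is for the \emph{left} adjoint, precisely because the evaluation functors of a lax limit have left adjoints, not right adjoints; and the relevant categorical operation is a colimit, not a totalization. Moreover, since $\pi$ is pseudo-proper, $\pi_{\dr,*}=\pi_{\dr,!}$ is \emph{left} adjoint to $\pi^!$, so the only natural map in sight is the counit $\pi_{\dr,*}\pi^!\to\on{Id}$, not a unit $\on{Id}\to\pi_{\dr,*}\pi^!$; the latter does not exist as a natural transformation. (Relatedly, if $L\dashv R$ then $R\circ L$ is a monad, not a comonad, so the sign of your bookkeeping is off throughout.) Once you replace $R$ by the left adjoint $L^L$, the totalization by a colimit over strings, and the unit by the counit, your argument lines up with the paper's; the subsequent fiberwise $!$-base-change reduction to the statement that $\on{C}_*(W_{\on{Glued},\nu})\to k$ is an isomorphism is then exactly the content of \lemref{l:pointwise} and \corref{c:pointwise bis}.
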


Here $\on{C}_*$ stands for homology (with coefficients in $k$). Note that if a prestack $\CW$ is the
 colimit of schemes
$$\CW=\underset{a\in A}{\on{colim}}\, W_a,$$
then its homology can be computed as
$$\on{C}_*(\CW)=\underset{a\in A}{\on{colim}}\, \on{C}_*(W_a).$$

If the ground field $k$ is $\BC$, we can assign to $\CW$ the homotopy type
$$\CW^{\on{top}}:=\underset{a\in A}{\on{colim}}\, W_a^{\on{top}}$$
(here the colimit is taken in the $\infty$-category of \emph{spaces}, and for a scheme $W_a$ we denote by $W_a^{\on{top}}$
the underlying analytic space).  In this case we have
$$\on{C}_*(\CW)\simeq \on{C}_*(\CW^{\on{top}}).$$

So, the homology $\on{C}_*(W_{\on{Glued},\nu})$ appearing in \thmref{t:fiberwise prev} is indeed the homology of 
a canonically defined homotopy type. 

\sssec{}  \label{sss:stack version}

The above discussion applies to the case when $Y$ is a quasi-smooth algebraic stack rather than a scheme, 
and $Z_i$'s are quasi-smooth algebraic stacks proper and schematic over $Y$. 

\medskip

The upshot
is that the question of fully faithfulness of the functor
$$\IndCoh_\CN(Y)\to \on{Glue}(\QCoh(Z_i)_{\on{conn}/Y},i\in I)$$
is equivalent to that of homological contractibility, as stated in \thmref{t:fiberwise prev}. 
 
\ssec{The methods: global Springer fibers} 

\sssec{}

Recall that our goal is to show that the functor \eqref{e:gluing functor} is fully faithful. According to
\secref{sss:stack version}, this follows from homological contractibility of certain homotopy types
constructed using the maps
$$
\CD
\LocSys_P\underset{\LocSys_G}\times \Sing(\LocSys_G)  @>>>  \Sing(\LocSys_P)  \\
@VVV  \\
\Sing(\LocSys_G).  
\endCD
$$

\medskip

Namely, fix a $k$-point $\on{Nilp}_{\on{glob}}$. We can think of such a point as a pair $(\sigma,A)$, where
$\sigma$ is a $G$-local system on $X$, and $A$ is a \emph{horizontal} section of the vector bundle $\fg_\sigma$ 
associated with the adjoint representation. 

\medskip

Then, the corresponding scheme $W_{i,\nu}$ of \secref{sss:W} for $i=P$ and $\nu=(\sigma,A)$ is that of
reductions of $\sigma$ to $P$ for which $A$ is contained in the sub-bundle $\fu(P)_\sigma$, where $\fu(P)$ denotes 
the Lie algebra of the unipotent radical of $P$. We denote this scheme by 
$$\Spr_{P,\on{unip}}^{\sigma,A}.$$

In addition, we consider the schemes
\begin{equation} \label{e:springers}
\Spr_{P,\on{unip}}^{\sigma,A}\subset \Spr_{P}^{\sigma,A}\subset \Spr_P^\sigma,
\end{equation} 
where $\Spr_{P}^{\sigma}$ is that of reductions of $\sigma$ to $P$, and $\Spr_{P}^{\sigma,A}$
is the subscheme that corresponds to those reductions for which $A$ is contained in $\fp_\sigma$.

\medskip

All three of the above schemes can be viewed as global versions of the Springer fiber. 

\sssec{}

For $P_1\subset P_2$, the schemes $\Spr_{P_1,\on{unip}}^{\sigma,A}$ and $\Spr_{P_2,\on{unip}}^{\sigma,A}$
are related by the correspondence
$$
\CD
\Spr_{P_1}^\sigma\underset{\Spr_{P_2}^\sigma} \times \Spr_{P_2,\on{unip}}^{\sigma,A}  @>>>  \Spr_{P_1,\on{unip}}^{\sigma,A} \\
@VVV \\
\Spr_{P_2,\on{unip}}^{\sigma,A}
\endCD
$$
and the colimit described in \secref{sss:W} yields a prestack, denoted $\Spr_{\on{Glued,unip}}^{\sigma,A}$.

\medskip

Combining the results of \secref{ss:methods}, we obtain that \conjref{c:gluing prev} follows from the next result
(it appears in the paper as \thmref{t:contr1}):

\begin{thm} \label{t:contr preview}
For any $(\sigma,A)$ with a nilpotent $A$, the prestack $\Spr_{\on{Glued,unip}}^{\sigma,A}$ is homologically contractible.
\end{thm}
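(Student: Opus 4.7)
The plan is to prove Theorem~\ref{t:contr preview} by combining a cofinality argument for the defining colimit with an explicit contraction obtained from a global Jacobson--Morozov construction.

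First, I would simplify the defining colimit. For a chain $P_0 \subseteq \dots \subseteq P_n$ of standard parabolics, the fiber product
$$\Spr_{P_0}^\sigma \underset{\Spr_{P_n}^\sigma}\times \Spr_{P_n,\on{unip}}^{\sigma,A}$$
identifies with the closed subscheme $S(P_0,P_n) \subseteq \Spr_{P_0}^\sigma$ of those reductions $\CF_{P_0}$ such that, under the induced reduction to $P_n$, $A$ lies in the sub-bundle $\fu(P_n)_\sigma$. A cofinality / Quillen Theorem~A argument applied to the ``endpoint'' functor sending a string to the pair $(P_0,P_n)$ collapses the colimit: its fibers are order complexes of closed intervals $[P_0,P_n]$ in $\on{Par}(G)$, which are contractible since they admit a minimum. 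Thus
$$\Spr_{\on{Glued,unip}}^{\sigma,A} \;\simeq\; \underset{P_0 \subseteq P_n}{\on{colim}}\, S(P_0,P_n).$$

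Next, I would globalize Jacobson--Morozov to produce a canonical horizontal reduction. Since $A$ is a horizontal nilpotent section, one extends it to a horizontal $\fsl_2$-triple $(A,H,F)$ in $\fg_\sigma$: the space of such extensions with $A$ fixed is a torsor over a flat bundle of connected unipotent groups on $X$, and any such torsor admits a horizontal section. The resulting cocharacter $\lambda_H$ defines a canonical horizontal parabolic $P_A \subseteq G_\sigma$ (the stabilizer of the non-negative-weight flag under $\ad H$) together with a distinguished horizontal reduction $\CF^{\on{can}} \in \Spr_{P_A}^\sigma$ automatically satisfying $A \in \fu(P_A)_\sigma$. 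This gives a canonical base point of $\Spr_{\on{Glued,unip}}^{\sigma,A}$.

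Third, I would contract the simplified colimit onto $\CF^{\on{can}}$. The cocharacter $\lambda_H$ acts on each $\Spr_P^\sigma$ by conjugation, and the Bialynicki--Birula limits $\lim_{t \to 0} t\cdot \CF_{P_0}$ exist and land in the $\lambda_H$-fixed locus of reductions compatible with $P_A$. The goal is to assemble these limits into an $\BA^1$-family of self-maps on $\Spr_{\on{Glued,unip}}^{\sigma,A}$, interpolating between the identity (at $t=1$) and the constant map to $\CF^{\on{can}}$ (at $t=0$), thereby exhibiting the homological deformation retraction. The main obstacle lies precisely here: one must verify compatibility of the Bialynicki--Birula contraction with all face and degeneracy maps between the various $S(P_0,P_n)$, and handle the subtlety that $\lambda_H$ rescales $A$ by $t^2$ (so does not literally preserve the fiber over $(\sigma,A) \in \Sing(\LocSys_G)$). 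Overcoming this requires working $\mathbb{G}_m$-equivariantly along $\Sing(\LocSys_G)$ and carefully matching the attracting strata under $\lambda_H$ with the subschemes $S(P_0,P_n)$; this is the technical heart of the argument.
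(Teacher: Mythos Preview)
Your Step~3 contains a genuine gap. The Bialynicki--Birula flow for $\lambda_H$ does \emph{not} contract $\Spr_P^{\sigma,A}$ (or the glued prestack) to the single point $\CF^{\on{can}}$. The limit $\lim_{t\to 0} t\cdot\CF_{P_0}$ lands only in the $\lambda_H$-fixed locus, which is typically disconnected or positive-dimensional. Already for $G=SL_3$ with a sub-regular $A$ and trivial $\sigma$, the fixed locus in $\Spr_B^{\sigma,A}$ consists of three points, not one. So no $\BA^1$-family of self-maps interpolating between the identity and a constant map exists, and the ``technical heart'' you flag is not a technicality but the entire content of the theorem. (Your worry about $\lambda_H$ rescaling $A$ is actually a non-issue: since $\fp_\sigma$ and $\fu(P_n)_\sigma$ are linear and $\lambda_H$ is horizontal, the conditions $A\in\fp_\sigma$ and $A\in\fu(P_n)_\sigma$ are $\lambda_H$-invariant.)

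Compare with the paper's proof. Its architecture is different in two ways. First, instead of your cofinality reduction to pairs $(P_0,P_n)$, the paper reduces (Proposition~\ref{p:another version}) to the genuinely simpler prestack $\Spr_{\on{Glued}}^{\sigma,A}=\colim_{P\in\Par'(G)}\Spr_P^{\sigma,A}$, via an \emph{induction on Levi subgroups} that invokes the theorem itself for each proper Levi. Second, it then stratifies each $\Spr_P^{\sigma,A}$ by relative position with the Jacobson--Morozov parabolic~$P_0$; this Schubert stratification is essentially the Bialynicki--Birula stratification for your $\lambda_H$, so your instinct to use $\lambda_H$ is correct in spirit. But rather than a single flow, the paper argues stratum by stratum: an explicit analysis of the partition $\sS=\sS_w^0\cup\sS_w^+\cup\sS_w^-$ (Lemma~\ref{lm:sch}) shows that each quotient $\Spr_{\on{Glued}}^{\sigma,A,\le w}/\Spr_{\on{Glued}}^{\sigma,A,<w}$ is homologically trivial, via a cofinality trick that passes to a sub-poset of $\Par'(G)$ with a terminal object on which the stratum is empty. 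The Bialynicki--Birula decomposition organizes the induction; it does not furnish a contraction.
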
 

\sssec{}

Although \thmref{t:contr preview} is a concrete statement, it involves the prestack $\Spr_{\on{Glued,unip}}^{\sigma,A}$, which is defined 
by a complicated procedure using correspondences.
However, in \secref{s:red to contr}, we show that \thmref{t:contr preview} is equivalent to a 
statement about simpler objects.

\medskip

Namely, let $\Spr_{\on{Glued}}^{\sigma,A}$ be the colimit of the diagram of schemes
$$P\mapsto \Spr_{P}^{\sigma,A},$$
taken over the poset of standard \emph{proper} parabolics of $G$ (where $\Spr_{P}^{\sigma,A}$ is as in \eqref{e:springers}).  

\medskip

We  show, \emph{assuming that \thmref{t:contr preview} holds for proper Levi subgroups of $G$}, 
that \thmref{t:contr preview} is equivalent to the next assertion (it appears in the paper as \thmref{t:contr2}):

\begin{thm} \label{t:contr2 preview}
For any $(\sigma,A)$ with a \emph{non-zero} nilpotent $A$, the prestack $\Spr_{\on{Glued}}^{\sigma,A}$ is homologically contractible.
\end{thm}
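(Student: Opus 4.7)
The plan is to attach to the pair $(\sigma, A)$ with $A$ a nonzero nilpotent horizontal section of $\fg_\sigma$ a canonical proper parabolic reduction of $\sigma$, and then contract the prestack $\Spr_{\on{Glued}}^{\sigma, A}$ onto the point it defines via a Jacobson--Morozov $\BG_m$-action.

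First, I would construct the canonical reduction. Since $A$ is horizontal, the monodromy of $\sigma$ commutes with $A$, hence also with an $\fsl_2$-triple $(F, H, A)$ completing $A$ (such a triple is canonical up to $Z_G(A)$-conjugacy, which does not affect the outcome). The associated cocharacter $\phi_H : \BG_m \to G$ then commutes with the monodromy, so the Jacobson--Morozov parabolic
\[ P(A) := \bigl\{ g \in G : \lim_{t\to 0}\Ad(\phi_H(t))\,g \text{ exists}\bigr\} \]
receives the monodromy, yielding a canonical reduction $\sigma_{P(A)}$ of $\sigma$ to $P(A)$. Because $A \neq 0$, $P(A)$ is a \emph{proper} parabolic; and because $A$ has $\phi_H$-weight $2$, it lies in $\fu(P(A))_{\sigma_{P(A)}}$.

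Next, for each standard proper parabolic $P$, the cocharacter $\phi_H$ acts on $\Spr_P^{\sigma,A}$ by $t\cdot \xi := \phi_H(t)\cdot \xi$. This action preserves horizontality (since $\phi_H(t)$ commutes with the monodromy) and the condition $A \in \fp_\xi$ (since $\Ad(\phi_H(t))A = t^2 A$ remains in the translated Lie algebra), and is natural in $P$, so it assembles into a $\BG_m$-action on the prestack $\Spr_{\on{Glued}}^{\sigma, A}$. Passing to the $t \to 0$ limit retracts the prestack onto its $\phi_H$-fixed sub-prestack, whose points are reductions whose associated parabolic contains $\phi_H(\BG_m)$. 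A cofinality argument replacing standard parabolics of $G$ containing $\phi_H(\BG_m)$ by standard parabolics of the Levi $M(A) = Z_G(\phi_H) \subset P(A)$ identifies this fixed-point sub-prestack with a gluing construction for $M(A)$; since the image of $A$ in $\Lie(M(A))$ is zero, this collapses onto the single point $\sigma_{P(A)}$ by direct inspection (aided by the hypothesis that the statement holds for proper Levis of $G$).

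The main obstacle is promoting the termwise $\BG_m$-contraction to a genuine deformation retraction of the prestack colimit: one needs (a) compatibility of the $\phi_H$-action with the transition maps in the diagram of standard proper parabolics, so that it lifts to an action on $\Spr_{\on{Glued}}^{\sigma,A}$ as a prestack, and (b) a prestack-level analogue of the Białynicki--Birula homotopy invariance statement, showing that a $\BG_m$-action which termwise contracts onto the fixed locus induces a homological equivalence after gluing. A second subtlety is that the canonical parabolic $P(A)$ is typically not standard, so the identification of the fixed-point locus with a gluing construction for $M(A)$ must be done up to $G$-conjugacy via a cofinality (Quillen's Theorem A) argument. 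Overcoming these two technical issues is where the main work of the proof lies.
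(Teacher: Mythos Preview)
Your proposal contains a genuine gap at the very first step: the claim that the Jacobson--Morozov cocharacter $\phi_H$ commutes with the monodromy of $\sigma$ is false in general. What is true is that the monodromy lands in $Z_G(A)$, and that $Z_G(A)$ is contained in the Jacobson--Morozov parabolic $P(A)$ (since $P(A)$ is canonical, independent of the choice of $\fsl_2$-triple). This is exactly enough to produce the reduction $\sigma_{P(A)}$, and the paper uses no more than this. But $Z_G(A)$ does \emph{not} centralize $\phi_H$: one has a Levi decomposition $Z_G(A)=Z_G(F,H,A)\ltimes U$ with $U$ unipotent, and elements of $U$ move $\phi_H$ nontrivially. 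Concretely, already for $G=SL_2$ with $A=\bigl(\begin{smallmatrix}0&1\\0&0\end{smallmatrix}\bigr)$, the unipotent part $\bigl(\begin{smallmatrix}1&b\\0&1\end{smallmatrix}\bigr)$ of $Z_G(A)$ fails to commute with $\phi_H(t)=\bigl(\begin{smallmatrix}t&0\\0&t^{-1}\end{smallmatrix}\bigr)$. So as soon as the monodromy of $\sigma$ has a nontrivial component in $U$, your formula $t\cdot\xi:=\phi_H(t)\cdot\xi$ does not preserve the condition of being a reduction of $\sigma$, and the $\BG_m$-action on $\Spr_P^{\sigma,A}$ is simply not defined. (If $\sigma$ happened to be semisimple one could conjugate the triple so that the reductive monodromy image sits inside $Z_G(F,H,A)$, but the theorem makes no such hypothesis.)

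The paper's proof avoids this entirely: it uses only the existence of the reduction to $P_0:=P(A)$, and then stratifies each $\Spr_P^{\sigma,A}$ by the relative position $w\in\sW'\subset\sW$ of a $P$-reduction against the fixed $P_0$-reduction. One then shows by induction on the Bruhat order that each $\Spr^{\sigma,A,\le w}_{\on{Glued}}$ is homologically contractible. The base case $w=1$ is a cofinality argument collapsing the colimit to a point; for $1\neq w\neq w'_0$ one shows the associated graded $\Spr^{\sigma,A,\le w}_{\on{Glued}}/\Spr^{\sigma,A,<w}_{\on{Glued}}$ is contractible by a purely combinatorial analysis of which simple roots $\alpha$ satisfy $w(\alpha)\in\sR_{\sJ_0}$, $\sR^+\setminus\sR_{\sJ_0}$, or $-\sR^+\setminus\sR_{\sJ_0}$; and the top stratum $w=w'_0$ is empty precisely because $A$ lies in $\fu(P_0)_\sigma$. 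No torus action, no Bia\l ynicki--Birula, no prestack homotopy theory beyond elementary colimit manipulations is needed.
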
 

\sssec{}

\thmref{t:contr2 preview} is an essentially combinatorial statement that is proved in \secref{s:Schubert}. The idea of the proof
is the following:

\medskip

By the Jacobson-Morozov Theorem, the section $A$ defines a reduction of $\sigma$ to a canonically defined parabolic $P_0$. This reduction
gives rise to a stratification of each $\Spr_{P}^{\sigma,A}$ by elements of the Weyl group that measure the relative position
of a given reduction to $P$ with the canonical reduction to $P_0$.

\medskip

For each $w\in W$, let 
$$\Spr_{\on{Glued}}^{\sigma,A,< w}\subset \Spr_{\on{Glued}}^{\sigma,A,\leq w}\subset \Spr_{\on{Glued}}^{\sigma,A}$$
be the corresponding substacks. Consider also
$$\Spr_{\on{Glued}}^{\sigma,A,\leq w}/\Spr_{\on{Glued}}^{\sigma,A,< w}:=\Spr_{\on{Glued}}^{\sigma,A,\leq w}
\underset{\Spr_{\on{Glued}}^{\sigma,A,< w}}\sqcup \on{pt}.$$

We prove, by an analysis of the Weyl group combinatorics, that  the prestack
\[\Spr_{\on{Glued}}^{\sigma,A,\leq w}/\Spr_{\on{Glued}}^{\sigma,A,< w}\] is homologically contractible
for every $w$. 

\medskip

This implies \thmref{t:contr2 preview} by induction on the length of $w$. 

\ssec{Contents}

The present paper is naturally divided into three parts.

\sssec{}

In Part I we discuss the crystal structure on the category of singularities of a quasi-smooth scheme or algebraic
stack, and its corollaries. 

\medskip

In \secref{s:sing as crystal} we state the main result of Part I, \thmref{t:crystal structure}, which says that for a 
quasi-smooth scheme $Z$, there exists a canonically defined crystal of categories over $\BP\Sing(Z)$, denoted
$\oInd(Z)^\sim$, such that the category of singularities of $Z$, denoted
$$\oInd(Z):=\IndCoh(Z)/\QCoh(Z),$$
is recovered as the category of global sections of $\oInd(Z)^\sim$.

\medskip

As was mentioned above, this theorem can be viewed as saying that $\oInd(Z)$ can be `localized' onto $\BP\Sing(Z)$.
Due to the 1-affineness property of de Rham prestacks, this theorem can be equivalently phrased as saying that the
(symmetric) monoidal category
$$\Dmod(\BP\Sing(Z)):=\QCoh(\BP(\Sing(Z))_\dr)$$
acts on $\oInd(Z)$.

\medskip

In \secref{s:proof of cryst} we prove \thmref{t:crystal structure}. Let us emphasize that it is naturally proved 
in the `crystal of categories' formulation, rather than in the `action of the category of D-modules' one. 

\medskip

In \secref{s:rel} we study the category $\IndCoh(Z)_{\on{conn}/Y}$, defined for a morphism $Z\to Y$, and its various subcategories
that can be described in terms of the crystal structure. 

\sssec{}

In Part II of the paper we state our main result, \thmref{t:main}, and reduce it to the assertion that certain homotopy
types are homologically contractible, namely, \thmref{t:contr1}. 

\medskip

In \secref{s:gluing} we recall the general paradigm of gluing of DG categories and state \thmref{t:main}, which
says that the Gluing Conjecture holds. In addition, we state \thmref{t:top to IndCoh}, which says that a certain
fully faithfulness condition purely at the level of D-modules implies a fully faithfulness result for ind-coherent
sheaves.

\medskip

It is fair to say that \thmref{t:top to IndCoh} contains the \emph{main idea} of the present paper: it allows us
to reduce the Gluing Conjecture to the question of homological contractibility. 

\medskip

\secref{s:proof top to IndCoh} is devoted to the proof of \thmref{t:top to IndCoh}.

\medskip

In \secref{s:fibers} we reformulate the condition of \thmref{t:top to IndCoh} (the pullback
functor to the category obtained by gluing certain categories
of D-modules is fully faithful) as homological contractibility of certain prestacks. 

\sssec{}

In Part III of the paper we prove \thmref{t:contr1}, which verifies the required homological contractibility condition
for the Gluing Conjecture. 

\medskip

In \secref{s:red to contr} we introduce global Springer fibers, state \thmref{t:contr1}, and show that it
is equivalent to a simpler homological contractibility statement (\thmref{t:contr2}).

\medskip

In \secref{s:Schubert} we prove \thmref{t:contr2} using an analysis of Weyl group combinatorics and 
Schubert strata. 

\medskip

Finally, in \secref{s:Springer}, we give an alternative proof of a special case \thmref{t:contr2}, using 
the Springer correspondence. 

\ssec{Conventions}

\sssec{DG categories and $\infty$-categories}

This paper uses the language of $\infty$-categories. For example, the main result, \thmref{t:main}, concerns
the lax limit of $\infty$-categories.  Our conventions regarding $\infty$-categories follow those of
\cite{AG}. In particular, the reader \emph{does not} need to know how the theory of $\infty$-categories
is constructed, but rather how to use it. 

\medskip

The primary object of study in this paper is DG categories (e.g., \thmref{t:main} says that a certain 
functor between DG categories is fully faithful). Again, the conventions pertaining to DG categories
follow those of \cite{AG}. Thus, all DG categories are assumed to be presentable, and in particular \emph{cocomplete} 
(i.e., containing arbitrary direct sums); all functors are assumed \emph{continuous} (i.e., preserving colimits). 

\sssec{}

We let $\StinftyCat_{\on{cont}}$ denote the $(\infty,1)$-category of (presentable) DG categories
and continuous functors. This $(\infty,1)$-category has a natural symmetric monoidal structure,
given by tensor product
$$\bC_1,\bC_2\to \bC_1\otimes \bC_2.$$

Thus, we can talk about monoidal DG-categories (i.e., algebra objects in $\StinftyCat_{\on{cont}}$ with respect
to the above (symmetric) monoidal structure), and modules over them.

\medskip

Given a monoidal DG category $\bO$, we denote by  $\bO\mmod$ the category of $\bO$-modules.
Thus, $\bC\in\bO\mmod$ means that $\bC$ is a DG category equipped with an action of $\bO$
$$\bO\otimes \bC\to \bC.$$

\sssec{Derived algebraic geometry}

This paper concerns quasi-coherent and ind-coherent sheaves on derived stacks. 
This puts us in the framework of \emph{derived algebraic geometry}.

\medskip

Our conventions regarding derived algebraic geometry follow those of \cite{AG}. 

\medskip

By a prestack we mean an arbitrary contravariant functor form the $\infty$-category of affine DG schemes
that that of $\infty$-groupoids. (In particular, we say `prestack' rather than `DG prestack'.)
By an `algebraic stack' we mean a derived algebraic stack. 
For a prestack $\CY$ there is a canonically defined category $\QCoh(\CY)$ of quasi-coherent sheaves
on $\CY$. 

\medskip

All DG schemes and prestacks considered in this paper are locally almost of finite type. For such schemes and prestacks,
one has the theory of \emph{ind-coherent} sheaves. The key tenets of this theory are recorded in \cite{Ga1}. However,
the main construction of this theory, namely that of the !-pullback, does not as yet appear in the published literature.
A book-in-progress that contains this, as well as some other fundamental constructions of this theory, is available
in the form of \cite{GR2}. 

\medskip

The following notation is used throughout the paper: for a prestack $\CY$ (assumed as always to be locally almost of finite type)
there is a canonically defined object
$$\omega_\CY\in \IndCoh(\CY),$$
the dualizing sheaf. We have a canonically defined functor 
$$\Upsilon_\CY:\QCoh(\CY)\to \IndCoh(\CY),\quad \CF\mapsto \CF\otimes \omega_\CY.$$

\sssec{Sheaves of categories}

In Part I of the paper, we use the notion of 
\emph{sheaf of categories over a prestack} and some fundamental results about it
(such as the notion of 1-affineness, its implications and its criteria). 
The reader is referred to \cite[Sects. 1 and 2]{Ga2} for a summary. 

\ssec{Acknowledgements}

We are grateful to S.~Raskin for carefully reading the first draft of this paper and providing valuable comments. We would also like to thank the anonymous referee for insightful suggestions.

\medskip

The research of D.A. is partially supported by NSF grant DMS-1101558. The research of D.G. is partially supported by NSF 
grant DMS-1063470.

\newpage 

\centerline{\bf Part I: Crystals and singular support.}

\bigskip

\section{The category of singularities as a crystal}   \label{s:sing as crystal}

Let $Z$ be an affine DG scheme almost of finite type. In this section, we study the singularity category of 
$Z$ 
\[\oInd(Z):=\IndCoh(Z)/\QCoh(Z).\] 
The category $\oInd(Z)$ obviously `lives over' $Z$, in the sense that
its objects can be tensored by quasi-coherent sheaves on $Z$.

\medskip

In this section we  show that if $Z$ is quasi-smooth, then the category $\oInd(Z)$ has a richer structure.
Namely, it 'lives over' the relative de Rham prestack of $\Sing(Z)$, where the latter is the classical scheme
measuring how far $Z$ is from being smooth. 

\ssec{Recollections: singular support}\label{ss:singsupp}

\sssec{}

Let $Z$ be an affine quasi-smooth DG scheme. Consider the DG categories $\IndCoh(Z)$ and $\QCoh(Z)$.
Recall that according to \cite[Sect. 4.2.4]{AG}, there is a canonically defined fully faithful functor
$$\Xi_Z:\QCoh(Z)\hookrightarrow \IndCoh(Z),$$
which admits a (continuous) right adjoint, denoted $\Psi_Z$. 

\medskip

We identify $\QCoh(Z)$ with the full subcategory $\Xi_Z(\QCoh(Z))\subset\IndCoh(Z)$ using the functor 
$\Xi_Z$.

\begin{rem} \label{r:Upsilon and Xi}
Recall there is another canonically defined functor
$$\Upsilon_Z:\QCoh(Z)\to \IndCoh(Z), \quad \CF\mapsto \CF\otimes \omega_Z,$$
where $\omega_Z\in \IndCoh(Z)$ is the dualizing sheaf. 
Fortunately, when $Z$ is quasi-smooth, the functors $\Xi_Z$ and $\Upsilon_Z$ differ by tensoring by
a line bundle. Hence their essential images in $\IndCoh(Z)$ coincide. 
\end{rem}

\sssec{}

Define the \emph{singularity category} of $Z$ to be the quotient DG category
$$\oInd(Z):=\IndCoh(Z)/\QCoh(Z).$$
Note that $\oInd(Z)$ identifies with the full 
subcategory\footnote{Here and elsewhere, for a full subcategory $\bC'\subset \bC$, we denote by $(\bC')^\perp\subset \bC$ its \emph{right}
orthogonal, i.e., the full subcategory consisting of objects that receive no non-zero maps from objects of $\bC'$.}
$\QCoh(Z)^\perp\subset\IndCoh(Z)$ (which 
equals $\on{ker}(\Psi_Z)$).

\medskip

Recall also that $\IndCoh(Z)$ is naturally a module category over $\QCoh(Z)$, and both
functors $\Xi_Z$ and $\Psi_Z$ are compatible with the $\QCoh(Z)$-actions.  Hence,
$\oInd(Z)$ also acquires a natural structure of $\QCoh(Z)$-module category.

\sssec{}

Recall (see \cite[Sect. 2.3]{AG}) that to the DG scheme $Z$ one attaches the classical
scheme $\Sing(Z)$ equipped with

\begin{itemize}

\item a $\BG_m$-action,

\item a projection $\Sing(Z)\to Z$,

\item a zero section $^{\on{cl}}\!Z\to \Sing(Z)$. 

\end{itemize}

By a slight abuse of notation, we denote the image of the zero section by $\{0\}\subset \Sing(Z)$.

\medskip

The action of $\BG_m$ on $\Sing(Z)-\{0\}$ is free. Put 
$$\BP\Sing(Z):=(\Sing(Z)-\{0\})/\BG_m.$$

\sssec{}

The main construction of the paper \cite{AG} (namely, \cite[Defn. 4.1.4]{AG}, which is essentially borrowed from \cite{BIK}) assigns to an object 
$\CF\in \IndCoh(Z)$ a Zariski-closed conical subset
$$\on{SingSupp}(\CF) \subset \Sing(Z).$$ 

\medskip

Conversely, a Zariski-closed conical subset $\CN\subset \Sing(Z)$ yields a full subcategory
$$\IndCoh_{\CN}(Z):=\{\CF\,|\,\on{SingSupp}(\CF) \subset \CN\}\subset \IndCoh(Z).$$

\medskip

The following is \cite[Theorem 4.2.6]{AG}: 

\begin{thm}  \label{t:support zero}
The full subcategories $\IndCoh(Z)_{\{0\}}$ and $\QCoh(Z)$ of 
$\IndCoh(Z)$ coincide.
\end{thm}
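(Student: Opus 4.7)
The plan is to establish the two inclusions $\QCoh(Z) \subseteq \IndCoh_{\{0\}}(Z)$ and $\IndCoh_{\{0\}}(Z) \subseteq \QCoh(Z)$ separately, after a reduction to compact generators. Both sides are compactly generated (the left by perfect complexes, the right by coherent sheaves with singular support in $\{0\}$), so it suffices to show that a coherent sheaf is perfect if and only if its singular support lies in the zero section.

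The key tool is the graded algebra $A_Z := \Gamma(\Sing(Z), \O)$, whose action on $\Ext^*_{\IndCoh(Z)}(\CF, \CF)$ for compact $\CF$ is what defines $\on{SingSupp}(\CF)$ as the support of this module. Locally, one has $A_Z \cong \Sym_{\O_Z}(\CN^\vee)$ for the excess conormal bundle $\CN$, with the generators of $\CN^\vee$ sitting in cohomological degree $2$ and $\BG_m$-weight $1$. In particular, the augmentation ideal $I \subset A_Z$ corresponding to the zero section coincides with the ideal of strictly positive-degree elements. So the condition $\on{SingSupp}(\CF) \subseteq \{0\}$ is equivalent to $I$ acting locally nilpotently on $\Ext^*(\CF, \CF)$.

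For the direction $\QCoh(Z) \subseteq \IndCoh_{\{0\}}(Z)$, a perfect complex has finite Tor-amplitude, so $\Ext^*(\CF, \CF)$ is concentrated in a bounded range of cohomological degrees; since every homogeneous element of $I$ shifts degree by at least $2$, sufficiently large powers of $I$ act by zero, giving the required nilpotency. For the reverse direction, assume $\CF$ is coherent and $I$ acts locally nilpotently on $\Ext^*(\CF, \CF)$. The crucial input is that $\Ext^*(\CF, \CF)$ is \emph{finitely generated} as an $A_Z$-module; granting this, nilpotency of $I$ forces finite generation over $A_Z/I^n$ for some $n$, which is bounded in degree, so $\Ext^*(\CF, \CF)$ is bounded. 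For a coherent sheaf on a quasi-smooth scheme, boundedness of self-Ext implies finite Tor-amplitude, hence perfection, placing $\CF$ in $\QCoh(Z)$.

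The main obstacle is therefore the finite generation of $\Ext^*(\CF, \CF)$ over $A_Z$ for coherent $\CF$, and this is exactly where quasi-smoothness is decisive. I would work locally and present $Z$ as a derived complete intersection $Z \simeq U \times_V \{v\}$ with $U, V$ smooth, compute the self-Ext by a Koszul-type resolution of the diagonal $Z \hookrightarrow Z \times Z$, and identify the generators of $A_Z$ with the associated "cohomology operators" of degree $2$. One then invokes the classical finite-generation theorem for Ext modules over a complete intersection, in the spirit of Gulliksen and Avramov--Buchweitz (and developed into a support-theoretic framework by Benson--Iyengar--Krause). Globalizing via Zariski descent then yields the claim for arbitrary affine quasi-smooth $Z$.
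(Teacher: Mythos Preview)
The paper does not give its own proof of this statement; it is simply quoted as \cite[Theorem~4.2.6]{AG}. Your outline is essentially the strategy of that reference: pass to compact objects, use the graded algebra $A_Z$ acting via the degree-$2$ cohomology operators attached to a local complete-intersection presentation, and invoke the Gulliksen/Avramov--Buchweitz finite-generation theorem to translate the support condition into a boundedness condition on $\Ext$, whence perfection.

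One step to tighten: you deduce perfection from boundedness of \emph{self}-$\Ext$ alone. The clean criterion is that a coherent $\CF$ is perfect iff $\Hom^\bullet(\CF,\CG)$ is bounded for \emph{every} coherent $\CG$. In the actual argument this costs nothing extra, because the $A_Z$-action arises from the $\BE_2$-center of $\IndCoh(Z)$ and is therefore natural in both variables: if the augmentation ideal acts locally nilpotently on $\CF$ (the meaning of $\on{SingSupp}(\CF)\subset\{0\}$), it does so on every $\Hom^\bullet(\CG,\CF)$, and finite generation over $A_Z$ for each such pair then yields the needed boundedness. Alternatively one can invoke the Avramov--Buchweitz theorem that, over a complete intersection, the support variety read off from self-$\Ext$ coincides with the one read off from arbitrary $\Ext$-pairings; either way, the passage you left implicit is exactly where the quasi-smoothness hypothesis is doing work, and it is worth stating it.
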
 

\sssec{}  \label{sss:Psupp}

From \thmref{t:support zero} we obtain that to an object $\CF\in \oInd(Z)$ we can assign a Zariski-closed
subset 
$$\BP\on{SingSupp}(\CF) \subset \BP\Sing(Z).$$ 

Conversely, a Zariski-closed subset $\CN\subset\BP\Sing(Z)$ yields a full subcategory
$$\oInd_{\CN}(Z):=\{\CF\,|\, \BP\on{SingSupp}(\CF) \subset \CN\}\subset \oInd(Z).$$

\ssec{Recollections: sheaves of categories}

\sssec{}

Recall the notion of a \emph{quasi-coherent sheaf of categories
over a prestack} introduced in \cite[Sect. 1.1]{Ga2}. For a prestack $\CY$, a quasi-coherent sheaf of
categories $\CC$ over $\CY$ consists of the following data:
\begin{itemize}
\item A $\QCoh(S)$-module $\CC_{S,y}\in \QCoh(S)\mmod$ for every $(S,y)\in (\affSch)_{/\CY}$;

\item An identification of $\QCoh(S')$-modules
$$\CC_{S',y'}\simeq \QCoh(S')\underset{\QCoh(S)}\otimes \CC_{S,y}$$
for every morphism $S'\overset{f}\to S$, where $(S,y)\in (\affSch)_{/\CY}$ and $y'=y\circ f$;

\item A homotopy-coherent system of compatibilities between the identifications for higher-order compositions.
\end{itemize}

Denote the category of quasi-coherent sheaves of categories over $\CY$ by $\on{ShvCat}(\CY)$.

\sssec{} 

If $\CC\in \on{ShvCat}(\CY)$, the \emph{category of global sections} of $\CC$ is defined as
$$\bGamma(\CY,\CC):=\underset{(S,y)\in \on{PreStk}_{/\CY}}{lim}\, \CC_{S,y}.$$
It is a DG category equipped with a natural action of the (symmetric) monoidal category $\QCoh(\CY)$ (see \cite[Sect. 1.2]{Ga2});
indeed, $\QCoh(\CY)$ acts on each term $\CC_{S,y}$.

\sssec{}  \label{sss:enriched}

The category $\on{ShvCat}(\CY)$ is naturally enriched over $\StinftyCat_{\on{cont}}$. Using this structure, we
can think of of $\bGamma(\CY,\CC)$ as the DG category of maps from $\QCoh_{/\CY}$ to $\CC$, where 
$\QCoh_{/\CY}$ is the \emph{unit} sheaf of categories given by 
$$(\QCoh_{/\CY})_{S,y}:=\QCoh(S)\quad\text{for all }(S,y)\in (\affSch)_{/\CY}.$$

Note that
$\bGamma(\CY,\QCoh_{/\CY})\simeq \QCoh(\CY)$.

\sssec{}  \label{sss:1-aff one}

Recall (see \cite[Definition 1.3.7]{Ga2}) that a prestack $\CY$ is said to be \emph{1-affine} if the above functor
$$\bGamma(\CY,-):\on{ShvCat}(\CY)\to \QCoh(\CY)\mmod$$
is an equivalence of categories. 

\sssec{}

For future reference, recall the following constructions. Let $g:\CZ\to \CY$ be a map 
of prestacks. In this case, we have a tautologically defined functor
$$\cores_g:\on{ShvCat}(\CY)\to \on{ShvCat}(\CZ),$$
given by restriction: for $(S,z)\in (\affSch)_{/\CZ}$ we have
$$(\cores_g(\CC))_{S,z}:=\CC_{S,g\circ z}.$$
Note that $\cores_g(\QCoh_{/\CY})\simeq\QCoh_{/\CZ}$.

\medskip

Slightly abusing the notation,  we  sometimes write 
$$\bGamma(\CZ,\CC):= \bGamma(\CZ,\cores_g(\CC))\quad\text{for }\CC\in \on{ShvCat}(\CY).$$

We  sometimes write for $(S,y)\in (\affdgSch)_{/\CY}$ and $\CC\in \on{ShvCat}(\CY)$
$$\bGamma(S,\CC):=\CC_{S,y}.$$

\sssec{}

The above functor $\cores_g$ admits a right adjoint, which we denote by 
$$\coind_g:\on{ShvCat}(\CZ)\to \on{ShvCat}(\CY).$$
It can be explicitly described as follows: 
$$(\coind_g(\CC))_{S,y}=\bGamma(S\underset{\CY}\times \CZ,\CC)\quad\text{for all }(S,y)\in (\affSch)_{/\CY},$$
see \cite[Sect. 3.1.3]{Ga2}. Here $\CC\in \on{ShvCat}(\CZ)$

\medskip

By adjunction and using \secref{sss:enriched}, we have
$$\bGamma(\CY,\coind_g(\CC))\simeq \bGamma(\CZ,\CC).$$

\ssec{Recollections: the de Rham prestack} \label{ss:dR}

\sssec{}

Recall (see e.g., \cite[Sect. 1.1.1]{GR1}) that the \emph{de Rham prestack} $\CY_\dr$ of a prestack $\CY$ 
is defined by
$$\Maps(S,\CY_\dr)=\Maps({}^{\on{red}}S,\CY), \quad S\in \affdgSch.$$
We have a tautological projection 
$$p_{\CY,\dr}:\CY\to \CY_\dr.$$

\medskip
For this paper, we only consider $\CY_\dr$ for prestacks  $\CY$ of locally (almost) finite type\footnote{The word `almost' is parenthesized because 
$\CY_\dr$ only depends on the classical prestack underlying $\CY$.}. In this case, it is shown in
\cite[Proposition 1.3.3]{GR1} that $\CY_\dr$ is classical and also locally almost of finite type. 

\medskip

The basic fact concerning $\CY_\dr$ is
a canonical equivalence of categories
$$\QCoh(\CY_\dr)\simeq \Dmod(\CY),$$
which, properly speaking, must be taken as the definition of the category $\Dmod(\CY)$. 

\sssec{}  

The main object of study in this paper is sheaves of categories over prestacks of the form $\CY_\dr$.
They can be alternatively called `crystals of categories over $\CY$'. 

\medskip

Let us now list several useful facts about crystals of categories. 
The first is the following (see \cite[Theorem 2.6.3]{Ga2}):

\begin{prop}  \label{p:dr 1-aff}
Let $Y$ be a (DG) scheme of finite type. Then $Y_\dr$ is 1-affine.
\end{prop}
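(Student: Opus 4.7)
The plan is to verify $1$-affineness of $Y_\dr$ by descent along the canonical projection $p_{Y,\dr}: Y \to Y_\dr$, reducing first to the affine case via Zariski descent. The two ingredients are (a) $1$-affineness of affine (DG) schemes almost of finite type, and (b) a descent statement for sheaves of categories along $Y \to Y_\dr$.

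First I would reduce to the case when $Y$ is affine. A finite Zariski cover $\{U_i\}$ of $Y$ by affines yields compatible covers $(U_i)_\dr \to Y_\dr$, with multiple intersections $(U_{i_0}\cap \dots \cap U_{i_n})_\dr$. Since both $\on{ShvCat}(-)$ and the assignment $\CY\rightsquigarrow \QCoh(\CY)\mmod$ satisfy Zariski descent on prestacks, and the global sections functor is compatible with such limits, it suffices to verify $1$-affineness for affine $Y$ of finite type.

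For affine $Y$, I would exploit the presentation of $Y_\dr$ as the geometric realization of the formal groupoid of the diagonal: setting $Y^{(n)} := Y^{\times_{Y_\dr} n}$, which is the formal completion of $Y^n$ along the main diagonal, one has $Y_\dr \simeq |Y^{(\bullet)}|$ in prestacks. Each $Y^{(n)}$ is a formal scheme presented by its nilpotent thickenings, and $1$-affineness for $Y^{(n)}$ follows from $1$-affineness of these thickenings (affine schemes of finite type being $1$-affine), together with the fact that $1$-affineness propagates through filtered colimits along closed embeddings -- both $\on{ShvCat}$ and $\QCoh(-)\mmod$ convert such colimits into compatible limits. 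Once each level is known to be $1$-affine, I would deduce the desired equivalence from the parallel totalization descriptions
\[\on{ShvCat}(Y_\dr) \simeq \Tot\bigl(\on{ShvCat}(Y^{(\bullet)})\bigr), \qquad \QCoh(Y_\dr)\mmod \simeq \Tot\bigl(\QCoh(Y^{(\bullet)})\mmod\bigr),\]
and check that the levelwise $\bGamma$-equivalences assemble to an equivalence under $\Tot$.

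The principal obstacle is the descent statement for $\on{ShvCat}$ along $p_{Y,\dr}$, which, unlike descent for modules over a symmetric monoidal category, is not formal. I would obtain it from the general criterion that a surjection $f:\CZ\to\CY$ of prestacks is of effective descent for sheaves of categories whenever $\cores_f$ is conservative and admits a Beck--Chevalley relation with its right adjoint $\coind_f$ on the \v{C}ech nerve. Conservativity holds because every field-valued point of $Y_\dr$ lifts along $p_{Y,\dr}$, while the Beck--Chevalley condition reduces to a base-change identity for the formal completion functor applied to products of $Y$. Combined with the levelwise $1$-affineness above, this propagates through the totalization and gives $1$-affineness of $Y_\dr$.
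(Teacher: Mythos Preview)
The paper does not prove this proposition; it simply records it with a reference to \cite[Theorem 2.6.3]{Ga2}. So there is no in-paper argument to compare against. Your outline is in the right spirit and broadly tracks the approach of \cite{Ga2}: reduce to affines, present $Y_\dr$ via the \v{C}ech nerve of $p_{Y,\dr}$, and combine $1$-affineness at each level with descent for $\on{ShvCat}$.

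That said, two of your steps are doing real work that your sketch does not adequately justify. First, the claim that the formal neighborhoods $Y^{(n)}$ are $1$-affine is not a formality: these are ind-schemes, and the assertion that ``$1$-affineness propagates through filtered colimits along closed embeddings'' hides the nontrivial fact that for such a colimit $\hat Z=\colim Z_m$ the functor $\QCoh(\hat Z)\mmod\to\lim_m\QCoh(Z_m)\mmod$ is an equivalence. Taking module categories does not commute with limits of monoidal categories in general; one needs rigidity-type input or an explicit Barr--Beck argument, which is exactly what \cite{Ga2} supplies. Second, and for the same reason, your displayed identification $\QCoh(Y_\dr)\mmod\simeq\Tot\bigl(\QCoh(Y^{(\bullet)})\mmod\bigr)$ is not formal either: knowing $\QCoh(Y_\dr)\simeq\Tot\,\QCoh(Y^{(\bullet)})$ does not by itself give descent for module categories. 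In \cite{Ga2} this is handled via the fact that $p_{Y,\dr}^*$ admits a $\QCoh(Y_\dr)$-linear left adjoint (induction of D-modules) whose essential image generates, which then propagates to module categories. Your Beck--Chevalley remark for $\on{ShvCat}$ is on target, but you need the parallel statement on the $\QCoh(-)\mmod$ side, and that is where the substance lies.
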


\begin{rem}
We should warn the reader that not all prestacks one encounters in practice are 1-affine.  
For example, although it is shown in \cite[Theorem 2.2.6]{Ga2}
that a quasi-compact algebraic stack $\CY$ is 1-affine under some mild technical assumptions,
the de Rham prestack $\CY_\dr$ is typically \emph{not} 1-affine (see \cite[Proposition 2.6.5]{Ga2}).
\end{rem} 

\sssec{}

For the rest of this subsection we fix a prestack $\CY$ and a closed embedding $i:\CZ\to \CY$.
Note that by the finite type assumption, the complementary open embedding $j:\oCY\hookrightarrow \CY$ 
is a quasi-compact morphism.

\medskip 

We have the following assertion (see \cite[Sect. 4]{Ga2}):

\begin{prop}  \label{p:sect of sheaf on formal} Consider the maps
$$\CZ_\dr\overset{i_\dr}\longrightarrow \CY_\dr\overset{j_\dr}\longleftarrow \oCY_\dr.$$

\noindent{\em(a)}  
The functor 
$$\coind_{i_\dr}:\on{ShvCat}(\CZ_\dr)\to \on{ShvCat}(\CY_\dr)$$ is fully faithful. Its essential 
image consists of those objects that are annihilated by the functor $\cores_{j_\dr}$. 

\smallskip

\noindent{\em(b)} For $\CC\in  \on{ShvCat}(\CY_\dr)$, the functor 
$$\bGamma(\CY_\dr,\CC)\to \bGamma(\CZ_\dr,\CC)$$ induces an equivalence
$$\on{ker}\left(\bGamma(\CY_\dr,\CC)\to \bGamma(\oCY_\dr,\CC)\right)\to \bGamma(\CZ_\dr,\CC).$$

\end{prop}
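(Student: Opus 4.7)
My plan is to reduce both parts to the affine case, where \propref{p:dr 1-aff} lets us identify sheaves of categories with module categories over $\Dmod(S)$ and enables an appeal to the standard recollement for D-modules. Throughout I use two elementary geometric facts. $(i)$ Both $i_\dr$ and $j_\dr$ are monomorphisms of prestacks, since $\CZ\hookrightarrow\CY$ and $\oCY\hookrightarrow\CY$ are; concretely, the map $\Maps({}^{\on{red}} T,\CZ)\to\Maps({}^{\on{red}} T,\CY)$ is an inclusion. $(ii)$ For $(S,y)\in(\affSch)_{/\oCY_\dr}$, the fiber product $S\times_{\CY_\dr}\CZ_\dr$ is empty, since any $T$-point would yield a map ${}^{\on{red}} T\to\CY$ landing in both $\oCY$ and $\CZ$, hence in $\oCY\cap\CZ=\emptyset$; the symmetric statement with the roles of $\CZ$ and $\oCY$ swapped also holds.

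For part (a), fully faithfulness of $\coind_{i_\dr}$ is equivalent to the counit $\cores_{i_\dr}\coind_{i_\dr}\to\on{id}$ being an equivalence; unwinding $(\coind_{i_\dr}\CD)_{S,y}=\bGamma(S\times_{\CY_\dr}\CZ_\dr,\CD)$, this reduces to the canonical $S\to S\times_{\CY_\dr}\CZ_\dr$ being an equivalence for $S$ lying over $\CZ_\dr$, which is $(i)$. The inclusion of the essential image of $\coind_{i_\dr}$ into $\on{ker}(\cores_{j_\dr})$ is immediate from $(ii)$ together with the vanishing of $\bGamma$ on the empty prestack. For the reverse inclusion, assume $\cores_{j_\dr}(\CC)=0$; I wish to show that the unit $\CC\to\coind_{i_\dr}\cores_{i_\dr}(\CC)$ is an equivalence in $\on{ShvCat}(\CY_\dr)$. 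This is a local statement, so I pull back along each $(S,y)\in(\affSch)_{/\CY_\dr}$ of finite type to reduce to the case $\CY=S$ affine, where $S_\dr$ is 1-affine and $\on{ShvCat}(S_\dr)\simeq\Dmod(S)\mmod$. Writing $W:=S\times_\CY\CZ$ (closed) and $U:=S\times_\CY\oCY$ (open), the assertion becomes: an $M\in\Dmod(S)\mmod$ satisfying $\Dmod(U)\otimes_{\Dmod(S)}M=0$ also satisfies $M\simeq\Dmod(W)\otimes_{\Dmod(S)}M$. This is the module-categorical form of Kashiwara's lemma, following from the recollement fiber sequence of $\Dmod(S)$-bimodules $\Dmod(W)\to\Dmod(S)\to\Dmod(U)$.

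For part (b), I first record the symmetric analog of (a) for $j_\dr$, proved by the same template: $\coind_{j_\dr}$ is fully faithful with essential image $\on{ker}(\cores_{i_\dr})$. Form $\CC^\flat:=\on{fib}\bigl(\CC\to\coind_{j_\dr}\cores_{j_\dr}(\CC)\bigr)$ in $\on{ShvCat}(\CY_\dr)$. Applying $\cores_{j_\dr}$ to the fiber sequence $\CC^\flat\to\CC\to\coind_{j_\dr}\cores_{j_\dr}(\CC)$ and using $\cores_{j_\dr}\coind_{j_\dr}\simeq\on{id}$ yields $\cores_{j_\dr}(\CC^\flat)=0$, so by (a), $\CC^\flat\simeq\coind_{i_\dr}\cores_{i_\dr}(\CC^\flat)$. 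Applying $\cores_{i_\dr}$ and using the vanishing $\cores_{i_\dr}\coind_{j_\dr}=0$ (an instance of $(ii)$) gives $\cores_{i_\dr}(\CC^\flat)\simeq\cores_{i_\dr}(\CC)$, whence $\CC^\flat\simeq\coind_{i_\dr}\cores_{i_\dr}(\CC)$. Applying $\bGamma(\CY_\dr,-)$ (which preserves fibers as a right adjoint) and invoking the general identification $\bGamma(\CB,\coind_g(\CD))\simeq\bGamma(\CA,\CD)$ for $g:\CA\to\CB$ already recorded in the paper, the fiber sequence is transformed into the sequence of DG categories $\bGamma(\CZ_\dr,\CC)\to\bGamma(\CY_\dr,\CC)\to\bGamma(\oCY_\dr,\CC)$, which is precisely (b).

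I expect the principal technical obstacle to be the module-categorical recollement used in the converse direction of (a). Although well-known at the underlying D-module level, establishing that $\Dmod(W)\to\Dmod(S)\to\Dmod(U)$ is a fiber sequence of $\Dmod(S)$-bimodules, and that the resulting endofunctors $\Dmod(W)\otimes_{\Dmod(S)}(-)$ and $\Dmod(U)\otimes_{\Dmod(S)}(-)$ form complementary localizations on $\Dmod(S)\mmod$, requires careful bookkeeping with tensor products of presentable DG categories.
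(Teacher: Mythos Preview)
The paper does not prove this proposition; it simply cites \cite[Sect.~4]{Ga2}. Your approach is essentially the one carried out there: for part~(a), use the factorization $S\to S_\dr\to\CY_\dr$ (valid since $\CY_\dr(S)=\CY({}^{\on{red}}S)=\CY_\dr(S_\dr)$) to reduce to $\on{ShvCat}(S_\dr)$ for affine $S$, invoke \propref{p:dr 1-aff} to pass to $\Dmod(S)\mmod$, and then use the open--closed recollement for D-modules at the level of module categories. Your deduction of part~(b) from (a) and its open analog via the fiber sequence $\CC^\flat\to\CC\to\coind_{j_\dr}\cores_{j_\dr}(\CC)$ is clean and correct; the functors $\cores$ and $\bGamma$ do preserve the relevant limits.

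Two small remarks. First, the notation $W:=S\times_\CY\CZ$ is slightly off, since $S$ maps to $\CY_\dr$ rather than $\CY$; what you mean is the closed subscheme of $S$ with $W_\dr\simeq S_\dr\times_{\CY_\dr}\CZ_\dr$, i.e., the preimage of $\CZ$ under ${}^{\on{red}}S\to\CY$. Second, your phrase ``recollement fiber sequence of $\Dmod(S)$-bimodules'' warrants care: $\StinftyCat_{\on{cont}}$ is not stable, so the content is really that $\Dmod(W)\otimes_{\Dmod(S)}(-)$ and $\Dmod(U)\otimes_{\Dmod(S)}(-)$ are \emph{complementary idempotent localizations} on $\Dmod(S)\mmod$ (each identifies its image with the kernel of the other). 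You correctly flag this as the main technical point; it is precisely what is established in \cite[Sect.~4]{Ga2}.
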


\sssec{}  \label{sss:support on closed}

From now on, we use claim (b) of \propref{p:sect of sheaf on formal} to identify $\bGamma(\CZ_\dr,\CC)$ and $\on{ker}\left(\bGamma(\CY_\dr,\CC)\to \bGamma(\oCY_\dr,\CC)\right)$. Thus, we consider $\bGamma(\CZ_\dr,\CC)$
as a full subcategory of $\bGamma(\CY_\dr,\CC)$. 

\medskip

We also have the following (tautological) assertion:

\begin{lem}  \label{l:sect of sheaf on formal bis} 
If in the situation of \propref{p:sect of sheaf on formal}(b) the prestack $\CY_\dr$ is 1-affine, then the full subcategory 
$$\bGamma(\CZ_\dr,\CC)\subset \bGamma(\CY_\dr,\CC)$$
consists of objects annihilated by the monoidal ideal
$$\on{ker}\left(\QCoh(\CY_\dr)\to \QCoh(\CZ_\dr)\right)\subset \QCoh(\CY_\dr).$$
\end{lem}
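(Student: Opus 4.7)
The plan is to use the 1-affineness of $\CY_\dr$ to translate the statement into a question about module categories over $\QCoh(\CY_\dr)$, and then to identify the subcategory $\bGamma(\CZ_\dr,\CC)$ with the ``torsion'' part of $\bM := \bGamma(\CY_\dr,\CC)$ with respect to the localization at the open substack $\oCY_\dr$.

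First, I would set $\bM := \bGamma(\CY_\dr,\CC)$, viewed as a $\QCoh(\CY_\dr)$-module category via the 1-affineness equivalence $\bGamma(\CY_\dr,-) : \on{ShvCat}(\CY_\dr) \xrightarrow{\sim} \QCoh(\CY_\dr)\mmod$. Under this equivalence, the left adjoint $\cores_{j_\dr}$ corresponds to extension of scalars along $\QCoh(\CY_\dr) \to \QCoh(\oCY_\dr)$, so that
\[
\bGamma(\oCY_\dr,\CC) \;\simeq\; \bM \otimes_{\QCoh(\CY_\dr)} \QCoh(\oCY_\dr),
\]
and the restriction functor becomes the canonical extension-of-scalars map $\bM \to \bM \otimes_{\QCoh(\CY_\dr)} \QCoh(\oCY_\dr)$. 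By \propref{p:sect of sheaf on formal}(b), $\bGamma(\CZ_\dr,\CC)$ is identified with the kernel of this map. It thus suffices to show that this kernel coincides with the full subcategory of $\bM$ on which $\I := \ker(\QCoh(\CY_\dr) \to \QCoh(\CZ_\dr))$ acts by zero.

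For one inclusion, the recollement identity $i_\dr^* \circ j_{\dr,*} = 0$ shows that $\CF^j := j_{\dr,*}(\O_{\oCY_\dr})$ lies in $\I$. Hence, if $m$ is annihilated by $\I$, then $\CF^j \otimes m = 0$; by the projection formula $\CF^j \otimes m \simeq j_{\dr,*}(j_\dr^* m)$, and applying $j_\dr^*$ (using that $j_\dr^* j_{\dr,*} \simeq \Id$ for an open embedding) yields $j_\dr^* m = 0$, so $m$ lies in the kernel. For the reverse inclusion, I would use the recollement fiber sequence to observe that $\I$ coincides with the essential image of $j_{\dr,*}$: any $\CF \in \ker(i_\dr^*)$ satisfies $\CF \simeq j_{\dr,*} j_\dr^* \CF$. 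Then for such $\CF = j_{\dr,*}\G$ and $m$ in the kernel, the projection formula gives $\CF \otimes m \simeq j_{\dr,*}(\G \otimes j_\dr^* m) = 0$, so $m$ is annihilated by $\I$.

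The main obstacle is essentially just bookkeeping: one must invoke the standard recollement package for $\QCoh$ of de~Rham prestacks --- namely $i_\dr^* j_{\dr,*} = 0$, $j_\dr^* j_{\dr,*} \simeq \Id$, and the projection formula for $j_{\dr,*}$ --- all of which are formal consequences of $j_\dr$ being an open embedding complementary to $i_\dr$. These properties, together with the 1-affineness identification, collapse the lemma into the abstract fact that for a Bousfield localization of module categories, the ``torsion'' kernel coincides with the subcategory annihilated by the localizing ideal. This is why the authors label the statement as tautological.
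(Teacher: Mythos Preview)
Your argument is correct and is exactly the tautological unpacking the paper has in mind: under 1-affineness one passes to $\QCoh(\CY_\dr)$-module categories and uses the open/closed recollement to identify $\bGamma(\CZ_\dr,\CC)$ with the subcategory annihilated by $\I=\on{image}(j_{\dr,*})$. One small remark: your identification $\bGamma(\oCY_\dr,\CC)\simeq\bM\otimes_{\QCoh(\CY_\dr)}\QCoh(\oCY_\dr)$ and the ensuing module-category ``projection formula'' tacitly use 1-affineness of $\oCY_\dr$ as well---harmless in practice since $\CY$ will be a scheme---but you can bypass it entirely: the inclusion ``kernel $\Rightarrow$ annihilated'' follows because the $\QCoh(\CY_\dr)$-action on $\bGamma(\CZ_\dr,\CC)$ factors through $(i_\dr)^*$, and ``annihilated $\Rightarrow$ kernel'' follows because $E:=j_{\dr,*}\CO_{\oCY_\dr}$ lies in $\I$ while $(j_\dr)^*E\simeq\CO_{\oCY_\dr}$ acts as the unit on $\bGamma(\oCY_\dr,\CC)$.
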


\ssec{Statement of the result} 

Return now to the setup of \secref{ss:singsupp}. Thus, $Z$ is an affine quasi-smooth DG scheme. The notion
of singular support provides natural assignments
$$\CF\in \oInd(Z)\,\,\rightsquigarrow \,\,\,\,\BP\on{SingSupp}(\CF)\subset \BP\Sing(Z)$$ and  
$$\CN\subset \BP\Sing(Z)\,\, \rightsquigarrow \,\,\,\, \oInd_{\CN}(Z)\subset \oInd(Z)$$
(see \secref{ss:singsupp}). The goal of this section is to refine the assignments
to a richer structure.

\sssec{}

Consider the prestack $(\BP\Sing(Z))_\dr$. We will prove:

\begin{thmconstr}  \label{t:crystal structure}
There exists a canonically defined object
$$\oInd(Z)^{\sim}\in \on{ShvCat}((\BP\Sing(Z))_\dr),$$
equipped with an identification 
$$\bGamma((\BP\Sing(Z))_\dr,\oInd(Z)^{\sim})\simeq \oInd(Z).$$
This construction has the following properties:

\smallskip

\noindent{\em(a)} For a Zariski-closed subset $\CN\subset \BP\Sing(Z)$, the full subcategory
$\oInd_\CN(Z)\subset \oInd(Z)$ coincides with 
$$\bGamma(\CN_\dr,\oInd(Z)^{\sim})\subset \bGamma((\BP\Sing(Z))_\dr,\oInd(Z)^{\sim}).$$

\smallskip

\noindent{\em(b)} The action of $\QCoh(Z_\dr)$ on $\bGamma((\BP\Sing(Z))_\dr,\oInd(Z)^{\sim})$
coming from the (symmetric) monoidal functor $$\QCoh(Z_\dr)\to \QCoh((\BP\Sing(Z))_\dr)$$ and the
natural action of the latter on $\bGamma((\BP\Sing(Z))_\dr,\oInd(Z)^{\sim})$ identifies with the 
action of $\QCoh(Z_\dr)$ on $\oInd(Z)$ coming from the (symmetric) monoidal functor $$\QCoh(Z_\dr)\to 
\QCoh(Z),$$ and the action of the latter on $\oInd(Z)\subset \IndCoh(Z)$.

\end{thmconstr}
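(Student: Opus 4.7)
My plan is to construct $\oInd(Z)^\sim$ as an object of $\on{ShvCat}((\BP\Sing(Z))_\dr)$ in stages: first build a quasi-coherent sheaf of categories over $\BP\Sing(Z)$ itself, then promote it to a crystal. In keeping with the paper's remark that the crystal-of-categories formulation is more natural than an action of $\Dmod(\BP\Sing(Z))$ (although by \propref{p:dr 1-aff} the two are equivalent, since $\BP\Sing(Z)$ is a scheme of finite type), I organize the construction this way.

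The first step is to promote the $\QCoh(Z)$-module structure on $\IndCoh(Z)$ to a $\QCoh(\Sing(Z))$-module structure. This is essentially a repackaging of the main construction of \cite{AG}: the graded commutative algebra $\Gamma(\Sing(Z),\O_{\Sing(Z)})$ used there to define singular support acts canonically on $\IndCoh(Z)$, refining the $\O_Z$-action along the affine projection $\Sing(Z)\to Z$. Encoded in sheaf-of-categories language, this gives an object of $\on{ShvCat}(\Sing(Z))$ whose global sections are $\IndCoh(Z)$. By \thmref{t:support zero} the full subcategory $\QCoh(Z)\subset\IndCoh(Z)$ is exactly the part supported on the zero section $\{0\}\hookrightarrow\Sing(Z)$, so the quotient $\oInd(Z)$ descends to a sheaf of categories on $\Sing(Z)-\{0\}$. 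The construction is $\BG_m$-equivariant by design, and the $\BG_m$-action on $\Sing(Z)-\{0\}$ is free, so descent along $\Sing(Z)-\{0\}\twoheadrightarrow\BP\Sing(Z)$ yields an object of $\on{ShvCat}(\BP\Sing(Z))$.

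The main obstacle — the crystal-structure upgrade — is to show that this sheaf of categories extends canonically to $(\BP\Sing(Z))_\dr$. I would formulate this as infinitesimal invariance: for every square-zero extension $S_0\hookrightarrow S_1$ of affine DG schemes over $\BP\Sing(Z)$, the base-change functor $\bGamma(S_1,\oInd(Z)^\sim)\to\bGamma(S_0,\oInd(Z)^\sim)$ should be an equivalence, coherently in $(S_0,S_1)$, and these data should glue to give a sheaf of categories over $(\BP\Sing(Z))_\dr$. The reason to expect this is that $\Sing(Z)$ is a \emph{classical} scheme, and the $\QCoh(\Sing(Z))$-action on $\IndCoh(Z)$ is built out of operators shifting cohomological degree (coming from Hochschild- / Koszul-duality style constructions); once the zero-section piece $\QCoh(Z)$ is quotiented away, these degree features should force obstructions to splitting nilpotent extensions to live in $\Ext$-groups that vanish on objects of $\oInd(Z)$. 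Making this precise is the key technical difficulty of the proof.

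Finally, properties (a) and (b) should follow essentially formally. For (a), by construction the piece of $\oInd(Z)^\sim$ set-theoretically supported on $\CN\subset\BP\Sing(Z)$ is $\oInd_\CN(Z)$; \lemref{l:sect of sheaf on formal bis}, applicable by 1-affineness of $(\BP\Sing(Z))_\dr$, then identifies this with $\bGamma(\CN_\dr,\oInd(Z)^\sim)$. For (b), the two $\QCoh(Z_\dr)$-actions are matched by tracing through the construction: the $\QCoh(\Sing(Z))$-action from Step 1 refines the tautological $\QCoh(Z)$-action along the affine structure map $\Sing(Z)\to Z$, and this compatibility propagates through $\BG_m$-descent and the crystal upgrade along the composite map $\BP\Sing(Z)\to Z$.
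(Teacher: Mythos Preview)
Your plan diverges from the paper's at the very first step, and this is where the main gap lies. You write that the graded commutative algebra $A=\Gamma(\Sing(Z),\CO_{\Sing(Z)})$ ``acts canonically on $\IndCoh(Z)$'' and that this gives an object of $\on{ShvCat}(\Sing(Z))$. But the map $A\to H^\bullet(\on{HC}(Z))$ only records, for each homogeneous $a\in A^{2n}$, a class in $H^{2n}(\on{HC}(Z))$, i.e.\ a degree-$2n$ endomorphism of the identity functor. This is a \emph{sheared} action: it produces support conditions in $\Spec(A)$, but not a $\QCoh(\Sing(Z))$-module structure on $\IndCoh(Z)$, and hence not a sheaf of categories over $\Sing(Z)$ in the sense you need. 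Un-shearing is exactly the role of the auxiliary $\BE_2$-algebra $\CB$ with $\CB^{\on{shift}}$ classical, which the paper introduces (\secref{sss:assump}) and which is only available Zariski-locally, after presenting $Z$ as a global complete intersection. Your $\BG_m$-descent step and then your infinitesimal-invariance step (which you yourself flag as the key technical difficulty and leave only heuristically justified) are therefore resting on a module structure that is not actually there.

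The paper avoids both problems by building the crystal structure into the definition rather than upgrading to it. For each affine $S$ with a map $^{\on{red}}S\overset{f}\to \BP\Sing(Z)$ (i.e.\ an $S$-point of $(\BP\Sing(Z))_\dr$), it sets
\[
\bGamma(S,\oInd(Z)^\sim):=(\QCoh(S)\otimes \oInd(Z))_{\on{Graph}_f}\subset \QCoh(S)\otimes \oInd(Z),
\]
the full subcategory of objects whose singular support in $S\times\BP\Sing(Z)$ lies set-theoretically in the graph of $f$. Since the graph condition depends only on $^{\on{red}}S$, insensitivity to nilpotents---your ``infinitesimal invariance''---is automatic. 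The genuine work is then elsewhere: showing that the tautological functor $\oInd(Z)\to\bGamma((\BP\Sing(Z))_\dr,\oInd(Z)^\sim)$ is an equivalence. That is where the reduction to a global complete intersection and the algebra $\CB$ enter (\secref{ss:globe sect 1}--\secref{ss:globe sect 3}): one first gets a genuine $\QCoh(\Proj(B))$-module structure on $\obC$, uses it to identify $\CC_B\simeq\coind_{p_{\dr,\Proj(B)}}(\bLoc_{\Proj(B)}(\obC))$, and then transfers the result to $A$ via the closed embedding $\Proj(A)\hookrightarrow\Proj(B)$. Parts (a) and (b) then follow along the lines you sketch, using \lemref{l:sect of sheaf on formal bis} and a compatibility argument (\propref{p:identify monoidal}).
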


\begin{rem}
Note that \thmref{t:crystal structure} relates the category of singularities
$$\oInd(Z):=\IndCoh(Z)/\QCoh(Z)$$
and the projectivization $\BP\Sing(Z)$ of $\Sing(Z)$. It would be interesting to find 
a similar structure on $\IndCoh(Z)$ itself. 
\end{rem}

\sssec{}

According to \lemref{l:sect of sheaf on formal bis} and \propref{p:dr 1-aff},
\thmref{t:crystal structure} is equivalent to the following:

\begin{cor}  \label{c:tensored over}
The category $\oInd(Z)$ carries a canonically defined action of the (symmetric) monoidal
category $\QCoh((\BP\Sing(Z))_\dr)$ such that:

\smallskip

\noindent{\em(a)} For a Zariski-closed subset $\CN\subset \BP\Sing(Z)$, the full subcategory
$\oInd_\CN(Z)\subset \oInd(Z)$ coincides with the full subcategory of objects annihilated by
the monoidal ideal
$$\on{ker}(\QCoh((\BP\Sing(Z))_\dr)\to \QCoh(\CN_\dr)).$$ 

\smallskip

\noindent{\em(b)} The action of $\QCoh(Z_\dr)$ on $\oInd(Z)$ coming from
the (symmetric) monoidal functor $$\QCoh(Z_\dr)\to \QCoh((\BP\Sing(Z))_\dr)$$ 
identifies with the 
action of $\QCoh(Z_\dr)$ on $\oInd(Z)$ coming from the (symmetric) monoidal functor $$\QCoh(Z_\dr)\to 
\QCoh(Z)$$ and the action of $\QCoh(Z)$ on $\oInd(Z)\subset \IndCoh(Z)$.

\end{cor}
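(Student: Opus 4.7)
The plan is to derive Corollary \ref{c:tensored over} from Theorem \ref{t:crystal structure} by applying 1-affineness of the de Rham prestack $(\BP\Sing(Z))_\dr$. First, I would observe that $\BP\Sing(Z)$ is an honest scheme of finite type: $\Sing(Z)$ is a classical scheme of finite type, and the $\BG_m$-action on $\Sing(Z)-\{0\}$ is free, so the quotient $(\Sing(Z)-\{0\})/\BG_m$ exists as a scheme. Hence \propref{p:dr 1-aff} applies and gives an equivalence
$$\bGamma((\BP\Sing(Z))_\dr,-) : \on{ShvCat}((\BP\Sing(Z))_\dr) \;\xrightarrow{\sim}\; \QCoh((\BP\Sing(Z))_\dr)\mmod.$$
Applying this equivalence to the sheaf of categories $\oInd(Z)^\sim$ supplied by \thmref{t:crystal structure}, whose global sections are identified with $\oInd(Z)$, yields the desired action of the symmetric monoidal category $\QCoh((\BP\Sing(Z))_\dr)$ on $\oInd(Z)$.

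To translate property (a), I would take a Zariski-closed subset $\CN \subset \BP\Sing(Z)$ and apply \thmref{t:crystal structure}(a), which identifies $\oInd_\CN(Z)$ with $\bGamma(\CN_\dr,\oInd(Z)^\sim)$, viewed as a full subcategory of $\bGamma((\BP\Sing(Z))_\dr,\oInd(Z)^\sim) \simeq \oInd(Z)$ via \propref{p:sect of sheaf on formal}(b). Then \lemref{l:sect of sheaf on formal bis}, applied to the closed embedding $\CN \hookrightarrow \BP\Sing(Z)$ and the sheaf of categories $\oInd(Z)^\sim$, characterizes this subcategory as consisting of objects annihilated by the monoidal ideal $\on{ker}(\QCoh((\BP\Sing(Z))_\dr)\to \QCoh(\CN_\dr))$. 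This is precisely the content of part (a) of the corollary. Part (b) of the corollary then follows directly from part (b) of the theorem, since the $\QCoh(Z_\dr)$-module structure on $\oInd(Z)$ transports across the 1-affineness equivalence in a manner compatible with the monoidal functor $\QCoh(Z_\dr)\to \QCoh((\BP\Sing(Z))_\dr)$.

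The main obstacle in this plan is not the derivation itself, which is essentially a repackaging, but rather the verification of the input hypotheses: concretely, confirming that $\BP\Sing(Z)$ is indeed a scheme of finite type (so that \propref{p:dr 1-aff} is available, since the remark explicitly warns that $\CY_\dr$ for a genuine algebraic stack $\CY$ typically fails to be 1-affine), and tracking the compatibility of the monoidal actions under the 1-affineness equivalence. The genuinely substantive content — constructing the sheaf of categories $\oInd(Z)^\sim$ with the prescribed full subcategories of sections over closed subsets — resides entirely in \thmref{t:crystal structure} itself, whose proof is deferred to the following section.
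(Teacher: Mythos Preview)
Your proposal is correct and matches the paper's approach exactly: the paper simply states that the corollary is equivalent to \thmref{t:crystal structure} by \lemref{l:sect of sheaf on formal bis} and \propref{p:dr 1-aff}, and you have spelled out precisely how these two ingredients effect the translation between the sheaf-of-categories formulation and the monoidal-action formulation.
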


\ssec{Upgrade to a relative crystal of categories}

We postpone the proof of \thmref{t:crystal structure} until Sect.~\ref{s:proof of cryst}.
Let us state a slight refinement of the theorem concerning the structure of a \emph{relative} 
crystal of categories on the category of singularities.
This refined structure naturally allows us to extend the theory from the case of an affine DG scheme $Z$ to that of
an algebraic stack. 

\sssec{}

Consider the (classical reduced) scheme $\BP\Sing(Z)$, and the prestack
$$(\BP\Sing(Z))_\dr\underset{Z_\dr}\times Z.$$ 
Informally, this prestack can be thought of as the `relative' de Rham stack of $\BP\Sing(Z)$ over the base
$Z$. Let $(\on{id}\times p_{\dr,Z})$ denote the tautological map
$$(\BP\Sing(Z))_\dr\underset{Z_\dr}\times Z\to (\BP\Sing(Z))_\dr.$$

Consider the corresponding functor
$$\coind_{(\on{id}\times p_{\dr,Z})}:\on{ShvCat}((\BP\Sing(Z))_\dr\underset{Z_\dr}\times Z)\to
\on{ShvCat}((\BP\Sing(Z))_\dr).$$

\begin{propconstr}  \label{p:rel crystal structure}
There exists a canonically defined object
$$\oInd(Z)^{\sim,\on{rel}}\in \on{ShvCat}((\BP\Sing(Z))_\dr\underset{Z_\dr}\times Z),$$
equipped with an identification 
$$\coind_{(\on{id}\times p_{\dr,Z})}(\oInd(Z)^{\sim,\on{rel}})\simeq \oInd(Z)^{\sim}.$$
\end{propconstr}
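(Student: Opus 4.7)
The plan is to refine the sheaf-of-categories structure of \thmref{t:crystal structure} by first exhibiting $\oInd(Z)$ as a module category over $\QCoh((\BP\Sing(Z))_\dr\underset{Z_\dr}\times Z)$, and then invoking a $1$-affineness statement to convert this into the desired sheaf of categories.

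To produce the module structure: by \corref{c:tensored over}, $\oInd(Z)$ carries an action of $\QCoh((\BP\Sing(Z))_\dr)$; tautologically, it also carries a $\QCoh(Z)$-action inherited from $\IndCoh(Z)$. Part (b) of \corref{c:tensored over} asserts that the two resulting actions of $\QCoh(Z_\dr)$ (the first via the map $(\BP\Sing(Z))_\dr \to Z_\dr$, the second via $p_{\dr,Z}$) agree. By the universal property of the relative tensor product of symmetric monoidal DG categories, these combine into an action of
$$\QCoh((\BP\Sing(Z))_\dr) \underset{\QCoh(Z_\dr)}\otimes \QCoh(Z),$$
which I would then identify with $\QCoh((\BP\Sing(Z))_\dr\underset{Z_\dr}\times Z)$ via a base-change statement for quasi-coherent sheaves adapted to the de Rham setting.

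To convert this module structure into a sheaf of categories, I would argue that $(\BP\Sing(Z))_\dr\underset{Z_\dr}\times Z$ is $1$-affine. Granting the identification in the previous step, this should reduce to inheriting $1$-affineness from the base $(\BP\Sing(Z))_\dr$, which is itself $1$-affine by \propref{p:dr 1-aff}, using that the projection has a fibre $Z$ which is affine. The unique sheaf of categories corresponding to the module structure is then taken as the definition of $\oInd(Z)^{\sim,\on{rel}}$.

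For the required identification $\coind_{(\on{id}\times p_{\dr,Z})}(\oInd(Z)^{\sim,\on{rel}})\simeq \oInd(Z)^{\sim}$, I would test both sides on global sections over the $1$-affine prestack $(\BP\Sing(Z))_\dr$. By adjunction between $\coind$ and $\cores$, the left-hand side has global sections equal to $\bGamma((\BP\Sing(Z))_\dr\underset{Z_\dr}\times Z,\oInd(Z)^{\sim,\on{rel}})=\oInd(Z)$, with the $\QCoh((\BP\Sing(Z))_\dr)$-action inherited from the first step, and this coincides with the action defining $\oInd(Z)^{\sim}$ in \thmref{t:crystal structure}. The main obstacle is the identification of the relative tensor product with $\QCoh$ of the fibre product: since $p_{\dr,Z}$ is neither schematic nor affine, one cannot appeal to a generic descent result and must instead use a base-change statement specific to (relative) de Rham prestacks.
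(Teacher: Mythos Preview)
Your proposal is correct and follows essentially the same route as the paper: establish the $\QCoh((\BP\Sing(Z))_\dr\underset{Z_\dr}\times Z)$-module structure on $\oInd(Z)$ by combining the $\QCoh((\BP\Sing(Z))_\dr)$-action with the $\QCoh(Z)$-action via the compatibility in \thmref{t:crystal structure}(b), then invoke $1$-affineness of the relative de Rham prestack to promote this to a sheaf of categories. The two technical inputs you flag are exactly what the paper isolates as \lemref{l:rel 1-aff} (the $1$-affineness, proved by embedding $\BP\Sing(Z)\hookrightarrow Z\times\BP^n$ and reducing to $(\BP^n)_\dr\times Z$, rather than your suggested ``affine fiber over a $1$-affine base'' argument) and \lemref{l:QCoh on rel} (the base-change identification $\QCoh(Z'_\dr)\underset{\QCoh(Z_\dr)}\otimes\QCoh(Z)\simeq\QCoh(Z'_\dr\underset{Z_\dr}\times Z)$, which the paper obtains directly from \cite[Proposition~3.1.9]{Ga2}, resolving the concern you raise at the end).
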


Let us now derive \propref{p:rel crystal structure} from \thmref{t:crystal structure}. 

\sssec{}

First, we claim:

\begin{lem} \label{l:rel 1-aff}
The prestack $(\BP\Sing(Z))_\dr\underset{Z_\dr}\times Z$ is 1-affine. 
\end{lem}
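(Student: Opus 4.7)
The plan is to recognize $\CW := (\BP\Sing(Z))_\dr \underset{Z_\dr}\times Z$ as an inf-scheme locally almost of finite type in the sense of \cite{GR2}, and then invoke the general $1$-affineness theorem for such prestacks (extending \propref{p:dr 1-aff}).

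First I would verify the three defining properties of an inf-scheme. That $\CW$ admits deformation theory and is convergent is formal: $\CW$ is a finite limit of prestacks with both properties, since $Z$ is a DG scheme almost of finite type, and $Z_\dr$ and $(\BP\Sing(Z))_\dr$ are de Rham prestacks of classical schemes of finite type. To identify the underlying reduced prestack, let $T$ be a reduced affine test scheme; a $T$-point of $\CW$ is a pair $(\alpha : T \to \BP\Sing(Z),\, \beta : T \to Z)$ inducing the same map $T \to Z_\dr$. Since $T$ is reduced, this forces the literal equality $\pi \circ \alpha = \beta$, where $\pi : \BP\Sing(Z) \to Z$ is the projection. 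Hence the data of such a $T$-point reduces to $\alpha$ alone, so $^{\on{red}}\CW \simeq \BP\Sing(Z)$, which is a (classical, reduced) scheme of finite type.

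The main obstacle I anticipate is locating or verifying the general $1$-affineness criterion for inf-schemes locally almost of finite type in precisely the form needed. Should a clean citation prove awkward, I would argue more concretely via the projection $p : \CW \to (\BP\Sing(Z))_\dr$, whose target is $1$-affine by \propref{p:dr 1-aff}. The fibers of $p$ over an affine test scheme $S \to (\BP\Sing(Z))_\dr$ identify with $Z \underset{Z_\dr}\times S$, which in turn can be written as the formal completion $(Z \times S)^{\wedge}_{{}^{\on{red}}\! S}$ along the graph of the induced map ${}^{\on{red}}\! S \to Z$; these formal completions are themselves $1$-affine. One can then propagate $1$-affineness from $(\BP\Sing(Z))_\dr$ and the fibers to $\CW$ by a descent argument for sheaves of categories, in the spirit of \propref{p:sect of sheaf on formal}.
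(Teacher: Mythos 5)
Your approach is genuinely different from the paper's. The paper's proof exploits the specific geometry of $\BP\Sing(Z)$: since it is projective over $Z$, one can choose a closed embedding $\BP\Sing(Z)\hookrightarrow Z\times\BP^n$, which induces a closed embedding
$$(\BP\Sing(Z))_\dr\underset{Z_\dr}\times Z\hookrightarrow (\BP^n)_\dr\times Z,$$
and then quotes two results from \cite{Ga2}: that $1$-affineness passes to closed subprestacks of a laft $1$-affine prestack (Corollary 3.2.7), and that $(\BP^n)_\dr\times Z$ is $1$-affine (Corollary 3.2.8). This is a short, concrete reduction to an explicitly known $1$-affine prestack.

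Your Plan A is conceptually more general (it would work for any inf-scheme laft with quasi-compact reduced scheme), and your verifications are correct: $\CW:=(\BP\Sing(Z))_\dr\underset{Z_\dr}\times Z$ is indeed convergent and admits deformation theory (being a finite limit of prestacks with these properties), and ${}^{\on{red}}\CW\simeq\BP\Sing(Z)$ for the reason you give. The issue you flag is real, though: I am not aware of a theorem in the paper's references asserting $1$-affineness of inf-schemes laft in the generality you need, so this route is contingent on a citation that may not have been available.

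Your Plan B has a genuine gap, and the gap is precisely the ``propagation'' step. It is not true in general that a morphism $\CW\to\CY$ with $1$-affine base $\CY$ and $1$-affine fibers has $1$-affine total space (the standard counterexample $\on{pt}/\BG_a$ over $\on{pt}$ illustrates the failure for group quotients, and more relevantly, formal completions along infinite-dimensional subschemes are not $1$-affine even though they fiber over $1$-affine de Rham prestacks). Propagating $1$-affineness along a morphism requires verifying additional monadicity/dualizability conditions on the pushforward of sheaves of categories, which $\propref{p:sect of sheaf on formal}$ (a statement about open/closed decompositions of the base) does not supply. So ``a descent argument in the spirit of $\propref{p:sect of sheaf on formal}$'' is not enough as stated. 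To fix Plan B you would need to either isolate and verify the relevant ``passability'' conditions on $p$, or simply fall back on the paper's strategy, where the ambient $1$-affine prestack is produced explicitly and the reduction is via a nil-closed embedding — a situation the cited corollary from \cite{Ga2} handles directly.
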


\begin{proof}

We can realize $\BP\Sing(Z)$ as a closed subscheme of $Z\times \BP^n$. Hence,
we have a map
$$(\BP\Sing(Z))_\dr\underset{Z_\dr}\times Z\to (\BP^n)_\dr\times Z,$$
which is a closed embedding. Hence, by \cite[Corollary 3.2.7]{Ga2}, it suffices
to show that $(\BP^n)_\dr\times Z$ is 1-affine.  However, the latter follows from 
\cite[Corollary 3.2.8]{Ga2}.

\end{proof}

\sssec{}

By \lemref{l:rel 1-aff}, we obtain that in order to prove \propref{p:rel crystal structure}, we need
to extend the action of the (symmetric) monoidal category $\QCoh((\BP\Sing(Z))_\dr)$ on
$\oInd(Z)$ to that of the (symmetric) monoidal category
$$\QCoh((\BP\Sing(Z))_\dr\underset{Z_\dr}\times Z).$$

We now claim:

\begin{lem}  \label{l:QCoh on rel}
For any map of DG schemes almost of finite type $Z'\to Z$, the functor 
$$\QCoh(Z'_\dr)\underset{\QCoh(Z_\dr)}\otimes \QCoh(Z)\to \QCoh(Z'_\dr\underset{Z_\dr}\times Z)$$
is an equivalence.
\end{lem}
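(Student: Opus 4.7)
The plan is to recast both sides of the desired equivalence as the global sections of a single sheaf of categories on $Z'_\dr$, computed in two different ways using the 1-affineness of de Rham prestacks (\propref{p:dr 1-aff}).

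Write $p = p_{Z,\dr}\colon Z\to Z_\dr$ and $f_\dr\colon Z'_\dr\to Z_\dr$, and consider
$$\CC := \coind_p(\QCoh_{/Z}) \in \on{ShvCat}(Z_\dr).$$
From the explicit formula for $\coind$ recalled in the paper, $\bGamma(Z_\dr,\CC) \simeq \QCoh(Z)$ as $\QCoh(Z_\dr)$-modules. Since the square
$$
\CD
Z'_\dr\underset{Z_\dr}\times Z   @>{\tilde f}>>   Z   \\
@V{p'}VV   @VV{p}V   \\
Z'_\dr   @>{f_\dr}>>   Z_\dr
\endCD
$$
is Cartesian, the base change identity $\cores_{f_\dr}\coind_p \simeq \coind_{p'}\cores_{\tilde f}$ holds; indeed, this is immediate from the explicit formula $(\coind_g\CC')_{S,y}=\bGamma(S\underset{\CY}\times\CZ,\CC')$. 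Applying this identity to $\QCoh_{/Z}$, which pulls back to $\QCoh_{/Z'_\dr\times_{Z_\dr}Z}$, and then taking global sections yields
$$\bGamma\bigl(Z'_\dr,\cores_{f_\dr}\CC\bigr) \;\simeq\; \QCoh\bigl(Z'_\dr\underset{Z_\dr}\times Z\bigr).$$

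For the second computation, 1-affineness of $Z_\dr$ (\propref{p:dr 1-aff}) asserts that $\bGamma$ has an inverse $\bLoc_{Z_\dr}$ sending a module $M\in\QCoh(Z_\dr)\mmod$ to the sheaf of categories with stalks $\QCoh(S)\underset{\QCoh(Z_\dr)}\otimes M$ at $(S,y)\to Z_\dr$; in particular, $\CC \simeq \bLoc_{Z_\dr}(\QCoh(Z))$. Restricting along $f_\dr$ and invoking associativity of the tensor product, one identifies
$$\cores_{f_\dr}\CC \;\simeq\; \bLoc_{Z'_\dr}\bigl(\QCoh(Z'_\dr)\underset{\QCoh(Z_\dr)}\otimes\QCoh(Z)\bigr),$$
and applying $\bGamma(Z'_\dr,-)$ together with the 1-affineness of $Z'_\dr$ gives
$$\bGamma\bigl(Z'_\dr,\cores_{f_\dr}\CC\bigr) \;\simeq\; \QCoh(Z'_\dr)\underset{\QCoh(Z_\dr)}\otimes\QCoh(Z).$$
Comparing the two expressions for $\bGamma(Z'_\dr,\cores_{f_\dr}\CC)$ produces the desired equivalence; a routine unwinding confirms that it agrees with the natural map in the statement.

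The only substantive inputs are the Beck--Chevalley base change for $\coind/\cores$ along a Cartesian square of prestacks and the explicit form of the inverse to $\bGamma$ for a 1-affine prestack. Both are essentially formal consequences of the definitions collected in \cite{Ga2}, so I do not foresee a real obstacle; the argument is just a matter of assembling these pieces in the right order.
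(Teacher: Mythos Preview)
Your argument is correct and is essentially the content behind the paper's one-line citation of \cite[Proposition 3.1.9]{Ga2}: both rely on the 1-affineness of $Z_\dr$ and $Z'_\dr$ to identify the tensor product with global sections of the restricted sheaf of categories. You have simply unpacked the proof of that cited proposition in this particular instance, so there is no meaningful difference in approach.
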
 

\begin{proof}

Follows from \cite[Proposition 3.1.9]{Ga2}. 

\end{proof}

In particular, we obtain that the symmetric monoidal functor
$$\QCoh((\BP\Sing(Z))_\dr)\underset{\QCoh(Z_\dr)}\otimes \QCoh(Z)\to \QCoh((\BP\Sing(Z))_\dr\underset{Z_\dr}\times Z)$$
is an equivalence.

\medskip

Now, the action of $\QCoh((\BP\Sing(Z))_\dr\underset{Z_\dr}\times Z)$ on $\oInd(Z)$ is obtained
by combining \lemref{l:QCoh on rel} and the compatibility statement \thmref{t:crystal structure}(b).

\qed(\propref{p:rel crystal structure})

\ssec{Extension to algebraic stacks}  \label{ss:stacks}

\sssec{}

Let now $\CZ$ be a quasi-smooth algebraic stack with an affine diagonal (see \cite[Sect. 8.1.1]{AG} for the definition). 

\medskip

Let $\Sing(\CZ)$ be the corresponding (classical) algebraic stack, 
constructed in \cite[Sect. 8.1.5]{AG}, and consider the corresponding stack $\BP\Sing(\CZ)$. 

\sssec{}

Consider the category $\IndCoh(\CZ)$, the subcategory $\QCoh(\CZ)\overset{\Xi_\CZ}\hookrightarrow \IndCoh(\CZ)$,
and the quotient category
$$\oInd(\CZ):=\IndCoh(\CZ)/\QCoh(\CZ),$$
which identifies with the full subcategory 
$$\QCoh(\CZ)^\perp=\on{ker}(\Psi_\CZ:\IndCoh(\CZ)\to \QCoh(\CZ))\subset \IndCoh(\CZ).$$

\medskip 

The constructions of \secref{sss:Psupp} and \cite[Sect. 8.2]{AG} generalize to define for every
$\CF\in \oInd(\CZ)$ the Zariski-closed subset
$$\BP\on{SingSupp}(\CF)\subset \BP\Sing(\CZ),$$
and for a Zariski-closed subset $\CN\subset \BP\Sing(\CZ)$, the full subcategory
$$\oInd_\CN(\CZ)\subset \oInd(\CZ).$$

\sssec{}

We claim:

\begin{prop}  \label{p:sheaf over stacks}
There exists a canonically defined object
$$\oInd(\CZ)^{\sim,\on{rel}}\in \on{ShvCat}((\BP\Sing(\CZ))_\dr\underset{\CZ_\dr}\times \CZ),$$
equipped with the following system of identifications:

\smallskip

\noindent{\em(a)} For an affine DG scheme $Z$ equipped with a \emph{smooth} map $Z\to \CZ$, we have a
canonical identification
$$\bGamma\left(\left((\BP\Sing(\CZ))_\dr\underset{\CZ_\dr}\times \CZ\right)\underset{\CZ}\times Z,
\oInd(\CZ)^{\sim}\right)
\simeq \oInd(Z),$$
as categories equipped with an action of $\QCoh(Z)$.

\smallskip

\noindent{\em(b)} For a \emph{smooth} map $g:Z_1\to Z_2$ of affine DG schemes smooth over $\CZ$, the diagram
$$
\CD
\bGamma\left(\left((\BP\Sing(\CZ))_\dr\underset{\CZ_\dr}\times \CZ\right)\underset{\CZ}\times Z_2,
\oInd(\CZ)^{\sim}\right) @>>>  \oInd(Z_2)  \\
@VVV    @VV{g^!}V   \\
\bGamma\left(\left((\BP\Sing(\CZ))_\dr\underset{\CZ_\dr}\times \CZ\right)\underset{\CZ}\times Z_1,
\oInd(\CZ)^{\sim}\right) @>>> \oInd(Z_1)
\endCD
$$
commutes.
\end{prop}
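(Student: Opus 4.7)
The plan is to construct $\oInd(\CZ)^{\sim,\on{rel}}$ by smooth descent from the affine case handled in \propref{p:rel crystal structure}. Since $\CZ$ has affine diagonal, it admits a smooth atlas $p_0 \colon Z_0 \to \CZ$ by an affine DG scheme; let $Z_\bullet$ denote its \v{C}ech nerve, which is a simplicial object in affine DG schemes smooth over $\CZ$. The strategy is to equip the $\oInd(Z_n)^{\sim,\on{rel}}$ with the structure of a descent datum on an appropriate simplicial prestack and then glue.

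The key geometric input is that for any smooth map $f \colon Z \to \CZ$ between quasi-smooth DG schemes or stacks, the singular codifferential $Z \underset{\CZ}\times \Sing(\CZ) \to \Sing(Z)$ is an isomorphism (see \cite[Sect. 2.4]{AG}), and hence $\BP\Sing(Z) \simeq Z \underset{\CZ}\times \BP\Sing(\CZ)$. Since the de Rham functor is defined pointwise as a mapping space and therefore commutes with fiber products, one obtains a canonical identification
\[
(\BP\Sing(Z))_\dr \underset{Z_\dr}\times Z \;\simeq\; \bigl((\BP\Sing(\CZ))_\dr \underset{\CZ_\dr}\times \CZ\bigr) \underset{\CZ}\times Z,
\]
compatible with the simplicial structure on $Z_\bullet$. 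Applying \propref{p:rel crystal structure} termwise places $\oInd(Z_n)^{\sim,\on{rel}}$ on the right-hand side for each $n$, so the candidate descent datum lives over $\bigl((\BP\Sing(\CZ))_\dr \underset{\CZ_\dr}\times \CZ\bigr) \underset{\CZ}\times Z_\bullet$.

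Granting effectiveness, the resulting object $\oInd(\CZ)^{\sim,\on{rel}}$ on $(\BP\Sing(\CZ))_\dr \underset{\CZ_\dr}\times \CZ$ will satisfy (a) and (b) automatically: property (a) is the descent equivalence at level zero, and property (b) follows from the compatibility of $!$-pullback with the $\QCoh(Z_i)$-actions in the affine case. Effectiveness of the descent rests on smooth descent for sheaves of categories, which in turn reduces to smooth descent for $\IndCoh$ and $\QCoh$ (both standard) combined with 1-affineness of each base prestack $\bigl((\BP\Sing(\CZ))_\dr \underset{\CZ_\dr}\times \CZ\bigr) \underset{\CZ}\times Z_n$, established by the same argument as in \lemref{l:rel 1-aff}.

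The main obstacle is the functoriality step that must accompany \propref{p:rel crystal structure}: for a smooth morphism $g \colon Z_1 \to Z_2$ of affine DG schemes smooth over $\CZ$, one must show that the restriction of $\oInd(Z_2)^{\sim,\on{rel}}$ along the induced map on base prestacks agrees with $\oInd(Z_1)^{\sim,\on{rel}}$ as a sheaf of categories. I expect this to follow by inspection of the construction of \thmref{t:crystal structure} in \secref{s:proof of cryst}, since every step of that construction commutes with smooth $!$-pullback: for such a $g$, the functor $g^!$ preserves $\QCoh$ and is compatible with singular support along the (isomorphic) singular codifferential. Once naturality is secured, the gluing proceeds formally and the proposition follows.
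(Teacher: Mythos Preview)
Your approach is correct and essentially the same as the paper's: both construct the sheaf of categories by smooth descent, using the identification $(\BP\Sing(Z))_\dr \underset{Z_\dr}\times Z \simeq \bigl((\BP\Sing(\CZ))_\dr \underset{\CZ_\dr}\times \CZ\bigr) \underset{\CZ}\times Z$ for $Z$ smooth over $\CZ$, and then supplying compatibilities for smooth $g\colon Z_1\to Z_2$. The paper invokes \cite[Theorem 1.5.7]{Ga2} directly (rather than a single \v{C}ech atlas), but this is cosmetic.

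The one place where the paper is sharper is exactly the step you flag as the ``main obstacle.'' Rather than inspecting the construction of \thmref{t:crystal structure}, the paper uses 1-affineness of $(\BP\Sing(Z_1))_\dr \underset{(Z_1)_\dr}\times Z_1$ to reduce the sheaf-of-categories identification $\cores_{\on{id}\times g}(\oInd(Z_2)^{\sim,\on{rel}})\simeq \oInd(Z_1)^{\sim,\on{rel}}$ to an identification of their global sections as $\QCoh\bigl((\BP\Sing(Z_1))_\dr \underset{(Z_1)_\dr}\times Z_1\bigr)$-modules. That in turn amounts to the equivalence
\[
\oInd(Z_2)\underset{\QCoh(Z_2)}\otimes \QCoh(Z_1)\;\simeq\;\oInd(Z_1),
\]
supplied by $g^!$ via \cite[Corollary 7.5.7]{Ga1}. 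This is cleaner than tracing through \secref{s:proof of cryst}, and you should use it: once you have 1-affineness (which you already invoke), the functoriality is a one-line citation rather than an inspection.
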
 

\begin{proof}

The proof is completely formal:

\medskip

Let $(\affdgSch)_{\on{smooth}/\CZ}$ be the category of affine DG schemes $Z$ equipped with a smooth map to $\CZ$. 
By \cite[Theorem 1.5.7]{Ga2},
in order to construct $\oInd(\CZ)^{\sim,\on{rel}}$, it is sufficient to construct an assignment 
$$Z\in (\affdgSch)_{\on{smooth}/\CZ}\rightsquigarrow  
\oInd(\CZ)^{\sim,\on{rel}}|_Z\in 
\on{ShvCat}\left(\left((\BP\Sing(\CZ))_\dr\underset{\CZ_\dr}\times \CZ\right)\underset{\CZ}\times Z\right)$$
equipped with a coherent system of identifications
$$g:Z_1\to Z_2\,\,\rightsquigarrow \,\,\,\,\cores_{\on{id}\times g}(\oInd(\CZ)^{\sim,\on{rel}}|_{Z_2})\simeq 
\oInd(\CZ)^{\sim,\on{rel}}|_{Z_1}.$$

\medskip

Given $Z\in (\affdgSch)_{\on{smooth}/\CZ}$, we set 
$$\oInd(\CZ)^{\sim,\on{rel}}|_Z:=\oInd(Z)^{\sim,\on{rel}}.$$
Note that 
$$
\left((\BP\Sing(\CZ))_\dr\underset{\CZ_\dr}\times \CZ\right)\underset{\CZ}\times Z\simeq 
(\BP\Sing(Z))_\dr \underset{Z_\dr}\times Z.$$ 

\medskip

It remains to construct an identification
\begin{equation} \label{e:cores ident}
\cores_{\on{id}\times g}(\oInd(Z_2)^{\sim,\on{rel}})\simeq \oInd(Z_1)^{\sim,\on{rel}}
\end{equation}
for a morphism $g:Z_1\to Z_2$ in $(\affdgSch)_{\on{smooth}/\CZ}$.
Since $(\BP\Sing(Z))_\dr \underset{Z_\dr}\times Z$ is 1-affine, an identification \eqref{e:cores ident}
amounts to an identification
\begin{multline*}
\bGamma\left((\BP\Sing(Z_2))_\dr \underset{(Z_2)_\dr}\times Z_2,\oInd(Z_2)^{\sim,\on{rel}}\right)
\underset{\QCoh(Z_2)}\otimes \QCoh(Z_1)\simeq  \\
\simeq \bGamma\left((\BP\Sing(Z_1))_\dr \underset{(Z_1)_\dr}\times Z_1,\oInd(Z_1)^{\sim,\on{rel}}\right)
\end{multline*}
in $\QCoh\left((\BP\Sing(Z_1))_\dr \underset{(Z_1)_\dr}\times Z_1\right)\mmod$. 

Since
$$\bGamma\left((\BP\Sing(Z_i))_\dr \underset{(Z_i)_\dr}\times Z_i,\oInd(Z_i)^{\sim,\on{rel}}\right)\simeq \oInd(Z_i),$$
it remains to construct an identification
$$\oInd(Z_2)\underset{\QCoh(Z_2)}\otimes \QCoh(Z_1)\simeq \oInd(Z_1).$$
Such identification is given by the functor $g^!$,  see \cite[Corollary 7.5.7]{Ga1}.
\end{proof}

\sssec{}

We now claim:

\begin{prop} There exists a canonical identification
$$\bGamma\left((\BP\Sing(\CZ))_\dr\underset{\CZ_\dr}\times \CZ,\oInd(\CZ)^{\sim,\on{rel}}\right)\simeq \oInd(\CZ).$$
Moreover, for a Zariski-closed subset $\CN\subset \BP\Sing(\CZ)$, the full subcategory
$$\oInd_\CN(\CZ)\subset \oInd(\CZ)$$ equals 
$$\bGamma\left(\CN_\dr\underset{\CZ_\dr}\times \CZ,\oInd(Z)^{\sim}\right)\subset 
\bGamma\left((\BP\Sing(Z))_\dr\underset{\CZ_\dr}\times \CZ,\oInd(Z)^{\sim}\right).$$
\end{prop}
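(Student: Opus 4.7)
The plan is to deduce both statements from the scheme case (\thmref{t:crystal structure}) by smooth descent, using that $\oInd(\CZ)^{\sim,\on{rel}}$ has already been constructed in \propref{p:sheaf over stacks} as a sheaf of categories by assembling the affine case along a smooth atlas.

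First I would choose a smooth atlas $\pi: Z \to \CZ$ with $Z$ an affine DG scheme almost of finite type, and form its Čech nerve $Z_\bullet$ over $\CZ$, each $Z_n$ being an affine DG scheme smooth over $\CZ$ (because $\CZ$ has affine diagonal). By the sheaf-of-categories formalism and the definition of $\oInd(\CZ)^{\sim,\on{rel}}|_{Z_n} = \oInd(Z_n)^{\sim,\on{rel}}$, combined with part (a) of \propref{p:sheaf over stacks}, one has
$$\bGamma\left((\BP\Sing(\CZ))_\dr\underset{\CZ_\dr}\times \CZ,\oInd(\CZ)^{\sim,\on{rel}}\right)\simeq \underset{[n]\in\bDelta}{\lim}\,\bGamma\left((\BP\Sing(\CZ))_\dr\underset{\CZ_\dr}\times Z_n,\oInd(Z_n)^{\sim,\on{rel}}\right),$$
and each term on the right is identified with $\oInd(Z_n)$. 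Part (b) of \propref{p:sheaf over stacks} ensures the transition maps of this cosimplicial diagram are given by $!$-pullback $g^!$ along the smooth face maps.

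Next I would invoke smooth descent: $\IndCoh$ and $\QCoh$ both satisfy descent along smooth covers with transition functors $g^!$ (resp.\ $g^*$, which agree up to a line-bundle twist along smooth maps for the image of $\Xi$, cf.\ \remref{r:Upsilon and Xi}), and the embedding $\QCoh \hookrightarrow \IndCoh$ is compatible with them. Taking the quotient termwise gives $\oInd(\CZ) \simeq \underset{[n]\in\bDelta}{\lim}\,\oInd(Z_n)$ with the same $g^!$ structure maps. Comparing this with the limit just computed yields the first identification, and the compatibility with the $\QCoh$-actions is again inherited termwise from \thmref{t:crystal structure}(b).

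For the subcategory statement, I would use that for smooth $g: Z_n \to \CZ$ the singular codifferential induces an isomorphism $Z_n \underset{\CZ}\times \BP\Sing(\CZ) \simeq \BP\Sing(Z_n)$ (the relative tangent complex lives in degree zero), so that the condition $\BP\on{SingSupp}(\CF)\subset \CN$ can be checked smooth-locally: $\oInd_\CN(\CZ) = \lim_{n}\,\oInd_{\CN_n}(Z_n)$ with $\CN_n := \CN\underset{\CZ}\times Z_n$. Applying \thmref{t:crystal structure}(a) to each $Z_n$ identifies each $\oInd_{\CN_n}(Z_n)$ with $\bGamma((\CN_n)_\dr \underset{(Z_n)_\dr}\times Z_n,\oInd(Z_n)^{\sim,\on{rel}})$, which is exactly the $Z_n$-term of $\bGamma(\CN_\dr \underset{\CZ_\dr}\times \CZ,\oInd(\CZ)^{\sim,\on{rel}})$ computed by the same atlas. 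Passing to the limit gives the desired equality of full subcategories.

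The main obstacle I expect is the smooth-local nature of singular support for stacks: one needs to know that $\BP\on{SingSupp}$ is preserved under smooth $!$-pullback, which ultimately relies on the construction of singular support for stacks in \cite[Sect.~8.2]{AG}. Granting this, together with the already-established 1-affineness of the relevant relative de Rham prestacks (\lemref{l:rel 1-aff}) which makes the two descriptions of $\bGamma$ (as a limit, and as sections of a sheaf of categories) agree, the proposition follows formally.
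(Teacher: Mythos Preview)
Your proposal is correct and follows essentially the same route as the paper: the paper's proof is the one-liner ``Follows by combining \thmref{t:crystal structure}(a) and \cite[Proposition 8.3.4]{AG},'' and your argument simply unpacks this into the explicit smooth-descent computation, with \cite[Proposition 8.3.4]{AG} supplying exactly the smooth-local behavior of $\oInd_\CN$ that you isolate as the ``main obstacle.''
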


\begin{proof}

Follows by combining \thmref{t:crystal structure}(a) and \cite[Proposition 8.3.4]{AG}.

\end{proof}  

\section{Proof of \thmref{t:crystal structure}}  \label{s:proof of cryst}

\ssec{Idea of the proof}

Before we give the proof, let us explain informally its main idea.

\sssec{}

To specify a sheaf of categories $\CC$ over $(\BP\Sing(Z))_\dr$, we need to assign a category $\bGamma(S,\CC)$
to any affine DG scheme $S$ equipped
with a map $$^{\on{red}}S\to \BP\Sing(Z).$$

\medskip

In the case of the sheaf $\CC=\oInd(Z)^{\sim}$, we take $\bGamma(S,\oInd(Z)^{\sim})$ to be a certain
full subcategory in 
$$\QCoh(S)\otimes \oInd(Z).$$

\sssec{}

Namely, for an object $\CF\in \QCoh(S)\otimes \IndCoh(Z)$ we can talk about its singular support, which is a closed subset
in $S\times \Sing(Z)$,
conical with respect to the $\BG_m$-action on the second factor. Note that if $\CF\in \QCoh(S)\otimes \QCoh(Z)$,
then its singular support is contained in $S\times \{0\}$. Hence, to an object of 
$$\QCoh(S)\otimes \oInd(Z)$$
we can attach its singular support, which is a closed subset of $S\times \BP\Sing(\CZ)$.  

\medskip

Now, let  
$$\bGamma(S,\oInd(Z)^{\sim})\subset \QCoh(S)\otimes \oInd(Z)$$
be the full subcategory of objects whose singular support is contained (set-theoretically) in the graph of the given map
$^{\on{red}}S\to \BP\Sing(\CZ)$.

\sssec{}

To prove that the above construction works, we need to do two things: 

\smallskip

\noindent(i) Show that the assignment $S\rightsquigarrow \bGamma(S,\oInd(Z)^{\sim})$ is 
indeed a sheaf of categories. This will not be difficult. 

\smallskip

\noindent(ii) Show that a naturally constructed functor $\oInd(Z)\to \bGamma(\BP\Sing(\CZ),\oInd(Z)^{\sim})$ 
is an equivalence. To do so, we will reduce to the case when $Z$ is a global complete intersection and use
some explicit analysis. 

\sssec{}

Rather than giving the proof specifically for $\oInd(Z)$, below we do it in an abstract setting, by isolating
the relevant pieces of structure. 

\medskip

Namely, instead of $\IndCoh(Z)$ we have an arbitrary DG category $\bC$,
and the role of the $\BE_2$-algebra of Hochschild cochains (whose action on $\IndCoh(Z)$ gives rise to the
notion of singular support), we use an arbitrary $\BE_2$-algebra $\CA$.

\ssec{Abstract setting for \thmref{t:crystal structure}}  \label{ss:abs setting}

\sssec{}  \label{sss:abs setting}

Let $\bC$ be a DG category, equipped with an action of an $\BE_2$-algebra $\CA$ 
(see \cite[Sect. 3.5]{AG} for what this means). Let $A$ be a commutative finitely generated algebra,
graded by even non-negative integers, equipped with a grading-preserving homomorphism
$$A\to H^\bullet(\CA):=\underset{n}\oplus\, H^n(\CA).$$

\medskip

According to \cite[Sect. 3.5]{AG} (by the construction going back to \cite{BIK}), to any $\bc\in \bC$ we can attach its support, denoted 
$\on{supp}_A(\bc)$, which is a conical Zariski-closed subset of $\Spec(A)$.  

\medskip

Vice versa, to a conical Zariski-closed subset $\CN\subset \Spec(A)$ we assign the full subcategory
$$\bC_\CN\subset \bC,$$
consisting of objects with support in $\CN$. 

\sssec{}

Let $A^0$ be the degree $0$ component of $A$. The projection $\Spec(A)\to \Spec(A^0)$
admits a canonically defined section $\Spec(A^0)\to \Spec(A)$, because we can identify
$A^0$ with the quotient algebra of $A$ by the ideal $A^{>0}$. 

\medskip

Let $\{0\}$ denote the subset of $\Spec(A)$ equal to the image $\Spec(A^0)$ under the
above section. Let $\bC_{\{0\}}$ be the corresponding full subcategory of $\bC$. Define
$$\obC:=\bC/\bC_{\{0\}}.$$

We can also think of $\obC$ as the kernel of the co-localization functor
$\bC\to \bC_{\{0\}}$, right adjoint to the tautological embedding; this is the same as $(\bC_{\{0\}})^\perp\subset \bC$.

\sssec{}

Consider the scheme $\Proj(A)$. The assignment
$$\bc\in \bC\,\,\rightsquigarrow \,\,\,\,\supp_A(\bc)\subset \Spec(A)$$
gives rise to an assignment
$$\bc\in \obC\,\,\rightsquigarrow \,\,\,\,\BP\supp_A(\bc)\subset \Proj(A).$$

\medskip

Vice versa, to a Zariski-closed subset $\CN\subset \Proj(A)$ we assign the full subcategory
$$\obC_\CN=\{\bc\in\obC\,|\, \BP\supp_A(\bc)\subset \CN\}\subset \obC.$$

\ssec{Plan of this section} 

\sssec{}

In \secref{ss:constr sheaf of cat},  we  attach a certain sheaf of categories 
$\CC_A\in\on{ShvCat}(\Proj(A)_\dr)$ to the data $(\bC,\CA,A)$ as above.  

\medskip

In \secref{ss:glob sect}, we  show that $\CC_A$ comes equipped with a functor
\begin{equation} \label{e:global sect, prev}
\obC\to \bGamma(\Proj(A)_\dr,\CC_A).
\end{equation} 
More generally, for a Zariski-closed subset $\CN\subset \Proj(A)$, there is a functor
\begin{equation} \label{e:global sect supp, prev}
\obC_\CN\to \bGamma(\CN_\dr,\CC_A).
\end{equation} 

\medskip

We then provide additional conditions on the triple $(\bC,\CA,A)$ (in \secref{sss:assump}) that
guarantee that the functor \eqref{e:global sect supp, prev}, 
and in particular \eqref{e:global sect, prev}, is an equivalence. The proof of this claim
(\propref{p:compute global sections}) occupies Sects. ~\ref{ss:globe sect 1}--\ref{ss:globe sect 3}.

\sssec{}

In \secref{ss:our case} we  apply this discussion to $$\bC:=\IndCoh(Z),\,\, \CA:=\on{HC}(Z),\,\,
A:=\Gamma(\Sing(Z),\CO_{\Sing(Z)}).$$ 

\medskip

In the above formula, $\on{HC}(Z)$ is the $\BE_2$-algebra of Hochschild cochains on $Z$, or, which is the same,
the $\BE_2$-center of the DG category $\IndCoh(Z)$, see \cite[Appendix F]{AG}.

\medskip

The resulting sheaf of categories category $\CC_A$ is the sought-for 
$$\oInd(Z)^\sim\in \on{ShvCat}((\BP\Sing(Z))_\dr).$$

\medskip

The equivalence \eqref{e:global sect supp, prev} proves point (a) of \thmref{t:crystal structure}.

\sssec{}

To establish point (b) of \thmref{t:crystal structure}, we study the interaction of the construction
$$(\bC,\CA,A)\rightsquigarrow \CC_A$$
with some pre-existing monoidal actions; this is done in \secref{ss:action of base}.

\ssec{Construction of the sheaf of categories}  \label{ss:constr sheaf of cat}

\sssec{}

For $S\in \affdgSch$ consider the category
$$\QCoh(S)\otimes \bC.$$

\medskip

The action of $\CA$ on $\bC$ and the action of the $\BE_\infty$-algebra $\Gamma(S,\CO_S)$, viewed
as a $\BE_2$-algebra, on $\QCoh(S)$ give
rise to the action of the $\BE_2$-algebra $\Gamma(S,\CO_S)\otimes \CA$ on $\QCoh(S)\otimes \bC$. 

\medskip

Note that we have a canonical map of commutative algebras
$$A_S:=H^0(\Gamma(S,\CO_S))\otimes A\to H^\bullet(\Gamma(S,\CO_S)\otimes \CA).$$

\sssec{}

Consider the corresponding categories $(\QCoh(S)\otimes \bC)_{\{0\}}\subset \QCoh(S)\otimes \bC$ and 
$$\QCoh(S)\otimes \bC/(\QCoh(S)\otimes \bC)_{\{0\}}.$$

\medskip

By \cite[Proposition 3.5.7]{AG},
we have $$(\QCoh(S)\otimes \bC)_{\{0\}}=\QCoh(S)\otimes \bC_{\{0\}},$$ 
as full subcategories in $\QCoh(S)\otimes \bC$. 

\medskip

Hence,
$$\QCoh(S)\otimes \bC/(\QCoh(S)\otimes \bC)_{\{0\}}\simeq \QCoh(S)\otimes \obC.$$

\medskip

Note that $\Proj(A_S)\simeq S\times \Proj(A)$. Thus, to a Zariski-closed subset $\CN'\subset S\times \Proj(A)$,
we can attach the full subcategory
$$(\QCoh(S)\otimes \obC)_{\CN'}\subset \QCoh(S)\otimes \obC.$$

\sssec{}

Assume now that $S$ is equipped with a map to $\Proj(A)_\dr$, i.e., 
$^{\on{red}}S$ is equipped with a map $f$ to $\Proj(A)$. 

\medskip

Define
$$\bGamma(S,\CC_A):=(\QCoh(S)\otimes \obC)_{\on{Graph}_f},$$
where $\on{Graph}_f$ is the Zariski-closed \emph{subset} of $S\times \Proj(A)$ equal to
the graph of the map $f$.  

\sssec{}

For a map $S_1\to S_2$ we have a tautological identification
$$\QCoh(S_1)\underset{\QCoh(S_2)}\otimes (\QCoh(S_2)\otimes \obC)) 
\simeq \QCoh(S_1)\otimes \obC.$$

It is easy to see that under this identification we have an inclusion
\begin{equation} \label{e:incl for graphs}
\QCoh(S_1)\underset{\QCoh(S_2)}\otimes 
(\QCoh(S_2)\otimes \obC)_{\on{Graph}_{f_2}}\subset 
(\QCoh(S_1)\otimes \obC)_{\on{Graph}_{f_1}},
\end{equation} 
where $f_2:{}^{\on{red}}S_2\to \Proj(A)$ and $f_1$ is the composition of 
$^{\on{red}}S_1\to {}^{\on{red}}S_2$ and $f_2$. 

\medskip

We claim:

\begin{lem}  \label{l:eq for graphs}
The inclusion \eqref{e:incl for graphs} is an equality.
\end{lem}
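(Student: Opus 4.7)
My plan is to deduce the lemma from a general base-change compatibility of BIK-type support subcategories, which directly generalizes \cite[Proposition 3.5.7]{AG} (that handles the case $\CN = \{0\}$). Since the $\subset$ direction of \eqref{e:incl for graphs} is already granted, only the reverse inclusion needs to be established. Let $p : S_1 \to S_2$ denote the given morphism, and form the conical Zariski-closed subsets $\widetilde{\CN}_i \subset S_i \times \Spec(A)$ obtained as preimages of $\on{Graph}_{f_i}\subset S_i \times \Proj(A)$ under the quotient $S_i \times (\Spec(A)\setminus\{0\})\to S_i\times\Proj(A)$, unioned with $S_i \times \{0\}$. Since $f_1 = f_2 \circ ({}^{\on{red}}S_1 \to {}^{\on{red}}S_2)$, one directly checks that $\widetilde{\CN}_1 = (p\times\on{id})^{-1}(\widetilde{\CN}_2)$ as closed subsets of $S_1\times \Spec(A)$.

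\textbf{Reduction.} The claim thus reduces to the following auxiliary base-change assertion: for any morphism $p: S_1 \to S_2$ in $\affdgSch$ and any conical Zariski-closed subset $\CN_2 \subset S_2 \times \Spec(A)$ with preimage $\CN_1 = (p\times\on{id})^{-1}(\CN_2)$, the natural functor
\[
\QCoh(S_1)\underset{\QCoh(S_2)}{\otimes} (\QCoh(S_2)\otimes \bC)_{\CN_2} \longrightarrow (\QCoh(S_1)\otimes \bC)_{\CN_1}
\]
is an equivalence. Specializing to $\CN_i = \widetilde{\CN}_i$ and then projectivizing (passing from $\bC$ to $\obC$, which quotients out the $\{0\}$-supported objects on both sides) recovers exactly the equality asserted by the lemma.

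\textbf{The base-change claim and main obstacle.} The proof of the base-change assertion will follow the pattern of \cite[Proposition 3.5.7]{AG}: that proposition identifies $\bC_{\{0\}}\subset \bC$ as the essential image of a colocalization functor constructed as a filtered colimit of canonical $A$-linear endofunctors. For a general closed cone $\CN = V(I)$, the analogous colocalization $\Gamma_I$ onto $\bC_\CN$ admits a \v{C}ech/Koszul description: choosing generators $I = (a_1,\ldots,a_n)$, the functor $\Gamma_I$ is built from the localization endofunctors $\mathbf{c}\mapsto \mathbf{c}[a_j^{-1}]$ by a totalization, and depends only on the underlying $A$-action. Such a totalization is preserved by the continuous base change $\QCoh(S_1)\otimes_{\QCoh(S_2)}(-)$; combined with the fact that the $A$-action on $\QCoh(S_i)\otimes \bC$ factors compatibly through $A_{S_i}$ via $A_{S_1}\simeq A_{S_2}\otimes_{H^0(\Gamma(S_2,\CO_{S_2}))}H^0(\Gamma(S_1,\CO_{S_1}))$, this yields the required equivalence. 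The main obstacle I anticipate is not conceptual but technical: carefully setting up the \v{C}ech description of $\Gamma_I$ so that its compatibility with $\QCoh$-base change in $S$ is manifest, which requires a little care beyond the case $I = A^{>0}$ treated in \cite[Proposition 3.5.7]{AG}.
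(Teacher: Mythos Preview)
Your proposal is correct and follows essentially the same route as the paper: the paper's one-line proof simply invokes \cite[Proposition~3.5.5 and Lemma~3.3.12]{AG}, which together encode precisely the base-change compatibility of BIK-type support subcategories that you are re-deriving via the Koszul/\v{C}ech description of the colocalization $\Gamma_I$. Your detour through $\bC$ and $\Spec(A)$ followed by projectivization is the natural way to unpack those citations; one minor terminological quibble is that $\Gamma_I$ is built from the localizations $\bc\mapsto \bc[a_j^{-1}]$ by a \emph{finite} (co)limit (iterated fibers, or equivalently a \v{C}ech cube) rather than a totalization, but in a stable setting this is of course preserved by $\QCoh(S_1)\otimes_{\QCoh(S_2)}(-)$, so the argument goes through as you describe.
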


\begin{proof} 
Follows by combining \cite[Proposition 3.5.5 and Lemma 3.3.12]{AG}.
\end{proof}

\sssec{}

From \lemref{l:eq for graphs} we obtain that the assignment
$$(S,f) 
\rightsquigarrow (\QCoh(S)\otimes \obC)_{\on{Graph}_f}$$
defines an object of $\on{ShvCat}(\Proj(A)_\dr)$. 

\medskip

We  denote this object by $\CC_A$. Thus by definition, 
$$\bGamma(S,\CC_A):=(\QCoh(S)\otimes \obC)_{\on{Graph}_f}$$
for any
$(S,f) \in (\affdgSch)_{/\Proj(A)_\dr}$.

\ssec{A functor to the category of global sections}   \label{ss:glob sect}

\sssec{}

For $(S,f)\in (\affdgSch)_{/\Proj(A)_\dr}$, we define a functor
\begin{equation} \label{e:from glob to loc sections}
\obC\to (\QCoh(S)\otimes \obC)_{\on{Graph}_f}=\bGamma(S,\CC_A)
\end{equation}
as follows.

\medskip

It is the composition of the tautological functor
$$\obC\to \QCoh(S)\otimes \obC,\quad \bc\mapsto \CO_S\otimes \bc,$$
followed by the co-localization functor
$$\QCoh(S)\otimes \obC\to (\QCoh(S)\otimes \obC)_{\on{Graph}_f},$$
which is right adjoint to the tautological embedding
$$(\QCoh(S)\otimes \obC)_{\on{Graph}_f}\hookrightarrow \QCoh(S)\otimes \obC.$$

\sssec{}

The functors \eqref{e:from glob to loc sections} are clearly compatible under the maps
$S_1\to S_2$ in the category $(\affdgSch)_{/\Proj(A)_\dr}$.

\medskip

Hence, they give rise to a functor
\begin{equation} \label{e:from glob to glob sections}
\obC\to \bGamma(\Proj(A)_\dr,\CC_A).
\end{equation} 

\sssec{}

Let now $\CN\subset \Proj(A)$ be a Zariski-closed subset. Consider the corresponding full subcategory 
$$\obC_\CN\subset \obC.$$ 

On the other hand, consider $\CN_\dr\subset \Proj(A)_\dr$. By \secref{sss:support on closed},
the category $\bGamma(\CN_\dr,\CC_A)$ is naturally a full 
subcategory of $\bGamma(\Proj(A)_\dr,\CC_A)$. The following assertion results from the construction (see \propref{p:sect of sheaf on formal}(b)):

\begin{lem}  \label{l:from glob to glob sections, supp}
The essential image of the subcategory $\obC_\CN\subset\obC$ under the functor \eqref{e:from glob to glob sections} 
is contained in $\bGamma(\CN_\dr,\CC_A)\subset\bGamma(\Proj(A)_\dr,\CC_A)$. 
\end{lem}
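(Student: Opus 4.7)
The plan is to use \propref{p:sect of sheaf on formal}(b) to reduce the claim to a pointwise vanishing statement, and then verify vanishing using the compatibility of singular support with tensor products.

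\medskip

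First, I would identify the target subcategory as a kernel. Applying \propref{p:sect of sheaf on formal}(b) to the closed embedding $\CN \hookrightarrow \Proj(A)$ with open complement $U := \Proj(A)\setminus\CN$, the subcategory $\bGamma(\CN_\dr,\CC_A)\subset\bGamma(\Proj(A)_\dr,\CC_A)$ is precisely the kernel of the restriction functor $\bGamma(\Proj(A)_\dr,\CC_A)\to \bGamma(U_\dr,\CC_A)$. So it suffices to show that, for every $\bc\in\obC_\CN$ and every $(S,f)\in (\affdgSch)_{/U_\dr}$ (so $f:{}^{\on{red}}S\to U\subset\Proj(A)$), the image of $\bc$ in $\bGamma(S,\CC_A)=(\QCoh(S)\otimes\obC)_{\on{Graph}_f}$ under \eqref{e:from glob to loc sections} vanishes.

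\medskip

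By construction, this image is the co-localization of $\CO_S\otimes\bc$ to $(\QCoh(S)\otimes\obC)_{\on{Graph}_f}\subset \QCoh(S)\otimes\obC$. I would then invoke the compatibility of support with external tensor products (the same result cited in the proof of \lemref{l:eq for graphs}, namely \cite[Proposition 3.5.5]{AG}) to conclude that $\CO_S\otimes\bc$, viewed as an object of $\QCoh(S)\otimes\obC$, has $\BP$-support contained in $S\times\BP\supp_A(\bc)\subset S\times\CN$. On the other hand, $\on{Graph}_f$ is contained in $S\times U$, hence
\[
\on{Graph}_f \,\cap\, \bigl(S\times\CN\bigr) \;=\; \emptyset
\]
as closed subsets of $S\times\Proj(A)$.

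\medskip

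Finally, I would show that the co-localization to a closed support condition $\CN_2$ of an object supported on a disjoint closed $\CN_1$ is zero: for any $\bd\in(\QCoh(S)\otimes\obC)_{\on{Graph}_f}$ and $\bc'=\CO_S\otimes\bc$, any morphism $\bd\to\bc'$ has image supported in $\on{Graph}_f\cap (S\times\CN)=\emptyset$, hence is zero; by the adjunction defining the co-localization, this forces the co-localization of $\bc'$ to vanish. The one step requiring care, and the main obstacle, is making precise this ``disjoint supports imply vanishing of maps'' principle in the abstract setting of \secref{ss:abs setting}; I expect this to follow directly from the definition of the support filtration together with the idempotent decomposition provided by the recollement $\bC_{\CN_1}\rightleftarrows \bC\rightleftarrows \bC/\bC_{\CN_1}$ and the results of \cite[Sect.~3.5]{AG}, applied to $\QCoh(S)\otimes\obC$ with its induced $A_S$-linear structure.
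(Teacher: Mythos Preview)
Your proposal is correct and is essentially a spelled-out version of what the paper treats as immediate: the paper simply states that the lemma ``results from the construction (see \propref{p:sect of sheaf on formal}(b))'' without further argument. Your reduction via \propref{p:sect of sheaf on formal}(b) to showing that the co-localization of $\CO_S\otimes\bc$ to $(\QCoh(S)\otimes\obC)_{\on{Graph}_f}$ vanishes whenever $f$ factors through $U=\Proj(A)\setminus\CN$ is exactly the intended meaning.

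The one point you flag as requiring care---that objects with support in disjoint closed subsets have no maps between them---is indeed routine in this framework. In the language of \cite[Sect.~3]{AG}, for disjoint closed conical subsets $\CN_1,\CN_2$ one has $\bC_{\CN_1}\cap\bC_{\CN_2}=\bC_{\CN_1\cap\CN_2}=0$, and since both $\bC_{\CN_i}$ arise as kernels of localization functors to complementary opens which jointly cover, the recollement structure forces $\Hom(\bC_{\CN_1},\bC_{\CN_2})=0$ (equivalently, the co-localization to $\CN_2$ annihilates $\bC_{\CN_1}$). So your expectation is justified, and there is no gap.
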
 

Thus, from \lemref{l:from glob to glob sections, supp}, for every $\CN$ as above, we obtain a functor
\begin{equation} \label{e:from glob to glob sections, supp}
\obC_\CN\to \bGamma(\CN_\dr,\CC_A).
\end{equation}

\ssec{Imposing additional conditions}  

In this subsection we will recall the setting of \cite[Sect. 3.6]{AG}, where the notion of support is 
particularly explicit.

\sssec{}

First, we recall that the symmetric monoidal category $\Vect^{\on{gr}}:=\Vect^{\BG_m}$ of $\BZ$-graded objects of $\Vect$ has
a canonical automorphism, 
\begin{equation} \label{e:shift of grading functor}
M\mapsto M^{\on{shift}}
\end{equation}
that applies the cohomological shift by $[2k]$ to the $k$-th graded component, i.e.,
$$(M^{\on{shift}})_k:=M_k[2k].$$

\medskip

Let $\CB$ be an $\BE_2$-algebra in $\Vect^{\on{gr}}$. Consider the corresponding $\BE_2$-algebra $\CB^{\on{shift}}$,
and assume that it is \emph{classical}, i.e., is concentrated in cohomological degree $0$. Thus, we can regard 
$\CB^{\on{shift}}$ as a graded commutative algebra, which we can identify with
$$B:=\underset{n}\oplus\, H^{2n}(\CB),$$
and the functor \eqref{e:shift of grading functor} gives rise to a monoidal equivalence
\begin{equation}  \label{e:graded action}
(\CB\mod)^{\on{gr}}:=(\CB\mod)^{\BG_m}\simeq \QCoh(\Spec(B)/\BG_m).
\end{equation}

\sssec{}

Let $\bC$ be a DG category, equipped with an action of $\CB$. Using the forgetful functor
$$(\CB\mod)^{\on{gr}}\to \CB\mod$$
and the equivalence \eqref{e:graded action}, we obtain that $\bC$ is acted on by the (symmetric) monoidal 
category $\QCoh(\Spec(B)/\BG_m)$. 

\medskip

Let $\CN$ be a conical closed subset $\CN\subset \Spec(B)$. Then, on the one hand, we can attach to it the full
subcategory $\bC_\CN$, singled out by the cohomological support condition, see \secref{sss:abs setting} above.
 On the other hand, we can consider
the full subcategory
$$\bC\underset{\QCoh(\Spec(B)/\BG_m)}\otimes \QCoh(\Spec(B)/\BG_m)_{\CN/\BG_m}\subset 
\bC\underset{\QCoh(\Spec(B)/\BG_m)}\otimes \QCoh(\Spec(B)/\BG_m) \simeq \bC.$$

\medskip

The following assertion is \cite[Corollary 3.6.5]{AG}:

\begin{prop}  \label{p:ssupp via shift}
The full subcategories
$$\bC_\CN \subset \bC \supset 
\bC\underset{\QCoh(\Spec(B)/\BG_m)}\otimes \QCoh(\Spec(B)/\BG_m)_{\CN/\BG_m}$$
coincide.
\end{prop}

\sssec{} \label{sss:assump}

We now return to the general setting of \secref{ss:abs setting} and make the following additional assumption 
on the pair $(\CA,A)$:

\medskip

Suppose there exists an $\BE_2$-algebra $\CB$, equipped with a homomorphism 
$$\CB\to \CA,$$
such that:

\begin{itemize}

\item $\CB$ is equipped with a grading such that $\CB^{\on{shift}}$ is classical;

\item The resulting map $B:=H^\bullet(\CB)\to H^\bullet(\CA)$ can be factored as
$$B\to A\to H^\bullet(\CA),$$
where $B\to A$ is a surjection modulo nilpotents.

\end{itemize}  

We claim:

\begin{prop} \label{p:compute global sections}
Under the above assumptions on the pair $(\CA,A)$, 
the functor \eqref{e:from glob to glob sections, supp} is an equivalence.
\end{prop}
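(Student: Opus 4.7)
\emph{Proof proposal.} The plan is to establish the equivalence by reducing, in two stages, to a statement that can be proved using 1-affineness of de Rham prestacks. First, we use the auxiliary $\BE_2$-algebra $\CB$ to reduce to the case where the $\BE_2$-action on $\bC$ comes from an honest action of a classical graded commutative algebra. Second, in this classical case, we match both sides with the category of sections of a sheaf of categories associated to a natural $\Dmod(\Proj(A))$-module structure on $\obC$.

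For the first reduction, the map $\CB \to \CA$ induces a $\CB$-action on $\bC$ and hence an analogous sheaf of categories $\CC_B$ on $\Proj(B)_\dr$. Since $B \to A$ is surjective modulo nilpotents, the induced map $\Proj(A)_{\on{red}} \to \Proj(B)_{\on{red}}$ is a closed embedding, so $\Proj(A)_\dr \hookrightarrow \Proj(B)_\dr$ is a closed subprestack (de Rham prestacks depend only on reduced classical structures). Moreover, the $\CB$-support of an object of $\bC$ is the image of its $\CA$-support under $\Spec(A) \to \Spec(B)$, whence $\bC_{\{0\}}$ and $\obC$ are identical for the two pairs, and $\CC_A$ is canonically the restriction of $\CC_B$ to $\Proj(A)_\dr$. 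We may thus assume $\CA = \CB$, so that $\CA^{\on{shift}}$ is classical. Applying the shift construction of \cite[Sect.~3.6]{AG} then turns the $\BE_2$-action of $\CA$ into an honest action of the classical graded commutative algebra $A$, equivariant for the $\BG_m$-grading arising from the grading on $A$. This makes $\bC$ a module category over $\QCoh(\Spec(A))$; descending to the quotient $\obC$ yields a canonical $\QCoh(\Proj(A))$-module structure, which via the symmetric monoidal pullback $\QCoh(\Proj(A)) \to \Dmod(\Proj(A))$ promotes to a $\Dmod(\Proj(A))$-action.

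By \propref{p:dr 1-aff}, this $\Dmod(\Proj(A))$-module structure corresponds to a sheaf of categories on $\Proj(A)_\dr$, whose global sections over $\CN_\dr$ recover $\obC_\CN$ by \lemref{l:sect of sheaf on formal bis} (the support condition on $\obC$ matches, under the classical $A$-action, the condition of being annihilated by the ideal of $\CN$). The remaining task --- and the main obstacle of the proof --- is to identify this sheaf of categories with $\CC_A$: for each test $(S, f)$ with $f:{}^{\on{red}}S \to \Proj(A)$, one must show that the graph-support condition defining $\bGamma(S, \CC_A) \subset \QCoh(S) \otimes \obC$ in \secref{ss:constr sheaf of cat} agrees with the ``connection along $f$'' condition coming from the $\Dmod$-structure. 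Both are cut out by certain ideals in $A_S = H^0(\CO_S) \otimes A$; the matching is ultimately forced by compatibility under base change (\lemref{l:eq for graphs}) together with the translation, available in the classical case, between singular support and annihilator ideals in the graded algebra. Once this identification is in hand, the equivalence of the proposition follows by taking global sections over $\CN_\dr$.
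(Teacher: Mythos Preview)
Your overall strategy matches the paper's --- reduce to the case $(\CA,A)=(\CB,B)$ via the closed embedding $\Proj(A)_\dr\hookrightarrow\Proj(B)_\dr$, then handle that case using the $\QCoh(\Proj(B))$-module structure on $\obC$ coming from the classical graded algebra --- but your execution of the second step contains a genuine gap.

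You write that the $\QCoh(\Proj(A))$-module structure on $\obC$ ``via the symmetric monoidal pullback $\QCoh(\Proj(A)) \to \Dmod(\Proj(A))$ promotes to a $\Dmod(\Proj(A))$-action.'' The symmetric monoidal functor goes the other way: pullback along $p_{\dr,\Proj(A)}:\Proj(A)\to\Proj(A)_\dr$ gives $\Dmod(\Proj(A))\to\QCoh(\Proj(A))$, so a $\Dmod$-action restricts to a $\QCoh$-action, not conversely. A $\QCoh(Y)$-module category has no reason to carry a $\Dmod(Y)$-action (e.g.\ $\QCoh(Y)$ itself does not). Producing the $\Dmod$-action is exactly the content of the proposition, so this step is circular. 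Your subsequent appeal to \lemref{l:sect of sheaf on formal bis} and the hand-wavy matching of ``graph-support'' with ``connection along $f$'' inherit this problem: you have not constructed the object in $\on{ShvCat}(\Proj(A)_\dr)$ whose sections you are trying to identify.

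The paper sidesteps this by never claiming a $\Dmod$-action directly. It forms $\CC'_B:=\bLoc_{\Proj(B)}(\obC)$ as a sheaf of categories on $\Proj(B)$ (not on $\Proj(B)_\dr$), then \emph{coinduces} along $p_{\dr,\Proj(B)}$ to land in $\on{ShvCat}(\Proj(B)_\dr)$. The crucial Lemma is that $\coind_{p_{\dr,\Proj(B)}}(\CC'_B)\simeq\CC_B$: for each $(S,f)$, both have sections $(\QCoh(S)\otimes\obC)_{\on{Graph}_f}$, via the formal-completion description of coinduction and \cite[Corollary~3.6.5]{AG}. Global sections of the coinduction over $\Proj(B)_\dr$ agree with global sections of $\CC'_B$ over $\Proj(B)$ by adjunction, and the latter recovers $\obC$ by 1-affineness of the \emph{scheme} $\Proj(B)$. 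This is the missing mechanism in your argument.
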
 

We prove \propref{p:compute global sections} in Sections~\ref{ss:globe sect 1}--\ref{ss:globe sect 3}.

\ssec{The case of ind-coherent sheaves}  \label{ss:our case}

In this subsection we deduce \thmref{t:crystal structure} from \propref{p:compute global sections}.

\sssec{}

Let $Z$ be an affine quasi-smooth DG scheme. In the setting of \secref{ss:abs setting} we take 
$$\bC=\IndCoh(Z),\,\, \,\, \CA:=\on{HC}(Z),\,\, A:=\Gamma(\Sing(Z),\CO_{\Sing(Z)}).$$ 

\medskip

In this case $\Proj(A)=\BP\Sing(Z)$. The construction of \secref{ss:constr sheaf of cat} defines a sheaves of categories
$\CC_A$ over $(\BP\Sing(Z))_\dr$; this is the sought-for $\oInd(Z)^\sim$.

\sssec{}

The functor \eqref{e:from glob to glob sections} gives rise to a functor
\begin{equation} \label{e:from glob to glob sections sing}
\oInd(Z)\to \bGamma\left((\BP\Sing(Z))_\dr,\oInd(Z)^\sim\right).
\end{equation} 

Furthermore, for a Zariski-closed subset $\CN\subset \BP\Sing(Z)$ we obtain a functor
\begin{equation} \label{e:from glob to glob sections sing, supp}
\oInd_\CN(Z)\to \bGamma\left(\CN_\dr,\oInd(Z)^\sim\right).
\end{equation}

To prove \thmref{t:crystal structure}(a), we need to show that the functor \eqref{e:from glob to glob sections sing, supp},
and in particular, \eqref{e:from glob to glob sections sing} is an equivalence. We  do so by reducing to the situation
when \propref{p:compute global sections} becomes applicable.

\sssec{}

First, we notice that the fact that \eqref{e:from glob to glob sections sing, supp} is an equivalence can be checked
Zariski-locally on $Z$. Hence, can (and will) assume that $Z$ is a \emph{global derived complete intersection}.
This means that $Z$ fits into a Cartesian square 
\begin{equation} \label{e:global intersect}
\CD
Z  @>>>  U  \\
@VVV   @VVV   \\
\on{pt}  @>>>  V,
\endCD
\end{equation}
where $U$ is smooth, and $V$ is a vector space. 

\medskip

We claim that in this case the additional assumptions of \secref{sss:assump} are satisfied. 

\medskip

Indeed, for $Z$ fitting into the diagram \eqref{e:global intersect}, we take $\CB$ to be the $\BE_2$-algebra 
$$\Gamma(U,\CO_U)\otimes \Sym(V[-2]),$$
see \cite[Sect. 5.3.2]{AG}. The required pieces of structure on $\CB$ are described in \cite[Formula (5.9) and Sect. 5.4]{AG},
respectively. 

\ssec{Proof of \propref{p:compute global sections}, Step 1}\label{ss:globe sect 1}

Let $(\CA,A)$ and $(\CB,B)$ be as in \secref{sss:assump}. 
Let us prove that \eqref{e:from glob to glob sections} is an equivalence in the special case 
$(\CA,A)=(\CB,B)$. 

\sssec{}

According to \cite[Sect. 3.6.2]{AG}, the category $\obC$ has a natural structure
of module over $\QCoh(\Proj(B))$. 

\medskip

Let $\CC'_B$ denote the object of $\on{ShvCat}(\Proj(B))$ equal to
$$\bLoc_{\Proj(B)}(\obC),$$
where 
$$\bLoc_{\Proj(B)}:\QCoh(\Proj(B))\mmod\to \on{ShvCat}(\Proj(B))$$ is the left adjoint functor to
$\bGamma(\Proj(B),-)$,
see \cite[Sect. 1.3.1]{Ga2}.  Explicitly, for an affine DG scheme $S$ mapping to $\Proj(B)$, we have
$$\bGamma(S,\CC'_B):=\QCoh(S)\underset{\QCoh(\Proj(B))}\otimes \obC.$$

\sssec{}

Recall that $p_{\dr,\Proj(B)}$ denotes the tautological map $\Proj(B)\to \Proj(B)_\dr$. 
The key observation is provided by the following lemma, which expresses the set-theoretic
nature of singular support: 

\begin{lem}  \label{l:sects for B}
There exists a canonical isomorphism 
$$\CC_B\simeq \coind_{p_{\dr,\Proj(B)}}(\CC'_B)$$
in $\on{ShvCat}(\Proj(B)_\dr)$;
under this identification, the composite map
$$\obC\to  \bGamma(\Proj(B),\CC'_B)\simeq  \bGamma(\Proj(B)_\dr,\coind_{p_{\dr,\Proj(B)}}(\CC'_B))\simeq
\bGamma(\Proj(B)_\dr,\CC_B)$$
identifies with \eqref{e:from glob to glob sections}.
\end{lem}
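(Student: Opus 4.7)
We construct the isomorphism $\CC_B\simeq \coind_{p_{\dr,\Proj(B)}}(\CC'_B)$ in $\on{ShvCat}(\Proj(B)_\dr)$ by matching the two objects on sections for each $(S,f)\in (\affdgSch)_{/\Proj(B)_\dr}$ and checking functoriality in $S$. The guiding idea is that because $\CB^{\on{shift}}$ is classical, the $\CB$-action on $\obC$ is governed by the $\QCoh(\Proj(B))$-module structure from \cite[Sect.~3.6.2]{AG}; this lets us rewrite the singular-support condition defining $\CC_B$ as a tensor product over $\QCoh(\Proj(B))$, which is precisely the recipe underlying $\CC'_B=\bLoc_{\Proj(B)}(\obC)$ combined with $\coind_{p_{\dr,\Proj(B)}}$.

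\textbf{Sectionwise comparison.}
Fix $(S,f)$ and denote by $\ol f$ the induced map ${}^{\on{red}}S\to \Proj(B)$. By the $(\cores,\coind)$-adjunction (applied to $\bGamma$) and the defining formula for $\bLoc$, the right-hand side evaluates to
\[
\bGamma(S,\coind_{p_{\dr,\Proj(B)}}(\CC'_B))=\bGamma\bigl(S\underset{\Proj(B)_\dr}\times \Proj(B),\CC'_B\bigr)\simeq \QCoh\bigl(S\underset{\Proj(B)_\dr}\times \Proj(B)\bigr)\underset{\QCoh(\Proj(B))}\otimes \obC,
\]
where the final step uses that $S\times_{\Proj(B)_\dr}\Proj(B)$ is 1-affine (argued exactly as in \lemref{l:rel 1-aff}). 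The left-hand side is $(\QCoh(S)\otimes\obC)_{\on{Graph}_{\ol f}}$. Now $S\times_{\Proj(B)_\dr}\Proj(B)$ is the formal completion of $S\times \Proj(B)$ along $\on{Graph}_{\ol f}$. Viewing $\QCoh(S)\otimes\obC$ as a $\QCoh(S\times \Proj(B))$-module via the $\QCoh(\Proj(B))$-action on $\obC$, the subcategory cut out by the singular-support condition along $\on{Graph}_{\ol f}$ equals
\[
\QCoh\bigl(S\underset{\Proj(B)_\dr}\times \Proj(B)\bigr)\underset{\QCoh(S\times \Proj(B))}\otimes \bigl(\QCoh(S)\otimes \obC\bigr),
\]
and rearranging the tensor product matches the right-hand side.

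\textbf{Compatibility and the main obstacle.}
The identification above is manifestly functorial in $S$ by base change, so it assembles into the desired isomorphism of sheaves of categories. Under both presentations, the map $\obC\to \bGamma(S,-)$ of \eqref{e:from glob to loc sections} is $\bc\mapsto \CO_S\otimes\bc$ followed by the canonical co-localization/tensor-up functor, so the two versions of this map are identified at the level of sections; passing to global sections over $\Proj(B)$ then realizes the composition written in the lemma as the unit of the $(\bLoc,\bGamma)$-adjunction for $\obC\in \QCoh(\Proj(B))\mmod$, which is precisely the map \eqref{e:from glob to glob sections} specialized to $(\CA,A)=(\CB,B)$. The main obstacle is the bridge in the sectionwise step between `singular support contained in $\on{Graph}_{\ol f}$' and `tensoring up to $\QCoh$ of the formal completion of $S\times\Proj(B)$ along $\on{Graph}_{\ol f}$': this is where the hypothesis that $\CB^{\on{shift}}$ is classical is essential, since it reduces the support theory for the $\CB$-action on $\obC$ to the support theory of the classical commutative algebra $B$ and hence to $\QCoh(\Proj(B))$-module support, where the desired equivalence of descriptions is available via \cite[Propositions~3.5.5 and 3.5.7]{AG}.
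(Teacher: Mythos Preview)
Your proposal is correct and follows essentially the same route as the paper: compute the sections of $\coind_{p_{\dr,\Proj(B)}}(\CC'_B)$ over $(S,f)$ as $\QCoh$ of the formal completion $(S\times\Proj(B))^\wedge_{\on{Graph}_f}$ tensored over $\QCoh(\Proj(B))$ with $\obC$, rewrite this as a tensor over $\QCoh(S\times\Proj(B))$, and then identify the result with $(\QCoh(S)\otimes\obC)_{\on{Graph}_f}$ using that $\CB^{\on{shift}}$ is classical. The only imprecision is in the final citation: the identification of the $\CB$-theoretic support condition with the $\QCoh(\Proj(B))$-module-theoretic localization is \cite[Corollary~3.6.5]{AG}, not Propositions~3.5.5 and~3.5.7 (those concern behavior of support under tensoring with $\QCoh(S)$ and were already invoked in \lemref{l:eq for graphs}).
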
 

\begin{proof}

Fix $S\overset{f}\longrightarrow \Proj(B)$, and let
$(S\times \Proj(B))^\wedge_{\on{Graph}_f}$ be the formal completion of $S\times \Proj(B)$ along the graph of $f$, i.e., 
$$(S\times \Proj(B))^\wedge_{\on{Graph}_f}:=(S\times \Proj(B))\underset{(S\times \Proj(B))_\dr}\times (\on{Graph}_f)_\dr.$$

\medskip

The sheaf of categories $\coind_{p_{\dr,\Proj(B)}}(\CC'_B)$ assigns to $(S,f)$ as above the category
$$\QCoh\left((S\times \Proj(B))^\wedge_{\on{Graph}_f}\right) \underset{\QCoh(\Proj(B))}\otimes \obC.$$
which tautologically identifies with
\begin{multline*} 
\QCoh\left((S\times \Proj(B))^\wedge_{\on{Graph}_f}\right) \underset{\QCoh(S\times \Proj(B))}\otimes 
(\QCoh(S)\otimes \obC)\simeq \\
\simeq 
\QCoh(S\times \Proj(B))_{\on{Graph}_f} \underset{\QCoh(S\times \Proj(B))}\otimes 
(\QCoh(S)\otimes \obC).
\end{multline*} 

Now, the latter category identifies with $(\QCoh(S)\otimes \obC)_{\on{Graph}_f}$ by \propref{p:ssupp via shift} above.

\end{proof} 

\sssec{}

From \lemref{l:sects for B}, we obtain that in order to prove that \eqref{e:from glob to glob sections}
is an isomorphism, it suffices to show that the map
$$\obC\to  \bGamma(\Proj(B),\CC'_B)=\bGamma(\Proj(B),\bLoc_{\Proj(B)}(\obC))$$
is an isomorphism.

\medskip

However, the latter follows from the fact that $\Proj(B)$ is 1-affine, being a quasi-compact DG scheme,
see \cite[Theorem 2.1.1]{Ga2}.

\ssec{Proof of \propref{p:compute global sections}, Step 2}  \label{ss:globe sect 2}

Suppose now that for $(\CA,A)$ as in \secref{sss:assump}, the map 
\eqref{e:from glob to glob sections} is an equivalence.

\sssec{}

Applying the construction of Sects. \ref{ss:constr sheaf of cat}-\ref{ss:glob sect} to $(\CB,B)$, we obtain 
$$\CC_B\in \on{ShvCat}(\Proj(B)),$$
and a functor 
\begin{equation} \label{e:from glob to glob sections B}
\obC\to \bGamma(\Proj(B)_\dr,\CC_B).
\end{equation}

\sssec{}

By assumption, the homomorphism $B\to A$ induces a map 
$g:\Proj(A)\to \Proj(B)$, which is moreover a closed
embedding of the underlying classical reduced schemes.

\medskip

Consider the corresponding map $g_\dr:\Proj(A)_\dr\to \Proj(B)_\dr$ and the resulting adjoint pair of functors 
functor
$$\cores_{g_\dr}:\on{ShvCat}(\Proj(B)_\dr)\rightleftarrows \on{ShvCat}(\Proj(A)_\dr):\coind_{g_\dr}.$$

\medskip

Tautologically, we have:
$$\cores_{g_\dr}(\CC_B)\simeq \CC_A.$$  Moreover, under this identification, the composite map
$$\obC\to \bGamma(\Proj(B)_\dr,\CC_B)\to \bGamma(\Proj(A)_\dr,\cores_{g_\dr}(\CC_B))\simeq 
\bGamma(\Proj(A)_\dr,\CC_A)$$
identifies with \eqref{e:from glob to glob sections}.

\medskip

By adjunction, we obtain a map in $\on{ShvCat}(\Proj(B)_\dr)$:
\begin{equation} \label{e:map to coind}
\CC_B\to \coind_{g_\dr}(\CC_A).
\end{equation}

We claim:

\begin{lem} \label{l:coind A&B}
The map \eqref{e:map to coind} is an isomorphism.
\end{lem}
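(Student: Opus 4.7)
The plan is to apply \propref{p:sect of sheaf on formal}(a). Since $B\to A$ is surjective modulo nilpotents, $g:\Proj(A)\to \Proj(B)$ is a closed embedding at the level of reduced classical schemes, so $g_\dr:\Proj(A)_\dr\to \Proj(B)_\dr$ is a closed embedding of prestacks with well-defined complementary open embedding $\bar g_\dr:U_\dr\hookrightarrow \Proj(B)_\dr$, where $U:=\Proj(B)-g(\Proj(A))$. By \propref{p:sect of sheaf on formal}(a), the functor $\coind_{g_\dr}$ is fully faithful, with essential image the subcategory of objects annihilated by $\cores_{\bar g_\dr}$. Via the tautological identification $\cores_{g_\dr}(\CC_B)\simeq \CC_A$ recorded just before the lemma, the map \eqref{e:map to coind} is precisely the unit of the $(\cores_{g_\dr},\coind_{g_\dr})$-adjunction applied to $\CC_B$; thus the entire problem reduces to verifying the single identity
$$\cores_{\bar g_\dr}(\CC_B)=0.$$

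To prove this vanishing, I would compute pointwise: for a test object $(S,f':{}^{\on{red}}S\to U)$, the construction of $\CC_B$ gives
$$\bGamma(S,\cores_{\bar g_\dr}(\CC_B))=(\QCoh(S)\otimes \obC)_{\on{Graph}_{f'}},$$
where $\on{Graph}_{f'}$ is viewed as a Zariski-closed subset of ${}^{\on{red}}S\times \Proj(B)$ lying inside ${}^{\on{red}}S\times U$. The key observation is that any $\bc\in \QCoh(S)\otimes \obC$ automatically has $\BP\supp_B(\bc)\subset {}^{\on{red}}S\times g(\Proj(A))$: this is because the action of $\CB$ on $\bC$ factors through $\CA$ via $\CB\to \CA$, and on cohomology through the surjection-modulo-nilpotents $B\twoheadrightarrow A\to H^\bullet(\CA)$, so the $B$-support of any object lands set-theoretically in the image of the closed embedding $g$. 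I would transfer this containment to the $\QCoh(S)$-tensored quotient category $\QCoh(S)\otimes \obC$ using \cite[Propositions 3.5.5 and 3.5.7]{AG}, exactly as in the proof of \lemref{l:eq for graphs}.

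Granting this observation, the conclusion is immediate: the subsets $\on{Graph}_{f'}\subset {}^{\on{red}}S\times U$ and ${}^{\on{red}}S\times g(\Proj(A))$ are disjoint, so the support condition defining $(\QCoh(S)\otimes \obC)_{\on{Graph}_{f'}}$ forces $\BP\supp_B(\bc)=\emptyset$; together with the exclusion of the zero section already built into the passage from $\bC$ to $\obC$, this forces $\bc\simeq 0$. Hence $(\QCoh(S)\otimes \obC)_{\on{Graph}_{f'}}$ is zero for every such $(S,f')$, which gives $\cores_{\bar g_\dr}(\CC_B)=0$ as needed.

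The only substantive step is the compatibility between $\CA$- and $\CB$-supports after tensoring with $\QCoh(S)$ and passing to the quotient $\obC$; this is essentially a formal consequence of the support formalism of \cite[Sect. 3.5]{AG}, but requires one to keep track of the interaction of the BIK construction with both the extension of scalars $\bC\rightsquigarrow \QCoh(S)\otimes \bC$ and the Verdier quotient $\bC\rightsquigarrow \obC$. Once this is settled, everything else is formal manipulation via \propref{p:sect of sheaf on formal}(a).
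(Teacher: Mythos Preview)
Your proposal is correct and follows essentially the same approach as the paper's proof: both observe that every object of $\obC$ (and its $\QCoh(S)$-tensored variants) has $B$-support contained in $\Proj(A)\subset\Proj(B)$, conclude that $\cores_{\bar g_\dr}(\CC_B)=0$, and then invoke \propref{p:sect of sheaf on formal}(a). The paper states this in one line, while you unpack the pointwise verification in more detail; the content is the same.
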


\begin{proof}
Clearly, $\BP\supp_B(\bc)\subset \Proj(A)\subset \Proj(B)$ for any $\bc\in \obC$. Hence
the restriction of $\CC_B$ to
$$\Proj(B)_\dr-\Proj(A)_\dr$$
vanishes. Now the claim follows from \propref{p:sect of sheaf on formal}(a).
\end{proof}

\sssec{}

As we showed in Step 1 of the proof, the functor \eqref{e:from glob to glob sections B} is an equivalence.
Since
$$\bGamma(\Proj(B)_\dr, \coind_{g_\dr}(\CC_A))\simeq \bGamma(\Proj(A)_\dr,\CC_A),$$
\lemref{l:coind A&B} implies that \eqref{e:from glob to glob sections} is an equivalence, as claimed.

\ssec{Proof of \propref{p:compute global sections}, Step 3}  \label{ss:globe sect 3}

\sssec{}

To complete the proof, it remains to show 
that the functor 
$$\obC_\CN\to \bGamma(\CN_\dr,\CC_A)$$
of \eqref{e:from glob to glob sections, supp} is an equivalence. 

\sssec{}

Let $\CN'$ be the conical Zariski-closed subset of $\Spec(A)$ such that $\CN'\supset\{0\}$ and $\CN=\BP(\CN')$. Consider
the corresponding full subcategory $\bC':=\bC_{\CN'}\subset \bC$. We have an equality
$$\obC{}'=\obC_\CN$$
of full subcategories of $\obC$. 

\medskip

Consider the corresponding sheaf of categories $\CC'_A$ over $\Proj(A)_\dr$. We have a canonical identification
$$\bGamma(\Proj(A)_\dr,\CC'_A)\simeq \bGamma(\CN_\dr,\CC_A),$$
such that the diagram
$$
\CD
\obC_\CN  @>>>  \bGamma(\CN_\dr,\CC_A) \\
@A{\sim}AA    @AA{\sim}A   \\
\obC{}'  @>>>  \bGamma(\Proj(A)_\dr,\CC'_A)
\endCD
$$
commutes.

\medskip

As shown on Step 2 of the proof (applied to $\bC'$), the bottom arrow of this diagram is an equivalence.
Hence, the top arrow is an equivalence as well. This completes the proof.

\ssec{Compatibility of monoidal actions}  \label{ss:action of base}

\sssec{}

We  now enhance the setting of \secref{ss:abs setting} to include certain pre-existing monoidal actions. 

\medskip

Suppose $\wt\CA$ is a commutative (i.e., $\BE_\infty$) algebra and $\wt\CA\to \CA$ is a homomorphism
of $\BE_2$-algebras. Assume that

\begin{itemize}

\item $\wt\CA$ is connective, i.e., $H^n(\wt\CA)=0$ for $n>0$;

\item We are given a factorization of the homomorphism $H^0(\wt\CA)\to H^0(\CA)$ as
$$H^0(\wt\CA)\to A^0\to H^0(\CA).$$

\end{itemize}

\medskip

The homomorphism $\wt\CA\to \CA$ and the action of $\CA$ on $\bC$ define an action of $\wt\CA$ on $\bC$. 
In particular, the (symmetric) monoidal category $\wt\CA\mod=\QCoh(\Spec(\wt\CA))$ acts on $\bC$, and hence
on $\obC$. 

\sssec{}

Thus, on the one hand, the category $\QCoh(\Spec(\wt\CA)_\dr)$ acts on $\obC$ via the monoidal functor
$$\QCoh(\Spec(\wt\CA)_\dr)\to \QCoh(\Spec(\wt\CA))=\wt\CA\mod\to \CA\mod$$
(where the first arrow corresponding to the tautological projection $\Spec(\wt\CA)\to \Spec(\wt\CA)_\dr$),
and the action of $\CA\mod$ on $\obC\subset \bC$. 

\medskip

On the other hand, we have the (symmetric) monoidal functor
$$\QCoh(\Spec(\wt\CA)_\dr)\simeq \QCoh(\Spec(H^0(\wt\CA))_\dr)\to \QCoh(\Spec(A^0)_\dr)\to \QCoh(\Proj(A)_\dr),$$
while $\QCoh(\Proj(A)_\dr)$ acts on $\bGamma(\Proj(A)_\dr,\CC_A)$. 

\medskip

We claim:

\begin{prop}  \label{p:identify monoidal}
The functor \eqref{e:from glob to glob sections} intertwines the above actions of $\QCoh(\Spec(\wt\CA)_\dr)$ on
$\obC$ and $\bGamma(\Proj(A)_\dr,\CC_A)$, respectively.
\end{prop}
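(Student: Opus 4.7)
The plan is to reduce the compatibility of the two $\QCoh(\Spec(\wt\CA)_\dr)$-actions to a pointwise check at each local section of $\CC_A$, and then to exploit the support condition imposed by the co-localization to identify the two a priori distinct actions.

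First, since $\bGamma(\Proj(A)_\dr,\CC_A)$ is a limit over $(\affdgSch)_{/\Proj(A)_\dr}$ of the local categories $\bGamma(S,\CC_A) = (\QCoh(S)\otimes\obC)_{\on{Graph}_f}$, and the $\QCoh(\Proj(A)_\dr)$-action is assembled from the $\QCoh(S)$-actions on these local pieces, I would reduce the claim to verifying that, for each $(S,f)\in(\affdgSch)_{/\Proj(A)_\dr}$, the local functor $\obC \to (\QCoh(S)\otimes\obC)_{\on{Graph}_f}$ is equivariant for the $\QCoh(\Spec(\wt\CA)_\dr)$-actions. This local functor decomposes as $\obC\to \QCoh(S)\otimes\obC \to (\QCoh(S)\otimes\obC)_{\on{Graph}_f}$; the co-localization in the second arrow is right adjoint to a $\QCoh(S)$-linear inclusion, hence itself $\QCoh(S)$-linear and \emph{a fortiori} $\QCoh(\Spec(\wt\CA)_\dr)$-linear via the composite $\QCoh(\Spec(\wt\CA)_\dr)\to\QCoh(\Proj(A)_\dr)\to\QCoh(S)$. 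So the substantive content lies in the first, tautological arrow $\bc \mapsto \CO_S\otimes\bc$.

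On $\QCoh(S)\otimes\obC$ there are two \emph{a priori} distinct $\QCoh(\Spec(\wt\CA)_\dr)$-actions, coming from the two tensor factors: action (i) through the first factor, $\CE \cdot(\CE'\otimes\bc) = (\CE|_S\otimes\CE')\otimes\bc$, where $\CE|_S$ is the pullback along $S\to\Spec(\wt\CA)_\dr$; and action (ii) through the second factor, $\CE \cdot(\CE'\otimes\bc) = \CE'\otimes(\CE\cdot\bc)$, where the action on $\bc$ is through $\wt\CA\to\CA$. Applied to $\CO_S\otimes\bc$ they yield $\CE|_S\otimes\bc$ and $\CO_S\otimes(\CE\cdot\bc)$, respectively, which are genuinely distinct in $\QCoh(S)\otimes\obC$.

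The main obstacle is to construct a natural isomorphism between these two actions after passing to the co-localization $(\QCoh(S)\otimes\obC)_{\on{Graph}_f}$. The underlying reason is that both $\QCoh(\Spec(\wt\CA)_\dr)$-actions on $\QCoh(S)\otimes\bC$ factor through the central $\BE_\infty$-action of $A_S := H^0(\CO_S)\otimes A$ (the first via the composite $\wt\CA\to H^0(\CO_S)\to A_S$ coming from $S\to\Spec(\wt\CA)_\dr$, the second via $\wt\CA\to A\to A_S$), and these two factorizations are equalized when we restrict to the support condition on $\on{Graph}_f\subset S\times\Proj(A)$: the two resulting maps $\on{Graph}_f\to\Spec(\wt\CA)_\dr$ tautologically coincide by the very definition of $\on{Graph}_f$. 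To carry this out rigorously, I would upgrade the setup of \secref{ss:abs setting} to accommodate the $\BE_2$-algebra $\CO_S\otimes\CA$ with its $A_S$-grading, reinterpret both actions (i) and (ii) as arising from two symmetric monoidal functors $\QCoh(\Spec(\wt\CA)_\dr)\to\QCoh((S\times\Proj(A))_\dr)$ that become canonically identified on $(\on{Graph}_f)_\dr$, and then invoke the tensor-product description of the co-localized category via \cite[Corollary 3.6.5]{AG} (in the same spirit as the proof of \lemref{l:sects for B}) to promote this identification to an equivariance isomorphism. Assembling over $(S,f)$ yields the desired equivariance of \eqref{e:from glob to glob sections}.
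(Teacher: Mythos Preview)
Your overall strategy coincides with the paper's: reduce to each local section $(S,f)$, recognize that the functor $\obC\to(\QCoh(S)\otimes\obC)_{\on{Graph}_f}$ is tautologically equivariant for the action through the $\obC$-factor, and then argue that after co-localization the two $\QCoh(\Spec(\wt\CA)_\dr)$-actions agree because the two induced maps from the graph to $\Spec(\wt\CA)_\dr$ coincide. This is exactly the content of the commutative square at the end of the paper's proof.

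The execution of your final step, however, is not quite right in the stated generality. You propose to factor both actions through $\QCoh((S\times\Proj(A))_\dr)$ and invoke \cite[Corollary 3.6.5]{AG} for the tensor-product description of the co-localized category. But that corollary (as in the proof of \lemref{l:sects for B}) requires $\obC$ to be a $\QCoh(\Proj(A))$-module, i.e., it needs the special assumptions of \secref{sss:assump}; Proposition~\ref{p:identify monoidal} is stated and proved in the general setting of \secref{ss:abs setting}. The paper avoids this by working instead with $S\times\Spec(\wt\CA)$: the full $\QCoh(S)\otimes\wt\CA\mod\simeq\QCoh(S\times\Spec(\wt\CA))$-action on the co-localized category factors through $\QCoh$ of the formal completion $(S\times\Spec(\wt\CA))^\wedge_{\on{Graph}_{\wt f}}$ (this is immediate from the support condition, no extra structure needed), and on that formal completion the two projections to $\Spec(\wt\CA)_\dr$ agree tautologically. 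Replacing your $\Proj(A)$-route with this $\Spec(\wt\CA)$-route fixes the argument with no further change.
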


\sssec{}  \label{sss:apply monoidal}

We  apply \propref{p:identify monoidal} as follows. We take $\wt\CA=\Gamma(Z,\CO_Z)$, which is equipped
with a canonical map to $\on{HC}(Z)$. The conclusion of \propref{p:identify monoidal} in this case implies
the compatibility statement in \thmref{t:crystal structure}(b).

\medskip

The rest of this subsection is devoted to the proof of \propref{p:identify monoidal}. 

\sssec{}

The action of $\QCoh(\Spec(\wt\CA)_\dr)$ on $\bGamma(\Proj(A)_\dr,\CC_A)$ 
amounts to a compatible family of actions of $\QCoh(\Spec(\wt\CA)_\dr)$ on the categories
$$\bGamma(S,\CC_A)=(\QCoh(S)\otimes \obC)_{\on{Graph}_f},$$ for $(S,f)\in (\affdgSch)_{/\Proj(A)_\dr}$. 

\medskip

For every $(S,f)$, the action in question is obtained as the composition of the (symmetric) monoidal functor
\begin{multline}  \label{e:comp act1}
\QCoh(\Spec(\wt\CA)_\dr)\simeq \QCoh(\Spec(H^0(\wt\CA))_\dr)\to \QCoh(\Spec(A^0)_\dr)\to \\
\to \QCoh(\Proj(A)_\dr)\to
\QCoh(S)\to \QCoh(S)\otimes \wt\CA\mod
\end{multline}
and the action of $\QCoh(S)\otimes \wt\CA\mod$ on $(\QCoh(S)\otimes \obC)_{\on{Graph}_f}$, obtained from
the monoidal functor $\QCoh(S)\otimes \wt\CA\mod\to \QCoh(S)\otimes \CA\mod$. 

\medskip

We need to show that the functor $$\obC\to (\QCoh(S)\otimes \obC)_{\on{Graph}_f}$$
of \eqref{e:from glob to loc sections} intertwines the above action with the 
action of $\QCoh(\Spec(\wt\CA)_\dr)$ on $\obC$,
obtained from 
$$\QCoh(\Spec(\wt\CA)_\dr)\to \QCoh(\Spec(\wt\CA))=\wt\CA\mod,$$
and the action of $\wt\CA\mod$ on $\obC\subset \bC$, obtained from the monoidal functor $\wt\CA\mod\to \CA\mod$. 

\medskip

Tautologically, the functor \eqref{e:from glob to loc sections} intertwines the action of 
$\QCoh(\Spec(\wt\CA)_\dr)$ on $\obC$ with its action on $(\QCoh(S)\otimes \obC)_{\on{Graph}_f}$
obtained from the composition of (symmetric) monoidal functor
\begin{equation} \label{e:comp act2}
\QCoh(\Spec(\wt\CA)_\dr)\to \QCoh(\Spec(\wt\CA))=\wt\CA\mod\overset{\CO_S\otimes-}
\longrightarrow \QCoh(S)\otimes \wt\CA\mod
\end{equation}
and the action of $\QCoh(S)\otimes \wt\CA\mod$ on 
$(\QCoh(S)\otimes \obC)_{\on{Graph}_f}$ obtained from the 
monoidal functor $\QCoh(S)\otimes \wt\CA\mod\to \QCoh(S)\otimes \CA\mod$. 

\sssec{}

Note, however, that the action of $\QCoh(S)\otimes \wt\CA\mod$ on $(\QCoh(S)\otimes \obC)_{\on{Graph}_f}$
factors through 
\begin{multline}  \label{e:comp act3}
\QCoh(S)\otimes \wt\CA\mod=\QCoh(S)\otimes \QCoh(\Spec(\wt\CA))\simeq \\
\simeq \QCoh(S\times \Spec(\wt\CA))\to
\QCoh((S\times \Spec(\wt\CA))^\wedge_{\on{Graph}_{\wt{f}}}),
\end{multline}
where $(S\times \Spec(\wt\CA))^\wedge_{\on{Graph}_{\wt{f}}}$ is the formal completion of $S\times \Spec(\wt\CA)$
along the graph of the composite map, denoted $\wt{f}$:
$$^{\on{red}}S\to \Proj(A)\to \Spec(A^0)\to\Spec(H^0(\wt\CA)) \to \Spec(\wt\CA).$$

\medskip

Thus, we need to show that the compositions of both \eqref{e:comp act1} and \eqref{e:comp act2} with 
\eqref{e:comp act3} are canonically identified as (symmetric) monoidal functors. However, 
this follows from the commutativity of the next diagram of prestacks:

$$
\CD
(S\times \Spec(\wt\CA))^\wedge_{\on{Graph}_{\wt{f}}} @>>>  S\times \Spec(\wt\CA)  @>{\on{pr}_2}>>  \Spec(\wt\CA) \\
@VVV    & &  @VVV    \\
S\times \Spec(\wt\CA)   @>{\on{pr}_1}>>    S @>{\wt{f}}>>  \Spec(\wt\CA)_\dr.
\endCD
$$






\section{Relative crystals}   \label{s:rel}

Let $f:Z\to Y$ be a map of DG schemes almost of finite type. We are interested in the category 
$$\IndCoh(Z_\dr\underset{Y_\dr}\times Y).$$

Objects of this category can be viewed as ind-coherent sheaves on $Z$ 
\emph{equipped with a connection along the fibers of the map $Z\to Y$}. When the map is smooth,
the words `connection along the fibers' can be understood literally. In general, the definition requires
the language of de Rham prestacks.

\medskip 

When $Z$ is quasi-smooth, one can use singular support to construct subcategories of $\IndCoh(Z_\dr\underset{Y_\dr}\times Y)$.
In this section we study the interaction of this construction with the crystal structure 
on the category of singularities studied in \secref{s:sing as crystal}.   

\ssec{Relative crystals as a tensor product}  \label{ss:rel as ten}

Let $f:Z\to Y$ be a map of DG schemes almost of finite type. 
Let us describe the category $\IndCoh(Z_\dr\underset{Y_\dr}\times Y)$ in terms of $\IndCoh(Y)$.

\sssec{}

First, we claim: 

\begin{prop}  \label{p:rel cryst as tensored up}
The functor 
\begin{equation} \label{e:tensor up}
\QCoh(Z_\dr\underset{Y_\dr}\times Y) 
\underset{\QCoh(Y)}\otimes \IndCoh(Y)\to \IndCoh(Z_\dr\underset{Y_\dr}\times Y),
\end{equation}
induced by the $\QCoh(Y)$-linear functor
$$(f_\dr\times \on{id})^!: \IndCoh(Y)\to  \IndCoh(Z_\dr\underset{Y_\dr}\times Y),$$ 
is an equivalence. 
\end{prop}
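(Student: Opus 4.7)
The strategy is to reduce the assertion to a cleaner base-change identity, using the $\QCoh$-version already recorded as Lemma~\ref{l:QCoh on rel}. By associativity of the relative tensor product,
\[
\QCoh(Z_\dr \underset{Y_\dr}\times Y) \underset{\QCoh(Y)}\otimes \IndCoh(Y)
\simeq \Bigl(\QCoh(Z_\dr) \underset{\QCoh(Y_\dr)}\otimes \QCoh(Y)\Bigr) \underset{\QCoh(Y)}\otimes \IndCoh(Y)
\simeq \QCoh(Z_\dr) \underset{\QCoh(Y_\dr)}\otimes \IndCoh(Y),
\]
so it suffices to produce a natural equivalence
\[
\QCoh(Z_\dr) \underset{\QCoh(Y_\dr)}\otimes \IndCoh(Y) \;\xrightarrow{\sim}\; \IndCoh(Z_\dr \underset{Y_\dr}\times Y).
\]

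This reduced claim is precisely a base change for $\IndCoh$ along the Cartesian square with vertical arrow $p_{Y,\dr}\colon Y\to Y_\dr$ pulled back along $f_\dr\colon Z_\dr\to Y_\dr$. I would treat it as an instance of the more general assertion that for any prestack $\CW$ locally almost of finite type and equipped with a map to $Y_\dr$, the natural functor
\[
\QCoh(\CW) \underset{\QCoh(Y_\dr)}\otimes \IndCoh(Y) \;\to\; \IndCoh(\CW \underset{Y_\dr}\times Y)
\]
is an equivalence; the proposition is the case $\CW = Z_\dr$. Both sides carry compatible $\QCoh(\CW)$-actions and hence define sheaves of categories on $\CW$, so by smooth/Zariski descent one reduces to checking the equivalence after replacing $\CW$ by an affine DG scheme $T$ equipped with a map $^{\on{red}}T \to Y$.

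For such a $T$, the prestack $T \underset{Y_\dr}\times Y$ can be presented as the formal-geometric colimit (or the corresponding pro-system) of nilpotent thickenings of $^{\on{red}}T$ sitting over $Y$. On the $\IndCoh$-side, $!$-pullback along a nilpotent closed embedding is well-behaved in the sense of \cite{Ga1, GR2}, and one can describe $\IndCoh(T \underset{Y_\dr}\times Y)$ as the relevant (co)limit of $\IndCoh$ of these thickenings, each of which \emph{does} satisfy base change against $Y$. On the tensor-product side, $\QCoh(T)$ is computed from $\QCoh(Y_\dr)$ and the de Rham data in the same manner, and the formation of $(-)\underset{\QCoh(Y)}\otimes \IndCoh(Y)$ commutes with the colimit in question because $\IndCoh(Y)$ is a fixed $\QCoh(Y)$-linear module. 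Comparing the two descriptions pointwise on the pro-system yields the desired equivalence.

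The main obstacle is the last step: verifying that the naturally constructed functor is an equivalence, and not merely a map, requires compatibility between the formal-geometric structure of the relative de Rham prestack and the sheaf-of-categories formalism (which is delicate because $Y_\dr$ is typically not 1-affine). One has to check that the two sheaves of categories on $\CW$ agree as functors of $\CW$, and that passing from the formal completion to its defining pro-system does not lose information for either $\QCoh$ or $\IndCoh$. This is where the machinery of \cite{Ga2}, together with the $!$-pullback theory of \cite{GR2}, does the real work.
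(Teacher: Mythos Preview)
Your opening reduction via Lemma~\ref{l:QCoh on rel} is correct and useful. But the rest of the argument has two genuine problems.

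First, the descent step is circular. You want to regard $\CW\mapsto \IndCoh(\CW\underset{Y_\dr}\times Y)$ as a sheaf of categories and check it on affines $T\to Y_\dr$. But the sheaf condition is precisely that for $T'\to T$ the natural map
\[
\QCoh(T')\underset{\QCoh(T)}\otimes \IndCoh(T\underset{Y_\dr}\times Y)\;\to\;\IndCoh(T'\underset{Y_\dr}\times Y)
\]
is an equivalence, and this is essentially the assertion you are trying to prove. So the reduction to affine $T$ does not buy anything without an independent input. (Incidentally, your worry that ``$Y_\dr$ is typically not 1-affine'' is misplaced here: $Y$ is a scheme, so $Y_\dr$ \emph{is} 1-affine by \propref{p:dr 1-aff}. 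The difficulty lies elsewhere.)

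Second, even granting the reduction, the affine case is not actually proved. Saying that $T\underset{Y_\dr}\times Y$ is a colimit of nilpotent thickenings and that ``comparing the two descriptions pointwise on the pro-system yields the desired equivalence'' is a description of what one would like to happen, not an argument. One must identify both sides with the same (co)limit and check that the comparison functor respects it; neither is done.

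The paper's proof avoids both issues by a concrete geometric factorization. After localizing on $Z$, one writes $Z\overset{i}\hookrightarrow Z':=W\times Y$ with $i$ a closed embedding and $W$ smooth. For $Z'$ one has $Z'_\dr\underset{Y_\dr}\times Y\simeq W_\dr\times Y$, and the functor \eqref{e:tensor up} becomes $\Upsilon_{W_\dr}\otimes \on{Id}$, which is an equivalence by \cite[Proposition~2.4.4]{GR1}. One then deduces the case of $Z$ (and the complementary open $\oZ$) from the localization sequences for $\QCoh$ and $\IndCoh$ along the closed/open decomposition of $Z'_\dr\underset{Y_\dr}\times Y$. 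This route never needs to know that the right-hand side is a sheaf of categories in the test variable, and the only nontrivial input is the equivalence $\Upsilon_{W_\dr}$, which is already available.
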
 

\sssec{Proof of \propref{p:rel cryst as tensored up}, Step 0}

First, it is easy to see that the assertion is Zariski-local with respect to $Z$. Hence, 
we can assume that the map $f$ can be factored as
$$Z\overset{i}\hookrightarrow Z'\to Y,$$
where $i$ is a closed embedding, and $Z'$ is of the form $W\times Y$.

\medskip

Let $\oZ\overset{j}\hookrightarrow Z'$ be the embedding of the complementary open.

\sssec{Proof of \propref{p:rel cryst as tensored up}, Step 1}

We claim that the assertion of the proposition holds for $Z'$. Indeed, we have:
$$Z'_\dr\underset{Y_\dr}\times Y\simeq W_\dr\times Y,$$
and hence
$$\QCoh(Z'_\dr\underset{Y_\dr}\times Y) 
\underset{\QCoh(Y)}\otimes \IndCoh(Y)\simeq \QCoh(W_\dr)\otimes \IndCoh(Y).$$

\medskip

Similarly, 
$$\IndCoh(Z'_\dr\underset{Y_\dr}\times Y)\simeq \IndCoh(W_\dr)\otimes \IndCoh(Y).$$

Now, the functor
$$\QCoh(Z'_\dr\underset{Y_\dr}\times Y) 
\underset{\QCoh(Y)}\otimes \IndCoh(Y)\to \IndCoh(Z'_\dr\underset{Y_\dr}\times Y)$$
identifies with
$$\Upsilon_{W_\dr}\otimes \on{Id}:\QCoh(W_\dr)\otimes \IndCoh(Y)\to
 \IndCoh(W_\dr)\otimes \IndCoh(Y),$$
which is an equivalence by \cite[Proposition 2.4.4]{GR1}.

\sssec{Proof of \propref{p:rel cryst as tensored up}, Step 2}

Note that the map $Z_\dr\underset{Y_\dr}\times Y\to Z'_\dr\underset{Y_\dr}\times Y$ is an isomorphism
from $Z_\dr\underset{Y_\dr}\times Y$ to its own formal completion inside $Z'_\dr\underset{Y_\dr}\times Y$.

\medskip

Hence, we have a localization sequence
$$\QCoh(Z_\dr\underset{Y_\dr}\times Y) \rightleftarrows \QCoh(Z'_\dr\underset{Y_\dr}\times Y) \rightleftarrows
\QCoh(\oZ_\dr\underset{Y_\dr}\times Y),$$
which gives rise to the localization sequence
\begin{multline*}
\QCoh(Z_\dr\underset{Y_\dr}\times Y) \underset{\QCoh(Y)}\otimes \IndCoh(Y)
\rightleftarrows \QCoh(Z'_\dr\underset{Y_\dr}\times Y) \underset{\QCoh(Y)}\otimes \IndCoh(Y)\rightleftarrows \\
\rightleftarrows \QCoh(\oZ_\dr\underset{Y_\dr}\times Y) \underset{\QCoh(Y)}\otimes \IndCoh(Y).
\end{multline*}

Similarly, we have a localization sequence
$$\IndCoh(Z_\dr\underset{Y_\dr}\times Y) \rightleftarrows \IndCoh(Z'_\dr\underset{Y_\dr}\times Y) \rightleftarrows
\IndCoh(\oZ_\dr\underset{Y_\dr}\times Y).$$

\medskip

Combined with the fact that
$$ \QCoh(Z'_\dr\underset{Y_\dr}\times Y) \underset{\QCoh(Y)}\otimes \IndCoh(Y)\to
\IndCoh(Z'_\dr\underset{Y_\dr}\times Y)$$
is an equivalence, this implies that \eqref{e:tensor up} is fully faithful. 

\medskip

Thus, we have proved that the functor \eqref{e:tensor up} is fully faithful for \emph{any} $Z$; in particular,
it is fully faithful, and in particular, conservative, for $\oZ$.
Comparing the localization sequences, this implies that the functor \eqref{e:tensor up} is essentially surjective 
for the initial $Z$, as required.

\qed(\propref{p:rel cryst as tensored up})

\sssec{}

Assume now that $Y$ is quasi-smooth. Consider the category $$\QCoh(Z_\dr\underset{Y_\dr}\times Y) \underset{\QCoh(Y)}\otimes \IndCoh(Y),$$
appearing on the left-hand side of the equivalence in \propref{p:rel cryst as tensored up}. It contains as a full subcategory 
\begin{multline*} 
\QCoh(Z_\dr\underset{Y_\dr}\times Y) \simeq \\
\simeq \QCoh(Z_\dr\underset{Y_\dr}\times Y) \underset{\QCoh(Y)}\otimes \QCoh(Y)\overset{\on{Id}\otimes \Xi_Y}\hookrightarrow 
\QCoh(Z_\dr\underset{Y_\dr}\times Y) \underset{\QCoh(Y)}\otimes \IndCoh(Y).
\end{multline*} 

The resulting embedding 
$$\QCoh(Z_\dr\underset{Y_\dr}\times Y)\to \QCoh(Z_\dr\underset{Y_\dr}\times Y) \underset{\QCoh(Y)}\otimes \IndCoh(Y)\simeq
\IndCoh(Z_\dr\underset{Y_\dr}\times Y)$$
differs from the canonical embedding $\Upsilon_{Z_\dr\underset{Y_\dr}\times Y}$ 
(given by the action of the left-hand side on $\omega_{Z_\dr\underset{Y_\dr}\times Y}$) 
by tensoring by the pullback of $\omega_Y$. In particular, the two
embeddings have the same essential image.

\medskip

Set
$$\oInd(Z_\dr\underset{Y_\dr}\times Y):= \IndCoh(Z_\dr\underset{Y_\dr}\times Y)/\QCoh(Z_\dr\underset{Y_\dr}\times Y).$$
We  view it as a full subcategory of $\IndCoh(Z_\dr\underset{Y_\dr}\times Y)$ by identifying it with 
$$\QCoh(Z_\dr\underset{Y_\dr}\times Y)^\perp\subset \IndCoh(Z_\dr\underset{Y_\dr}\times Y).$$

\medskip

In terms of the equivalence of \propref{p:rel cryst as tensored up}, we have
$$\oInd(Z_\dr\underset{Y_\dr}\times Y)=\QCoh(Z_\dr\underset{Y_\dr}\times Y) 
\underset{\QCoh(Y)}\otimes \oInd(Y),$$
as full subcategories of
$$\IndCoh(Z_\dr\underset{Y_\dr}\times Y)\simeq \QCoh(Z_\dr\underset{Y_\dr}\times Y) \underset{\QCoh(Y)}\otimes \IndCoh(Y).$$

\sssec{}

We now claim:

\begin{prop}  \label{p:tensor up via Sing}
There exist canonical equivalences
\begin{multline} \label{e:tensor up via Sing}
\oInd(Z_\dr\underset{Y_\dr}\times Y) \simeq 
\QCoh((Z\underset{Y}\times \BP\Sing(Y))_\dr)\underset{\QCoh((\BP\Sing(Y))_\dr)}\otimes \oInd(Y)\simeq \\
\simeq \bGamma\left((Z\underset{Y}\times \BP\Sing(Y))_\dr,\oInd(Y)^\sim\right)
\end{multline}
\end{prop}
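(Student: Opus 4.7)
The plan is to chain together Proposition~\ref{p:rel cryst as tensored up}, Lemma~\ref{l:QCoh on rel}, and the $\QCoh((\BP\Sing(Y))_\dr)$-action on $\oInd(Y)$ furnished by Theorem~\ref{t:crystal structure}. Write $T := \BP\Sing(Y)$. First, Proposition~\ref{p:rel cryst as tensored up} (combined with the description of $\oInd$ as the right orthogonal of $\QCoh$) gives $\oInd(Z_\dr\underset{Y_\dr}\times Y) \simeq \QCoh(Z_\dr\underset{Y_\dr}\times Y) \underset{\QCoh(Y)}\otimes \oInd(Y)$. By Lemma~\ref{l:QCoh on rel}, $\QCoh(Z_\dr\underset{Y_\dr}\times Y) \simeq \QCoh(Z_\dr) \underset{\QCoh(Y_\dr)}\otimes \QCoh(Y)$; substituting and collapsing the intervening $\QCoh(Y)$ rewrites this as $\QCoh(Z_\dr) \underset{\QCoh(Y_\dr)}\otimes \oInd(Y)$, where $\QCoh(Y_\dr)$ acts on $\oInd(Y)$ through the pullback $\QCoh(Y_\dr) \to \QCoh(Y)$ and the tautological action.

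Next I invoke the compatibility of Theorem~\ref{t:crystal structure}(b): this $\QCoh(Y_\dr)$-action on $\oInd(Y)$ coincides with the one obtained from the monoidal functor $\QCoh(Y_\dr) \to \QCoh(T_\dr)$ composed with the $\QCoh(T_\dr)$-action arising from the crystal structure. Hence the $\QCoh(Y_\dr)$-module structure factors through $\QCoh(T_\dr)$, and we can rewrite
\[\QCoh(Z_\dr) \underset{\QCoh(Y_\dr)}\otimes \oInd(Y) \simeq \left(\QCoh(Z_\dr) \underset{\QCoh(Y_\dr)}\otimes \QCoh(T_\dr)\right) \underset{\QCoh(T_\dr)}\otimes \oInd(Y).\]
A second application of Lemma~\ref{l:QCoh on rel}, together with the fact that the de Rham construction commutes with fiber products of prestacks (so that $Z_\dr \underset{Y_\dr}\times T_\dr \simeq (Z\underset{Y}\times T)_\dr$), identifies the parenthesized factor with $\QCoh((Z\underset{Y}\times T)_\dr)$. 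This establishes the first equivalence.

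For the second equivalence, I combine the 1-affineness of $T_\dr$ and of $(Z\underset{Y}\times T)_\dr$, both of which follow from Proposition~\ref{p:dr 1-aff} applied to the DG schemes $T$ and $Z\underset{Y}\times T$ respectively. Under 1-affineness, restriction of the sheaf of categories $\oInd(Y)^\sim \in \on{ShvCat}(T_\dr)$ along the projection $(Z\underset{Y}\times T)_\dr \to T_\dr$ corresponds to base-changing the underlying $\QCoh(T_\dr)$-module, and taking global sections over a 1-affine base returns this module: concretely, $\bGamma((Z\underset{Y}\times T)_\dr, \oInd(Y)^\sim) \simeq \QCoh((Z\underset{Y}\times T)_\dr) \underset{\QCoh(T_\dr)}\otimes \oInd(Y)$. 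The main obstacle is the middle step, where I must transport the tensor product from the $\QCoh(Y_\dr)$-action induced by pullback from $\QCoh(Y)$ over to the $\QCoh(T_\dr)$-action coming from the crystal structure; once this compatibility is correctly invoked, the rest is formal manipulation with Lemma~\ref{l:QCoh on rel}, base change, and 1-affineness.
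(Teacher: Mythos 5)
Your argument is correct and tracks the paper's own proof step for step: Proposition~\ref{p:rel cryst as tensored up} and Lemma~\ref{l:QCoh on rel} bring the tensor product down to one over $\QCoh(Y_\dr)$, Theorem~\ref{t:crystal structure}(b) lets you replace the base by $\QCoh((\BP\Sing(Y))_\dr)$, and 1-affineness of the relevant de Rham prestacks gives the final identification with global sections. One small nit: your ``second application of Lemma~\ref{l:QCoh on rel}'' is not a literal application of that lemma (whose last tensor factor is $\QCoh$ of a scheme, not of a de Rham prestack); the equivalence $\QCoh(Z_\dr)\underset{\QCoh(Y_\dr)}\otimes\QCoh((\BP\Sing(Y))_\dr)\simeq\QCoh((Z\underset{Y}\times\BP\Sing(Y))_\dr)$ instead follows directly from the 1-affineness of $Z_\dr$ and $Y_\dr$ via \cite[Proposition 3.1.9]{Ga2}, which is also the ingredient underlying Lemma~\ref{l:QCoh on rel}, so the substance of the step is fine.
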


\sssec{Proof of \propref{p:tensor up via Sing}}

Let us show that we have a canonical isomorphism
$$\QCoh(Z_\dr\underset{Y_\dr}\times Y)\underset{\QCoh(Y)}\otimes \bC_Y 
\simeq \QCoh((Z\underset{Y}\times \BP\Sing(Y))_\dr)\underset{\QCoh((\BP\Sing(Y))_\dr)}\otimes \bC_Y$$
for any $\bC_Y\in \QCoh((\BP\Sing(Y))_\dr\underset{Y_\dr}\times Y)\mmod$. 

\medskip

First, the fact $Y_\dr$ is 1-affine implies that
$$\QCoh(Z_\dr)\underset{\QCoh(Y_\dr)}\otimes \QCoh(Y)\to \QCoh(Z_\dr\underset{Y_\dr}\times Y)$$
is an isomorphism, see \lemref{l:QCoh on rel}.

\medskip

Hence, 
$$\QCoh(Z_\dr\underset{Y_\dr}\times Y)\underset{\QCoh(Y)}\otimes \bC_Y\simeq
\QCoh(Z_\dr)\underset{\QCoh(Y_\dr)}\otimes  \bC_Y.$$

\medskip

Next, we rewrite
\begin{multline*}
\QCoh(Z_\dr)\underset{\QCoh(Y_\dr)}\otimes  \bC_Y\simeq \\
\simeq 
\left(\QCoh(Z_\dr)\underset{\QCoh(Y_\dr)}\otimes \QCoh((\BP\Sing(Y))_\dr)\right) \underset{ \QCoh((\BP\Sing(Y))_\dr)}\otimes \bC_Y.
\end{multline*}

Now, the fact that both $Z_\dr$ and $Y_\dr$ are 1-affine implies that the functor
\begin{multline*}
\QCoh(Z_\dr)\underset{\QCoh(Y_\dr)}\otimes \QCoh((\BP\Sing(Y))_\dr)\to\\
\to \QCoh(Z_\dr\underset{Y_\dr}\times (\BP\Sing(Y))_\dr)=\QCoh((Z\underset{Y}\times \BP\Sing(Y))_\dr)
\end{multline*}
is an equivalence.

\medskip

Hence,
$$\QCoh(Z_\dr)\underset{\QCoh(Y_\dr)}\otimes  \bC_Y\simeq 
\QCoh((Z\underset{Y}\times \BP\Sing(Y))_\dr)\underset{\QCoh((\BP\Sing(Y))_\dr)}\otimes \bC_Y,$$
as desired.

\medskip

Finally, the fact that
$$\QCoh((Z\underset{Y}\times \BP\Sing(Y))_\dr)\underset{\QCoh((\BP\Sing(Y))_\dr)}\otimes \oInd(Y)\to
\bGamma\left((Z\underset{Y}\times \BP\Sing(Y))_\dr,\oInd(Y)^\sim\right)$$
is an equivalence follows from the fact that both $\BP\Sing(Y))_\dr$ and 
$(Z\underset{Y}\times \BP\Sing(Y))_\dr$ are 1-affine.

\qed

\ssec{Relative crystals with prescribed singular support}  

Let $f:Z\to Y$ be as before. We now assume 
that $Z$ is quasi-smooth and that $f$ has a perfect relative cotangent 
complex (this is automatic if $Y$ is also quasi-smooth).

\medskip

In this subsection we show how conical subvarieties on 
$\Sing(Z)$ give rise to subcategories of  $\IndCoh(Z_\dr\underset{Y_\dr}\times Y)$.

\sssec{}

The tautological map $p_{\dr/Y,Z}:Z\to Z_\dr\underset{Y_\dr}\times Y$ gives rise to
the forgetful functor
$$(p_{\dr/Y,Z})^!: \IndCoh(Z_\dr\underset{Y_\dr}\times Y)\to \IndCoh(Z).$$

\medskip

According to \cite[Chapter III.3, Proposition 3.1.2]{GR2}, the functor
$(p_{\dr/Y,Z})^!$ is conservative and admits a left adjoint, denoted $(p_{\dr/Y,Z})^\IndCoh_*$. 
Informally, if one views ind-coherent sheaves on $Z_\dr\underset{Y_\dr}\times Y$ as (relative) D-modules for the morphism
$Z\to Y$, then $(p_{\dr/Y,Z})^\IndCoh_*$ is the induction functor from ind-coherent sheaves on $Z$ to 
relative D-modules.

\medskip

The composition $((p_{\dr/Y,Z})^!\circ (p_{\dr/Y,Z})^\IndCoh_*)$ acquires a natural structure of a monad
acting on $\IndCoh(Z)$. Denote by 
$$((p_{\dr/Y,Z})^!\circ (p_{\dr/Y,Z})^\IndCoh_*)\mod(\IndCoh(Z))$$
the category of modules over this monad. The Barr-Beck-Lurie theorem provides an equivalence
\[\IndCoh(Z_\dr\underset{Y_\dr}\times Y)\simeq ((p_{\dr/Y,Z})^!\circ (p_{\dr/Y,Z})^\IndCoh_*)\mod(\IndCoh(Z)).\] 
(The assumption that $Z$ is quasi-smooth is not required for this equivalence.)

\sssec{}  \label{sss:prescribe}

Now fix a conical Zariski-closed subset $\CN\subset \Sing(Z)$. Let
$$\IndCoh_\CN(Z_\dr\underset{Y_\dr}\times Y)\subset \IndCoh(Z_\dr\underset{Y_\dr}\times Y)$$
denote the preimage of
$$\IndCoh_\CN(Z)\subset \IndCoh(Z)$$
under the functor $(p_{\dr/Y,Z})^!$.

\medskip

We claim:

\begin{prop} \label{p:preservation of support by monad}
The functor $(p_{\dr/Y,Z})^\IndCoh_*$ sends $\IndCoh_\CN(Z)$ to $\IndCoh_\CN(Z_\dr\underset{Y_\dr}\times Y)$.
\end{prop}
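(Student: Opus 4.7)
The plan is to reduce the claim to the statement that the monad
\[T := (p_{\dr/Y,Z})^! \circ (p_{\dr/Y,Z})^\IndCoh_*\]
on $\IndCoh(Z)$ preserves the full subcategory $\IndCoh_\CN(Z) \subset \IndCoh(Z)$. This reduction is immediate from the adjunction $(p_{\dr/Y,Z})^\IndCoh_* \dashv (p_{\dr/Y,Z})^!$ together with the definition of $\IndCoh_\CN(Z_\dr \underset{Y_\dr}\times Y)$ as the $(p_{\dr/Y,Z})^!$-preimage of $\IndCoh_\CN(Z)$: for $\CF \in \IndCoh_\CN(Z)$, the image $(p_{\dr/Y,Z})^\IndCoh_*(\CF)$ belongs to $\IndCoh_\CN(Z_\dr \underset{Y_\dr}\times Y)$ if and only if $T(\CF) \in \IndCoh_\CN(Z)$.

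To analyze $T$, first apply base change (valid because $p_{\dr/Y,Z}$ is a nil-isomorphism, hence ind-proper) to rewrite $T \simeq (\mathrm{pr}_2)^\IndCoh_* \circ (\mathrm{pr}_1)^!$, where $\mathrm{pr}_1, \mathrm{pr}_2$ are the two projections from $W := Z \underset{Z_\dr \underset{Y_\dr}\times Y}\times Z$. Unwinding the fiber product shows $W \simeq (Z \underset{Y}\times Z)^\wedge_Z$, the formal completion of the diagonal in $Z \times_Y Z$. Writing this formal completion as the filtered colimit of its classical infinitesimal neighborhoods $W_n$, one obtains $T \simeq \underset{n}{\colim}\, T_n$, where $T_n := (\mathrm{pr}_2^n)^\IndCoh_* \circ (\mathrm{pr}_1^n)^!$ and $\mathrm{pr}_i^n : W_n \to Z$.

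Each $W_n$ is affine over $Z$ via either projection, being a nilpotent thickening of the diagonal $Z \hookrightarrow W_n$ with retraction $\mathrm{pr}_i^n$. Identifying $\IndCoh(W_n)$ with the category of $\CO_{W_n}$-modules in $\IndCoh(Z)$ yields $T_n(\CF) \simeq \CO_{W_n} \otimes_{\CO_Z} \CF$, where $\CO_{W_n}$ is viewed as an $(\CO_Z,\CO_Z)$-bimodule via the two projections. The crucial computation is that $\CO_{W_n}$ is a perfect complex on $Z$: from the Cartesian square for the diagonal one has $L_{Z/(Z \times_Y Z)} \simeq L_{Z/Y}[1]$ (as the cofiber of the sum map $L_{Z/Y} \oplus L_{Z/Y} \to L_{Z/Y}$), so the ideal filtration on $\CO_{W_n}$ has associated graded pieces $\on{Sym}^k(L_{Z/Y}[1])$ for $k \leq n$; the hypothesis that $L_{Z/Y}$ is perfect then forces each derived symmetric power, and hence the finite extension $\CO_{W_n}$, to be perfect. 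Since $\IndCoh_\CN(Z)$ is a $\QCoh(Z)$-submodule of $\IndCoh(Z)$ (cf.~\cite[Sect.~4]{AG}), it is preserved under tensoring with any perfect complex on $Z$; combined with closure under filtered colimits, this gives that $T = \underset{n}{\colim}\, T_n$ preserves $\IndCoh_\CN(Z)$, completing the reduction.

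The main obstacle will be making the derived formal-geometric manipulations rigorous: verifying $W \simeq (Z \times_Y Z)^\wedge_Z$ in the correct derived sense, justifying the exchange of $\IndCoh$ pull-push with the filtered colimit presentation via the $W_n$, and confirming that each $\on{Sym}^k(L_{Z/Y}[1])$ is perfect. All of these rest on the formal-geometric machinery of \cite{GR2} applied to the perfectness of $L_{Z/Y}$.
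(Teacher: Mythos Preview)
Your opening reduction (show that the monad $T$ preserves $\IndCoh_\CN(Z)$) and your closing step (the $\QCoh(Z)$-action by a perfect complex preserves $\IndCoh_\CN(Z)$) are exactly how the paper proceeds. The paper fills the middle by directly invoking \cite[Part 4.4, Theorem 11.3.2]{GR2}: the monad carries a filtration whose $n$-th associated graded is $\Sym^n(T(Z/Y)) \sotimes -$, and perfectness of $T^*(Z/Y)$ then identifies this with the $\QCoh(Z)$-action by $\Sym^n((T^*(Z/Y))^\vee)$. You instead try to produce such a filtration by hand via classical infinitesimal neighborhoods $W_n$ of the diagonal, and that is where the problems lie.

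The formula $T_n(\CF) \simeq \CO_{W_n} \otimes_{\CO_Z} \CF$ computes $(\on{pr}_2)_* (\on{pr}_1)^*$, not $(\on{pr}_2)^\IndCoh_* (\on{pr}_1)^!$; the correct integral kernel is the relative dualizing complex, and accordingly the graded pieces involve the \emph{tangent} complex $(T^*(Z/Y))^\vee$, not $L_{Z/Y}[1]$. (For $Z/Y$ smooth of relative dimension one your expression $\Sym^k(L_{Z/Y}[1])$ vanishes for $k \geq 2$, whereas the actual monad---the relative differential-operator monad---has nonzero graded pieces in every degree.) Note also that the restriction maps $\CO_{W_{n+1}} \to \CO_{W_n}$ point the wrong way for a colimit; the transition maps $T_n \to T_{n+1}$ arise instead from the counits $(i_n)^\IndCoh_* i_n^! \to \id$. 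None of this is fatal---the corrected kernels are still perfect, and on the associated graded the two $\CO_Z$-actions still coincide so the bimodule tensor really is the $\QCoh(Z)$-action---but fixing these details amounts to reproving the cited result from \cite{GR2}, and it is cleaner simply to invoke it as the paper does.
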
 

\begin{proof}

The assertion of the proposition is equivalent to the fact that $$((p_{\dr/Y,Z})^!\circ (p_{\dr/Y,Z})^\IndCoh_*),$$
viewed as a plain endofunctor of $\IndCoh(Z)$, preserves the full subcategory $\IndCoh_\CN(Z)$.

\medskip

Recall that according to \cite[Chapter IV.5, Theorem 6.1.2]{GR2}, 
$((p_{\dr/Y,Z})^!\circ (p_{\dr/Y,Z})^\IndCoh_*)$ admits a filtration whose $n$-th associated graded
is isomorphic to the functor 
\begin{equation} \label{e:tensor product by Sym}
\Sym^n(T(Z/Y))\sotimes -,
\end{equation} 
where $\Sym^n$ is taken in the symmetric monoidal category $(\IndCoh(Z),\sotimes)$, and $T(Z/Y)\in \IndCoh(Z)$
is as in \cite[Chapter III.1, Sect. 4.3.8]{GR2}.

\medskip

Thus, it suffices to show that the functor \eqref{e:tensor product by Sym} preserves the subcategory
$\IndCoh_\CN(Z)$.

\medskip

Let $T^*(Z/Y)\in \QCoh(Z)$ be the cotangent complex of $Z$. The assumption that $T^*(Z/Y)$ be perfect implies that
$T(Z/Y)\in \IndCoh(Z)$ is canonically isomorphic to
$$\Upsilon_Z((T^*(Z/Y))^\vee),$$
where $(T^*(Z/Y))^\vee\in \QCoh(Z)$ is the monoidal dual of $T^*(Z/Y)$, and where $\Upsilon_Z$ is as in
\cite[Chapter II.3, Sect. 3.2.5]{GR2}.

\medskip

Therefore,
$$\Sym^n(T(Z/Y))\simeq \Upsilon_Z \left(\Sym^n((T^*(Z/Y))^\vee)\right),$$
where $\Sym^n$ is now taken in the symmetric monoidal category $(\QCoh(Z),\otimes)$.
Hence, the functor \eqref{e:tensor product by Sym} is given by 
$$\Sym^n((T^*(Z/Y))^\vee)\otimes-,$$
where $\otimes$ denotes the action of $\QCoh(Z)$ on $\IndCoh(Z)$, and therefore preserves
$\IndCoh_\CN(Z)$, see \cite[Lemma 4.2.2]{AG}. 
\end{proof} 

\sssec{}

As a corollary of \propref{p:preservation of support by monad}, we obtain:

\begin{cor} \label{c:Barr-Beck supp}
There exists a canonical equivalence
$$\IndCoh_\CN(Z_\dr\underset{Y_\dr}\times Y)\simeq ((p_{\dr/Y,Z})^!\circ (p_{\dr/Y,Z})^\IndCoh_*)\mod(\IndCoh_\CN(Z)),$$
commuting with the forgetful functors to $\IndCoh_\CN(Z)$.
\end{cor}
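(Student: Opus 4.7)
The plan is to apply the Barr--Beck--Lurie theorem to the restriction of the adjunction $((p_{\dr/Y,Z})^\IndCoh_*,\, (p_{\dr/Y,Z})^!)$ to the subcategory $\IndCoh_\CN(Z)$, using \propref{p:preservation of support by monad} as the key input that makes the restriction well-defined.

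First, I would verify that the adjunction restricts to an adjunction
$$(p_{\dr/Y,Z})^\IndCoh_*:\IndCoh_\CN(Z) \rightleftarrows \IndCoh_\CN(Z_\dr\underset{Y_\dr}\times Y):(p_{\dr/Y,Z})^!.$$
The right adjoint sends $\IndCoh_\CN(Z_\dr\underset{Y_\dr}\times Y)$ to $\IndCoh_\CN(Z)$ by the very definition of $\IndCoh_\CN(Z_\dr\underset{Y_\dr}\times Y)$ in \secref{sss:prescribe}, while the left adjoint sends $\IndCoh_\CN(Z)$ to $\IndCoh_\CN(Z_\dr\underset{Y_\dr}\times Y)$ by \propref{p:preservation of support by monad}. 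Since both functors preserve the respective full subcategories, the unit and counit of the original adjunction automatically restrict, and the induced monad on $\IndCoh_\CN(Z)$ coincides (as an endofunctor equipped with monad structure) with the restriction of the original monad $T:=(p_{\dr/Y,Z})^!\circ (p_{\dr/Y,Z})^\IndCoh_*$.

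Next, I would check the Barr--Beck--Lurie hypotheses for the restricted adjunction. Conservativity of $(p_{\dr/Y,Z})^!$ on $\IndCoh_\CN(Z_\dr\underset{Y_\dr}\times Y)$ is inherited from its conservativity on the whole category $\IndCoh(Z_\dr\underset{Y_\dr}\times Y)$, cited from \cite[Part III.3, Proposition 3.1.2]{GR2}. Continuity (commuting with colimits) of $(p_{\dr/Y,Z})^!$ on the subcategory is inherited from the ambient category, since $\IndCoh_\CN(Z_\dr\underset{Y_\dr}\times Y)\subset \IndCoh(Z_\dr\underset{Y_\dr}\times Y)$ is a colocalization (in fact it is closed under all colimits as it is defined by a singular support condition pulled back through a conservative, continuous functor from a similarly closed subcategory). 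Thus Barr--Beck--Lurie yields
$$\IndCoh_\CN(Z_\dr\underset{Y_\dr}\times Y)\simeq T\mod(\IndCoh_\CN(Z)),$$
compatibly with the forgetful functors to $\IndCoh_\CN(Z)$.

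Finally, I would reconcile this with the statement as written: the right-hand side $((p_{\dr/Y,Z})^!\circ (p_{\dr/Y,Z})^\IndCoh_*)\mod(\IndCoh_\CN(Z))$ makes sense precisely because of \propref{p:preservation of support by monad} (so that $T$ really acts on $\IndCoh_\CN(Z)$), and it agrees with the $T$-modules produced above. The only step that is not purely formal is the verification that $\IndCoh_\CN(Z_\dr\underset{Y_\dr}\times Y)$ is closed under the colimits needed to run Barr--Beck--Lurie, but this is immediate from its definition as the preimage of a colimit-closed subcategory under a continuous functor. The main (non-formal) input is \propref{p:preservation of support by monad} itself, which has already been established; everything else in this corollary is a formal consequence.
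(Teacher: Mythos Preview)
Your argument is correct and is exactly what the paper intends: the corollary is stated without proof as a direct consequence of \propref{p:preservation of support by monad}, and the implicit reasoning is precisely the Barr--Beck--Lurie argument you spell out, using that $(p_{\dr/Y,Z})^!$ is conservative and continuous on the ambient category (and hence on the full subcategory defined as its preimage).
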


\sssec{}

Let us now assume that $Y$ is quasi-smooth as well. Let $\CN\subset \Sing(Z)$ be a Zariski-closed conical subset
that contains the zero-section. We have
$$\QCoh(Z_\dr\underset{Y_\dr}\times Y)\subset \IndCoh_\CN(Z_\dr\underset{Y_\dr}\times Y),$$
as follows from the commutative diagram
$$
\CD
\QCoh(Z_\dr\underset{Y_\dr}\times Y) @>{\Upsilon_{Z_\dr\underset{Y_\dr}\times Y}}>>   \IndCoh(Z_\dr\underset{Y_\dr}\times Y) \\
@VVV    @VVV   \\
\QCoh(Z)  @>{\Upsilon_Z}>>  \IndCoh(Z).
\endCD
$$

Let us denote by
\begin{equation}  \label{e:full N 1}
\oInd_\CN(Z_\dr\underset{Y_\dr}\times Y)
\end{equation} 
the quotient
$$\IndCoh_\CN(Z_\dr\underset{Y_\dr}\times Y)/\QCoh(Z_\dr\underset{Y_\dr}\times Y),$$
considered as a full subcategory of
$$\oInd(Z_\dr\underset{Y_\dr}\times Y)\subset \IndCoh(Z_\dr\underset{Y_\dr}\times Y).$$

\sssec{} 

Recall now the map
$$\Sing(f):Z\underset{Y}\times \Sing(Y)\to \Sing(Z),$$
see \cite[Sect. 2.4.1]{AG}. For $\{0\}\subset \CN\subset \Sing(Z)$ as above, consider the closed subset
$$\Sing(f)^{-1}(\CN)\subset Z\underset{Y}\times \Sing(Y).$$

Consider the corresponding closed subset 
$$\BP(\Sing(f)^{-1}(\CN))\subset Z\underset{Y}\times \BP\Sing(Y).$$

\medskip

Consider the corresponding full subcategory 
\begin{equation} \label{e:full N 2}
\bGamma\left((\BP(\Sing(f)^{-1}(\CN)))_\dr,\oInd(Y)^\sim\right)\subset
\bGamma\left((Z\underset{Y}\times \BP\Sing(Y))_\dr,\oInd(Y)^\sim\right),
\end{equation}
or, which is the same,
\begin{multline} \label{e:full N 2 bis}
\QCoh\left((\BP(\Sing(f)^{-1}(\CN)))_\dr\right)\underset{\QCoh((\BP\Sing(Y))_\dr)}\otimes \oInd(Y)
\subset \\
\subset \QCoh((Z\underset{Y}\times \BP\Sing(Y))_\dr)\underset{\QCoh((\BP\Sing(Y))_\dr)}\otimes \oInd(Y).
\end{multline}

\sssec{}

Using \propref{p:tensor up via Sing}, we identify 
\begin{equation}   \label{e:identify again}
\oInd(Z_\dr\underset{Y_\dr}\times Y) \simeq 
\bGamma\left((Z\underset{Y}\times \BP\Sing(Y))_\dr,\oInd(Y)^\sim\right)
\end{equation}
or, equivalently,
\begin{equation}   \label{e:identify again bis}
\oInd(Z_\dr\underset{Y_\dr}\times Y) 
\simeq \QCoh((Z\underset{Y}\times \BP\Sing(Y))_\dr)\underset{\QCoh((\BP\Sing(Y))_\dr)}\otimes \oInd(Y).
\end{equation}

\medskip

We claim:

\begin{thm} \label{t:express category}
The full subcategory 
$$\oInd_\CN(Z_\dr\underset{Y_\dr}\times Y)\subset \IndCoh_\CN(Z_\dr\underset{Y_\dr}\times Y)$$
of  \eqref{e:full N 1} 
corresponds under the identifications \eqref{e:identify again} and \eqref{e:identify again bis} to the full subcategory
$$\bGamma\left((\BP(\Sing(f)^{-1}(\CN)))_\dr,\oInd(Y)^\sim\right)\subset
\bGamma\left((Z\underset{Y}\times \BP\Sing(Y))_\dr,\oInd(Y)^\sim\right)$$
from \eqref{e:full N 2}, or, equivalently, to the full subcategory 
\begin{multline*} 
\QCoh\left((\BP(\Sing(f)^{-1}(\CN)))_\dr\right)\underset{\QCoh((\BP\Sing(Y))_\dr)}\otimes \oInd(Y)
\subset \\
\subset \QCoh((Z\underset{Y}\times \BP\Sing(Y))_\dr)\underset{\QCoh((\BP\Sing(Y))_\dr)}\otimes \oInd(Y).
\end{multline*}
from \eqref{e:full N 2 bis}.
\end{thm}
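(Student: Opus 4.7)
The plan is to reduce the statement to a support-comparison mediated by the singular codifferential. Use \propref{p:tensor up via Sing} to identify $\oInd(Z_\dr\underset{Y_\dr}\times Y)$ with $\bGamma(W_\dr,\oInd(Y)^\sim)$, where $W := Z\underset{Y}\times\BP\Sing(Y)$. Both subcategories under comparison are full subcategories of this common category, stable under the $\QCoh(W_\dr)$-action (and hence under the $\QCoh((\BP\Sing(Y))_\dr)$-action on the $\oInd(Y)$-factor of the tensor product).

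First I would unwind how the forgetful functor $(p_{\dr/Y,Z})^!:\IndCoh(Z_\dr\underset{Y_\dr}\times Y)\to \IndCoh(Z)$ interacts with the equivalences of \propref{p:rel cryst as tensored up} and \propref{p:tensor up via Sing}. The key structural input is that $(p_{\dr/Y,Z})^!$ is $\QCoh(Y)$-linearly compatible with the ordinary pullback $f^!:\oInd(Y)\to \oInd(Z)$, via the commutative square in which $Z\to Z_\dr\underset{Y_\dr}\times Y$ composed with the projection to $Y$ recovers $f$. Combined with the main compatibility of singular support with $!$-pullback via the singular codifferential (see \cite[Sect. 7]{AG}, giving $\BP\on{SingSupp}(f^!\CG)\subset \BP\Sing(f)(Z\underset{Y}\times\BP\on{SingSupp}(\CG))$ for $\CG\in\oInd(Y)$), and with the tautology $\Sing(f)(\Sing(f)^{-1}(\CN))\subset \CN$, this yields the easy inclusion
$$\bGamma((\BP(\Sing(f)^{-1}(\CN)))_\dr,\oInd(Y)^\sim)\subset \oInd_\CN(Z_\dr\underset{Y_\dr}\times Y).$$

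For the reverse inclusion --- the heart of the theorem --- I would use the Barr--Beck description of \corref{c:Barr-Beck supp}: $\IndCoh_\CN(Z_\dr\underset{Y_\dr}\times Y)$ is identified with the category of modules over the monad $(p_{\dr/Y,Z})^!\circ(p_{\dr/Y,Z})^\IndCoh_*$ acting on $\IndCoh_\CN(Z)$, while an analogous monadic description holds for $\bGamma((\BP(\Sing(f)^{-1}(\CN)))_\dr,\oInd(Y)^\sim)$ via the crystal structure and the $\QCoh(W_\dr)$-action. The main obstacle is identifying these two monads. Mirroring the proof of \thmref{t:crystal structure}, I would Zariski-localize on $Z$ and reduce to the global derived complete intersection case of \secref{ss:our case}; there both monads admit filtrations by symmetric powers of $T(Z/Y)$, following \cite[Part IV.4, Theorem 11.3.2]{GR2} on one side and the $\BE_2$-algebra $\Sym(V[-2])$ of \cite[Sect. 5.3]{AG} controlling the crystal structure on the other. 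A term-by-term matching of these filtrations --- which ultimately reduces to the identification of the singular codifferential $\Sing(f)$ with the corresponding map of obstruction spaces in the complete intersection presentation --- intertwines the two monads and completes the proof.
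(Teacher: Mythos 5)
Your overall circle of ideas (identify the ambient category via \propref{p:tensor up via Sing}, then use the Barr--Beck monad of \corref{c:Barr-Beck supp} and the singular-support estimates from \cite[Sect. 7]{AG}) overlaps with the paper's proof, but your route is genuinely different and I think there is a gap in the "easy" half.

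For a general $f:Z\to Y$ the forgetful functor $(p_{\dr/Y,Z})^!:\IndCoh(Z_\dr\underset{Y_\dr}\times Y)\to\IndCoh(Z)$ is \emph{not} $f^!$: the identity $(p_{\dr/Y,Z})^!\circ(f_\dr\times\on{id}_Y)^!\simeq f^!$ pins down $(p_{\dr/Y,Z})^!$ only on the essential image of $(f_\dr\times\on{id}_Y)^!$, while the membership test defining $\oInd_\CN(Z_\dr\underset{Y_\dr}\times Y)$ applies $(p_{\dr/Y,Z})^!$ to \emph{arbitrary} objects. The singular-support estimate $\on{SingSupp}(f^!\CG)\subset\Sing(f)(Z\underset{Y}\times\on{SingSupp}(\CG))$ therefore does not directly control $(p_{\dr/Y,Z})^!$, and the "easy inclusion" you write down is not in fact established by the argument you give. (When $f$ is a closed embedding one does have $(p_{\dr/Y,Z})^!=f^!|_{\IndCoh(Y)_Z}$, so your argument works there --- but that special case is precisely the paper's Step~1.) The "hard inclusion" via matching the two filtered monads (GR's deformation-to-the-normal-cone filtration vs. the $\Sym(V[-2])$-action from the complete-intersection presentation) is plausible in spirit but is asserted rather than proved; in particular you would need to show that the $\QCoh(W_\dr)$-module side admits a monadic description over $\IndCoh_\CN(Z)$ with a filtration comparable term-by-term to $\Sym^\bullet(T(Z/Y))\sotimes -$, and that identification is essentially as hard as the theorem itself.

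For comparison: the paper instead factors $f$ Zariski-locally as a closed embedding $Z\hookrightarrow Y'=Y\times W$ followed by a smooth projection $Y'\to Y$. Step~1 handles closed embeddings directly (there $Z_\dr\underset{Y_\dr}\times Y\simeq Y^\wedge_Z$, the forgetful functor is $f^!$, and both inclusions follow from \cite[Prop.~7.1.3]{AG} plus \corref{c:Barr-Beck supp}); Step~2 transports the statement across the smooth projection by showing a relevant pullback functor is conservative, exploiting that $\Sing(g)$ is an isomorphism for smooth $g$. This factorization cleanly separates the nontrivial singular-support comparison (absorbed into the closed-embedding case) from the smooth-descent bookkeeping, and avoids having to control $(p_{\dr/Y,Z})^!$ directly for general $f$, which is precisely where your easy-inclusion step stalls. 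If you want to pursue your monad-comparison idea, you would still need a reduction to the closed-embedding or complete-intersection case for the \emph{easy} direction as well, not just the hard one.
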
 

\sssec{An example}

Let us take $\CN=\{0\}$. In this case we have the following three full subcategories of
$\IndCoh(Z_\dr\underset{Y_\dr}\times Y)$. The largest is $\IndCoh(Z_\dr\underset{Y_\dr}\times Y)$ 
itself. 

\medskip

The smallest is 
$$\QCoh(Z_\dr\underset{Y_\dr}\times Y)\subset \IndCoh(Z_\dr\underset{Y_\dr}\times Y).$$

\medskip

The middle category is $\IndCoh_{\{0\}}(Z_\dr\underset{Y_\dr}\times Y)$, i.e., the preimage of
$\QCoh(Z)\subset \IndCoh(Z)$ under the forgetful functor
$$\IndCoh(Z_\dr\underset{Y_\dr}\times Y)\to \IndCoh(Z).$$ 

\medskip

In terms of the identification
\begin{multline*} 
\oInd(Z_\dr\underset{Y_\dr}\times Y)=\IndCoh(Z_\dr\underset{Y_\dr}\times Y)/\QCoh(Z_\dr\underset{Y_\dr}\times Y)\simeq \\
\simeq \QCoh((Z\underset{Y}\times \BP\Sing(Y))_\dr)\underset{\QCoh((\BP\Sing(Y))_\dr)}\otimes \oInd(Y).
\end{multline*}
of \propref{p:tensor up via Sing}, the subcategory 
$$\oInd_{\{0\}}(Z_\dr\underset{Y_\dr}\times Y)\subset \oInd(Z_\dr\underset{Y_\dr}\times Y)$$
corresponds to subscheme
$$\BP(\Sing(f)^{-1}(\{0\}))\subset Z\underset{Y}\times \BP\Sing(Y).$$

\ssec{Proof of \thmref{t:express category}, Step 1}

We  first show that the assertion of the theorem holds when $f:Z\to Y$ is a closed embedding. 

\sssec{}

Note that in this case $Z_\dr\underset{Y_\dr}\times Y$ is the formal completion $Y^\wedge_Z$ of $Y$ along $Z$.
In particular, $\IndCoh(Z_\dr\underset{Y_\dr}\times Y)$ identifies with the full subcategory
$$\IndCoh(Y)_Z\subset \IndCoh(Y),$$
consisting of objects that are set-theoretically supported on $Z\subset Y$. 

\medskip

Recall that the categories $\IndCoh(Z)$ and $\IndCoh(Z_\dr\underset{Y_\dr}\times Y)$ are related by a pair
of adjoint functors
\[(p_{\dr/Y,Z})^\IndCoh_*:\IndCoh(Z)\rightleftarrows \IndCoh(Z_\dr\underset{Y_\dr}\times Y):(p_{\dr/Y,Z})^!\]
(the induction functor and the forgetful functor). Under the equivalence 
\[\IndCoh(Z_\dr\underset{Y_\dr}\times Y)\simeq \IndCoh(Y)_Z,\]
they are identified with the pair of adjoint functors
\[f^\IndCoh_*:\IndCoh(Z)\rightleftarrows \IndCoh(Y)_Z:f^!|_{\IndCoh(Y)_Z}.\]

\medskip

Under the identification of \eqref{e:tensor up}, the full subcategory
$$\QCoh(Z_\dr\underset{Y_\dr}\times Y) \underset{\QCoh(Y)}\otimes \oInd(Y) 
\subset \QCoh(Z_\dr\underset{Y_\dr}\times Y) \underset{\QCoh(Y)}\otimes \IndCoh(Y)$$
corresponds to
$$\IndCoh(Y)_Z\cap \oInd(Y)\subset \IndCoh(Y)_Z\simeq \IndCoh(Z_\dr\underset{Y_\dr}\times Y).$$

Furthermore, the diagram
$$
\CD
\QCoh(Z_\dr\underset{Y_\dr}\times Y) \underset{\QCoh(Y)}\otimes \oInd(Y)   @>{\text{\propref{p:tensor up via Sing}}}>{\sim}>  
\bGamma\left((Z\underset{Y}\times \BP\Sing(Y))_\dr,\oInd(Y)^\sim\right)  \\
@VV{\sim}V    @VVV   \\
\IndCoh(Y)_Z\cap \oInd(Y) & &  \bGamma\left((\BP\Sing(Y))_\dr,\oInd(Y)^\sim\right)   \\
@VVV   @VV{\sim}V   \\
\oInd(Y) @>{\on{Id}}>> \oInd(Y)
\endCD
$$
commutes. 

\sssec{}

Set
$$\CM:=\Sing(f)^{-1}(\CN)\subset Z\underset{Y}\times \Sing(Y)\subset \Sing(Y).$$
Let $\BP\CM$ denote the corresponding Zariski-closed subset of $\BP\Sing(Y)$. 

\medskip

Then, by \thmref{t:crystal structure}(a), 
$$\bGamma\left(\BP\CM_\dr,\oInd(Y)^\sim\right)\subset 
\bGamma\left((Z\underset{Y}\times \BP\Sing(Y))_\dr,\oInd(Y)^\sim\right)\subset \oInd(Y)$$
identifies with the full subcategory of $\oInd(Y)$ equal to
$$\oInd_{\BP\CM}(Y)=\oInd(Y)\cap \IndCoh_\CM(Y).$$



\medskip

Therefore, in order to establish the assertion of the theorem, it is sufficient to show that
$$\IndCoh_\CM(Y)=\IndCoh_\CN(Z_\dr\underset{Y_\dr}\times Y),$$
as subcategories of $\IndCoh(Z_\dr\underset{Y_\dr}\times Y)\simeq \IndCoh(Y)_Z$. 

\sssec{}

Thus, we need to show that $\IndCoh_\CM(Y)\subset \IndCoh(Y)_Z$ equals the preimage
of $\IndCoh_\CN(Z)$ under the functor $f^!:\IndCoh(Y)_Z\to \IndCoh(Z)$. 

\medskip

We note that the inclusion
$$\IndCoh_\CM(Y)\subset (f^!)^{-1}(\IndCoh_\CN(Z))$$
follows from \cite[Proposition 7.1.3(a)]{AG}. 

\medskip

For the opposite inclusion, by \corref{c:Barr-Beck supp}, it suffices to show that the essential image of 
$\IndCoh_\CN(Z)$ under the functor
$$f^\IndCoh_*:\IndCoh(Z)\to \IndCoh(Y)_Z$$ 
is contained in $\IndCoh_\CM(Y)$. However, this follows from \cite[Proposition 7.1.3(b)]{AG}. 

\ssec{Proof of \thmref{t:express category}, Step 2}

We  now consider the case of a general morphism $f:Z\to Y$. 

\sssec{}

It is easy to see that the assertion of the theorem is Zariski-local on $Z$. Hence, we can assume that the morphism $f$ factors
as $$Z\overset{f'}\to Y' \overset{g}\to Y,$$
where $Z\to Y'$ is a closed embedding, and $g$ is smooth. Furthermore, we can assume that $Y'$ is isomorphic to
$Y\times W$ with $W$ smooth. 

\medskip

By Step 1, we know that the statement of the theorem holds for the morphism $Z\to Y'$. 

\sssec{}

Consider the (forgetful) functor 
\begin{equation} \label{e:Y to Y'}
(\on{id}\times g)^!:\IndCoh(Z_\dr\underset{Y_\dr}\times Y)\to \IndCoh(Z_\dr\underset{Y'_\dr}\times Y').
\end{equation}

By definition,
$$\IndCoh_\CN(Z_\dr\underset{Y_\dr}\times Y)\subset \IndCoh(Z_\dr\underset{Y_\dr}\times Y)$$
is the preimage under \eqref{e:Y to Y'} of 
$$\IndCoh_\CN(Z_\dr\underset{Y'_\dr}\times Y')\subset \IndCoh(Z_\dr\underset{Y'_\dr}\times Y').$$

\sssec{}

The fact that $g$ is smooth implies that
$$\Sing(g):Y'\underset{Y}\times \Sing(Y)\to \Sing(Y')$$
is an isomorphism. In particular,
$$Z\underset{Y}\times \Sing(Y)\simeq Z\underset{Y'}\times \Sing(Y').$$

\medskip

Under this identification, the loci
$$\Sing(f)^{-1}(\CN) \subset Z\underset{Y}\times \Sing(Y) \text{ and }
\Sing(f')^{-1}(\CN)\subset Z\underset{Y'}\times \Sing(Y')$$
correspond to one another. 

\medskip

Under the identifications of \propref{p:tensor up via Sing} for $Y$ and $Y'$, respectively, the pullback functor
$$\QCoh(Z_\dr\underset{Y_\dr}\times Y)\underset{\QCoh(Y)}\otimes \oInd(Y) \to
\QCoh(Z_\dr\underset{Y'_\dr}\times Y')\underset{\QCoh(Y')}\otimes \oInd(Y')$$
corresponds to the functor
\begin{multline}  \label{e:another Y to Y'}
\QCoh((Z\underset{Y}\times \BP\Sing(Y))_\dr)\underset{\QCoh((\BP\Sing(Y))_\dr)}\otimes \oInd(Y)  \to \\
\to \QCoh((Z\underset{Y'}\times \BP\Sing(Y'))_\dr)\underset{\QCoh((\BP\Sing(Y'))_\dr)}\otimes \oInd(Y'),  
\end{multline} 

Hence, we obtain that in order to prove the theorem, it suffices to show that the preimage of 
\begin{multline} \label{e:on Y'}
\QCoh((\BP(\Sing(f')^{-1}(\CN)))_\dr)\underset{\QCoh((\BP\Sing(Y'))_\dr)}\otimes \oInd(Y')
\subset \\
\subset \QCoh((Z\underset{Y'}\times \BP\Sing(Y'))_\dr)\underset{\QCoh((\BP\Sing(Y'))_\dr)}\otimes \oInd(Y').
\end{multline}
under the functor \eqref{e:another Y to Y'} equals 
\begin{multline} \label{e:on Y}
\QCoh((\BP(\Sing(f)^{-1}(\CN)))_\dr)\underset{\QCoh((\BP\Sing(Y))_\dr)}\otimes \oInd(Y)
\subset \\
\subset \QCoh((Z\underset{Y}\times \BP\Sing(Y))_\dr)\underset{\QCoh((\BP\Sing(Y))_\dr)}\otimes \oInd(Y).
\end{multline}

\medskip

I.e., it suffices to show that the functor
\begin{multline*}  
\QCoh((\BP(\Sing(f)^{-1}(\CN)))_\dr)^\perp\underset{\QCoh((\BP\Sing(Y))_\dr)}\otimes \oInd(Y)\to \\ 
\to 
\QCoh((\BP(\Sing(f')^{-1}(\CN)))_\dr)^\perp\underset{\QCoh((\BP\Sing(Y'))_\dr)}\otimes \oInd(Y')
\end{multline*}
is conservative. 

\sssec{}

Since $g$ is smooth, the functor $g^!$ induces an equivalence
$$\QCoh(Y')\underset{\QCoh(Y)}\otimes \IndCoh(Y)\to \IndCoh(Y').$$

Hence, 
$$\QCoh((\BP(\Sing(f')^{-1}(\CN)))_\dr)^\perp\underset{\QCoh((\BP\Sing(Y'))_\dr)}\otimes \oInd(Y')$$
is obtained from 
$$\QCoh((\BP(\Sing(f)^{-1}(\CN)))_\dr)^\perp\underset{\QCoh((\BP\Sing(Y))_\dr)}\otimes \oInd(Y)$$ by
the procedure
$$-\underset{\QCoh(Y'_\dr)\underset{\QCoh(Y_\dr)}\otimes \QCoh(Y)}\otimes \QCoh(Y').$$

\medskip

Now, we claim that for any $\bC\in \QCoh(Y'_\dr\underset{Y_\dr}\times Y)\mmod$, the resulting functor
\begin{equation} \label{e:left adjoint}
\bC\to \bC \underset{\QCoh(Y'_\dr)\underset{\QCoh(Y_\dr)}\otimes \QCoh(Y)}\otimes \QCoh(Y')
\end{equation} 
is conservative. 

\medskip

To show this, it is enough to prove that the pullback functor
\begin{equation} \label{e:smooth dr}
\QCoh(Y'_\dr)\underset{\QCoh(Y_\dr)}\otimes \QCoh(Y)\to \QCoh(Y')
\end{equation} 
admits a left adjoint, which is compatible with the action of $\QCoh(Y'_\dr)\underset{\QCoh(Y_\dr)}\otimes \QCoh(Y)$,
and whose essential image generates $\QCoh(Y'_\dr)\underset{\QCoh(Y_\dr)}\otimes \QCoh(Y)$ as a DG category. 

\medskip 

Indeed, such a left adjoint implies the existence of a left adjoint to \eqref{e:left adjoint}, 
whose essential image generates $\bC$. 

\sssec{}

To establish the required property of \eqref{e:smooth dr}, we use the assumption that $Y'=Y\times W$
with $W$ smooth. 

\medskip

We write
$$\QCoh(Y'_\dr)\underset{\QCoh(Y_\dr)}\otimes \QCoh(Y)\simeq \QCoh(Y)\otimes \QCoh(W_\dr)$$
and
$$\QCoh(Y')\simeq \QCoh(Y)\otimes \QCoh(W).$$

\medskip

Thus, our assertion follows from the fact that the forgetful functor
$$\QCoh(W_\dr)\to \QCoh(W)$$
does admit a left adjoint with the required properties. 

\newpage 

\centerline{\bf Part II: Gluing.}

\bigskip

\section{A paradigm for gluing}  \label{s:gluing}

In this section we formulate the main result of this paper, \thmref{t:main}. 

\ssec{Gluing and lax limits: a reminder}     \label{ss:gluing}

\sssec{}  \label{sss:setting for gluing}

Let $I$ be an index $\infty$-category, and let 
$$i\mapsto \bC_i, \quad (\alpha:i\to j)\mapsto (\Phi_\alpha:\bC_i\to \bC_j).$$
be a functor $I\to \StinftyCat_{\on{cont}}$. 

\medskip

Let $\bC_I$ be the corresponding co-Cartesian fibration over $I$.  The lax limit 
$$\underset{i\in I}{\on{lax-lim}}\, \bC_i$$
is the object of $\StinftyCat_{\on{cont}}$ equal to the category of \emph{all} (i.e., not necessarily co-Cartesian)
sections $I\to \bC_I$ of the projection $\bC_I\to I$. 

\medskip

We have a fully faithful embedding
$$\underset{i\in I}{\on{lim}}\, \bC_i\hookrightarrow \underset{i\in I}{\on{lax-lim}}\, \bC_i$$
that corresponds to taking \emph{co-Cartesian} sections. 

\sssec{}

Objects of $\underset{i\in I}{\on{lax-lim}}\, \bC_i$ can be concretely described as follows: 
An object of $\underset{i\in I}{\on{lax-lim}}\, \bC_i$ is a collection
$$\bc_i\in \bC_i\quad\text{for all }i\in I,$$
equipped with a family of morphisms (but not necessarily isomorphisms)
$$\Phi_\alpha(\bc_i)\to \bc_j\quad\text{for all }\alpha:i\to j,$$
compatible with compositions of $\alpha$'s, and endowed with a homotopy-coherent system
of compatibilities for multi-fold compositions. 

\medskip

An object as above belongs to $\underset{i\in I}{\on{lim}}\, \bC_i$ if and only if the above maps $\Phi_\alpha(\bc_i)\to \bc_j$
are all isomorphisms. 

\sssec{}  \label{sss:functor to glue}

Unwinding the definitions, for a given $\bD\in \StinftyCat_{\on{cont}}$, the datum of a functor
$$\sF:\bD\to \underset{i\in I}{\on{lax-lim}}\, \bC_i$$
consists of a collection of functors
$$\sF_i:\bD\to \bC_i\quad\text{for all }i\in I$$
equipped with a compatible family of natural transformations (but not necessarily isomorphisms)
$$\Phi_\alpha\circ \sF_i\to \sF_j\quad\text{for all }\alpha:i\to j.$$

\medskip

In particular, by taking $\bD=\Vect$, we obtain the description of objects of $\underset{i\in I}{\on{lax-lim}}\, \bC_i$, given above.

\sssec{}

We think of $\underset{i\in I}{\on{lax-lim}}\, \bC_i$ as glued from the categories $\bC_i$ using
the functors $\Phi_\alpha$. 

For this reason, we  also denote
$$\underset{i\in I}{\on{lax-lim}}\, \bC_i=:\on{Glue}(\bC_i,\,i\in I).$$

\begin{rem}\label{rem:limit over lax functor}
The category $\on{Glue}(\bC_i,\,i\in I)$ can be defined in a more general situation. Namely, we do not
need 
$$i\mapsto \bC_i,\quad I\to \StinftyCat_{\on{cont}}$$
to be a functor, but only (either left or right) \emph{lax} functor. I.e., we do not need to have 
an isomorphism between $\Phi_\alpha \circ \Phi_\beta$ and $\Phi_{\alpha\circ \beta}$, but only a morphism
in one direction. 

\medskip

We do not need this more general set-up in the present paper. 
\end{rem} 

\sssec{Example}\label{ex:gluing sheaves}
Let $Y$ be a topological space, and let $Y_0\overset{j}\hookrightarrow Y$ be an open subset and
$Y_1\overset{i}\hookleftarrow Y$ be the complementary closed. Let $I$ be the category $0\to 1$, and 
set 
$$\bC_0=\on{Shv}(Y_0),\,\, \bC_1=\on{Shv}(Y_1),\,\, \Phi_{0\to 1}=i^!\circ j_!.$$

Then the functor
$$\on{Shv}(Y)\to \on{Glue}(\bC_i,\,i\in I),\quad \CF\mapsto (j^!(\CF),i^!(\CF),i^!\circ j_!\circ j^!(\CF)\to i^!(\CF))$$
is an equivalence. The inverse functor sends 
$$(\CF_0,\CF_1,i^!\circ j_!(\CF_0)\to \CF_1)\mapsto \on{Cone}\left(i_!(\on{ker}(i^!\circ j_!(\CF_0)\to \CF_1))\to 
j_!(\CF_0)\right).$$

\sssec{Example}\label{ex:gluing sheaves on strat}
Example~\ref{ex:gluing sheaves} can be generalized to arbitrary stratified topological spaces, but this
requires taking lax limits over lax functors, as in Remark~\ref{rem:limit over lax functor}. Namely, let
$Y=\bigcup_{a\in A}Y_a$ be a stratification of a topological space $Y$ indexed by a finite poset $A$.
Thus, the subspaces $Y_a\subset Y$ are disjoint and locally closed, and
\[\overline{Y_a}\subset\bigcup_{a'\ge a}Y_{a'}\subset Y\]
for all $a\in A$. Denote the embedding $Y_a\hookrightarrow Y$ by $\iota_a$.

\medskip

For every pair $a_1,a_2\in A$ with $a_1\le a_2$, consider the functor 
\[\Phi_{a_1\to a_2}:=\iota_{a_2}^!\circ\iota_{a_1,!}:\on{Shv}(Y_{a_1})\to \on{Shv}(Y_{a_2}).\]
For a triple $a_1,a_2,a_3\in A$ with $a_1\le a_2\le a_3$, the adjunction between $\iota_{a_2}^!$ and $\iota_{a_2,!}$
yields a natural transformation \[\left(\Phi_{a_2\to a_3}\circ\Phi_{a_1\to a_2}\right)\to \Phi_{a_1\to a_3}.\]
In this way, we obtain a lax functor $I\to \StinftyCat_{\on{cont}}$ (here $I$ is the category 
corresponding to the poset $A$) sending $a\in A$ to the category $\on{Shv}(Y_a)$. Similarly to Example~\ref{ex:gluing sheaves}, there is a 
natural equivalence between the resulting glued category and $\on{Shv}(Y)$.

\sssec{}  \label{sss:insert}

For every $i_0\in I$ we let $\on{ev}_{i_0}$ denote the natural evaluation functor
$$\underset{i\in I}{\on{lax-lim}}\, \bC_i\to \bC_{i_0}.$$

The functor $\on{ev}_{i_0}$ admits a left adjoint, denoted
$\on{ins}_{i_0}$. Explicitly, for $\bc_{i_0}\in \bC_{i_0}$ and $i\in I$, we have
$$\on{ev}_i\circ \on{ins}_{i_0}(\bc_{i_0})\simeq \underset{\alpha\in \Maps_I(i_0,i)}{\on{colim}}\, \Phi_\alpha(\bc_{i_0}).$$

\begin{rem}
The latter expression for $\on{ev}_i\circ \on{ins}_{i_0}$ is a feature of \emph{lax limits} of DG categories; it is false for 
usual limits. 
\end{rem} 

\sssec{Subcategories}  \label{sss:glue subcat}

Let $i\mapsto \bC_i$ be as before. Suppose now that for every $i\in I$ we chose a full subcategory
$$\bC'_i\subset \bC_i.$$
These subcategories define a full subcategory $\bC'_I\subset\bC_I$. 

\medskip

Assume that the following condition
holds: for every $(\alpha:i\to j)\in I$, the functor $\Phi_\alpha$ sends $\bC'_i$ to $\bC'_j$. 
In this case, the composition 
$$\bC'_I\hookrightarrow \bC_I\to I$$
is a co-Cartesian fibration, and hence gives rise to a functor
$$i\mapsto \bC'_i, \quad I\to \StinftyCat_{\on{cont}}.$$

\medskip

Consider the corresponding category
$$\underset{i\in I}{\on{lax-lim}}\, \bC'_i=:\on{Glue}(\bC'_i,\,i\in I).$$

By construction, we have a canonical fully faithful functor
\begin{equation} \label{e:embed glue}
\on{Glue}(\bC'_i,\,i\in I)\to \on{Glue}(\bC_i,\,i\in I),
\end{equation} 
that commutes with the evaluation functors $\on{ev}_{i_0}$. 

\medskip

Finally, assume that in the above setting, each of the embeddings 
$\bC'_i\hookrightarrow \bC_i$ admits a continuous right adjoint. In this case, it is
easy to show that the functor \eqref{e:embed glue} also admits a continuous right
adjoint. 

\medskip

The resulting right adjoint $\on{Glue}(\bC_i,\,i\in I)\to \on{Glue}(\bC'_i,\,i\in I)$ 
also commutes with the evaluation functors $\on{ev}_{i_0}$.

\ssec{Gluing of $\IndCoh$}  \label{ss:gluing IndCoh}

\sssec{} 

Consider the following set-up. Let $\CY$ be an algebraic stack. Let $I$ be an index category, and let
$$i\mapsto \CZ_i, \quad (\alpha:i\to j)\mapsto (f_\alpha:\CZ_j\to \CZ_i).$$
be an $I^{\on{op}}$-diagram of algebraic stacks over $\CY$. We denote by $f_i$ the corresponding morphisms
$\CZ_i\to \CY$. 

\medskip  

We assume that $\CY$ and all $\CZ_i$ are quasi-smooth.

\sssec{}

We consider
$$i\mapsto \IndCoh((\CZ_i)_\dr\underset{\CY_\dr}\times \CY), \quad (\alpha:i\to j)\mapsto 
((f_\alpha)_\dr\times \on{id}_\CY)^!$$
as a functor $I\to \StinftyCat_{\on{cont}}$.

\medskip

Let now
$$\CN_i\subset \Sing(\CZ_i)$$
be conical Zariski-closed subsets. We  assume that for every $\alpha:i\to j$ the map
$$\Sing(f_\alpha):\CZ_j\underset{\CZ_i}\times \Sing(\CZ_i)\to \Sing(\CZ_j)$$
sends $\CZ_j\underset{\CZ_i}\times \CN_i$ to $\CN_j$.

\medskip

Consider the corresponding full subcategories
$$\IndCoh_{\CN_i}((\CZ_i)_\dr\underset{\CY_\dr}\times \CY)\subset \IndCoh((\CZ_j)_\dr\underset{\CY_\dr}\times \CY).$$

According to \cite[Lemma 8.4.2]{AG}, the above condition on $f_\alpha$ implies that the functor
$$((f_\alpha)_\dr\times \on{id}_\CY)^!$$
sends $\IndCoh_{\CN_i}((\CZ_i)_\dr\underset{\CY_\dr}\times \CY)$ to 
$\IndCoh_{\CN_j}((\CZ_j)_\dr\underset{\CY_\dr}\times \CY)$. 

\sssec{}

We consider the corresponding pair of adjoint functors 
\begin{equation} \label{e:glue adj}
\on{Glue}(\IndCoh_{\CN_i}((\CZ_i)_\dr\underset{\CY_\dr}\times \CY),\,i\in I) 
\rightleftarrows \on{Glue}(\IndCoh((\CZ_i)_\dr\underset{\CY_\dr}\times \CY),\,i\in I). 
\end{equation}

\medskip

The functors $((f_i)_\dr\times \on{id}_Y)^!$ define a functor
$$\IndCoh(\CY) \to \lim_{i\in I}\IndCoh((\CZ_i)_\dr\underset{\CY_\dr}\times \CY).$$

Thus, for a given conical Zariski-closed subset $\CN\subset \Sing(\CY)$ we obtain the functor
\begin{multline} \label{e:functor in question}
\IndCoh_\CN(\CY) \hookrightarrow \IndCoh(\CY) \to \lim\limits_{i\in I} 
\IndCoh((\CZ_i)_\dr\underset{\CY_\dr}\times \CY) \hookrightarrow \\
\hookrightarrow \on{Glue}(\IndCoh((\CZ_i)_\dr\underset{\CY_\dr}\times \CY),\,i\in I)\to 
\on{Glue}(\IndCoh_{\CN_i}((\CZ_i)_\dr\underset{\CY_\dr}\times \CY),\,i\in I),
\end{multline}  
where the last arrow is the right adjoint from \eqref{e:glue adj}. This functor is our main object
of interest.

\begin{rem}
Note that the image of \eqref{e:functor in question} is usually \emph{not} contained in the full subcategory
\[\lim\limits_{i\in I}\IndCoh_{\CN_i}((\CZ_i)_\dr\underset{\CY_\dr}\times \CY)\subset 
\on{Glue}(\IndCoh_{\CN_i}((\CZ_i)_\dr\underset{\CY_\dr}\times \CY),\,i\in I).\]
\end{rem} 

\ssec{The setting for the main theorem}   \label{ss:setting for main}

\sssec{}  \label{sss:setting for main}

We  now consider a particular case of the above situation. Let $G$ be a reductive group. 

\medskip

We 
let $I^{\on{op}}$ be the category corresponding to the poset $\on{Par}(G)$ of standard parabolics
in $G$ (i.e., the set of subsets of vertices of the Dynkin diagram of $G$). 

\medskip

Given a curve $X$, we let $\CY:=\LocSys_G$ be the algebraic stack of $G$-local systems on $X$.
We consider the functor
$$P\in \on{Par}(G)\,\,\mapsto \,\,\CZ_P:=\LocSys_P.$$

\medskip

We take
$$\CN:=\on{Nilp}_{\on{glob}}\subset \Sing(\LocSys_G)$$
to be the global nilpotent cone, see \cite[Sect. 11.1.1]{AG}. See also \secref{sss:glob cone}
for an explicit description of $\on{Nilp}_{\on{glob}}$. 

\medskip

For every $P\in \on{Par}(G)$, we take $\CN_P\subset \Sing(\LocSys_P)$ to be the zero-section $\{0\}$. 

\sssec{}  

The following conjecture was made by us (it was recorded as \cite[Conjecture 9.3.7]{Ga3}):

\begin{conj} \label{c:main}
The functor 
$$\IndCoh_{\on{Nilp}_{\on{glob}}}(\LocSys_G)\to
\on{Glue}(\IndCoh_{\{0\}}((\LocSys_P)_\dr\underset{(\LocSys_G)_\dr}\times \LocSys_G),\,P\in \on{Par}(G)^{\on{op}})$$
of \eqref{e:functor in question} 
is fully faithful.
\end{conj}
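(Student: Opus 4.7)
The plan is to apply \thmref{t:top to IndCoh} to the $\on{Par}(G)^{\on{op}}$-diagram $P \mapsto \LocSys_P$ over $\LocSys_G$, with $\CN = \on{Nilp}_{\on{glob}}$ and all $\CN_P = \{0\}$. This reduces \conjref{c:main} to two fully-faithfulness conditions: (i) the pullback
$$\QCoh(\LocSys_G) \longrightarrow \lim_{P \in \on{Par}(G)^{\on{op}}} \QCoh\bigl((\LocSys_P)_\dr \underset{(\LocSys_G)_\dr}\times \LocSys_G\bigr)$$
is fully faithful, and (ii) the topological gluing functor
$$\Dmod(\BP(\on{Nilp}_{\on{glob}})) \longrightarrow \on{Glue}\bigl(\Dmod(\BP(\Sing(f_P)^{-1}(\{0\}))), P \in \on{Par}(G)^{\on{op}}\bigr)$$
is fully faithful, where $f_P \colon \LocSys_P \to \LocSys_G$ denotes the tautological morphism. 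Condition (i) should be essentially automatic: $P = G$ is the initial object of $\on{Par}(G)^{\on{op}}$, the corresponding term of the limit is $\QCoh(\LocSys_G)$ itself with the functor being the identity, so the limit is computed by this initial term.

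The content lies therefore in condition (ii). By the stack version of \thmref{t:fiberwise prev} recalled in \secref{sss:stack version}, fully faithfulness of (ii) is equivalent to homological contractibility, for each geometric point $\nu = (\sigma, A) \in \on{Nilp}_{\on{glob}}$ with $A \neq 0$, of the prestack $W_{\on{Glued},\nu}$ built from the fibers $W_{P,\nu} \subset \Sing(f_P)^{-1}(\{0\})$. Directly from the definition, these fibers identify with the global Springer fibers $\Spr_{P,\on{unip}}^{\sigma,A}$ parametrizing $P$-reductions of $\sigma$ carrying the horizontal nilpotent section $A$ into $\fu(P)_\sigma$. Consequently, condition (ii) is equivalent to \thmref{t:contr preview}: homological contractibility of the glued prestack $\Spr_{\on{Glued,unip}}^{\sigma,A}$.

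I would prove \thmref{t:contr preview} by induction on the semisimple rank of $G$. Granting the result for all proper Levi subgroups, the reduction to \thmref{t:contr2 preview} --- contractibility of the simpler prestack $\Spr_{\on{Glued}}^{\sigma,A}$ built only from \emph{proper} standard parabolics and using the larger schemes $\Spr_P^{\sigma,A}$ of reductions with $A \in \fp_\sigma$ --- should proceed by a cone/cofiber argument. The point is that the discrepancy between $\Spr_P^{\sigma,A}$ and $\Spr_{P,\on{unip}}^{\sigma,A}$ records those reductions for which the image of $A$ in the Levi quotient is a non-zero nilpotent; the induced glued prestack is controlled by the analogous Springer data for the Levi subgroups of $P$, to which the inductive hypothesis applies.

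Finally, for \thmref{t:contr2 preview} itself, the key input is the Jacobson--Morozov theorem: since $A$ is a non-zero horizontal nilpotent section of $\fg_\sigma$, it extends to an $\fsl_2$-triple and determines a canonical parabolic reduction of $\sigma$ to some $P_0$. Relative position with respect to this canonical reduction produces a Schubert-type stratification of each $\Spr_P^{\sigma,A}$ indexed by Weyl group elements, inducing a filtration $\Spr_{\on{Glued}}^{\sigma,A,\le w}$ of the glued prestack. The plan is to show, by direct analysis of the Weyl group combinatorics, that each successive quotient $\Spr_{\on{Glued}}^{\sigma,A,\le w}/\Spr_{\on{Glued}}^{\sigma,A,< w}$ is homologically contractible, and conclude by induction on $\ell(w)$. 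This last step is the main obstacle: the gluing colimit over strings of proper parabolics interacts non-trivially with the Bruhat stratification, and proving acyclicity of the resulting simplicial-type resolution requires an explicit, delicate analysis of the nerve of the poset of proper parabolics compatible with each $w$.
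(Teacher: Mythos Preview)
Your proposal is correct and follows essentially the same route as the paper: apply \thmref{t:top to IndCoh} (condition (1) is trivial, condition (2) holds because $P=G$ is initial), reduce condition (3) to pointwise homological contractibility of $\Spr_{\on{Glued,unip}}^{\sigma,A}$, then argue by induction on the Levi to replace this by contractibility of $\Spr_{\on{Glued}}^{\sigma,A}$, and finally handle the latter via the Jacobson--Morozov reduction to $P_0$ and a Schubert filtration indexed by $\sW'$. One point to be careful with in your inductive reduction: when you pass to the Levi $M$ of a proper parabolic, the image $A_M$ of $A$ may well be zero, so you need the contractibility statement for $\Spr_{\on{Glued,unip}}^{\sigma_M,A_M}$ to include the case $A_M=0$ (which is easy, since then the glued prestack collapses to a point via the final object $G$ of $\Par(G)$); the paper states \thmref{t:contr1} for all $A$, including $A=0$, precisely for this reason.
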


The main result of this paper is:

\begin{thm}  \label{t:main}
\conjref{c:main} holds.
\end{thm}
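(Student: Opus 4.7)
The plan is to apply \thmref{t:top to IndCoh} to the diagram $P\mapsto \LocSys_P$ indexed by $\on{Par}(G)^{\on{op}}$, with base $Y=\LocSys_G$ and conical subset $\CN=\on{Nilp}_{\on{glob}}\subset \Sing(\LocSys_G)$. The maps $\LocSys_P\to\LocSys_G$ are schematic and proper, and both source and target are quasi-smooth, so the framework of \secref{sss:stack version} applies. This reduces \thmref{t:main} to two hypotheses: first, that the functor $\QCoh(\LocSys_G)\to \underset{P}{\on{lim}}\,\QCoh((\LocSys_P)_\dr\underset{(\LocSys_G)_\dr}\times \LocSys_G)$ is fully faithful; and second, that the topological pullback functor
$$\Dmod(\BP(\on{Nilp}_{\on{glob}}))\to \on{Glue}\bigl(\Dmod(\BP(\Sing(f_P)^{-1}(\{0\}))),\, P\in \on{Par}(G)^{\on{op}}\bigr)$$
is fully faithful, where $f_P:\LocSys_P\to \LocSys_G$. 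The first condition is essentially formal: every $G$-local system admits a $B$-reduction, so $\LocSys_B\to \LocSys_G$ is a surjection of prestacks, and standard quasi-coherent descent combined with the fact that $\LocSys_\bullet$ has an affine diagonal should give the conclusion.

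The content is the second (D-module) condition. By \thmref{t:fiberwise prev}, it is equivalent to checking homological contractibility of a glued prestack $W_{\on{Glued},\nu}$ for every $k$-point $\nu\in \on{Nilp}_{\on{glob}}$ that is not in the zero-section. Writing $\nu=(\sigma,A)$ with $\sigma$ a $G$-local system and $A$ a horizontal section of $\fg_\sigma$ that is pointwise nilpotent, the fibers of $\Sing(f_P)^{-1}(\{0\})$ over $\nu$ identify with the unipotent global Springer schemes $\Spr_{P,\on{unip}}^{\sigma,A}$, and the glued prestack becomes $\Spr_{\on{Glued},\on{unip}}^{\sigma,A}$. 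Thus the assertion needed is precisely \thmref{t:contr preview}, namely homological contractibility of $\Spr_{\on{Glued},\on{unip}}^{\sigma,A}$ for all nilpotent $A$.

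To prove \thmref{t:contr preview} one proceeds by induction on the semisimple rank of $G$. The base case (semisimple rank zero, or equivalently $\on{Par}(G)=\{G\}$) is immediate. For the inductive step, assuming the statement for all proper Levi subgroups, a comparison of the two colimit procedures should show that \thmref{t:contr preview} for $G$ is equivalent to \thmref{t:contr2 preview}: for non-zero nilpotent $A$, the simpler prestack $\Spr_{\on{Glued}}^{\sigma,A}$, built as the colimit of $\Spr_P^{\sigma,A}$ over \emph{proper} standard parabolics of $G$, is homologically contractible. By Jacobson--Morozov, the section $A$ determines a canonical parabolic $P_0\subsetneq G$ together with a reduction of $\sigma$ to $P_0$; relative position with this canonical reduction stratifies each $\Spr_P^{\sigma,A}$ into locally closed pieces indexed by Weyl group elements $w$. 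The plan is to filter $\Spr_{\on{Glued}}^{\sigma,A}$ accordingly and prove, by a direct analysis of the combinatorics of $W$ and the poset of standard parabolics, that every successive quotient $\Spr_{\on{Glued}}^{\sigma,A,\leq w}/\Spr_{\on{Glued}}^{\sigma,A,<w}$ is homologically contractible; \thmref{t:contr2 preview} then follows by induction on $\ell(w)$.

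The hard step will be the contractibility of these Schubert-stratum quotients. It requires showing that, for each $w$, the subcomplex of the nerve of $\on{Par}(G)^{\on{op}}$ indexing the nonempty strata $\Spr_P^{\sigma,A,\leq w}\setminus \Spr_P^{\sigma,A,<w}$ is itself contractible (up to a twist by the Schubert-cell fibers, which are themselves affine spaces). This is a purely combinatorial assertion about the interaction between the Bruhat order on $W$ and the lattice of standard parabolics, and is where the non-vanishing of $A$ enters in an essential way via the properness of $P_0$.
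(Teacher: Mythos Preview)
Your overall strategy matches the paper's: apply \thmref{t:top to IndCoh}, then reduce condition (3) via \corref{c:pointwise bis} to \thmref{t:contr1}, prove that by induction on Levi subgroups using \propref{p:another version} to pass to \thmref{t:contr2}, and finish via a Schubert stratification argument. Two points deserve correction.

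First, your verification of condition (2) is off. The diagram is indexed by $\on{Par}(G)^{\on{op}}$, which has an \emph{initial object}, namely $P=G$; at this object the functor takes the value $\QCoh\bigl((\LocSys_G)_\dr\underset{(\LocSys_G)_\dr}\times\LocSys_G\bigr)\simeq\QCoh(\LocSys_G)$, so the limit is tautologically $\QCoh(\LocSys_G)$ and the functor in question is the identity. No descent argument via $\LocSys_B\to\LocSys_G$ is needed (and such an argument would not directly yield what is required, since the condition concerns the full limit over all parabolics, not just the cover by $B$). You also omit condition (1) of \thmref{t:top to IndCoh}; it is indeed trivial here since $\CN_P=\{0\}$ and $\Sing(f_P)^{-1}(\{0\})$ consists of pairs $(\sigma_P,A)$ with $A\in\fu(P)_\sigma$, hence automatically nilpotent.

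Second, your description of the Schubert-stratum step is inaccurate. The strata $\Spr_P^{\sigma,A,w}$ are \emph{not} affine-space bundles in general, and the argument is not that a certain subcomplex of the nerve is contractible. Rather, the paper partitions the simple roots as $\sS=\sS_w^0\cup\sS_w^+\cup\sS_w^-$ (according to where $w$ sends them relative to $\sR_{\sJ_0}$) and uses two facts from \lemref{lm:sch}: removing $\sS_w^+$ from $\sJ$ induces an isomorphism on the $w$-stratum, and the presence of any $\sS_w^-$ in $\sJ$ forces the $w$-stratum to be empty. An adjoint-functor cofinality argument (\lemref{l:adj cofin}) then replaces the colimit over $\Par'(G)$ by one over $\Par'_w(G)=\{\sJ\subset\sS_w^0\cup\sS_w^-\}$. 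For $w\neq 1,w'_0$ this poset has a maximal element $\sS_w^0\cup\sS_w^-$, at which the stratum is empty, collapsing the quotient to a point. The endpoint cases $w=1$ and $w=w'_0$ are handled separately: the former via the final object $\sJ_0$, the latter via \lemref{l:unip radical} and the fact that $A\in\fu(P_0)_\sigma$ with $A\neq 0$.
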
 

The rest of this paper is devoted to the proof of this theorem. 

\ssec{Gluing for D-modules}  \label{ss:setting Dmod}

In this subsection we  formulate another gluing situation, in the context of D-modules. 
We then state a result that says that (under certain circumstances) the full faithfulness 
of the functor \eqref{e:functor in question} is equivalent to the full faithfulness of a
certain functor in the context of D-modules. 

\sssec{}

In what follows, for a prestack $\CY$ locally almost of finite type we  consider the category $\Dmod(\CY)$.
By definition, 
$$\Dmod(\CY):=\QCoh(\CY_\dr),$$
and thus can be viewed as a symmetric monoidal category.

\medskip

Recall also that according to \cite[Proposition 2.4.4]{GR1}, the functor $\Upsilon_{\CY_\dr}$ defines an equivalence
$$\QCoh(\CY_\dr)\to \IndCoh(\CY_\dr).$$

\medskip

For a morphism $g:\CY_1\to \CY_2$ we  denote by $g^{\dr,!}$ the corresponding pullback functor
$$\Dmod(\CY_2)\to \Dmod(\CY_1).$$

\medskip

By definition, $g^{\dr,!}$ identifies with either of the vertical arrows in the following diagram:
$$
\CD
\QCoh((\CY_1)_\dr)   @>{\Upsilon_{(\CY_1)_\dr}}>> \IndCoh((\CY_1)_\dr)  \\
@A{(g_\dr)^*}AA    @AA{(g_\dr)^!}A     \\
\QCoh((\CY_2)_\dr)   @>{\Upsilon_{(\CY_2)_\dr}}>> \IndCoh((\CY_2)_\dr).  
\endCD
$$

\medskip

If $g$ is schematic and quasi-compact, we  denote by $g_{\dr,*}$ the corresponding
direct image functor
$$\Dmod(\CY_1)\to \Dmod(\CY_2).$$

\sssec{}

Let $\CY'$ be a prestack locally almost of finite type. Let $I$ be again an index category, and let
$$i\mapsto \CZ'_i, \quad (\alpha:i\to j)\mapsto (f'_\alpha:\CZ'_j\to \CZ'_i).$$
be an $I^{\on{op}}$-diagram of algebraic stacks over $\CY$. We denote by $f'_i$ the corresponding morphisms
$\CZ'_i\to \CY'$. 

\medskip

We consider
$$i\mapsto \Dmod(\CZ'_i), \quad (\alpha:i\to j)\mapsto (f'_\alpha)^{\dr,!}$$
as a functor $I\to \StinftyCat_{\on{cont}}$.

\medskip

Let now
$$\CM_i\subset \CZ'_i$$ be Zariski-closed subsets. We  assume that for every $\alpha:i\to j$ we have
$$(f'_\alpha)^{-1}(\CM_i)\subset \CM_j.$$

Let $\CM$ be a Zariski-closed subset of $\CY'$. 

\sssec{}

We consider the corresponding pair of adjoint functors 
\begin{equation} \label{e:adj glue Dmod}
\on{Glue}(\Dmod(\CM_i),\,i\in I) \rightleftarrows \on{Glue}(\Dmod(\CZ'_i),\,i\in I).
\end{equation}

\medskip

The functors $(f'_i)^{\dr,!}$ define a functor
$$\Dmod(\CY') \to \underset{i\in I}{\on{lim}}\, \Dmod(\CZ'_i).$$

Consider the composition
\begin{multline} \label{e:functor for Dmod}
\Dmod(\CM)\hookrightarrow  \Dmod(\CY') \to \underset{i\in I}{\on{lim}}\, \Dmod(\CZ'_i) \hookrightarrow \\
\hookrightarrow \on{Glue}(\Dmod(\CZ'_i),\,i\in I)\to \on{Glue}(\Dmod(\CM_i),\,i\in I),
\end{multline}  
where the last arrow is the right adjoint from \eqref{e:adj glue Dmod}.

\sssec{}

Consider again the setting of \secref{ss:gluing IndCoh}. Put 
\begin{align*}
\CY'&=\BP\Sing(\CY)&\CZ'_i&=\CZ_i\underset{\CY}\times \BP\Sing(\CY)\\
\CM&=\BP(\CN)& \CM_i&=\BP\left(\Sing(f_i)^{-1}(\CN_i)\right)\subset \CZ'_i.
\end{align*}

In \secref{s:proof top to IndCoh} we prove:

\begin{thm}  \label{t:top to IndCoh} Assume that the maps $f_i:\CZ_i\to \CY$ are schematic and proper. 
Assume also that the following conditions hold:

\begin{enumerate}

\item For every index $i$, we have $\{0\}\subset \CN_i$
and $$\Sing(f_i)^{-1}(\CN_i)\subset \CZ_i\underset{\CY}\times \CN.$$

\smallskip

\item The functor
$$\QCoh(\CY)\to \underset{i\in I}{\on{lim}}\, \QCoh((\CZ_i)_\dr\underset{\CY_\dr}\times \CY)$$
is fully faithful;

\smallskip

\item The functor
$$\Dmod(\CM)\to \on{Glue}\left(\Dmod\left(\CM_i\right),\,i\in I\right)$$
of \eqref{e:functor for Dmod} is fully faithful. 

\end{enumerate}

Then the functor 
$$\IndCoh_\CN(\CY) \to \on{Glue}(\IndCoh_{\CN_i}((\CZ_i)_\dr\underset{\CY_\dr}\times \CY),\,i\in I)$$
of \eqref{e:functor in question} is fully faithful.

\end{thm}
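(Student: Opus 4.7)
The strategy is to split the argument along the semi-orthogonal decomposition
\[\QCoh(\CY)\hookrightarrow \IndCoh_\CN(\CY)\twoheadrightarrow \oInd_{\BP\CN}(\CY),\]
which makes sense since condition~(1) forces $\{0\}\subset \CN$, and a parallel decomposition target-side: condition~(2) will be pure input for the $\QCoh$-piece, while condition~(3), combined with the crystal structure of \thmref{t:crystal structure} and the topological description of \thmref{t:express category}, will be pure input for the $\oInd$-piece.

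First, condition~(1) ensures that for every $i$ the inclusion
\[\QCoh((\CZ_i)_\dr\underset{\CY_\dr}\times \CY)\hookrightarrow \IndCoh_{\CN_i}((\CZ_i)_\dr\underset{\CY_\dr}\times \CY)\]
holds, with quotient $\oInd_{\CN_i}((\CZ_i)_\dr\underset{\CY_\dr}\times \CY)$ as in \secref{ss:rel as ten}. Since the transition functors $((f_\alpha)_\dr\times\on{id})^!$ preserve each of these three subcategory assignments, the discussion of \secref{sss:glue subcat} upgrades this to a semi-orthogonal decomposition on the glued categories. The functor \eqref{e:functor in question} respects the decomposition, so by a routine two-out-of-three argument, it is fully faithful provided the induced functors on the $\QCoh$-piece and on the $\oInd$-piece are each fully faithful. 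The $\QCoh$-piece factors as
\[\QCoh(\CY)\to \underset{i\in I}{\lim}\,\QCoh((\CZ_i)_\dr\underset{\CY_\dr}\times \CY)\hookrightarrow \on{Glue}(\QCoh((\CZ_i)_\dr\underset{\CY_\dr}\times \CY),i),\]
where the first arrow is fully faithful by condition~(2) and the inclusion of the strict limit into the lax limit is automatic.

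For the $\oInd$-piece, I would use \propref{p:tensor up via Sing} to identify $\oInd((\CZ_i)_\dr\underset{\CY_\dr}\times \CY)$ with $\bGamma((\CZ_i\underset{\CY}\times \BP\Sing(\CY))_\dr,\oInd(\CY)^\sim)$, and \thmref{t:express category} to identify the subcategory $\oInd_{\CN_i}((\CZ_i)_\dr\underset{\CY_\dr}\times \CY)$ with $\bGamma(\BP(\Sing(f_i)^{-1}(\CN_i))_\dr,\oInd(\CY)^\sim)$; similarly \thmref{t:crystal structure}(a) gives $\oInd_{\BP\CN}(\CY)\simeq \bGamma(\BP\CN_\dr,\oInd(\CY)^\sim)$. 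All of these are modules for the monoidal category $\Dmod(\BP\Sing(\CY))$, and condition~(1) guarantees that $\BP(\Sing(f_i)^{-1}(\CN_i))$ maps into $\CZ_i\underset{\CY}\times \BP\CN$ so the diagrams are compatible. Thus the $\oInd$-piece functor is obtained from the D-module functor of condition~(3) by a ``tensor up with $\oInd(\CY)$ over $\Dmod(\BP\Sing(\CY))$'' operation (appropriately interpreted inside the glued category).

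The main obstacle is precisely this last step: extracting full faithfulness of the $\oInd$-piece from condition~(3). Gluing is a lax limit and does not commute with tensor products in general, so one cannot simply apply $-\underset{\Dmod(\BP\Sing(\CY))}{\otimes}\oInd(\CY)$ to the fully faithful D-module functor of condition~(3). The plan is to use the properness of the maps $f_i:\CZ_i\to\CY$ to force the commutation: properness ensures that the relevant $!$-pullbacks admit $\Dmod(\BP\Sing(\CY))$-linear left adjoints (given by $\IndCoh$-pushforward, which preserves singular support by \cite[Proposition~7.1.3]{AG}), and these adjoints let one compute Hom spaces in the glued oInd-category via Hom spaces in the glued D-module category, thereby transporting condition~(3) into the desired full faithfulness.
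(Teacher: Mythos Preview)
Your approach is essentially the paper's: decompose along $\QCoh \hookrightarrow \IndCoh_\CN \twoheadrightarrow \oInd_{\BP\CN}$, handle the $\QCoh$-piece by condition~(2), and reduce the $\oInd$-piece to condition~(3) via \thmref{t:express category} and the crystal structure. But you have misdiagnosed where the real difficulty sits, and there is a genuine gap.

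The gap is in your ``routine two-out-of-three argument.'' The glued target category does \emph{not} inherit a semi-orthogonal decomposition: the paper notes explicitly (just before \propref{p:ff ses}) that while $\on{Glue}(\obC_i,\,i\in I)\subset \on{Glue}(\bC'_i,\,i\in I)^\perp$, this inclusion is strict in general. So even knowing that $\sF'$ and $\osF$ are fully faithful, the cross-Hom $\Hom_\bC(\osF(\obd),\sF'(\bd'))$ for $\obd\in\obD$, $\bd'\in\bD'$ is not controlled by any orthogonality on the target. The paper's \propref{p:ff ses} handles exactly this: one needs the individual left adjoints $\sF_i^L$ not merely to exist, but to send $\obC_i$ back into $\obD$; then \lemref{l:left adjoints} identifies the cross-Hom with one computed via $\osF^L\circ\osF$, and full faithfulness of $\osF$ finishes it. Verifying this preservation is the content of the paper's Step~1, and it uses both properness and \cite[Proposition~7.1.3]{AG} in the direction opposite to the one you cite (pushforward from $\IndCoh_{\CN_i}(Z_i)$ lands in $\IndCoh_\CN(Y)$, which is where condition~(1) enters).

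Conversely, the obstacle you flag is not one. Lax limits \emph{do} commute with tensoring over a monoidal category $\bO$, provided the transition functors are $\bO$-linear: this is \lemref{l:glue and tensor}, which follows immediately from the explicit description of the left adjoints $\on{ins}_i$ in \secref{sss:insert}. So the $\oInd$-piece functor is literally $\sF_{\Dmod}\underset{\Dmod(\BP\Sing(Y))}\otimes \on{Id}_{\oInd(Y)}$, and its full faithfulness follows from that of $\sF_{\Dmod}$ once you know $\sF_{\Dmod}^L$ is $\Dmod(\BP\Sing(Y))$-linear---which it is, being assembled from de Rham pushforwards along proper maps. You have identified the right tool (left adjoints from properness), but it is needed to repair the two-out-of-three step, not to force a commutation that already holds.
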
 

\begin{rem}
In \secref{ss:ff Dmod via contr} we  express condition (3) in \thmref{t:top to IndCoh} in more concrete
terms: it amounts to acyclicity of certain explicit objects of $\Vect$, or, equivalently, to 
\emph{homological contractibility} of certain homotopy types. 

\medskip

Thus, \thmref{t:top to IndCoh} claims that a certain full faithfulness assertion for $\IndCoh$ 
is essentially of topological nature. The proof of \thmref{t:top to IndCoh} is based on \thmref{t:express category} from
Part I of the paper. 
\end{rem}

\begin{rem}
With a little extra work, one can show that \thmref{t:top to IndCoh} holds without the condition that
$$\Sing(f_i)^{-1}(\CN_i)\subset \CZ_i\underset{\CY}\times \CN.$$
\end{rem}

\sssec{}

We will apply \thmref{t:top to IndCoh} to deduce \thmref{t:main}. We take $I=\on{Par}(G)^{\on{op}}$
and $\CY,\CZ_i,\CN,\CN_i$ as in \secref{sss:setting for main}.

\medskip

Note that condition (1) of \thmref{t:top to IndCoh} is trivially satisfied. Condition (2) is satisfied because
the category $\on{Par}(G)^{\on{op}}$ has an initial object (the improper parabolic $P=G$), so
$$\underset{P\in \on{Par}(G)^{\on{op}}}{\on{lim}}\, \QCoh((\LocSys_P)_\dr
\underset{(\LocSys_G)_\dr}\times \LocSys_G)\simeq \QCoh(\LocSys_G).$$

\medskip

Thus, \thmref{t:main} follows from \thmref{t:top to IndCoh}, combined with the following result:

\begin{thm} \label{t:Dmod ff LocSys}
The functor
$$\Dmod\left(\BP(\on{Nilp}_{\on{glob}})\right)\to 
\on{Glue}\left(\Dmod\left(\BP(\CM_P)\right),\,P\in \on{Par}(G)^{\on{op}}\right)$$
is fully faithful, where
$$\CM_P\subset \LocSys_P\underset{\LocSys_G}\times \Sing(\LocSys_G)$$
is the preimage of $\{0\}\subset \Sing(\LocSys_P)$ under the map
$$\LocSys_P\underset{\LocSys_G}\times \Sing(\LocSys_G)\to \Sing(\LocSys_P).$$
\end{thm}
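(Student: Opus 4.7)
The plan is to reduce fully faithfulness of the D-module gluing functor to a purely topological contractibility statement about global Springer fibers, and then prove the latter by Weyl-group combinatorics.

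First, I would apply the general criterion \thmref{t:fiberwise prev} (in the stacky version indicated in \secref{sss:stack version}) with $\CY = \LocSys_G$, $\CZ_P = \LocSys_P$, $\CN = \on{Nilp}_{\on{glob}}$ and $\CN_P = \{0\}$. This converts the fully faithfulness of our functor into the following concrete statement: for every geometric point $\nu \in \BP(\on{Nilp}_{\on{glob}})$, the prestack $W_{\on{Glued}, \nu}$ obtained by gluing the fibers of $\CM_P \to \BP(\on{Nilp}_{\on{glob}})$ over $\nu$ through the natural correspondences is homologically contractible over $k$.

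Next, I would identify these fibers explicitly. A $k$-point of $\BP(\on{Nilp}_{\on{glob}})$ is an equivalence class of pairs $(\sigma, A)$, with $\sigma$ a $G$-local system and $A$ a nonzero horizontal nilpotent section of $\fg_\sigma$. Unwinding the definitions of $\Sing(\LocSys_P)$ and of the singular codifferential $\Sing(f_P)$, the fiber $W_{P, \nu}$ is precisely the scheme of $P$-reductions of $\sigma$ for which $A$ lies in the subbundle $\fu(P)_\sigma$; i.e., the global Springer fiber $\Spr^{\sigma,A}_{P, \on{unip}}$. Hence $W_{\on{Glued}, \nu} \simeq \Spr^{\sigma,A}_{\on{Glued}, \on{unip}}$, and the theorem reduces to \thmref{t:contr preview}.

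For the contractibility of $\Spr^{\sigma,A}_{\on{Glued}, \on{unip}}$, I would argue by induction on the semisimple rank of $G$. Granted the statement for all proper Levis, I would trade $\Spr^{\sigma,A}_{\on{Glued}, \on{unip}}$ for the simpler prestack $\Spr^{\sigma,A}_{\on{Glued}}$ (built using only \emph{proper} parabolics and allowing $A \in \fp_\sigma$), reducing to \thmref{t:contr2 preview}. By Jacobson-Morozov, $A$ determines a canonical parabolic reduction of $\sigma$ to some $P_0$, which induces a Weyl-group stratification of each $\Spr^{\sigma,A}_P$ by relative position with this canonical reduction, and hence a filtration of $\Spr^{\sigma,A}_{\on{Glued}}$ by $\Spr_{\on{Glued}}^{\sigma,A,\leq w}$ for $w \in W$. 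Contractibility would then follow by induction on $\ell(w)$, once each graded piece $\Spr_{\on{Glued}}^{\sigma,A,\leq w} / \Spr_{\on{Glued}}^{\sigma,A,< w}$ is shown to be contractible. The main obstacle is this final stratum-by-stratum step: one must prove, for each $w \in W$, that the pieces contributed by the various proper parabolics $P$ assemble — under the colimit defining the gluing — into a contractible prestack. This is a delicate combinatorial verification involving the interaction between the relative-position stratification and the order structure of $\on{Par}(G)^{\on{op}}$, and is where the real work of the proof lies; once it is in hand, everything else is formal consequence of the machinery of Parts I-II.
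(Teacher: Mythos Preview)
Your proposal is correct and follows essentially the same approach as the paper: reduce via the fiberwise criterion (\corref{c:pointwise bis}) to homological contractibility of the glued global Springer fibers $\Spr_{\on{Glued,unip}}^{\sigma,A}$, then (using the inductive hypothesis for proper Levis) trade this for contractibility of the simpler prestack $\Spr_{\on{Glued}}^{\sigma,A}$, and finally prove the latter by the Jacobson--Morozov reduction and a Schubert stratification indexed by $\sW'$. The one place where the paper supplies more than your sketch is the reduction step $\Spr_{\on{Glued,unip}}^{\sigma,A} \rightsquigarrow \Spr_{\on{Glued}}^{\sigma,A}$ (\propref{p:another version}), which goes through an auxiliary ``mixed'' prestack and requires the case $A=0$ of the contractibility statement for the Levis; you should be aware that this intermediate step is not entirely trivial.
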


We prove \thmref{t:Dmod ff LocSys} in Part III of the paper.  

\section{Proof of \thmref{t:top to IndCoh}} \label{s:proof top to IndCoh}

\ssec{A criterion for fully faithfulness} 

\sssec{}

Let $(\bC_i,\Phi_\alpha)$ be as in \secref{sss:setting for gluing}. Let $\bC_i'\subset \bC_i$
be full subcategories such that
$$\Phi_\alpha(\bC'_i)\subset \bC'_j,\quad (\alpha:i\to j)\in I.$$ 

Set $\obC_i:=(\bC_i')^\perp\subset \bC_i$. Assume that 
$$\Phi_\alpha(\obC_i)\subset \obC_j,\quad (\alpha:i\to j)\in I.$$

\medskip

Denote
$$\bC:=\on{Glue}(\bC_i,\,i\in I), \quad \bC':=\on{Glue}(\bC'_i,\,i\in I),\quad \obC:=\on{Glue}(\obC_i,\,i\in I).$$

Thus, we have a pair of full subcategories
$$\bC'\hookrightarrow \bC \hookleftarrow \obC.$$

We have an inclusion
$$\obC\subset (\bC')^\perp,$$
which, in general, is not an equality. 

\sssec{}

Let now
$$\sF_i:\bD\to \bC_i$$ be a family of functors as in \secref{sss:functor to glue}. 

\medskip

Let $\bD'\subset \bD$
be a full subcategory, and set 
$$\obD:=(\bD')^\perp \subset \bD.$$

We  assume that for every $i\in I$, the functor $\sF_i$ satisfies:
$$\sF_i(\bD')\subset \bC'_i, \quad \sF_i(\obD)\subset \obC_i.$$

These conditions imply that $\sF$ restricts to well-defined functors
$$\sF':\bD'\to \bC' \text{ and } \osF:\obD\to \obC.$$

\medskip

We claim:

\begin{prop} \label{p:ff ses}
Assume that:

\smallskip

\noindent{\em(a)} Each of the functors $\sF_i$ admits a left adjoint, denoted $\sF_i^L$, and 
$$\sF_i^L(\obC_i)\subset \obD\quad\text{for all }i\in I.$$ 

\smallskip

\noindent{\em(b)} The functors $\sF'$ and $\osF$ are both fully faithful. 

\medskip

Then $\sF$ is also fully faithful. 
\end{prop}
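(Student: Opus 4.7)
The plan is to construct a left adjoint $\sF^L$ to $\sF$ and prove that the counit $\sF^L\sF\to\on{id}_\bD$ is an isomorphism, by exploiting the semi-orthogonal decomposition of $\bD$ into $\bD'$ and $\obD$. Any object $\bc\in\bC=\on{Glue}(\bC_i,\,i\in I)$ is canonically the colimit $\underset{i\in I}{\on{colim}}\,\on{ins}_i(\on{ev}_i(\bc))$, so the adjunctions $\on{ins}_i\dashv\on{ev}_i$ and $\sF_i^L\dashv\sF_i$ yield
\[
\Maps_{\bC}(\bc,\sF(\bd))\;\simeq\;\underset{i\in I}{\on{lim}}\,\Maps_{\bC_i}(\on{ev}_i(\bc),\sF_i(\bd))\;\simeq\;\underset{i\in I}{\on{lim}}\,\Maps_{\bD}(\sF_i^L(\on{ev}_i(\bc)),\bd).
\]
The representing object of this functor of $\bd$ is the desired left adjoint $\sF^L(\bc)$, and full faithfulness of $\sF$ becomes equivalent to the counit $\sF^L\sF(\bd)\to\bd$ being an isomorphism for every $\bd\in\bD$.

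The second step is to verify that $\sF^L$ respects the semi-orthogonal decompositions on both sides. Hypothesis (a) gives $\sF_i^L(\obC_i)\subset\obD$ directly. Dually, for $\bc'_i\in\bC'_i$ and $\obd\in\obD$,
\[
\Maps_{\bD}(\sF_i^L(\bc'_i),\obd)\;\simeq\;\Maps_{\bC_i}(\bc'_i,\sF_i(\obd))\;=\;0
\]
(using $\sF_i(\obd)\in\obC_i=(\bC'_i)^\perp$). Identifying the left orthogonal ${}^\perp\obD$ with $\bD'$ — a consequence of the semi-orthogonal decomposition of $\bD$, which also ensures that $\bD'$ is closed under colimits — this yields $\sF_i^L(\bC'_i)\subset\bD'$. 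Passing to colimits, $\sF^L$ sends $\bC'$ into $\bD'$ and $\obC$ into $\obD$.

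Finally, for $\bd\in\bD$ the semi-orthogonal decomposition furnishes a fiber sequence $Q'(\bd)\to\bd\to\oP(\bd)$ with $Q'(\bd)\in\bD'$ and $\oP(\bd)\in\obD$. Applying the exact functor $\sF^L\sF$ produces a morphism of fiber sequences
$$
\CD
\sF^L\sF(Q'(\bd)) @>>> \sF^L\sF(\bd) @>>> \sF^L\sF(\oP(\bd)) \\
@VVV @VVV @VVV \\
Q'(\bd) @>>> \bd @>>> \oP(\bd),
\endCD
$$
whose vertical arrows are the counits of $\sF^L\dashv\sF$. By the previous step, $\sF^L\sF$ restricts to endofunctors of $\bD'$ and $\obD$, and the leftmost and rightmost vertical arrows are the counits of the induced adjunctions $\sF^L|_{\bC'}\dashv\sF'$ and $\sF^L|_{\obC}\dashv\osF$ respectively. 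These are isomorphisms by the hypothesis that $\sF'$ and $\osF$ are fully faithful, so the middle vertical arrow is too, as desired.

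The hardest step will be the middle one: the identification ${}^\perp\obD=\bD'$ relies on $\bD$ admitting a semi-orthogonal decomposition into $\bD'$ and $\obD$, which is an ingredient built into the setup around the proposition; verifying that the pointwise colimit formulas for $\sF^L$ genuinely land in these orthogonal pieces is what reduces the fully faithfulness question for $\sF$ to the fully faithfulness of the restrictions $\sF'$ and $\osF$.
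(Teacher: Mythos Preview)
Your argument is correct and follows essentially the same route as the paper: build the left adjoint $\sF^L$, check it is compatible with the semi-orthogonal pieces, and reduce to full faithfulness of $\sF'$ and $\osF$. The paper's version is slightly leaner: rather than proving both $\sF^L(\bC')\subset\bD'$ and $\sF^L(\obC)\subset\obD$ and then running your fiber-sequence argument, it isolates only the second containment (as a lemma that $\sF^L|_{\obC}\simeq\osF^L$) and verifies the single cross-Hom $\Hom_\bD(\obd,\bd')\to\Hom_\bC(\osF(\obd),\sF'(\bd'))$ directly. This avoids ever invoking ${}^\perp\obD=\bD'$ or closure of $\bD'$ under colimits.

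One technical point to tighten: your opening formulas
\[
\bc\simeq\underset{i\in I}{\on{colim}}\,\on{ins}_i(\on{ev}_i(\bc)),\qquad
\Maps_\bC(\bc,\sF(\bd))\simeq\underset{i\in I}{\on{lim}}\,\Maps_{\bC_i}(\on{ev}_i(\bc),\sF_i(\bd))
\]
are not literally correct for a lax limit indexed by a general $I$ --- the correct indexing category is $\on{String}(I)$, as the paper works out later when it gives an explicit formula for $\sF^L$. This does not damage your argument, because the facts you actually use downstream are only that $\sF^L$ exists, that $\sF^L\circ\on{ins}_i\simeq\sF_i^L$, and that the $\on{ins}_i$ jointly generate under colimits; all of these hold.
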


\sssec{}

The rest of this subsection is devoted to the proof of \propref{p:ff ses}, which is rather formal. 

\medskip

It is easy to see that the assumption that the functors $\sF_i$ each admits a left adjoint implies that 
the functor $\sF:\bD\to \bC$ admits a left adjoint\footnote{In \secref{ss:expl left adjoint} we give a more explicit description of the functor $\sF^L$.} 
(denoted $\sF^L$), which satisfies
$$\sF^L\circ \on{ins}_i\simeq \sF^L_i\quad\text{for all }i\in I,$$
where $\on{ins}_i$ is as in \secref{sss:insert}. 

\medskip

We will need the following:

\begin{lem}  \label{l:left adjoints}
If
$$\sF_i^L(\obC_i)\subset \obD\quad\text{for all }i\in I,$$
then the diagram
$$
\CD
\bC   @<<<  \obC  \\
@V{\sF^L}VV    @VV{\osF{}^L}V    \\
\bD   @<<<   \obD
\endCD
$$
commutes.
\end{lem}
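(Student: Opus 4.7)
The plan is to verify two things: first, that $\sF^L$ sends $\obC$ into $\obD$; second, that the resulting functor $\obC \to \obD$ coincides with $\osF^L$. Both statements are formal consequences of the adjunctions and the orthogonality relations built into the setup, so I do not anticipate any real obstacle.

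For the first step, I would invoke the inclusion $\obC \subset (\bC')^\perp$ recorded in the paragraph preceding the lemma. Given $\bc \in \obC$ and $d' \in \bD'$, the adjunction $\sF^L \dashv \sF$ yields
\[\Hom_\bD(d', \sF^L(\bc)) \simeq \Hom_\bC(\sF(d'), \bc),\]
and the right-hand side vanishes because $\sF(d') \in \bC'$ (from the standing assumption $\sF_i(\bD') \subset \bC'_i$) while $\bc \in (\bC')^\perp$. Hence $\sF^L(\bc) \in (\bD')^\perp = \obD$, so $\sF^L$ restricts to a functor $\obC \to \obD$.

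For the second step, I would apply Yoneda. For $\bc \in \obC$ and $\bd \in \obD$, chain the natural isomorphisms
\[\Hom_\obD(\sF^L(\bc), \bd) \simeq \Hom_\bD(\sF^L(\bc), \bd) \simeq \Hom_\bC(\bc, \sF(\bd)) \simeq \Hom_\obC(\bc, \osF(\bd)) \simeq \Hom_\obD(\osF^L(\bc), \bd),\]
where the outer isomorphisms use fully faithfulness of the embeddings $\obD \hookrightarrow \bD$ and $\obC \hookrightarrow \bC$ together with $\sF(\bd) = \osF(\bd) \in \obC$ (from $\sF_i(\obD) \subset \obC_i$), and the middle isomorphisms use $\sF^L \dashv \sF$ and $\osF^L \dashv \osF$. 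Yoneda then identifies $\sF^L(\bc) \simeq \osF^L(\bc)$, naturally in $\bc$, completing the proof. I note in passing that the hypothesis $\sF_i^L(\obC_i) \subset \obD$ stated in the lemma is in fact automatic from the standing assumption $\sF_i(\bD') \subset \bC'_i$ by the very same adjunction argument applied componentwise, and is not used directly above; the alternative route via $\sF^L \circ \on{ins}_i \simeq \sF_i^L$ combined with a generation argument for $\obC$ in terms of the insertion functors would be where this hypothesis enters in its stated form.
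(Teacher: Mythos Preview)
Your step 1 contains a genuine error in the direction of the adjunction. With $\sF^L \dashv \sF$, the isomorphism one obtains is
\[
\Hom_\bD(\sF^L(\bc), d) \;\simeq\; \Hom_\bC(\bc, \sF(d)),
\]
not $\Hom_\bD(d', \sF^L(\bc)) \simeq \Hom_\bC(\sF(d'), \bc)$ as you wrote. Since $\obD = (\bD')^\perp$ is the \emph{right} orthogonal, testing membership of $\sF^L(\bc)$ in $\obD$ requires computing maps \emph{into} $\sF^L(\bc)$ from objects of $\bD'$, and the adjunction at hand moves $\sF^L$ across only when it appears in the source. Knowing $\sF(\bD') \subset \bC'$ therefore does not formally imply $\sF^L((\bC')^\perp) \subset (\bD')^\perp$; what it does imply (by the correct adjunction direction) is rather that $\sF^L$ reflects $\obD$ in the sense that objects mapping into $\obD$ under $\sF^L$ have a certain property, which is not what is needed. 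The same mistake invalidates your concluding remark that the hypothesis $\sF_i^L(\obC_i)\subset\obD$ is automatic from $\sF_i(\bD')\subset\bC'_i$.

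The paper's proof proceeds exactly along what you called the ``alternative route'': one checks the commutativity of the square after precomposing with each $\on{ins}_i:\obC_i\to\obC$, using that $\sF^L\circ\on{ins}_i\simeq\sF_i^L$ and that the images of the insertion functors generate $\obC$. This is precisely where the stated hypothesis $\sF_i^L(\obC_i)\subset\obD$ is used (together with the componentwise version of your step 2, which identifies $\sF_i^L|_{\obC_i}$ with $\osF_i^L$). Your step 2 is correct once step 1 is known, but step 1 must be supplied by this generation argument rather than by the adjunction computation you gave.
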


\begin{proof}

It is enough to show that for every $i\in I$ the diagram
$$
\CD
& & \obC_i \\
& & @VV{\on{ins}_i}V  \\
\bC   @<<<  \obC  \\
@V{\sF^L}VV    @VV{\osF{}^L}V    \\
\bD   @<<<   \obD
\endCD
$$
commutes.  Note, however, that the diagram 
$$
\CD
\bC_i   @<<< \obC_i \\
@V{\on{ins}_i}VV   @VV{\on{ins}_i}V    \\
\bC   @<<<  \obC 
\endCD
$$
commutes. Hence, it is enough to establish the commutativity of
$$
\CD
\bC_i   @<<< \obC_i \\ 
@V{\sF^L\circ \on{ins}_i}VV     @VV{\osF{}^L\circ \on{ins}_i}V    \\
\bD   @<<<   \obD.
\endCD
$$

However, the latter diagram identifies with
$$
\CD
\bC_i   @<<< \obC_i \\ 
@V{\sF^L_i}VV     @VV{\osF{}^L_i}V    \\
\bD   @<<<   \obD,
\endCD
$$
and the commutativity follows from the assumption.

\end{proof}

\sssec{Proof of \propref{p:ff ses}}

It suffices to check that for $\bd'\in \bD'$ and $\obd\in \obD$, the map
\begin{equation} \label{e:compare Homs}
\Hom_{\bD}(\obd,\bd')\to \Hom_{\bC}(\osF(\obd),\sF'(\bd'))
\end{equation} 
is an isomorphism. 

\medskip

Using \lemref{l:left adjoints}, we can identify \eqref{e:compare Homs} with the map
$$\Hom_{\bD}(\obd,\bd')\to  \Hom_{\bD}(\osF{}^L\circ \osF(\obd),\bd'),$$
which comes from the co-unit of the adjunction
$$\osF{}^L\circ \osF(\obd)\to \obd.$$

Since, the latter is an an isomorphism ($\osF$ was assumed fully faithful), the assertion
of the proposition follows.

\qed

\ssec{Proof of \thmref{t:top to IndCoh}, Step 0}  

\sssec{}

It is easy to see by descent that the property of the functor \eqref{e:functor in question} to be fully faithful
is local in the smooth topology on $\CY$.  The same is true for conditions (2) and (3) in \thmref{t:top to IndCoh}.

\medskip

Hence, we can assume that $\CY=:Y$ and $\CZ_i=:Z_i$ are DG schemes. 

\sssec{}

We  prove \thmref{t:top to IndCoh} by applying \propref{p:ff ses}. We take
$$\bD=\IndCoh_\CN(Y),\quad \bD'=\QCoh(Y),\quad \obD=\oInd(Y)\cap \IndCoh_\CN(Y).$$

We take
$$\bC_i:=\IndCoh_{\CN_i}((Z_i)_\dr\underset{Y_\dr}\times Y).$$

Recall the identification 
$$\IndCoh((Z_i)_\dr\underset{Y_\dr}\times Y)\simeq \QCoh((Z_i)_\dr\underset{Y_\dr}\times Y)\underset{\QCoh(Y)}\otimes \IndCoh(Y)$$
of \propref{p:rel cryst as tensored up}.

\medskip

We take 
\begin{multline*}
\bC'_i=\QCoh((Z_i)_\dr\underset{Y_\dr}\times Y))=
\QCoh((Z_i)_\dr\underset{Y_\dr}\times Y))\underset{\QCoh(Y)}\otimes \QCoh(Y) \subset \\
\subset \QCoh((Z_i)_\dr\underset{Y_\dr}\times Y)\underset{\QCoh(Y)}\otimes \IndCoh(Y)\simeq
\IndCoh((Z_i)_\dr\underset{Y_\dr}\times Y).
\end{multline*}

We have:
$$\bC'_i\subset \IndCoh_{\{0\}}((Z_i)_\dr\underset{Y_\dr}\times Y)\subset 
\IndCoh_{\CN_i}((Z_i)_\dr\underset{Y_\dr}\times Y)=\bC_i.$$

\medskip

Thus,
$$\obC_i=
\left(\QCoh((Z_i)_\dr\underset{Y_\dr}\times Y)\underset{\QCoh(Y)}\otimes \oInd(Y)\right)\cap
\left(\IndCoh_{\CN_i}((Z_i)_\dr\underset{Y_\dr}\times Y)\right).$$

\medskip

The functors $\sF_i$ are the compositions
\begin{multline}  \label{e:functor i}
\IndCoh_\CN(Y)\hookrightarrow \IndCoh(Y) \overset{((f_i)_\dr\times \on{id}_Y)^!}\longrightarrow \\
\to \IndCoh((Z_i)_\dr\underset{Y_\dr}\times Y)\to \IndCoh_{\CN_i}((Z_i)_\dr\underset{Y_\dr}\times Y),
\end{multline}
where the last arrow is the right adjoint to the embedding
$$\IndCoh_{\CN_i}((Z_i)_\dr\underset{Y_\dr}\times Y)\hookrightarrow
\IndCoh((Z_i)_\dr\underset{Y_\dr}\times Y).$$

\medskip

It is clear that the above functor sends $\bD'=\QCoh(Y)$ to
$\bC'_i=\QCoh((Z_i)_\dr\underset{Y_\dr}\times Y))$.

\medskip

In Steps 1 and 2 below we will verify that the above data satisfies the conditions of \propref{p:ff ses}.

\ssec{Proof of \thmref{t:top to IndCoh}, Step 1} 

In this subsection we will show the following:

\medskip

\noindent(i) The above functor $\sF_i:\bD\to \bC_i$ 
$$\IndCoh_\CN(Y)\to \IndCoh_{\CN_i}((Z_i)_\dr\underset{Y_\dr}\times Y)$$
admits a left adjoint. 

\medskip

\noindent(ii) The left adjoint in (i) sends $\bC'_i$ to $\bD'$, i.e.,  
$$\QCoh((Z_i)_\dr\underset{Y_\dr}\times Y)\subset \IndCoh_{\CN_i}((Z_i)_\dr\underset{Y_\dr}\times Y)$$
to 
$$\QCoh(Y)\subset \IndCoh_\CN(Y).$$

\medskip

Note that (ii) is equivalent to the fact that $\sF_i$ sends $\obD$ to $\obC_i$. 

\medskip

\noindent(iii) The left adjoint in (i) sends $\obC_i$ to $\obD$. 

\sssec{}  \label{sss:left adjoint}

First, we claim that the functor
$$\IndCoh(Y) \overset{((f_i)_\dr\times \on{id}_Y)^!}\longrightarrow \IndCoh((Z_i)_\dr\underset{Y_\dr}\times Y)$$
admits a left adjoint\footnote{More conceptually, the left adjoint in question exists because the map
$(f_i)_\dr\times \on{id}_Y)$ is \emph{inf-schematic and nil-proper}, see \cite[Chapter III.3, Proposition 3.2.4]{GR2}.}.
Indeed, we rewrite
$$\IndCoh((Z_i)_\dr\underset{Y_\dr}\times Y)\simeq
\QCoh((Z_i)_\dr\underset{Y_\dr}\times Y)\underset{\QCoh(Y)}\otimes \IndCoh(Y).$$

So, it is enough to show that the functor 
$$((f_i)_\dr\times \on{id}_Y)^*:\QCoh(Y)\to \QCoh((Z_i)_\dr\underset{Y_\dr}\times Y)$$
admits a left adjoint (it automatically commutes with the action of $\QCoh(Y)$, because $\QCoh(Y)$
is rigid as a monoidal category). 

\medskip

We write
$$\QCoh((Z_i)_\dr\underset{Y_\dr}\times Y) \simeq \QCoh((Z_i)_\dr)\underset{\QCoh(Y_\dr)}\otimes \QCoh(Y),$$
see \lemref{l:QCoh on rel}.

\medskip

So, it is enough to show that the functor
$$(f_i)_\dr^*:\QCoh(Y_\dr)\to \QCoh((Z_i)_\dr)$$
admits a left adjoint, which commutes with the action of $\QCoh(Y_\dr)$.  

\medskip

We interpret the latter functor as
$$f_i^{\dr,!}:\Dmod(Y)\to \Dmod(Z_i).$$

\medskip

Since $f_i$ is proper, the left adjoint in question
is the functor
$$(f_i)_{\dr,*}:\Dmod(Z_i)\to \Dmod(Y).$$
The commutativity with the action of $\QCoh(Y_\dr)=\Dmod(Y)$ is given by the projection
formula for $(f_i)_{\dr,*}$. 

\sssec{}

Now, the left adjoint to the functor \eqref{e:functor i} is given by the composition
$$
\IndCoh_{\CN_i}((Z_i)_\dr\underset{Y_\dr}\times Y) \hookrightarrow  
\IndCoh((Z_i)_\dr\underset{Y_\dr}\times Y) \overset{(((f_i)_\dr\times \on{id}_Y)^!)^L}\longrightarrow \IndCoh(Y).
$$

We claim that the essential image of the above functor belongs to $\IndCoh_\CN(Y)$. Indeed, by 
\corref{c:Barr-Beck supp}, it suffices to show that the composition
\begin{multline*}
\IndCoh_{\CN_i}(Z_i)\to 
\IndCoh_{\CN_i}((Z_i)_\dr\underset{Y_\dr}\times Y) \hookrightarrow \\
\to \IndCoh((Z_i)_\dr\underset{Y_\dr}\times Y) \overset{(((f_i)_\dr\times \on{id}_Y)^!)^L}\longrightarrow \IndCoh(Y)
\end{multline*}
maps to $\IndCoh_\CN(Y)$. However, the latter functor identifies with
$$\IndCoh_{\CN_i}(Z_i) \hookrightarrow \IndCoh(Z_i) \overset{(f_i)^\IndCoh_*}\longrightarrow
\IndCoh(Y),$$
and the desired containment follows from condition (1) in \thmref{t:top to IndCoh} and \cite[Proposition 7.1.3(b)]{AG}.

\sssec{}

The fact that the left adjoint to \eqref{e:functor i} sends 
$$\QCoh((Z_i)_\dr\underset{Y_\dr}\times Y)\subset \IndCoh_{\CN_i}((Z_i)_\dr\underset{Y_\dr}\times Y)$$
to 
$$\QCoh(Y)\subset \IndCoh_\CN(Y)$$
follows from the construction. 

\sssec{}

The fact that the left adjoint to \eqref{e:functor i} sends $\obC_i$ to $\oInd(Y)$ follows 
from the fact that the functor left adjoint to $((f_i)_\dr\times \on{id}_Y)^!$ sends
$$\QCoh((Z_i)_\dr\underset{Y_\dr}\times Y)\underset{\QCoh(Y)}\otimes \oInd(Y)$$ to $\oInd(Y)$,
which follows from the description of this left adjoint in \secref{sss:left adjoint}. 

\ssec{Proof of \thmref{t:top to IndCoh}, Step 2} 

In order to apply \propref{p:ff ses}, we need to show that the functors
$\QCoh(Y)\to \bC'$
and 
$\oInd(Y)\cap \IndCoh_\CN(Y)\to \obC$
are both fully faithful.

\sssec{}

The fact that $\QCoh(Y)\to \bC'$ is fully faithful is given by condition (2) in \thmref{t:top to IndCoh}. 

\sssec{}

It remains to show that the functor 
\begin{equation} \label{e:glue open}
\oInd(Y)\cap \IndCoh_\CN(Y)\to \obC
\end{equation}
is fully faithful. 

\medskip

We are now going to use the results from Part I of the paper. Namely, according to \thmref{t:express category}, 
the functor $$I\to \StinftyCat_{\on{cont}}, \quad i\mapsto \obC_i$$
identifies with the functor
$$i\mapsto \Dmod\left(\BP\left(\Sing(f_i)^{-1}(\CN_i)\right)\right)\underset{\Dmod(\BP\Sing(Y))}\otimes \oInd(Y).$$

Similarly, by \thmref{t:crystal structure}, 
$$\oInd(Y)\cap \IndCoh_\CN(Y)\simeq
\Dmod(\BP\CN)\underset{\Dmod(\BP\Sing(Y))}\otimes \oInd(Y).$$

\sssec{}



We have the following general assertion: 

\begin{lem}  \label{l:glue and tensor}
Suppose that in the setting of \secref{sss:setting for gluing}, the functor $I\to \StinftyCat_{\on{cont}}$
$$i\mapsto \bC_i,\quad (\alpha:i\to j)\mapsto \Phi_\alpha$$
upgrades to a functor $I\to \bO\mod$, where $\bO$ is a monoidal DG category. Then
for a right $\bO$-module category $\wt\bC$, the functor
$$\wt\bC \underset{\bO}\otimes \on{Glue}(\bC_i,\,i\in I)\to \on{Glue}(\wt\bC\underset{\bO}\otimes \bC_i,\,i\in I)$$
is an equivalence.
\end{lem}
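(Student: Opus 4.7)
The plan is to construct the natural functor $\Phi$ of the lemma and then verify it is an equivalence by exhibiting both sides as the same colimit in the $2$-category of $\bO$-module categories.

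I would construct $\Phi$ as follows. For each $i\in I$, the evaluation functor $\on{ev}_i:\on{Glue}(\bC_i,\,i\in I)\to \bC_i$ is $\bO$-linear, since the transition functors $\Phi_\alpha$ are so by hypothesis. Applying $\wt\bC\underset{\bO}\otimes -$ produces continuous functors $\wt\bC\underset{\bO}\otimes\on{Glue}(\bC_i,\,i\in I)\to \wt\bC\underset{\bO}\otimes \bC_i$, together with induced $2$-cells along each arrow $\alpha:i\to j$ in $I$ coming from the lax structure of the source. By the universal property of the target lax limit (\secref{sss:functor to glue}), this data assembles into the required functor $\Phi$.

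To see that $\Phi$ is an equivalence, the key step is to rewrite the lax limit $\on{Glue}(\bC_i,\,i\in I)$ as a colimit in $\bO\mmod$. Recall from \secref{sss:insert} that each $\on{ev}_i$ admits a left adjoint $\on{ins}_i:\bC_i\to\on{Glue}(\bC_j,\,j\in I)$, and in the present setting these insertions can be taken to be $\bO$-linear, so that they live in $\bO\mmod$. Together, the family $(\on{ins}_i)_{i\in I}$ and the interchange $2$-cells encoded by the coCartesian fibration $\bC_I\to I$ realize $\on{Glue}(\bC_i,\,i\in I)$ as a colimit in $\bO\mmod$ of a diagram whose value at $i\in I$ is $\bC_i$. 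Since the relative tensor product $\wt\bC\underset{\bO}\otimes -:\bO\mmod\to\StinftyCat_{\on{cont}}$ admits $\Fun^{\on{cont}}_{\bO}(\wt\bC,-)$ as a right adjoint, it preserves all colimits; applying it to the colimit presentation above yields the analogous presentation of $\on{Glue}(\wt\bC\underset{\bO}\otimes\bC_i,\,i\in I)$, and unwinding the constructions shows that the resulting identification coincides with $\Phi$.

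The main obstacle is to make precise the colimit presentation of the lax limit in $\bO\mmod$: one must check that the structural functors (the insertions $\on{ins}_i$, and the coherence data for their composites $\on{ev}_j\circ\on{ins}_i\simeq \underset{\alpha\in\Maps_I(i,j)}{\colim}\,\Phi_\alpha$ of \secref{sss:insert}) all lift to $\bO$-linear functors and $\bO$-linear natural transformations. This in turn follows from the hypothesis that the coCartesian fibration $\bC_I\to I$ itself arises from a diagram valued in $\bO\mmod$. One may either argue this directly by transferring all the constructions to the $2$-category of $\bO$-module categories, or appeal to the general formalism of lax limits of presentable module categories and their interaction with relative tensor products, in the spirit of \cite{GR2}.
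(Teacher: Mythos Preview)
Your proposal is correct and is precisely the argument the paper has in mind: the paper's entire proof is the single line ``Follows from \secref{sss:insert}'', i.e., the existence of the left adjoints $\on{ins}_i$ with the explicit formula for $\on{ev}_j\circ\on{ins}_i$ exhibits $\on{Glue}(\bC_i,\,i\in I)$ as a colimit, and then one uses that $\wt\bC\underset{\bO}\otimes -$ preserves colimits. Your write-up simply unpacks this, including the $\bO$-linearity bookkeeping that the paper leaves implicit.
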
 

\begin{proof}

Follows from \secref{sss:insert}.

\end{proof} 

\medskip

Applying \lemref{l:glue and tensor}, we obtain that the functor \eqref{e:glue open} identifies with the functor obtained from
\begin{equation}
\sF_{\Dmod}:\Dmod(\BP(\CN))\to \on{Glue}\left(\Dmod\left(\BP\left(\Sing(f_i)^{-1}(\CN_i)\right)\right),\,i\in I\right)
\end{equation}
by tensoring over $\Dmod(\BP\Sing(Y))$ with $\oInd(Y)$. 

\sssec{}

The functor $\sF_{\Dmod}$ admits a left adjoint that commutes with the monoidal
action of $\Dmod(\BP\Sing(Y))$ (by the same argument as in \lemref{l:left adjoints});
denote it by $\sF_{\Dmod}^L$. Hence, the functor \eqref{e:glue open} also admits a left
adjoint that can be identified with
$$\sF_{\Dmod}^L\otimes \on{Id}_{\oInd(Y)}.$$

We need to show that the co-unit of the adjunction
\begin{multline*}
(\sF_{\Dmod}\otimes \on{Id}_{\IndCoh(Y)})^L\circ (\sF_{\Dmod}\otimes \on{Id}_{\IndCoh(Y)})\simeq \\
\simeq (\sF^L_{\Dmod}\otimes \on{Id}_{\IndCoh(Y)})\circ (\sF_{\Dmod}\otimes \on{Id}_{\IndCoh(Y)})\simeq 
(\sF^L_{\Dmod}\circ \sF_{\Dmod}) \otimes \on{Id}_{\IndCoh(Y)}\to \on{Id}
\end{multline*}
is an isomorphism.

\medskip

For that, it is enough to know that $\sF^L_{\Dmod}\circ \sF_{\Dmod} \to \on{Id}$ is an isomorphism, i.e.,
that $\sF_{\Dmod}$ is fully faithful.

\medskip

However, the latter is given by condition (3) in \thmref{t:top to IndCoh}.

\section{Gluing for D-modules and homological contractibility}  \label{s:fibers}

For the rest of the paper we work within the usual (as opposed to derived) algebraic geometry. The reason for this
is that for a derived scheme $Y$, the map $^{\on{cl}}Y\to Y$ gives rise to an isomorphism
$({}^{\on{cl}}Y)_\dr\to Y_\dr$ (here $^{\on{cl}}Y$ denotes the classical scheme underlying $Y$),
so the pullback functor $\Dmod(Y)\to \Dmod({}^{\on{cl}}Y)$ is an equivalence. 

\medskip

From now on, our goal is to prove \thmref{t:Dmod ff LocSys}; that is, we need to verify condition (3) in  
\thmref{t:top to IndCoh} in a particular situation. Condition (3) may appear somewhat obscure. 
In this section, we restate it in more concrete terms as 
\emph{homological contractibility} of certain homotopy types. 

\ssec{D-modules on prestacks}

In this subsection we consider a simplified version of the set-up of \secref{ss:setting Dmod}, namely, one where $\CM_i=\CZ'_i$,
and where instead of the glued category we consider the actual (strict) limit. 

\medskip

Strictly speaking, some of this material is not necessary
for the sequel; it is included for completeness and in order to familiarize the reader with the objects involved.  
For a more comprehensive review of the theory, the reader is referred to \cite[Sects. 1 and 7.4]{Ga4}.

\medskip

For the duration of the paper, we let $\affSch$ denote the category of (classical) affine schemes of finite type. 

\sssec{}

Recall that for a prestack $\CZ$, the DG category $\Dmod(\CZ)$ is defined to be
$$\underset{S\in (\affSch_{/\CZ})^{\on{op}}}{\on{lim}}\, \Dmod(S),$$
where the limit is formed using !-pullbacks as transition functors.

\medskip

If $\CZ$ is written as a colimit over $I^{\on{op}}$ (where $I$ is an index $\infty$-category) as
\begin{equation} \label{e:prestack colimit}
\CZ\simeq\underset{i\in I^{\on{op}}}{\on{colim}}\, Z_i,
\end{equation} 
where $Z_i\in \Sch$, then the functor 
$$I^{\on{op}}\to  \Sch_{/\CZ},\quad i\mapsto Z_i$$
is cofinal, and so the restriction map
$$\Dmod(\CZ)\to \underset{i\in I}{\on{lim}}\, \Dmod(Z_i).$$
is an equivalence. 

\sssec{}  \label{sss:pseudo-proper}

A prestack $\CZ$ is said to be a \emph{pseudo-scheme} if it admits a presentation \eqref{e:prestack colimit}
$$\CZ\simeq\underset{i\in I^{\on{op}}}{\on{colim}}\, Z_i,$$
where $Z_i\in \Sch$, and the transition maps $Z_i\to Z_j$ are proper. 

\medskip

In this case, by \cite[Proposition 1.7.5]{DrGa}, the evaluation functors
$$\on{ev}_i:\Dmod(\CZ)\to \Dmod(Z_i)$$
admit left adjoints (to be denoted $\on{ins}_i$), and the resulting functor
\begin{equation} \label{e:Dmod as colim}
\underset{i\in I^{\on{op}}}{\on{colim}}\, \Dmod(Z_i)\to \Dmod(\CZ),
\end{equation} 
is an equivalence. In the formation of the above colimit, for an arrow $i_1\overset{\alpha}\to i_2$ in $I$
and the corresponding map $Z_{i_2}\overset{f_\alpha}\longrightarrow Z_{i_1}$, the functor
$$\Dmod(Z_{i_2})\to  \Dmod(Z_{i_2})$$
is $(f_\alpha)_{\dr,!}=(f_\alpha)_{\dr,*}$.  The functors $\Dmod(Z_i)\to \Dmod(\CZ)$ in \eqref{e:Dmod as colim}
are $\on{ins}_i$.

\sssec{} \label{sss:pseudo-proper dir image}

Suppose now that $\CZ$ is a prestack over a scheme $Y$ that 
admits a presentation \eqref{e:prestack colimit} 
where all $Z_i$ are \emph{proper} over $Y$. We then say that 
$\CZ$ is \emph{pseudo-proper} over $Y$.

\medskip

Let $f$ (reps. $f_i$) denote the map $\CZ\to Y$ (reps. $Z_i\to Y$). Consider the pullback functor
$$f^{\dr,!}:\Dmod(Y)\to \Dmod(\CZ).$$

By \secref{sss:pseudo-proper}, the functor $f^{\dr,!}$ admits a left adjoint, to be denoted $f_{\dr,!}$, which is given in terms
of the equivalence \eqref{e:Dmod as colim} by the compatible family of functors
$$(f_i)_{\dr,!}=(f_i)_{\dr,*}:\Dmod(Z_i)\to \Dmod(Y).$$
That is, $f_{\dr,!}\circ \on{ins}_i\simeq (f_i)_{\dr,!}$. 

\medskip

The properness assumption on the $f_i$'s implies the following base-change
property: for a morphism of schemes $Y'\overset{g}\longrightarrow Y$ and the corresponding Cartesian square
\begin{equation} \label{e:base change ps proper}
\CD
\CZ'  @>{g_Z}>>  \CZ  \\
@V{f'}VV    @VV{f}V   \\
Y'  @>{g}>>  Y,
\endCD
\end{equation} 
the canonical map
\begin{equation} \label{e:base change isom}
(f')_{\dr,!}\circ (g_Z)^{\dr,!}\to g^{\dr,!}\circ f_{\dr,!},
\end{equation} 
arising by adjunction from the isomorphism $(g_Z)^{\dr,!}\circ f^{\dr,!}\simeq (f')^{\dr,!}\circ g^{\dr,!}$, is an isomorphism.

\sssec{}
 
We say that a prestack $\CZ$ over a scheme $Y$ is  \emph{homologically contractible} over $Y$ if the pullback functor 
$$(f)^{\dr,!}:\Dmod(Y)\to \Dmod(\CZ)$$
is fully faithful.  

\medskip

Since $\CZ$ is pseudo-proper over $Y$, the functor $f_{\dr,!}$ admits a right adjoint $f^{\dr,!}$. Hence, 
$\CZ$ is homologically contractible over $Y$ if and only if the co-unit of the adjunction
$$f_{\dr,!}\circ f^{\dr,!}\to \on{Id}_{\Dmod(Y)}$$
is an isomorphism. 

\medskip

The endofunctor $f_{\dr,!}\circ f^{\dr,!}$ of $\Dmod(Y)$ can be described explicitly as
\begin{equation} \label{e:monad as colim}
f_{\dr,!}\circ f^{\dr,!}(\CF)=\colim_{i\in I^{\on{op}}}(f_i)_{\dr,!}\circ (f_i)^{\dr,!}(\CF).
\end{equation} 
Therefore, $\CZ$ is homologically contractible over $\CZ$ if and only if the natural map
\[\colim_{i\in I^{\on{op}}}(f_i)_{\dr,*}\circ (f_i)^{\dr,!}(\CF)\to \CF\]
is an isomorphism for every $\CF\in \Dmod(Y)$.

\sssec{} \label{sss:contractibleoverpt}
Assume for a moment that $Y=\on{pt}$, so that $\Dmod(Y)=\Vect$. 
Then the endofunctor $f_{\dr,!}\circ f^{\dr,!}$ of $\Vect$ is given by
tensor product with the object 
$$f_{\dr,!}(\omega_\CZ)=f_{\dr,!}\circ f^{\dr,!}(k).$$
(As a side remark, $f_{\dr,!}(\omega_\CZ)$ is defined even if $\CZ$ is not pseudo-proper; 
this is due to the fact that the value of $\omega_\CZ$ on any $S\in \Sch_{/\CZ}$ is $\omega_S$, which is holonomic.)

\medskip

Put
$$f_{\dr,!}(\omega_\CZ)=:\on{C}_*(\CZ)\in\Vect.$$
We call $\on{C}_*(\CZ)$ the \emph{homology} of $\CZ$. 

\begin{rem} 
When $k=\BC$, we can attach to $\CZ$ a homotopy type $\CZ^{\on{top}}$ given by
$$\CZ^{\on{top}}:=\colim_{S\in \Sch_{/\CZ}}S^{\on{top}}.$$
Here $S\mapsto S^{\on{top}}$ is the functor sending a scheme to
the underlying analytic space, and the colimit is taken in the $\infty$-category of spaces.
The Riemann-Hilbert correspondence yields a canonical isomorphism
$$\on{C}_*(\CZ)\simeq \on{C}_*(\CZ^{\on{top}},k),$$
where the right-hand side is the homology of the homotopy type $\CZ^{\on{top}}$.
\end{rem}

\medskip

We now claim:

\begin{lem}  \label{l:pointwise}
Let $\CZ$ be pseudo-proper over $Y$. Then the following conditions are equivalent:

\smallskip

\noindent{\em(i)}
The prestack $\CZ$ is homologically contractible over $Y$;

\smallskip

\noindent{\em(ii)}
The prestack $\CZ$ is \emph{universally} homologically contractible over $Y$:
for any morphism of schemes $Y'\to Y$, the fiber product $\CZ':=\CZ\underset{Y}\times Y$ is homologically contractible over $Y'$;

\smallskip

\noindent{\em(iii)}
The map $$f_{\dr,!}(\omega_\CZ)\simeq f_{\dr,!}\circ f^{\dr,!}(\omega_Y)\to \omega_Y$$ is an isomorphism;

\smallskip

\noindent{\em(iv)}
For every field extension $k'\supset k$ and every $k'$-point $y$ of $Y$, the prestack 
$\CZ_y=\Spec(k')\underset{Y}\times \CZ$  is homologically trivial over $\Spec(k')$;

\smallskip

\noindent{\em(v)}
For every field extension $k'\supset k$ and every $k'$-point $y$ of $Y$, the $k'$-prestack 
$\CZ_y$ has trivial homology:
the natural map
$$\on{C}_*(\CZ_y)\to k'$$
is an isomorphism. 

\end{lem}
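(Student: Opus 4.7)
The plan is to prove the equivalences by establishing the cycle
(i)$\Leftrightarrow$(iii)$\Rightarrow$(ii)$\Rightarrow$(iv)$\Leftrightarrow$(v)$\Rightarrow$(iii),
with the last arrow being the main point of substance.

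\medskip

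First I would establish (i)$\Leftrightarrow$(iii). The direction (i)$\Rightarrow$(iii) is immediate by applying the defining isomorphism to $\CF=\omega_Y$. For the converse, I would combine the colimit formula \eqref{e:monad as colim} with the projection formula for each \emph{proper} $f_i$, obtaining
\[
f_{\dr,!}(f^{\dr,!}(\CF)) \simeq \colim_i \bigl(\CF \sotimes (f_i)_{\dr,!}(\omega_{Z_i})\bigr)
\simeq \CF \sotimes f_{\dr,!}(\omega_\CZ),
\]
which under hypothesis (iii) yields $\CF\sotimes\omega_Y\simeq\CF$ for every $\CF\in\Dmod(Y)$; here I use that the !-tensor product is continuous in each variable and that $\omega_Y$ is its unit.

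\medskip

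Next, (iii)$\Rightarrow$(ii) is a direct application of the base change isomorphism \eqref{e:base change isom}: for any $g:Y'\to Y$ with Cartesian square \eqref{e:base change ps proper}, we have
\[
(f')_{\dr,!}(\omega_{\CZ'}) \simeq (f')_{\dr,!}\circ(g_Z)^{\dr,!}(\omega_\CZ)
\simeq g^{\dr,!}\circ f_{\dr,!}(\omega_\CZ) \simeq g^{\dr,!}(\omega_Y) \simeq \omega_{Y'}.
\]
Hence $\CZ'$ satisfies (iii) over $Y'$, and the already established equivalence (i)$\Leftrightarrow$(iii) applied to the (still pseudo-proper) prestack $\CZ'/Y'$ gives (i) for $\CZ'/Y'$, i.e.\ (ii). The implication (ii)$\Rightarrow$(iv) is specialization to $Y'=\Spec(k')$, and the equivalence (iv)$\Leftrightarrow$(v) is the translation of (i)$\Leftrightarrow$(iii) for the pseudo-proper prestack $\CZ_y$ over $\Spec(k')$, using that $\on{C}_*(\CZ_y)=(f_y)_{\dr,!}(\omega_{\CZ_y})$ and $\omega_{\Spec(k')}=k'$.

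\medskip

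The heart of the proof is the implication (v)$\Rightarrow$(iii). Here I need to show that the natural map $f_{\dr,!}(\omega_\CZ)\to\omega_Y$ in $\Dmod(Y)$ is an isomorphism. By base change \eqref{e:base change isom}, its !-fiber at any point $y:\Spec(k')\to Y$ is identified with the map $(f_y)_{\dr,!}(\omega_{\CZ_y})\to k'$, which is an isomorphism by hypothesis (v). The conclusion then rests on the standard fact that the family of functors $\{y^{\dr,!}\}$, as $y$ runs over all field-valued points of $Y$, is jointly conservative on $\Dmod(Y)$ for $Y$ a scheme of finite type. The main obstacle lies precisely here: $f_{\dr,!}(\omega_\CZ)$ is a filtered colimit of the coherent objects $(f_i)_{\dr,!}(\omega_{Z_i})$ but need not itself be coherent, so one cannot naively appeal to support theory for coherent D-modules. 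I would handle this by invoking the general conservativity of !-fibers at scheme-theoretic points on $\Dmod(Y)$, which follows from the identification $\Dmod(Y)\simeq\IndCoh(Y_\dr)$ together with the standard fact that !-fibers detect vanishing in $\IndCoh$ over a finite-type base.
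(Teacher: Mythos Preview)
Your proof is correct and follows essentially the same route as the paper. The only notable difference is that you close the cycle via (v)$\Rightarrow$(iii) and obtain (iii)$\Rightarrow$(i) directly by a projection-formula argument, whereas the paper instead proves (iv)$\Rightarrow$(ii) by applying the conservativity of !-fibers to the counit $(f')_{\dr,!}(f')^{\dr,!}(\CF')\to\CF'$ for an \emph{arbitrary} $\CF'\in\Dmod(Y')$, never isolating the implication (iii)$\Rightarrow$(i). Both approaches rest on exactly the same key input---the joint conservativity of !-fibers at field-valued points on all of $\Dmod(Y)$---which the paper also asserts without further justification, so your worry about non-coherence of $f_{\dr,!}(\omega_\CZ)$ is not an additional obstacle. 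Your projection-formula step has the minor virtue of making transparent why testing the counit on $\omega_Y$ alone suffices; the paper's route is marginally shorter in that it avoids the projection formula entirely.
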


\begin{proof}

We have (ii) $\Rightarrow$ (i) $\Rightarrow$ (iii) for tautological reasons. The implication
(iii) $\Rightarrow$ (v) follows from base change (\secref{sss:pseudo-proper dir image}). The equivalence
(iv) $\Leftrightarrow$ (v) follows from \secref{sss:contractibleoverpt}.
Let us prove that (iv) $\Rightarrow$ (ii).

\medskip

Note first that if for a scheme $Y$ and $\CF\in \Dmod(Y)$, we have $\CF=0$ 
if and only if for every field extension $k'\supset k$
and every $k'$-point $y$ of $Y$, the !-fiber of 
$$k'\underset{k}\otimes \CF\in \Dmod(k'\underset{k}\otimes Y)$$
at $y$ is zero. 
Using the fact that the formation of $\CF\mapsto f_{\dr,!}\circ f^{\dr,!}(\CF)$ commutes with field extensions 
(which follows, for instance, from the description of $f_{\dr,!}\circ f^{\dr,!}$ as \eqref{e:monad as colim}), and the base-change
isomorphism \eqref{e:base change isom}, we obtain that (ii) is equivalent to each $\CZ_y$ being
homologically contractible, as claimed.

\end{proof} 

\ssec{Explicit description of the left adjoint: a digression}    \label{ss:expl left adjoint} 

Consider the general set-up of \secref{sss:functor to glue}. Thus, we have an index category $I$ and an $I$-diagram
of categories
$$i\mapsto \bC_i,\quad (i\overset{\alpha}\longrightarrow j)\mapsto (\bC_i\overset{\Phi_\alpha}\longrightarrow \bC_j).$$

Let $\sF$ be the functor 
$$\bD\to \on{Glue}(\bC_i,\,i\in I)$$
given by a lax-compatible family of functors $\sF_i:\bD\to \bC_i$. Let us assume that
each of the functors $\sF_i$ admits a left adjoint, which we denote $\sF_i^L$. 

\medskip

Let us give an explicit formula for the left adjoint of the functor
$$\sF:\bD\to \on{Glue}(\bC_i,\,i\in I).$$

\sssec{}

Consider the category $\on{String}(I)$, whose objects are
strings of objects of $I$:
\begin{equation} \label{e:string}
(i_0\to i_1\to\dots\to i_n),
\end{equation} 
and whose morphisms are induced by order preserving maps $[m]\to [n]$. In other words, $\on{String}(I)$ is
the co-Cartesian fibration in groupoids over $\bDelta^{\on{op}}$ corresponding to the functor
$$\bDelta^{\on{op}}\to \infty\on{-Grpd}$$
given by the nerve of $I$. 

\sssec{}

There exists a canonically defined functor
$$\sF^L_{\on{String}}:\on{Glue}(\bC_i,\,i\in I)\to \on{Funct}(\on{String}(I),\bD).$$

Namely, given an object
$$\{i\mapsto \bc_i,\,\, (i\overset\alpha\longrightarrow j) \mapsto 
(\Phi_\alpha(\bc_i)\to  \bc_j)\}\in \on{Glue}(\bC_i,\,i\in I),$$
the functor $\sF^L_{\on{String}}$ sends it to the functor $\on{String}(I)\to \bD$ that sends \eqref{e:string} to 
$$\sF^L_{i_n}(\Phi_{i_0\to i_n}(\bc_{i_0})).$$

\sssec{}

Consider the composition functor
\begin{equation} \label{e:candidate for left adj}
\on{Glue}(\bC_i,\,i\in I) \overset{\sF^L_{\on{String}}}\longrightarrow \on{Funct}(\on{String}(I),\bD) \overset{\colim}\longrightarrow \bD,
\end{equation}
where the right arrow is the functor of colimit along $\on{String}(I)$.
We claim:

\begin{prop} \label{p:left adj as geom real}
The functor \eqref{e:candidate for left adj}
is the left adjoint of the functor $\sF$.
\end{prop}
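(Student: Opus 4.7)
The plan is to verify the adjunction
\[
\Hom_{\bD}(\sF^L(\bc_\bullet),\bd)\simeq \Hom_{\on{Glue}(\bC_i,\,i\in I)}(\bc_\bullet,\sF(\bd))
\]
naturally in $\bc_\bullet\in \on{Glue}(\bC_i,\,i\in I)$ and $\bd\in \bD$, by computing both sides as the same limit indexed by $\on{String}(I)^{\on{op}}$. The left-hand side is immediate: unwinding the colimit in the definition \eqref{e:candidate for left adj} of $\sF^L$ and applying the componentwise adjunctions $\sF^L_i\dashv \sF_i$ gives
\begin{align*}
\Hom_{\bD}(\sF^L(\bc_\bullet),\bd)
&\simeq \lim_{(i_0\to\cdots\to i_n)\in \on{String}(I)^{\on{op}}}\Hom_{\bD}\bigl(\sF^L_{i_n}(\Phi_{i_0\to i_n}(\bc_{i_0})),\bd\bigr)\\
&\simeq \lim_{(i_0\to\cdots\to i_n)\in \on{String}(I)^{\on{op}}}\Hom_{\bC_{i_n}}\bigl(\Phi_{i_0\to i_n}(\bc_{i_0}),\sF_{i_n}(\bd)\bigr).
\end{align*}

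For the right-hand side, the key ingredient is a canonical simplicial resolution of an arbitrary $\bc_\bullet\in\on{Glue}(\bC_i,\,i\in I)$ by ``inserted'' objects. Consider the adjunction
\[
\on{ins}_\bullet:=\bigoplus_{i\in I}\on{ins}_i\;\dashv\; \on{ev}_\bullet:=\prod_{i\in I}\on{ev}_i\;\colon\;\prod_{i\in I}\bC_i\rightleftarrows \on{Glue}(\bC_i,\,i\in I).
\]
The functor $\on{ev}_\bullet$ is conservative (a morphism in the glued category is determined by its components) and continuous (colimits in a lax limit of presentable DG categories with cocontinuous transition functors $\Phi_\alpha$ are computed componentwise); hence by the Barr--Beck--Lurie theorem this adjunction is monadic. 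Iterating the formula $\on{ev}_j\circ \on{ins}_{i_0}(\bc_{i_0})=\colim_{\alpha\in \Maps_I(i_0,j)}\Phi_\alpha(\bc_{i_0})$ from \secref{sss:insert}, the associated bar resolution has its degree-$n$ term equal to $\bigoplus_{(i_0\to\cdots\to i_n)}\on{ins}_{i_n}(\Phi_{i_0\to i_n}(\bc_{i_0}))$, and reorganizing the geometric realization as a colimit over the co-Cartesian fibration $\on{String}(I)\to\bDelta^{\on{op}}$ yields
\[
\bc_\bullet\simeq\colim_{(i_0\to\cdots\to i_n)\in\on{String}(I)}\on{ins}_{i_n}\bigl(\Phi_{i_0\to i_n}(\bc_{i_0})\bigr).
\]
Applying $\Hom_{\on{Glue}}(-,\sF(\bd))$ converts this colimit into a limit over $\on{String}(I)^{\on{op}}$; further using the adjunction $\on{ins}_{i_n}\dashv \on{ev}_{i_n}$ together with $\on{ev}_{i_n}\sF(\bd)=\sF_{i_n}(\bd)$ reproduces exactly the limit obtained above, matching the two expressions.

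The main technical obstacle is establishing the simplicial resolution, i.e., the monadicity of $\on{ins}_\bullet\dashv \on{ev}_\bullet$. Conservativity is immediate from the concrete description of morphisms in $\on{Glue}(\bC_i,\,i\in I)$; continuity of $\on{ev}_\bullet$ is the less obvious input, and follows from the componentwise computation of colimits in the lax limit (which can be verified by realizing the lax limit as the category of sections of the co-Cartesian fibration $\bC_I\to I$ and invoking general results on presentable $\infty$-categorical fibrations).
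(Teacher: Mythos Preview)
Your proof is correct and takes a genuinely different route from the paper's. The paper factors $\sF$ as
\[
\bD\to \on{Glue}(\bD,\,i\in I) \to \on{Glue}(\bC_i,\,i\in I),
\]
where the first category is formed using the \emph{constant} diagram $i\mapsto\bD$. This reduces the statement to the case $\bC_i=\bD$ for all $i$, in which $\on{Glue}(\bD,\,i\in I)\simeq\on{Funct}(I,\bD)$ and the claim becomes the Bousfield--Kan formula expressing $\colim_I$ via the nerve of $I$.

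Your approach instead verifies the adjunction directly, building a simplicial resolution of an arbitrary object of $\on{Glue}(\bC_i,\,i\in I)$ from the monadic adjunction $\on{ins}_\bullet\dashv\on{ev}_\bullet$. This is more self-contained---it does not invoke the Bousfield--Kan formula as a black box but effectively reproves it through the bar construction---at the cost of being more laborious: the identification of the degree-$n$ term of the bar resolution with the coproduct over $n$-simplices of the nerve, and the matching of face and degeneracy maps, require some unwinding that you have sketched rather than spelled out. The paper's reduction buys brevity by isolating the combinatorics into a known statement about $\on{Funct}(I,\bD)$; your argument buys transparency about where the $\on{String}(I)$-indexed colimit actually comes from.
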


\begin{proof}

We can factor $\sF$ as a composition
$$\bD\to \on{Glue}(\bD,\,i\in I) \to \on{Glue}(\bC_i,\,i\in I),$$
where $\on{Glue}(\bD,\,i\in I)$ is formed using the constant functor 
\begin{equation} \label{e:constant family}
I\to \StinftyCat_{\on{cont}},\quad i\mapsto \bD.
\end{equation} 

\medskip

This reduces the statement of the proposition to the case when $\bC_i=\bD$, as in
 \eqref{e:constant family}. We then identify
$$\on{Glue}(\bD,\,i\in I)\simeq \on{Funct}(I,\bD),$$
and the assertion becomes equivalent to the usual expression of colimits
along $I$ via its nerve. 

\end{proof} 

\ssec{Full faithfulness as homological contractibility}      \label{ss:ff Dmod via contr}

We return to the situation of \secref{ss:setting Dmod} (with a slightly simplified notation). Let
$\CY=Y\in \Sch$ be a base scheme and
$$i\mapsto (Z_i\overset{f_i}\longrightarrow Y), \quad (i\overset{\alpha}\longrightarrow j)\mapsto
(Z_j\overset{f_\alpha}\longrightarrow Z_i)$$
an $I^{\on{op}}$-diagram of schemes over it. Let 
$\CM_i\subset Z_i$ be closed subschemes such that for every $i\overset{\alpha}\longrightarrow j$
we have
$$(f_\alpha)^{-1}(\CM_i)\subset \CM_j,$$
i.e., we have the diagrams
$$
\CD
(f_\alpha)^{-1}(\CM_i)   @>>> \CM_j \\
@V{f_\alpha}VV  \\
\CM_i.
\endCD
$$

\sssec{}

We consider the category 
$$\on{Glue}(\Dmod(\CM_i),\,i\in I).$$

For every $i$, we let $\sF_i:\Dmod(Y)\to \Dmod(\CM_i)$ be the functor
$$\Dmod(Y) \overset{f_i^{\dR,!}}\longrightarrow \Dmod(Z_i) \to \Dmod(\CM_i),$$
where the second arrow is the !-pullback along the embedding $\CM_i\hookrightarrow Z_i$.

\medskip

The functors $\sF_i$ give rise to a functor
$$\sF:\Dmod(Y) \to \on{Glue}(\Dmod(\CM_i),\,i\in I),$$
and we want to give an explicit criterion for full faithfulness of $\sF$. 

\sssec{}

Let us assume that all $Z_i$ are proper over $Y$. 

\medskip

In this situation, each of the functors $\sF_i$ admits a left adjoint, and we find ourselves in the setting of \secref{ss:expl left adjoint}. Hence the functor $\sF$
admits a left adjoint given by \eqref{e:candidate for left adj}. 

\medskip

Denote the left adjoint of $\sF$ by 
\[\sF^L:\on{Glue}(\Dmod(\CM_i),\,i\in I)\to\Dmod(Y).\]
The functor $\sF$ is fully faithful if and only if the co-unit of the adjunction
$$\sF^L\circ F\to \on{Id}_{\Dmod(Y)}$$
is an isomorphism. 

\medskip

Let us describe the functor $\sF^L\circ F$ explicitly. 

\sssec{}  \label{sss:glued prestack}

Consider the following prestack over $Y$, denoted $\CM_{\on{Glued}}$:

\medskip

The prestack is the colimit over the category $\on{String}(I^{\on{op}})$ of the functor
$$\on{String}(I^{\on{op}})\to \on{PreStk}, \quad (i_0\to i_1\to\dots\to i_n) \mapsto \CZ_{i_0}\underset{\CZ_{i_n}}\times \CM_{i_n}.$$
(Note that the categories $\on{String}(I^{\on{op}})$ and $\on{String}(I)$ are naturally equivalent.)
Denote by $f_{\on{Glued}}$ the natural map
$$\CM_{\on{Glued}}\to Y.$$

\medskip

Note that $\CM_{\on{Glued}}$ is by definition pseudo-proper over $Y$. By the results of 
\secref{sss:pseudo-proper dir image}, the functor $(f_{\on{Glued}})^{\dr,!}$ admits a left
adjoint 
$$(f_{\on{Glued}})_{\dr,!}:\Dmod(\CM_{\on{Glued}})\to \Dmod(Y).$$

\sssec{}

From \propref{p:left adj as geom real} we obtain:

\begin{cor}
There is a canonical isomorphism of endofunctors of $\Dmod(Y)$ \emph{over} $\on{Id}_{\Dmod(Y)}$:
$$\sF^L\circ \sF \simeq (f_{\on{Glued}})_{\dr,!}\circ (f_{\on{Glued}})^{\dr,!}.$$
\end{cor}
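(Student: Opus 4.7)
The plan is to apply \propref{p:left adj as geom real} to the family $\{\sF_i\}$ and identify the resulting term-by-term expression for $\sF^L \circ \sF$ with the colimit formula for $(f_{\on{Glued}})_{\dr,!}\circ (f_{\on{Glued}})^{\dr,!}$. Let $\iota_i : \CM_i \hookrightarrow Z_i$ denote the closed embedding, so that $\sF_i = \iota_i^{\dr,!}\circ f_i^{\dr,!}$. The left adjoint $\sF_i^L = (f_i)_{\dr,!}\circ (\iota_i)_{\dr,*}$ exists because $f_i$ is proper and $\iota_i$ is closed. For a string $(i_0\to\cdots\to i_n)$ in $I$ inducing a morphism $f:Z_{i_n}\to Z_{i_0}$ over $Y$, the transition functor is $\Phi_{i_0\to i_n} = \iota_{i_n}^{\dr,!}\circ f^{\dr,!}\circ (\iota_{i_0})_{\dr,*}$. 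By \propref{p:left adj as geom real} we therefore have
\[
\sF^L\circ\sF(\CF)\ \simeq\ \underset{(i_0\to\cdots\to i_n)\in\on{String}(I)}{\on{colim}}\ \sF^L_{i_n}\circ\Phi_{i_0\to i_n}\circ\sF_{i_0}(\CF).
\]

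The next step is to simplify each term. Set $W := f^{-1}(\CM_{i_0}) \subset Z_{i_n}$ with inclusion $\iota_W$. Base change applied to the Cartesian square
\[
\begin{CD}
W @>{\widetilde f}>> \CM_{i_0} \\
@V{\iota_W}VV @VV{\iota_{i_0}}V \\
Z_{i_n} @>f>> Z_{i_0}
\end{CD}
\]
gives $f^{\dr,!}\circ(\iota_{i_0})_{\dr,*}\simeq (\iota_W)_{\dr,*}\circ\widetilde{f}{}^{\dr,!}$. The hypothesis $W\subset \CM_{i_n}$ factors $\iota_W$ as $\iota_{i_n}\circ \iota'$ for a closed embedding $\iota':W\hookrightarrow\CM_{i_n}$, and then
\[
(\iota_{i_n})_{\dr,*}\circ\iota_{i_n}^{\dr,!}\circ(\iota_W)_{\dr,*} \simeq (\iota_{i_n})_{\dr,*}\circ\iota'_{\dr,*} \simeq (\iota_W)_{\dr,*},
\]
using $\iota_{i_n}^{\dr,!}\circ(\iota_{i_n})_{\dr,*} \simeq \on{Id}$ for the closed embedding $\iota_{i_n}$. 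Combining this with $f_{i_0}\circ f = f_{i_n}$ and $(\iota_W)_{\dr,*}=(\iota_W)_{\dr,!}$ (closed embedding), the full composite collapses to $(f_{i_n}\circ\iota_W)_{\dr,!}\circ (f_{i_n}\circ\iota_W)^{\dr,!}(\CF)$.

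The equivalence $\on{String}(I)\simeq\on{String}(I^{\on{op}})$ noted in \secref{sss:glued prestack} (string reversal) identifies the above summand with the contribution of the $I^{\on{op}}$-string $(i_n\to\cdots\to i_0)$ and the scheme $W = Z_{i_n}\underset{Z_{i_0}}\times\CM_{i_0}$ appearing in the definition of $\CM_{\on{Glued}}$. By \secref{sss:pseudo-proper dir image}, the colimit of these summands computes $(f_{\on{Glued}})_{\dr,!}\circ(f_{\on{Glued}})^{\dr,!}(\CF)$, yielding the desired isomorphism. Compatibility with the co-unit to $\on{Id}_{\Dmod(Y)}$ holds term-by-term: both co-units restrict to the proper push-pull co-unit $(f_{i_n}\circ\iota_W)_{\dr,!}\circ (f_{i_n}\circ\iota_W)^{\dr,!}\to \on{Id}$. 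The main subtlety is verifying that the termwise isomorphisms are natural with respect to morphisms of strings, so that they assemble into a coherent isomorphism of colimits; this follows formally from the naturality of base change and from the construction of the $\on{String}$-indexed diagram underlying \propref{p:left adj as geom real}.
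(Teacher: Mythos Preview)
Your proof is correct and follows exactly the approach the paper intends: the paper states the corollary as an immediate consequence of \propref{p:left adj as geom real}, and you have carefully unwound what that proposition says term-by-term, identifying each summand $\sF^L_{i_n}\circ\Phi_{i_0\to i_n}\circ\sF_{i_0}(\CF)$ with $(f_{i_n}\circ\iota_W)_{\dr,!}\circ(f_{i_n}\circ\iota_W)^{\dr,!}(\CF)$ for $W=Z_{i_n}\times_{Z_{i_0}}\CM_{i_0}$, and matching this with the colimit formula \eqref{e:monad as colim} for $(f_{\on{Glued}})_{\dr,!}\circ(f_{\on{Glued}})^{\dr,!}$ under the equivalence $\on{String}(I)\simeq\on{String}(I^{\on{op}})$. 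The paper leaves all of this implicit; your version is a faithful and correct elaboration of the same argument.
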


Hence, we obtain:

\begin{cor} \label{c:map contr}
The functor $\sF$ is fully faithful if and only if the map $f_{\on{Glued}}$ is \emph{homologically contractible}, that is, if the 
functor $(f_{\on{Glued}})^{\dr,!}$ is fully faithful.
\end{cor}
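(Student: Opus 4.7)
The proof will be essentially a direct unpacking of the corollary immediately preceding it, which identifies $\sF^L\circ \sF$ with $(f_{\on{Glued}})_{\dr,!}\circ (f_{\on{Glued}})^{\dr,!}$ in a way compatible with the natural maps down to $\on{Id}_{\Dmod(Y)}$. My plan is to reduce the statement to a triviality about adjoint pairs in two moves.

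First, I would recall the standard fact: for any adjoint pair $(L,R)$ of functors, $R$ is fully faithful if and only if the co-unit $L\circ R \to \on{Id}$ is an isomorphism. Dually, $L$ is fully faithful if and only if the unit $\on{Id}\to R\circ L$ is an isomorphism. Applying this twice (to the pair $(\sF^L,\sF)$ and to the pair $((f_{\on{Glued}})_{\dr,!},(f_{\on{Glued}})^{\dr,!})$), the desired statement unwinds to the assertion that the natural transformation
\[
\sF^L\circ \sF \to \on{Id}_{\Dmod(Y)}
\]
is an isomorphism if and only if
\[
(f_{\on{Glued}})_{\dr,!}\circ (f_{\on{Glued}})^{\dr,!} \to \on{Id}_{\Dmod(Y)}
\]
is an isomorphism.

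Second, I would invoke the preceding corollary, which provides a canonical isomorphism of endofunctors of $\Dmod(Y)$ between these two compositions \emph{over $\on{Id}_{\Dmod(Y)}$}. The phrase ``over $\on{Id}_{\Dmod(Y)}$'' is what matters: it says the isomorphism lives in the slice category, so the two natural transformations to $\on{Id}_{\Dmod(Y)}$ get identified under it. Once this is in hand, one of the natural transformations is an isomorphism precisely when the other is, which yields the equivalence claimed in the corollary.

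The main (and only) substantive point is to justify that the identification from the previous corollary really does intertwine the co-unit of the $(\sF^L,\sF)$-adjunction with the co-unit of the $((f_{\on{Glued}})_{\dr,!},(f_{\on{Glued}})^{\dr,!})$-adjunction. This should be immediate from the construction of the isomorphism via \propref{p:left adj as geom real}: the co-unit on the $\sF^L\circ \sF$ side is assembled from the co-units of the adjunctions $(\sF_i^L,\sF_i)$ over the category $\on{String}(I^{\on{op}})$, and these are exactly the data that go into defining the map $(f_{\on{Glued}})_{\dr,!}\circ (f_{\on{Glued}})^{\dr,!}\to \on{Id}_{\Dmod(Y)}$ as the colimit of the fiberwise co-units along the strings parametrizing $\CM_{\on{Glued}}$. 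With this compatibility in hand, the corollary is proved.
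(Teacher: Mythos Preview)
Your proposal is correct and matches the paper's approach exactly: the paper states this corollary with no proof, leaving it as an immediate consequence of the preceding identification $\sF^L\circ \sF \simeq (f_{\on{Glued}})_{\dr,!}\circ (f_{\on{Glued}})^{\dr,!}$ over $\on{Id}_{\Dmod(Y)}$. Your two moves (reduce full faithfulness of a right adjoint to the co-unit being an isomorphism, then invoke the compatibility ``over $\on{Id}$'') are precisely what the paper intends the reader to supply.
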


\sssec{}

Let $k'\supset k$ be a field extension and let $y$ be a $k'$-point of $Y$.  Let $\CM_{\on{Glued},y}$ be the fiber of
$\CM_{\on{Glued}}$ over $y$, that is,
$$\CM_{\on{Glued},y}=\Spec(k')\underset{y,\CY}\times \CM_{\on{Glued}}.$$

Explicitly,
$$\CM_{\on{Glued},y}\simeq \underset{(i_0\to i_1\to\dots\to i_n)\in \on{String}(I^{\on{op}})}{\on{colim}}\,\,
\Spec(k')\underset{y,\CY}\times \left(\CZ_{i_0}\underset{\CZ_{i_n}}\times \CM_{i_n}\right).$$

\medskip

Combining \corref{c:map contr} and \lemref{l:pointwise}, we obtain:

\begin{cor}  \label{c:pointwise bis}
The functor 
$$\sF: \Dmod(\CY) \to \on{Glue}(\Dmod(\CM_i),\,i\in I)$$ is fully faithful if and only if for every field extension $k'\supset k$ and every $k'$-point $y$ of $Y$, 
the prestack $\CM_{\on{Glued},y}$ is 
homologically contractible over $k'$; that is, the map
$$\on{C}_*(\CM_{\on{Glued},y})\to k'$$
is an isomorphism. 
\end{cor}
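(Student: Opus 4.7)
The plan is to deduce this corollary by concatenating Corollary~\ref{c:map contr} with Lemma~\ref{l:pointwise}, applied to the map $f_{\on{Glued}}:\CM_{\on{Glued}}\to Y$.

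First, by Corollary~\ref{c:map contr}, the functor $\sF$ is fully faithful if and only if $f_{\on{Glued}}$ is homologically contractible in the sense that $(f_{\on{Glued}})^{\dr,!}$ is fully faithful. So the task reduces to reformulating this condition fiberwise. For this one wants to apply Lemma~\ref{l:pointwise}, which requires knowing that $\CM_{\on{Glued}}$ is pseudo-proper over $Y$ in the sense of \secref{sss:pseudo-proper}. This is where the hypothesis that each $Z_i$ is proper over $Y$ enters: since $\CM_i\subset Z_i$ is closed and each $f_i$ is proper, each scheme $Z_{i_0}\underset{Z_{i_n}}\times\CM_{i_n}$ maps properly to $Y$, and the transition maps in the colimit indexed by $\on{String}(I^{\on{op}})$ are proper as well. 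Thus $\CM_{\on{Glued}}$ presents as the colimit of a diagram of proper $Y$-schemes with proper transitions, which is exactly the pseudo-proper condition.

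Next I would verify that the formation of the glued prestack commutes with base change to a point $y:\Spec(k')\to Y$. Since colimits of prestacks are computed objectwise on the category of affine schemes, we have
\[
\Spec(k')\underset{Y}\times\CM_{\on{Glued}}\simeq\underset{(i_0\to\cdots\to i_n)\in\on{String}(I^{\on{op}})}{\on{colim}}\,\Spec(k')\underset{y,Y}\times\bigl(Z_{i_0}\underset{Z_{i_n}}\times\CM_{i_n}\bigr),
\]
which is precisely the prestack $\CM_{\on{Glued},y}$ from the statement. So the fiber of $f_{\on{Glued}}$ over $y$ in the sense of Lemma~\ref{l:pointwise} is $\CM_{\on{Glued},y}$.

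With these two preparatory observations, the equivalence (i)$\Leftrightarrow$(v) of Lemma~\ref{l:pointwise} applied to $f_{\on{Glued}}$ gives exactly the conclusion: $\sF$ is fully faithful if and only if $\on{C}_*(\CM_{\on{Glued},y})\to k'$ is an isomorphism for every field extension $k'\supset k$ and every $k'$-point $y$ of $Y$. The only step requiring any real verification is the pseudo-properness of $\CM_{\on{Glued}}$ over $Y$ and the compatibility of its formation with base change along $\Spec(k')\to Y$; these are both formal consequences of the definitions together with properness of the $f_i$'s, and I do not expect any substantial obstacle.
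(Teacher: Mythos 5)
Your proof is correct and is the same argument the paper gives: combine Corollary~\ref{c:map contr} with Lemma~\ref{l:pointwise}, using that $\CM_{\on{Glued}}$ is pseudo-proper over $Y$ and that its formation commutes with base change to a $k'$-point. The extra verifications you spell out (pseudo-properness from properness of the $f_i$'s and closedness of $\CM_i\subset Z_i$, and the objectwise computation of the colimit giving the fiber $\CM_{\on{Glued},y}$) are exactly what the paper implicitly relies on in Sects.~\ref{sss:glued prestack} and the paragraph preceding the corollary.
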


\newpage 

\centerline{\bf Part III: Springer fibers.}

\bigskip

\section{Reduction to a homological contractibility statement}  \label{s:red to contr}

The goal of the remainder of the paper is to prove \thmref{t:Dmod ff LocSys} and thereby
finish the proof of \thmref{t:main}. Recall that we work in the framework of usual (non-derived) algebraic geometry,
which suffices for the study of D-modules. In other words, all (DG) schemes/stacks are replaced by the corresponding
classical subschemes/substacks.

\ssec{What do we need to show?}

\sssec{}
Recall the statement of \thmref{t:Dmod ff LocSys}. For any $P\in\on{Par}(G)$, we consider the stack of $P$-local 
systems $\LocSys_P$. When $P=G$, we take the global nilpotent cone
\[\on{Nilp}_{\on{glob}}\subset \Sing(\LocSys_G).\]
For every $P\in\on{Par}(G)$, we put
\[\CZ_P:=\LocSys_P\underset{\LocSys_G}\times \Sing(\LocSys_G),\]
and let 
$$\CM_P\subset \LocSys_P\underset{\LocSys_G}\times \Sing(\LocSys_G)$$
be the preimage of $\{0\}\subset \Sing(\LocSys_P)$ under the singular codifferential map
$$\CZ_P=\LocSys_P\underset{\LocSys_G}\times \Sing(\LocSys_G)\to \Sing(\LocSys_P).$$

\medskip

\thmref{t:Dmod ff LocSys} is the statement that the natural functor
$$\Dmod\left(\BP(\on{Nilp}_{\on{glob}})\right)\to 
\on{Glue}\left(\Dmod\left(\BP(\CM_P)\right),\,P\in \on{Par}(G)^{\on{op}}\right)$$
is fully faithful.

\sssec{} According to \corref{c:pointwise bis}, \thmref{t:Dmod ff LocSys} is equivalent to homological 
contractibility of the following prestacks. Let $k'\supset k$ be a field extension, and let $y$ be a $k'$-point
of $\BP(\on{Nilp}_{\on{glob}})$. Construct the prestack $\CM_{\on{Glued},y}$ as follows.

\medskip

Consider the category $\on{String}(\on{Par}(G))$. 
By definition, its objects are chains of standard 
parabolic subgroups
\[(P_0\subset P_1\subset\dots\subset P_n)\quad (n\ge 0, P_i\in\on{Par}(G)),\]
and morphisms are induced by order-preserving maps $[m]\to [n]$. Now consider the functor
$\on{String}(\on{Par}(G))\to\Sch$ given by
\[(P_0\subset P_1\subset\dots\subset P_n)\mapsto\Spec(k')
\underset{y,\BP(\on{Nilp}_{\on{glob}})}
\times(\CZ_{P_0}\underset{\CZ_{P_n}}\times\BP(\CM_{P_n})),\]
and put
\[\CM_{\on{Glued},y}=\colim_{(P_0\subset P_1\subset\dots\subset P_n)\in\on{Strings}(\on{Par}(G))}\Spec(k')
\underset{y,\BP(\on{Nilp}_{\on{glob}})}
\times(\CZ_{P_0}\underset{\CZ_{P_n}}\times\BP(\CM_{P_n})).\]

\medskip 

\thmref{t:Dmod ff LocSys} is equivalent to homological contractibility of prestacks $\CM_{\on{Glued},y}$
for every $k'$ and $y$. 
Without loss of generality, we can replace $k$ with its extension $k'$. Therefore, we need to verify
that $\CM_{\on{Glued},y}$ is homologically contractible for every $k$-point $y$ of $\BP(\on{Nilp}_{\on{glob}})$.

\sssec{}  \label{sss:glob cone}
Let us now restate the above condition in explicit terms. 
First, recall the description of $k$-points of the algebraic stack $\Sing(\LocSys_G)$
and of the substack
$$\on{Nilp}_{\on{glob}}\subset \Sing(\LocSys_G),$$
see \cite[Sect. 11.1]{AG}.

\medskip

Namely, this groupoid of $k$-points $\Sing(\LocSys_G)(k)$ 
consists of pairs $(\sigma,A)$, where $\sigma$ is a $G$-local system on $X$, and $A$ is
a horizontal section of the vector bundle $\fg^*_\sigma$ associated with the co-adjoint representation. 
We  identify $\fg^*$ with $\fg$ by means of a $G$-invariant bilinear form, and thus think of $A$ as a horizontal
section of $\fg_\sigma$.  

\medskip

The sub-groupoid of $k$-points $\on{Nilp}_{\on{glob}}(k)$ corresponds to pairs $(\sigma,A)$ with \emph{nilpotent} $A$.

\sssec{}

Given a $k$-point $(\sigma,A)$ of $\on{Nilp}_{\on{glob}}$ and a standard parabolic $P\in \Par(G)$, 
we define schemes
$$\Spr^{\sigma,A}_{P,\on{unip}} \subset \Spr^{\sigma,A}_P \subset \Spr^\sigma_P$$
as follows.

\medskip

$\Spr^\sigma_P$ is the scheme of reductions of $\sigma$ (as a local system)
to the parabolic $P$, and $\Spr^{\sigma,A}_{P,\on{unip}}$ and $\Spr^{\sigma,A}_P$ are its subschemes 
corresponding to the condition that $A$ be a section of 
$$\fu(P)_\sigma \subset \fg_\sigma \text{ or } \fp_\sigma \subset \fg_\sigma,$$
respectively, where $\fu(P)$ denotes the Lie algebra of the unipotent radical $U(P)$ of $P$. 

\sssec{}  \label{sss:identify with Springer}

For fixed $\sigma\in\LocSys_G(k)$ as above, the diagram
$$P\rightsquigarrow \Spr^\sigma_P$$
identifies with the diagram of schemes 
$$P\rightsquigarrow \LocSys_P\underset{\LocSys_G}\times \{\sigma\}.$$

\medskip

For fixed $(\sigma,A)\in \on{Nilp}_{\on{glob}}(k)$, the diagram
$$P \rightsquigarrow \Spr^{\sigma,A}_{P,\on{unip}}$$
identifies with the diagram of schemes 
$$P\rightsquigarrow \CM_P \underset{\on{Nilp}_{\on{glob}}}\times  \{(\sigma,A)\},$$
where 
$$\CM_P\subset \LocSys_P\underset{\LocSys_G}\times \Sing(\LocSys_G)$$
is as in \thmref{t:Dmod ff LocSys}. 

\sssec{}

Note that $$P \rightsquigarrow \Spr^{\sigma,A}_{P}$$
is a diagram in the usual sense: for any pair of standard parabolics $P_1\subset P_2$,
there is a morphism between the corresponding schemes
\[\Spr^{\sigma,A}_{P_1}\to\Spr^{\sigma,A}_{P_2}.\]
On the other hand, in the diagram 
\[P \rightsquigarrow \Spr^{\sigma,A}_{P,\on{unip}},\]
the schemes
\[\Spr^{\sigma,A}_{P_1,\on{unip}}\quad\text{and}\quad\Spr^{\sigma,A}_{P_2,\on{unip}}\qquad(P_1\subset P_2)\]
are connected by a correspondence:
$$
\CD
\Spr^{\sigma,A}_{P_1}\underset{\Spr^{\sigma,A}_{P_2}}\times \Spr^{\sigma,A}_{P_2,\on{unip}}@>>>  \Spr^{\sigma,A}_{P_1,\on{unip}}  \\
@VVV  \\
\Spr^{\sigma,A}_{P_2,\on{unip}}.
\endCD
$$

\sssec{}

Explicitly, in the inclusion 
$$\Spr^\sigma_{P_1}\underset{\Spr^\sigma_{P_2}}\times \Spr^{\sigma,A}_{P_2,\on{unip}}\subset
\Spr^{\sigma,A}_{P_1,\on{unip}},$$
the left-hand side (resp. the right-hand side) parametrizes reductions of the local system $\sigma$ 
to the parabolic $P_1$ such that $A$ is a section of 
$$\fu(P_2)_\sigma \subset \fg_\sigma \qquad\text{(resp. of } \fu(P_1)_\sigma \subset \fg_\sigma \text{)}.$$

\medskip

Let us now form the prestack
$$\Spr_{\on{Glued,unip}}^{\sigma,A}:=
\colim_{(P_0\subset P_1\subset\dots\subset P_n)\in
\on{Strings}(\on{Par}(G))}\Spr^\sigma_{P_0}\underset{\Spr^\sigma_{P_n}}\times \Spr^{\sigma,A}_{P_n,\on{unip}}.$$

\medskip

Provided $A\ne 0$, a pair $(\sigma,A)\in (\sigma,A)\in \on{Nilp}_{\on{glob}}(k)$ projects to a $k$-point
$y$ of $\BP(\on{Nilp}_{\on{glob}})$, and $\Spr_{\on{Glued,unip}}^{\sigma,A}$ identifies with the prestack
$\CM_{\on{Glued},y}$. We therefore see that \thmref{t:Dmod ff LocSys}  
is implied by the following assertion:

\begin{thm} \label{t:contr1}
Let $(\sigma,A)$ be a $k$-point of $\on{Nilp}_{\on{glob}}$. Then the prestack 
$\Spr_{\on{Glued,unip}}^{\sigma,A}$ is homologically contractible, that is, the trace map
$$\on{C}_*(\Spr_{\on{Glued,unip}}^{\sigma,A})\to k$$
is an isomorphism. 
\end{thm}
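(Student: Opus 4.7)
The plan is to prove \thmref{t:contr1} by induction on the semisimple rank of $G$, following the two-step strategy sketched in the introduction: first reduce \thmref{t:contr1} to its non-unipotent counterpart \thmref{t:contr2}, and then establish \thmref{t:contr2} by a Schubert-type stratification indexed by the Weyl group. The base of the induction is the case in which $G$ is a torus, where $\on{Par}(G) = \{G\}$ and the colimit is over strings all of whose entries equal $G$; the resulting prestack is a single point, so contractibility is immediate.

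\textbf{Step 1 (the case $A = 0$).} Here the condition $A \in \fu(P)_\sigma$ is automatic, so $\Spr^{\sigma,0}_{P,\on{unip}} = \Spr^\sigma_P$ and the correspondences collapse to ordinary morphisms. In particular, for the improper parabolic $P = G$ one has $\Spr^\sigma_G = \on{pt}$, and the substring $(G \subset \dots \subset G)$ is cofinal in a suitable sense inside $\on{String}(\on{Par}(G))$: every string admits a canonical extension by prepending/appending $G$, giving a contracting homotopy onto the constant string at $G$. Consequently the natural projection $\Spr_{\on{Glued,unip}}^{\sigma,0} \to \on{pt}$ is a homological equivalence.

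\textbf{Step 2 (reduction to \thmref{t:contr2}).} Now assume $A \neq 0$ and that \thmref{t:contr1} is already known for every proper Levi subgroup of $G$. Introduce the variant prestack
$$\Spr_{\on{Glued}}^{\sigma,A} := \underset{(P_0 \subset \dots \subset P_n) \in \on{String}(\on{Par}^{\on{prop}}(G))}{\on{colim}}\, \Spr^{\sigma,A}_{P_0},$$
where the colimit runs only over \emph{proper} standard parabolics. For each fixed $P$, one analyzes the fiber of the forgetful map $\Spr^{\sigma,A}_P \to \Spr^{\sigma,A}_{P,\on{unip}}$: this fiber parametrizes the ways of lifting the class of $A \bmod \fu(P)_\sigma$ (a nilpotent element in $\fl(P)_\sigma$ for the Levi $\fl(P)$ of $P$) to a section of $\fp_\sigma$. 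The latter is a (global) Springer-type data for the Levi $L(P)$, so by the inductive hypothesis applied to $L(P)$ the contributions from strings with a fixed \emph{largest} parabolic $P_n$ can be shown to collapse, identifying the gluing of the $\Spr^{\sigma,A}_{P,\on{unip}}$ with the (simpler) gluing of the $\Spr^{\sigma,A}_P$ over proper $P$. This yields a homological equivalence $\Spr_{\on{Glued,unip}}^{\sigma,A} \simeq \Spr_{\on{Glued}}^{\sigma,A}$.

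\textbf{Step 3 (proof of \thmref{t:contr2}).} By the Jacobson--Morozov theorem applied to the horizontal nilpotent section $A$, there is a canonically associated standard parabolic $P_0 = P_0(A)$ together with a reduction $\sigma_{P_0}$ of $\sigma$ for which $A$ lies in the Richardson orbit. For every standard parabolic $P$, the relative position of a point of $\Spr^{\sigma,A}_P$ with respect to $\sigma_{P_0}$ defines a stratification of $\Spr^{\sigma,A}_P$ indexed by $W_{P_0}\backslash W / W_P$. These stratifications are compatible across the transition maps $\Spr^{\sigma,A}_{P_1} \to \Spr^{\sigma,A}_{P_2}$, so they glue to a stratification
$$\Spr_{\on{Glued}}^{\sigma,A,<w} \subset \Spr_{\on{Glued}}^{\sigma,A,\leq w} \subset \Spr_{\on{Glued}}^{\sigma,A} \quad (w \in W).$$
Contractibility of $\Spr_{\on{Glued}}^{\sigma,A}$ then follows by induction on the Bruhat length of $w$, provided each stratum quotient $\Spr_{\on{Glued}}^{\sigma,A,\leq w} / \Spr_{\on{Glued}}^{\sigma,A,<w}$ is homologically contractible. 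Each such quotient is, by inspection, the colimit over strings in the subposet $\{P \in \on{Par}^{\on{prop}}(G) : w \in W^{P_0,P}\}$ of an explicit affine-bundle diagram over a single scheme; contractibility is then verified by producing an initial (or otherwise contractible) object in the indexing subposet, via a direct Weyl-group combinatorial argument.

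The main obstacle is Step~3: the Weyl-group-combinatorial verification that each successive stratum quotient is contractible. Steps~1 and~2 are mostly formal reshufflings of colimits once the correct diagrammatic reformulation is in place, and the inductive framework is standard. By contrast, the analysis of $\Spr_{\on{Glued}}^{\sigma,A,\leq w}/\Spr_{\on{Glued}}^{\sigma,A,<w}$ requires a careful case analysis of how double cosets $W_{P_0}\backslash W/W_P$ behave under variation of $P$, and it is here that the crucial use of the Jacobson--Morozov parabolic (rather than merely of $A$) enters.
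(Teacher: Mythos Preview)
Your overall strategy matches the paper's: induction on semisimple rank, the $A=0$ case handled separately, reduction to \thmref{t:contr2} via the inductive hypothesis for Levi subgroups, and proof of \thmref{t:contr2} by a Schubert-type filtration coming from the Jacobson--Morozov parabolic. Two points need correction.

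In Step~2, there is no map $\Spr^{\sigma,A}_P \to \Spr^{\sigma,A}_{P,\on{unip}}$: the latter is a closed subscheme of the former, and neither direction is a fibration with interesting fibers. The paper's reduction (\propref{p:another version}) instead introduces an intermediate prestack $\Spr_{\on{Glued,mixed}}^{\sigma,A}$ indexed by chains $(P_0 \subset \dots \subset P_n \subset P)$ of proper parabolics, which maps isomorphically to $\Spr_{\on{Glued,unip}}^{\sigma,A}$ (forgetting $P$; the extra index contributes a contractible factor to the colimit) and also maps to $\Spr_{\on{Glued}}^{\sigma,A}$ via the natural map to $\Spr^{\sigma,A}_P$. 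It is the fiber of this \emph{second} map over a point $\sigma_P \in \Spr^{\sigma,A}_P$ that identifies with $\Spr_{\on{Glued,unip}}^{\sigma_M,A_M}$ for the Levi quotient $M$ of $P$, to which the inductive hypothesis applies.

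In Step~3, indexing by double cosets $W_{P_0}\backslash W / W_P$ is problematic: this set depends on $P$, so the filtrations do not assemble over $\Par'(G)$. The paper instead uses the uniform index set $\sW' = \{w : w^{-1}(\sJ_0) \subset \sR^+\}$ of minimal representatives for $W_{P_0}\backslash W$, so that $\Spr^{\sigma,A,\leq w}_P$ is defined for every $P$ simultaneously and the closed subprestacks $\Spr_{\on{Glued}}^{\sigma,A,\leq w}$ make sense. The stratum quotients are then handled not by finding an initial object in a subposet, but by an adjoint-cofinality argument (\lemref{l:adj cofin}) based on the partition $\sS = \sS_w^0 \cup \sS_w^+ \cup \sS_w^-$, with separate treatment of $w = 1$, $1 \neq w \neq w'_0$, and $w = w'_0$; the last case is where one uses that $A$ lies in $\fu(P_0)_\sigma$ and not merely in $(\fp_0)_\sigma$.
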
 

The rest of the paper is devoted to the proof of \thmref{t:contr1}. 

\begin{rem} Note that \thmref{t:contr1} claims, in particular, 
that for any such $(\sigma, A)$, $\Spr_{\on{Glued,unip}}^{\sigma,A}$ is non-empty; this amounts to 
checking that $\Spr^{\sigma,A}_{P,\on{unip}}\ne\emptyset$ for some $P\in\on{Par}(G)$. 
This easily follows from the Jacobson-Morozov Theorem, see \secref{sss:JM}.
\end{rem}

\begin{rem}
Note that in \thmref{t:contr1} we allow $A=0$. The case $A=0$ is not needed to deduce
\thmref{t:Dmod ff LocSys}, but it is used in the inductive step in the proof of \thmref{t:contr1}. 
Note, however, that the case $A=0$ in \thmref{t:contr1} is reasonably easy: 

\medskip

It is not hard to check (see Remark \ref{r:more complicated} below) that for $A=0$, the prestack $\Spr_{\on{Glued,unip}}^{\sigma,A}$ 
identifies with $$\Spr_{\on{Glued}}^\sigma:=\underset{P\in \Par(G)}{\on{colim}}\, \Spr^\sigma_P.$$

\medskip

Now, the category $\Par(G)$ has a final object (with $P=G$), and 
$\Spr^\sigma_G=\on{pt}$. From here, $\Spr_{\on{Glued}}^\sigma\simeq \on{pt}$.
\end{rem}

\ssec{Reduction to another contractibility statement}

One difficulty with \thmref{t:contr1} is due to a rather complicated colimit used to define
the prestack $\Spr_{\on{Glued,unip}}^{\sigma,A}$.
We shall now replace  \thmref{t:contr1} by an
equivalent statement, namely \thmref{t:contr2}, which is simpler from the combinatorial point of view. 

\sssec{}

Denote by $\Par'(G)\subset\Par(G)$ the subset of \emph{proper} parabolics; thus
\[\Par(G)=\Par'(G)\sqcup\{G\}.\]

Consider the assignment
$$P\rightsquigarrow \Spr^{\sigma,A}_P$$
as a functor 
$$\Par'(G)\to \{\text{Schemes}\}.$$

Set
$$\Spr_{\on{Glued}}^{\sigma,A}:=\underset{P\in \Par'(G)}{\on{colim}}\, \Spr^{\sigma,A}_P.$$

\begin{rem} \label{r:more complicated}
The stack $\Spr_{\on{Glued}}^{\sigma,A}$ is also equal to the (more complicated) 
colimit  over $\on{String}(\Par'(G))$ of the functor 
$$(P_0\subset P_1\subset\dots\subset P_n)\mapsto  \Spr^{\sigma,A}_{P_0}.$$
\end{rem} 

\sssec{}

In Sects. \ref{ss:proof of another version 1} and \ref{ss:proof of another version 2}, we  prove:

\begin{prop}  \label{p:another version}
Assume the validity of \thmref{t:contr1} for all \emph{proper} Levi subgroups of $G$. Then for $A\neq 0$ there
exists a naturally defined isomorphism
$$\on{C}_*(\Spr_{\on{Glued,unip}}^{\sigma,A}) \simeq \on{C}_*(\Spr_{\on{Glued}}^{\sigma,A}).$$
\end{prop}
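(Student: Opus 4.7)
The plan is to construct a natural morphism of prestacks
\[
\Spr_{\on{Glued,unip}}^{\sigma,A} \longrightarrow \Spr_{\on{Glued}}^{\sigma,A}
\]
and to show that it induces an isomorphism on homology using the inductive hypothesis (\thmref{t:contr1} applied to proper Levi subgroups of $G$).

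\textbf{Construction of the map.} For any chain $(P_0\subset P_1\subset\cdots\subset P_n)$ in $\on{Par}(G)$, the containment $P_0\subset P_n$ gives $U(P_n)\subset U(P_0)$; hence the fiber product $\Spr^\sigma_{P_0}\underset{\Spr^\sigma_{P_n}}\times\Spr^{\sigma,A}_{P_n,\on{unip}}$ identifies with the locus of $P_0$-reductions of $\sigma$ whose induced $P_n$-reduction satisfies $A\in\fu(P_n)_\sigma$, and so is realized as a subscheme of $\Spr^{\sigma,A}_{P_0,\on{unip}}\subset\Spr^{\sigma,A}_{P_0}$. This realization is natural in the chain with respect to the functor $\on{String}(\on{Par}(G))\to\on{Par}(G)$ sending a chain to its initial vertex, and chains with $P_n=G$ contribute empty terms since $\fu(G)=0$ and $A\neq 0$. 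Passing to colimits yields the desired morphism, whose non-empty components land in the colimit indexed by $\on{Par}'(G)$.

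\textbf{Homology equivalence.} To check that this morphism is a homology equivalence, I will stratify the target according to Levi data and invoke the inductive hypothesis. For each $P\in\on{Par}'(G)$, the scheme $\Spr^{\sigma,A}_P$ decomposes into the closed stratum $\Spr^{\sigma,A}_{P,\on{unip}}$ (where the image $A_{M_P}$ of $A$ in $\fm_{P,\sigma_{M_P}}$ vanishes) and its open complement, which parametrizes a non-zero nilpotent Levi datum $(\sigma_{M_P},A_{M_P})$ on the Levi quotient. Standard parabolics $P'$ of $G$ containing $P$ are in bijection with standard parabolics $P'\cap M_P$ of $M_P$, and a direct root-space computation shows $A\in\fu(P')_\sigma$ if and only if $A_{M_P}\in\fu(P'\cap M_P)_{\sigma_{M_P}}$. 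Under these identifications, the portion of the colimit computing $\on{C}_*(\Spr_{\on{Glued}}^{\sigma,A})$ that is fibered over the open stratum of $\Spr^{\sigma,A}_P$ becomes precisely a prestack of the form $\Spr_{\on{Glued,unip}}^{\sigma_{M_P},A_{M_P}}$ for the proper Levi $M_P$, and hence is homologically contractible by the inductive hypothesis. The non-unipotent strata thus contribute trivially, and the remaining closed-stratum contributions assemble, via the simplicial structure on $\on{String}(\on{Par}(G))$, into $\on{C}_*(\Spr_{\on{Glued,unip}}^{\sigma,A})$, yielding the required isomorphism.

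\textbf{Main obstacle.} The principal difficulty is the precise reorganization of the colimit defining $\Spr_{\on{Glued}}^{\sigma,A}$ into pieces indexed by pairs $(P,\text{Levi datum})$ in such a way that the inductive hypothesis applies uniformly. This requires matching the correspondence structures appearing in $\Spr_{\on{Glued,unip}}$ (i.e., the fiber products over $\Spr^\sigma_{P_n}$) with the analogous fiber products for the Levi $M_P$, and verifying their compatibility with face and degeneracy operators on $\on{String}(\on{Par}(G))$. Equivalently, one may set up for each $P\in\on{Par}'(G)$ the cofiber sequence associated to the closed-open stratification $\Spr^{\sigma,A}_{P,\on{unip}}\hookrightarrow\Spr^{\sigma,A}_P$ and run a spectral sequence whose $E_1$-terms split into a unipotent contribution (matching the simplicial decomposition of $\on{C}_*(\Spr_{\on{Glued,unip}}^{\sigma,A})$) and non-unipotent contributions that collapse by the inductive hypothesis.
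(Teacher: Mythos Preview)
Your construction of the comparison map is fine: sending a chain $(P_0\subset\cdots\subset P_n)$ to its initial vertex and using the inclusion $\Spr_{P_0}^\sigma\times_{\Spr_{P_n}^\sigma}\Spr_{P_n,\on{unip}}^{\sigma,A}\hookrightarrow\Spr_{P_0}^{\sigma,A}$ does give a well-defined morphism of prestacks, and it coincides with the paper's map. The issue lies entirely in your argument for why this map is a homology equivalence.

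The difficulty you flag as the ``main obstacle'' is real and unresolved in your write-up. Your proposed stratification of each $\Spr_P^{\sigma,A}$ into the closed locus $\Spr_{P,\on{unip}}^{\sigma,A}$ and its open complement does not interact well with the colimit over $\Par'(G)$: for $P_1\subset P_2$ the transition map $\Spr_{P_1}^{\sigma,A}\to\Spr_{P_2}^{\sigma,A}$ does \emph{not} respect these strata (the condition $A\in\fu(P_1)_\sigma$ does not imply $A\in\fu(P_2)_\sigma$). So the ``non-unipotent contributions'' are not indexed by anything like a sub-diagram, and your assertion that they assemble into copies of $\Spr_{\on{Glued,unip}}^{\sigma_{M_P},A_{M_P}}$ is not justified: the Levi datum varies over the open stratum, and the relevant parabolics of $M_P$ you invoke (those of the form $P'\cap M_P$ for $P'\supset P$) only appear naturally once you have already fixed $P$ as a \emph{top} of a chain, not a bottom.

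The paper's device that you are missing is to enlarge the indexing category: replace $\on{String}(\Par'(G))$ by the category $I$ of chains $(P_0\subset\cdots\subset P_n\subset P)$ with one extra proper parabolic on top. Forgetting $P$ is a co-Cartesian fibration with contractible fibers, so this does not change the colimit. But now the projection $\phi_2:I\to\Par'(G)$ sending the chain to $P$ is itself a co-Cartesian fibration, so the left Kan extension along $\phi_2$ is computed fiberwise. The fiber of the resulting map over a \emph{point} $\sigma_P\in\Spr_P^{\sigma,A}$ (not a stratum) is identified on the nose with $\Spr_{\on{Glued,unip}}^{\sigma_M,A_M}$ for the Levi $M$ of $P$, and the inductive hypothesis applies directly---including when $A_M=0$, which is why \thmref{t:contr1} is stated for all $A$. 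This pointwise-over-$P$ organization is the missing structural idea; your stratified spectral-sequence sketch does not substitute for it.
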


Assuming \propref{p:another version}, we obtain that \thmref{t:contr1} is equivalent to the 
following:

\begin{thm} \label{t:contr2}
Let $(\sigma,A)$ be a $k$-point of $\on{Nilp}_{\on{glob}}$ with $A\neq 0$. Then the prestack 
$\Spr_{\on{Glued}}^{\sigma,A}$ is homologically contractible.
\end{thm}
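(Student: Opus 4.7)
The plan is to prove \thmref{t:contr2} via a Schubert-type stratification on the prestack $\Spr_{\on{Glued}}^{\sigma,A}$, induced by the Jacobson--Morozov parabolic attached to the nonzero nilpotent section $A$, followed by an inductive argument showing that each stratum contributes no homology. The strategy is sketched in the introduction (\secref{sss:W}), and here I expand the key moves.

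\textbf{Step 1 (the canonical reduction).} Since $A$ is a nonzero nilpotent horizontal section of $\fg_\sigma$, the fiberwise Jacobson--Morozov theorem produces a canonical parabolic subgroup scheme of the twisted group $\on{Aut}(\sigma)$, equivalently a horizontal reduction $\sigma_{P_0}$ of $\sigma$ to a canonically determined standard parabolic $P_0\in\on{Par}'(G)$, with $A\in\fu(P_0)_{\sigma_{P_0}}$. Crucially, $A\neq 0$ forces $P_0\ne G$, so $P_0$ lies in $\on{Par}'(G)$. This reference reduction is the ``canonical flag'' against which all other reductions will be measured.

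\textbf{Step 2 (Schubert stratification of the glued prestack).} For each $P\in\on{Par}'(G)$, the relative position of a $P$-reduction with respect to $\sigma_{P_0}$ at a point of $X$ is naturally an element of $W_{P_0}\backslash W/W_P$; using shortest-length representatives produces elements $w\in W$. Globally this gives a locally closed stratification of $\Spr_P^{\sigma,A}$, and these stratifications are compatible with the transition maps $\Spr_{P_1}^{\sigma,A}\to\Spr_{P_2}^{\sigma,A}$ for $P_1\subset P_2$. Hence they descend to an increasing filtration
\[
\Spr_{\on{Glued}}^{\sigma,A,<w}\subset\Spr_{\on{Glued}}^{\sigma,A,\leq w}\subset\Spr_{\on{Glued}}^{\sigma,A}
\]
indexed by a total refinement of the Bruhat order on $W$. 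The statement to prove is that each relative cofiber
\[
\Spr_{\on{Glued}}^{\sigma,A,\leq w}/\Spr_{\on{Glued}}^{\sigma,A,<w}:=\Spr_{\on{Glued}}^{\sigma,A,\leq w}\underset{\Spr_{\on{Glued}}^{\sigma,A,<w}}{\sqcup}\on{pt}
\]
is homologically contractible; granting this, induction on $\ell(w)$ propagates homological contractibility from the empty stratum outward, exhausting $\Spr_{\on{Glued}}^{\sigma,A}$ and yielding the theorem.

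\textbf{Step 3 (the layers).} Fixing $w\in W$, the stratum $\Spr_P^{\sigma,A,=w}\subset\Spr_P^{\sigma,A}$ is nonempty only when $P$ is ``compatible'' with $w$ and $P_0$ in a precise combinatorial sense (dictated by the condition $A\in\fp_\sigma$ cut out by the $w$-position), and on such $P$ it is an affine bundle over a scheme built from the incidence of $P$ with $wP_0w^{-1}$. After glueing over $\on{Par}'(G)$ and killing the $<w$ part, the resulting layer identifies, modulo affine bundles (which do not affect homology), with a colimit over a combinatorially defined subposet $I_w\subset\on{Par}'(G)$ of partial flag varieties for the Levi of $P_0$, or rather with a prestack-level realization of the nerve of $I_w$ decorated by such flag varieties. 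The analysis of the Weyl-group combinatorics should show that this colimit is homologically contractible, using in an essential way that $P_0\ne G$ (so the relevant poset of parabolics is the nerve of a nontrivial down-closed family).

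\textbf{Main obstacle.} The heart of the proof, and where I expect the real work to lie, is Step~3: one must exhibit, for each $w$, an explicit combinatorial presentation of the layer as a nerve of a poset $I_w\subset\on{Par}'(G)$ with contractible geometric realization, and verify that the affine-bundle structures are compatible with the transition maps in a way that lets homology be computed layer-by-layer. The algebraic geometry in each stratum is tame (affine bundles over flag varieties of Levi subgroups), so the substance of the argument is the Weyl-group/parabolic-poset combinatorics, precisely the content promised in \secref{s:Schubert}.
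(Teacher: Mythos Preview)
Your overall strategy---Jacobson--Morozov reduction to a canonical proper parabolic $P_0$, Schubert stratification by relative position against this reduction, and induction on the Bruhat order showing each cofiber $\Spr_{\on{Glued}}^{\sigma,A,\leq w}/\Spr_{\on{Glued}}^{\sigma,A,<w}$ is homologically contractible---is exactly the paper's approach in \secref{s:Schubert}. The divergence is in your Step~3, where your guess about ``affine bundles over flag varieties of Levi subgroups'' and nerves of combinatorially contractible posets $I_w$ is not how the paper executes the layer analysis, and is both vaguer and more complicated than what is actually needed.

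The paper's Step~3 is more elementary. For each $w$ one partitions the simple roots as $\sS=\sS_w^0\cup\sS_w^+\cup\sS_w^-$ according to whether $w(\alpha)$ lies in $\sR_{\sJ_0}$, in $\sR^+\setminus\sR_{\sJ_0}$, or in $-\sR^+\setminus\sR_{\sJ_0}$. The key observation (\lemref{lm:sch}) is that removing the $\sS_w^+$-part of $\sJ$ does not change the $w$-stratum of $\Fl_\sJ$, and that if $\sJ$ meets $\sS_w^-$ then the $w$-stratum is empty. The operation $\sJ\mapsto\sJ\setminus\sS_w^+$ is right adjoint to the inclusion of $\Par'_w(G):=\{\sJ\subset\sS_w^0\cup\sS_w^-\}$ into $\Par'(G)$, so a simple cofinality lemma (\lemref{l:adj cofin}) reduces the colimit computing the layer to one over $\Par'_w(G)$. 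For $1\neq w\neq w'_0$ this subposet has a terminal object $\sJ=\sS_w^0\cup\sS_w^-$; since $\sS_w^-\neq\emptyset$ (as $w\neq 1$), that terminal stratum is empty and the whole layer collapses to a point. For $w=w'_0$, \lemref{l:unip radical} combined with $A\in\fu(P_0)_\sigma$ and $A\neq 0$ shows every $w'_0$-stratum is already empty. The base case $w=1$ is handled separately: the adjoint trick with $\sJ\mapsto\sJ\cap\sJ_0$ collapses the colimit to $\Spr^{\sigma,A,1}_{\sJ_0}=\on{pt}$. No affine-bundle or flag-variety homology computation enters.
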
 

\medskip

We  prove \thmref{t:contr2} in \secref{s:Schubert}. In \secref{s:Springer} we  give an alternative 
proof of \thmref{t:contr2} in the special case when $\sigma$ is the trivial local system. 

\sssec{}  \label{sss:top}

Both Theorems \ref{t:contr1} and \ref{t:contr2} have topological counterparts. Let us sketch these counterparts 
in case the reader finds their statement more transparent; they are \emph{not} logically necessary for the proof. 

\medskip

Let $A$ be a 
nilpotent element of $\fg$, but instead of a local system $\sigma$ fix a family $\{g_\alpha\}$ of 
elements in $G$ that centralize $A$. 

\medskip

For every $P\in \Par(G)$ consider the corresponding partial flag variety $\Fl_P$; we  think of it as
the scheme classifying parabolics $P'$ in the conjugacy class of $P$. Let
$$\Spr_{P,\on{unip}}^{\{g_\alpha\},A}\subset \Spr_P^{\{g_\alpha\},A}\subset \Spr_P^{\{g_\alpha\}}$$
be the closed subschemes of $\Fl_P$ that correspond to $P'\in \Fl_P$ that satisfy the conditions
$$(g_\alpha\in P',A\in \fu(P')),\,\, (g_\alpha\in P',A\in \fp'),\,\, (g_\alpha\in P'),$$
respectively.

\medskip

We can form the prestacks $\Spr_{\on{Glued,unip}}^{\sigma,A}$ and $\Spr_{\on{Glued}}^{\sigma,A}$,
and the assertions parallel to Theorems~\ref{t:contr1} and \ref{t:contr2}  hold in this context as well. We
leave it to the reader to verify that the argument of this paper can be used to prove these
topological counterparts of the theorems.

\medskip

Note that when $k=\BC$, Theorems \ref{t:contr1} and \ref{t:contr2}, as stated above, follow from their
topological counterparts via the Riemann-Hilbert correspondence. 

\medskip

Namely, fix a base point $x\in X$, and trivialize the 
fiber of $\sigma$ at $x$. Then the monodromy of $\sigma$ gives a homomorphism
$\pi_1(X,x)\to G$, and we take $\{g_\alpha\}$ to be the images in $G$ of some set of generators 
of $\pi_1(X,x)$. Then the analytic spaces corresponding to the schemes $\Spr_{P,\on{unip}}^{\sigma,A}$ and
$\Spr_{P,\on{unip}}^{\{g_\alpha\},A}$  (resp., $\Spr_P^{\sigma,A}$ and
$\Spr_P^{\{g_\alpha\},A}$) are canonically identified.

\sssec{}

Let us consider some examples of \thmref{t:contr2}.

\medskip

First, we consider the case of $G=SL_2$, in which case \thmref{t:main} is already non-obvious. 
But all of its complexity is contained in the reduction of \thmref{t:main} to  \thmref{t:contr2}, as the latter is quite
easy: 

\medskip 

For $G$ of rank $1$, the poset $\Par'(G)$ consists of one element, namely, $P=B$. Since $A\neq 0$, the scheme 
$\Spr^{\sigma,A}_B$ is a `fat point': it is a nilpotent thickening of a point. 
Hence, it is homologically contractible.

\sssec{}

Consider now the case of $G=SL_3$. We  distinguish two cases: (a) when $A$ is a regular nilpotent; (b) when
$A$ is a sub-regular nilpotent. 

\medskip

In case (a), for all three parabolics, the corresponding schemes $\Spr^{\sigma,A}_P$
are again fat points. So, the contractibility follows from the fact that the poset $\Par'(G)$ 
$$P_1\supset B\subset P_2$$
is contractible as a category (it has an initial object, namely $B$). 

\medskip

Case (b) is more interesting. The scheme $\Spr^{\sigma,A}_B$ has the shape
$$Z_1\underset{\on{pt}}\sqcup\, Z_2,$$
i.e., its obtained by joining certain subschemes $Z_1$ and $Z_2$ along a common point. 
(To see this, use the topological version described in \secref{sss:top}, first with $\{g_\alpha\}$
being trivial, and then deduce the general case.)

\medskip

The projection
$$\Spr^{\sigma,A}_B\to \Spr^{\sigma,A}_{P_1}$$
maps $Z_1$ isomorphically onto its image, and it collapses $Z_2$ onto the image of $\on{pt}=Z_1\cap Z_2$.

\medskip

Similarly, the projection
$$\Spr^{\sigma,A}_B\to \Spr^{\sigma,A}_{P_1}$$
maps $Z_2$ isomorphically onto its image, and it collapses $Z_1$ onto the image of $\on{pt}=Z_1\cap Z_2$.

\medskip

This description makes the statement of \thmref{t:contr2} manifest. 

\ssec{Proof of \propref{p:another version}, Step 1}   \label{ss:proof of another version 1}

\sssec{}   \label{sss:Spr glue}

Recall that the prestack $\Spr_{\on{Glued,unip}}^{\sigma,A}$ is the the following colimit
over the index category $\on{Strings}(\on{Par}(G))$
of chains of standard parabolic subgroups
\[(P_0\subset P_1\subset\dots\subset P_n)\] 
and morphisms are given by order-preserving maps $[m]\to [n]$. 

\medskip

To each $(P_0\subset P_1\subset\dots\subset P_n)\in\on{Strings}(\on{Par}(G))$ we attach the scheme 
$$\Spr_{{P_0}}^\sigma\underset{\Spr_{{P_n}}^\sigma}\times\Spr_{P_n,\on{unip}}^{\sigma,A} .$$

\medskip

In the above diagram, the $P_i$'s are all standard parabolics. It is possible that $P_n=G$, but in this case
$\Spr_{P_n,\on{unip}}^{\sigma,A}$ is empty, because $A\neq 0$. Thus 
we can work with chains of proper standard parabolic subgroups \[(P_0,\dots,P_n)\in\on{String}(\Par'(G)).\] 

\medskip

Let $I_1:=\on{String}(\Par'(G))$ be the index category of chains of proper standard parabolic subgroups.
Denote by $\sF_1:I_1\to\Sch$ the functor
\[(P_0,\dots,P_n)\mapsto\Spr_{{P_0}}^\sigma\underset{\Spr_{{P_n}}^\sigma}\times\Spr_{P_n,\on{unip}}^{\sigma,A}.\]
Thus
\[\Spr_{\on{Glued,unip}}^{\sigma,A}\simeq \colim_{i\in I_1}\sF_1(i).\]

\sssec{}

We recall that by definition,
\[\Spr_{\on{Glued}}^{\sigma,A}=\colim_{i\in I_2}\sF_2(i),\]
where we put
$I_2:=\Par'(G)$ and
$$\sF_2:I_2\to \Sch:P\mapsto \Spr_P^{\sigma,A}.$$

\sssec{}

Consider now the category $I$ whose objects are collections
\begin{equation} \label{e:index comb}
(P_0\subset\dots\subset P_n\subset P)\quad(n\ge 0;P_0,\dots,P_n,P\in\Par'(G)).
\end{equation} 
A morphism 
$$(P^1_0\subset\dots\subset P^1_{n^1}\subset P^1)\to (P^2_0\subset\dots\subset P^2_{n^2}\subset P^2)$$
is specified by an order-preserving map $[n^2]\to [n^1]$ and an inclusion $P^1\subset P^2$. 

\medskip

Define a functor $F:I\to\Sch$ by
\[F:(P_0\subset\dots\subset P_n\subset P)\mapsto
\Spr_{P_n,\on{unip}}^{\sigma,A}\underset{\Spr_{P_n}^\sigma}\times \Spr_{P_0}^\sigma,\]
and put
$$\Spr_{\on{Glued,mixed}}^{\sigma,A}:=\colim_{i\in I}\sF(i).$$

\sssec{}

We have canonical forgetful functors 
$$I_1\overset{\phi_1}\longleftarrow I \overset{\phi_2}\longrightarrow I_2.$$

By construction $\sF\simeq \sF_1\circ \phi_1$. This isomorphism gives rise to a map
\begin{equation} \label{e:mixed unip}
\Spr_{\on{Glued,mixed}}^{\sigma,A}\to \Spr_{\on{Glued,unip}}^{\sigma,A}.
\end{equation}

We claim that the map \eqref{e:mixed unip} is an isomorphism of prestacks. Indeed, this
follows from the fact that the functor $\phi_1$ is a co-Cartesian fibration with contractible
fibers (each fiber has an initial object, namely $P=P_n$). 

\medskip

Thus, to prove \propref{p:another version}, we need to construct a homological equivalence between
prestacks $\Spr_{\on{Glued,mixed}}^{\sigma,A}$ and $\Spr_{\on{Glued}}^{\sigma,A}$. 

\sssec{}

Note now that we have a canonically defined natural transformation
\begin{equation} \label{e:F to F2}
\sF\to \sF_2\circ \phi_2.
\end{equation}
Indeed, for any $(P_0\subset\dots\subset P_n\subset P)\in I$, we have a natural map
\begin{multline*}
\sF(P_0\subset\dots\subset P_n\subset P)= 
\Spr_{P_n,\on{unip}}^{\sigma,A}\underset{\Spr_{P_n}^\sigma}\times \Spr_{P_0}^\sigma\hookrightarrow
\Spr_{P_0}^\sigma\to\\
\Spr_P^\sigma=\sF_2(P)=\sF_2\circ\phi_2(P_0\subset\dots\subset P_n\subset P).
\end{multline*}

Hence, we obtain a map of prestacks 
\begin{equation} \label{e:mixed}
\Spr_{\on{Glued,mixed}}^{\sigma,A}\to \Spr_{\on{Glued}}^{\sigma,A}.
\end{equation}

Let us prove that the map
\begin{equation} \label{e:mixed homology}
\on{C}_*(\Spr_{\on{Glued,mixed}}^{\sigma,A})\to \on{C}_*(\Spr_{\on{Glued}}^{\sigma,A}),
\end{equation}
induced by \eqref{e:mixed} is an isomorphism. 

\sssec{}

Let 
$$\sF'_2:I_2\to \Sch$$ 
denote the left Kan extension of the functor $\sF$ along $\phi_2$. By adjunction, the natural transformation
\eqref{e:F to F2} gives rise to a natural transformation 
$$\sF'_2\to \sF_2.$$

\medskip

Composing with the functor
$$\on{C}_*:\Sch\to \Vect,$$
we obtain a natural transformation
\begin{equation} \label{e:F to F2 again}
\on{C}_*\circ \sF'_2\to \on{C}_*\circ \sF_2
\end{equation} 
of functors $I_2\to \Vect$.

\medskip

The map \eqref{e:mixed homology} is obtained from \eqref{e:F to F2 again} by taking colimits over $I_2$.
Thus, in order to prove that  \eqref{e:mixed homology} is an isomorphism, it suffices to show that the map
\eqref{e:F to F2 again} is an isomorphism of functors $I_2\to \Vect$. 

\medskip

The latter will be done in Step 2, using \thmref{t:contr1} for
proper Levi subgroups of $G$ (including the case $A=0$). 

\ssec{Proof of \propref{p:another version}, Step 2}   \label{ss:proof of another version 2}

\sssec{}

Note that the functor $\phi_2$ is also a co-Cartesian fibration. Hence, the value of $\on{C}_*\circ \sF'_2$ on an object
$P\in \Par'(G)=I_2$
is computed as the colimit of the functor $\on{C}_*\circ \sF_2$ over the fiber of $\phi_2$ over $P$. I.e., it is
the homology of the prestack equal to the colimit of the restriction of $\sF$ to the above fiber. Denote this
prestack by $\Spr_{\on{Glued,mixed},P}^{\sigma,A}$. 

\medskip

Note that we have a tautologically defined map
$$f:\Spr_{\on{Glued,mixed},P}^{\sigma,A}\to \Spr_{P}^{\sigma,A}.$$

We need to show that the above map $f$ induces an isomorphism on homology. It suffices to check that
the trace map
\begin{equation} \label{e:rel trace}
f_{\dr,!}(\omega_{\Spr_{\on{Glued,mixed},P}^{\sigma,A}})\to \omega_{\Spr_P^{\sigma,A}}
\end{equation}
is an isomorphism in $\Dmod(\Spr_P^{\sigma,A})$. 

\sssec{}

The fact that \eqref{e:rel trace} is an isomorphism can be checked at the level of !-fibers at $k$-points
of $\Spr_{P}^{\sigma,A}$.  

\medskip

Fix a point $\sigma_P\in\Spr_{P}^{\sigma,A}(k)$. Thus, $\sigma_P$ is a reduction of $\sigma$ to $P$ that is
compatible with $A$. Let $M$ be the
Levi quotient of $P$, and let $(\sigma_M,A_M)$ be the resulting $k$-point of $\on{Nilp}_{\on{glob}}$ for
the group $M$. (Note that $A_M$ may be zero.)

\medskip

Note that $\Spr_{\on{Glued,mixed},P}^{\sigma,A}$ is a colimit of schemes each of which is proper,
and in particular, maps properly to $\Spr_P^{\sigma,A}$. Hence, by proper base change, the !-fiber of 
$f_{\dr,!}(\omega_{\Spr_{\on{Glued,mixed},P}^{\sigma,A}})$ at $\sigma_P$ 
is isomorphic to the homology of the fiber of $\Spr_{\on{Glued,mixed},P}$ over $\sigma_P$; denote
this fiber by  $\Spr^{\sigma,A}_{\on{Glued,mixed},P,\sigma_P}$. 

\sssec{}

Thus, we have to show that the trace map
$$\on{C}_*(\Spr^{\sigma,A}_{\on{Glued,mixed},P,\sigma_P})\to k$$
is an isomorphism. 

\medskip

However, we notice that there is a canonical isomorphism
$$\Spr^{\sigma,A}_{\on{Glued,mixed},P,\sigma_P}\simeq \Spr_{\on{Glued,unip}}^{\sigma_M,A_M},$$
(the latter prestack taken for the reductive group $M$). 

\medskip 

Hence, the required assertion follows from \thmref{t:contr1}, applied to $M$. 

\section{Schubert stratification} \label{s:Schubert}

The goal of this section is to prove \thmref{t:contr2}.

\ssec{Conventions regarding roots}

\sssec{}
Recall that we fixed a Borel subgroup $B\subset G$ and a maximal torus $T\subset B$. 
Let $\Lambda:=\Hom(T,\BG_m)$ be the character lattice of $T$; it is a free abelian group, which we write additively. 
The standard parabolics $P\subset G$ are the parabolic subgroups containing $B$.

\medskip

Let $\ft\subset\fb\subset\fg$ be the Lie algebras of $T$, $B$, and $G$ respectively. For every $\alpha\in\Lambda$, we 
denote by $\fg_\alpha\subset\fg$ the 
corresponding root subspace; in particular, $\fg_0=\ft$. Let
\[\sR=\{\alpha\in\Lambda-\{0\}:\fg_\alpha\ne 0\}\]
be the set of roots.
Denote by $\sS\subset\sR^+\subset\sR$ the subsets of simple and positive roots with respect to $B$. Thus, 
\[\fb=\ft\oplus\bigoplus_{\alpha\in\sR^+}\fg_\alpha.\]
We identify $\sS$ and the set of the vertices of the Dynkin diagram of $G$.

\sssec{}
We  think of $\Par(G)$ as the poset of subsets $\sJ\subset\sS$ (ordered by inclusion) via
$\sJ\mapsto P_\sJ$. Explicitly,
given $\sJ\in\Par(G)$, the Lie subalgebras
\begin{align*}
\fp_\sJ&:=\bigoplus\{\fg_\alpha:\alpha\in\sR^+\cup\Span(\sJ)\}\\
\fm_\sJ&:=\bigoplus\{\fg_\alpha:\alpha\in\Span(\sJ)\}\\
\fu(P_\sJ)&:=\bigoplus\{\fg_\alpha:\alpha\in\sR^+-\Span(\sJ)\}
\end{align*}
correspond to $P_\sJ$, the standard Levi subgroup $M_\sJ\subset P_\sJ$, and the unipotent radical $U(P_\sJ)$ of $P_\sJ$, 
respectively. We denote by
\[\sR_\sJ:=\sR\cap\Span(\sJ)\] the set of roots of $M_\sJ$, so that $\sJ\subset\sR_\sJ$ is the set of simple roots. 



\sssec{} 
Let $N(T)\subset G$ be the normalizer of $T$. 
The Weyl group $\sW=N(T)/T$ acts on $\Lambda$ preserving $\sR$.  For any $\sJ\in\Par(G)$, denote by $\sW_\sJ\subset\sW$ the 
subgroup generated by the reflections around the roots in $\sJ$. Thus, $\sW_\sJ$ is the Weyl group of $M_\sJ$.

\sssec{} Given $\sJ\in\Par(G)$, we denote by 
\[\Fl_\sJ=\{P'\subset G:P'\text{ is conjugate to }P_\sJ\}\] 
the flag variety of parabolic subgroups of type $\sJ$. We have a natural isomorphism $\Fl_\sJ=G/P_{\sJ}$.
If $\sJ=\emptyset$, then $P_\sJ=B$, and we write simply 
\[\Fl=\Fl_\sJ=G/B\qquad(\sJ=\emptyset)\]
for the complete flag variety.

\medskip

Whenever $\tilde\sJ\subset\sJ$ in $\Par(G)$, we have a natural morphism
\[f=f_{\tilde\sJ,\sJ}:\Fl_{\tilde\sJ}\to\Fl_\sJ.\]

\sssec{}
Given two Borel subgroups $B',B''\subset G$, we denote their relative position by $w(B',B'')\in\sW$. 
Explicitly, for $B''=B$ being the fixed Borel, the equality $w=w(B',B)$ means that 
\begin{align*}
B'&=\Ad_g(B), \quad g\in BwB.
\end{align*}
We then expand $w$ to arbitrary pairs $(B',B'')\in\Fl\times \Fl$ by $G$-invariance. 

\medskip

More generally, suppose $\sJ_0,\sJ\in\Par(G)$. The relative position of two parabolic subgroups $P'\in\Fl_{\sJ_0}$, $P''\in\Fl_\sJ$
is given by the double coset
\[\{w(B',B'')\in\sW:B'\subset P'\text{ and }B''\subset P''\text{ are Borel subgroups}\}\in \sW_{\sJ_0}  \backslash \sW/\sW_\sJ.\]
This double coset contains a unique minimal element with respect to the Bruhat order on $\sW$; we denote it by
$w(P',P'')\in\sW$.
The condition that $w\in\sW$ is minimal in its double coset $\sW_{\sJ_0}w\sW_\sJ$ is equivalent to the condition
\[w(\sJ)\subset\sR^+\qquad\text{and}\qquad w^{-1}(\sJ_0)\subset\sR^+.\] 

\ssec{Some Weyl group combinatorics}\label{ss:Weylcombo}

In this subsection we fix $\sJ_0\in\Par(G)$ and the corresponding standard
parabolic subgroup $P_0:=P_{\sJ_0}$. 

\sssec{}

Put 
\begin{equation}\label{eq:W'}
\sW':=\{w\in\sW:w^{-1}(\sJ_0)\subset\sR^+\}=\{w\in\sW:w\text{ is minimal in }\sW_{\sJ_0}w\}.
\end{equation}
There is a unique maximal element $w'_0\in\sW'$; it is characterized by the property that 
\[w'_0(\sR^+)\cap\sR^+=\sR_{\sJ_0}\cap\sR^+.\]
Explicitly, $w'_0$ is the minimal element of the coset $\sW_{\sJ_0}w_0$, where $w_0\in\sW$ is the longest element; also, 
$w_0'w_0\in \sW_{\sJ_0}$ 
is the longest element of the Coxeter group $\sW_{\sJ_0}$.

\sssec{}  \label{sss:pos neg}
Fix $w\in\sW$, and consider the partition $\sS=\sS_w^0\cup\sS_w^+\cup\sS_w^-$ given by
\begin{align*}
\sS_w^0&:=\sS\cap w^{-1}(\sR_{\sJ_0})\\
\sS_w^+&:=\sS\cap w^{-1}(\sR^+\setminus\sR_{\sJ_0})\\
\sS_w^-&:=\sS\cap w^{-1}(-\sR^+\setminus\sR_{\sJ_0}).
\end{align*}
(For simplicity, the dependence of this partition on $\sJ_0$ is suppressed in the notation.)
The following properties of this partition are clear.

\begin{lem} Suppose $w\in\sW$. Then
\begin{enumerate} 
\item $\sS_w^-=\emptyset$ if and only if $w\in\sW_{\sJ_0}$, and
\item $\sS_w^+=\emptyset$ if and only if $w\in\sW_{\sJ_0}w_0$.\qed
\end{enumerate}
\end{lem}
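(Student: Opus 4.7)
The plan is to prove both statements by standard manipulations with root systems and Bruhat combinatorics; the lemma is really the well-known description of minimal coset representatives in disguise. The key auxiliary fact that I would establish first is the following: \emph{$\sW_{\sJ_0}$ permutes each of the sets $\sR_{\sJ_0}$, $\sR^+\setminus\sR_{\sJ_0}$, and $-\sR^+\setminus\sR_{\sJ_0}$ setwise}. For $\sR_{\sJ_0}$ this is immediate from $\sW_{\sJ_0}$ being the Weyl group of $\sR_{\sJ_0}$; for the other two one checks that if $\alpha\in\sR^+\setminus\sR_{\sJ_0}$ and $\beta\in\sJ_0$, then $s_\beta(\alpha)=\alpha-\langle\alpha,\beta^\vee\rangle\beta$ is a root whose expansion in the simple root basis still has a strictly positive coefficient at some simple root outside $\sJ_0$, hence is positive.

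For (1), the $(\Leftarrow)$ direction is then immediate: if $w\in\sW_{\sJ_0}$ and $\alpha\in\sS\cap\sJ_0$ then $w(\alpha)\in\sR_{\sJ_0}$ (so $\alpha\in\sS_w^0$), while if $\alpha\in\sS\setminus\sJ_0$ then $\alpha\in\sR^+\setminus\sR_{\sJ_0}$, so $w(\alpha)\in\sR^+\setminus\sR_{\sJ_0}$ by the setwise preservation above (so $\alpha\in\sS_w^+$); in particular $\sS_w^-=\emptyset$. For $(\Rightarrow)$, I would use the factorization $w=w_1w_2$ with $w_1\in\sW_{\sJ_0}$ and $w_2\in\sW'$ (minimal in $\sW_{\sJ_0}w$, as in \eqref{eq:W'}). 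Assuming $w\notin\sW_{\sJ_0}$, i.e.\ $w_2\neq e$, pick a right descent $\alpha_0\in\sS$ of $w_2$, so that $w_2(\alpha_0)<0$. The defining condition $w_2^{-1}(\sJ_0)\subset\sR^+$ implies by nonnegative-combination considerations that $w_2^{-1}(\sR_{\sJ_0}\cap\sR^+)\subset\sR^+$; equivalently, no simple root is sent by $w_2$ into $-\sR_{\sJ_0}^+$. Hence $w_2(\alpha_0)\in -\sR^+\setminus\sR_{\sJ_0}$, and then applying $w_1\in\sW_{\sJ_0}$ keeps us in $-\sR^+\setminus\sR_{\sJ_0}$ by the preservation fact. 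Thus $\alpha_0\in\sS_w^-$, contradicting the hypothesis.

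For (2), I would simply reduce to (1) via the substitution $w\leftrightarrow ww_0$, using that $w_0(\sS)=-\sS$ and $\sR_{\sJ_0}=-\sR_{\sJ_0}$: the condition $\sS_w^+=\emptyset$ says $w(\sS)\subset -\sR^+\cup\sR_{\sJ_0}$, which after writing $w=w'w_0$ and $\sS=-w_0(\sS)$ becomes $w'(\sS)\subset\sR^+\cup\sR_{\sJ_0}$, i.e.\ $\sS_{w'}^-=\emptyset$. By (1), this is equivalent to $w'=ww_0\in\sW_{\sJ_0}$, i.e.\ $w\in\sW_{\sJ_0}w_0$.

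None of the steps is a serious obstacle; if anything, the only point requiring care is the intermediate claim that $w\in\sW'$ forces $w$ to send no simple root into $-\sR_{\sJ_0}^+$, which is the key input in the $(\Rightarrow)$ direction of (1). Once that and the setwise-preservation lemma are in hand, both parts of the statement are short and formal.
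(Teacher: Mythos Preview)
Your argument is correct; the minimal-coset-representative factorization together with the standard fact that $\sW_{\sJ_0}$ stabilizes $\sR^+\setminus\sR_{\sJ_0}$ does exactly what is needed, and the reduction of (2) to (1) via $w\mapsto ww_0$ is clean. The paper itself gives no proof at all (it simply declares the properties ``clear'' and ends with \qed), so you have supplied the details the authors omitted rather than taken a different route.
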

\begin{cor} \label{co:partition of S}
Suppose $w\in\sW'$. Then
\begin{enumerate} 
\item \label{it:co:partition of S1} $\sS_w^-=\emptyset$ if and only if $w=e$, and
\item \label{it:co:partition of S2} $\sS_w^+=\emptyset$ if and only if $w=w_0'$. \qed
\end{enumerate}
\end{cor}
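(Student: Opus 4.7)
The corollary is a direct combination of the preceding lemma with the defining property of $\sW'$. The plan is to handle the two statements in parallel, treating each as the intersection of the corresponding clause of the lemma with the condition $w\in\sW'$.

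For part~\eqref{it:co:partition of S1}, I would argue as follows. By the lemma, $\sS_w^-=\emptyset$ is equivalent to $w\in\sW_{\sJ_0}$. Combined with the hypothesis $w\in\sW'$, this says that $w$ lies in $\sW_{\sJ_0}$ \emph{and} is the minimal element of its coset $\sW_{\sJ_0}w$ (see \eqref{eq:W'}). But for any $w\in\sW_{\sJ_0}$ the coset $\sW_{\sJ_0}w$ equals $\sW_{\sJ_0}$, whose unique minimal element is the identity $e$. Hence $w=e$. The converse ($\sS_e^-=\emptyset$) is immediate from the definition.

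For part~\eqref{it:co:partition of S2}, the lemma gives $\sS_w^+=\emptyset$ if and only if $w\in\sW_{\sJ_0}w_0$. Intersecting with $w\in\sW'$, we need $w$ to be the minimal element of $\sW_{\sJ_0}w_0$. But this minimal element is \emph{by definition} $w_0'$, as recalled just before the statement: $w_0'$ is characterized as the minimal element of $\sW_{\sJ_0}w_0$, and equivalently as the unique longest element of $\sW'$. The converse direction follows from the characterization $w_0'(\sR^+)\cap\sR^+=\sR_{\sJ_0}\cap\sR^+$, which forces $w_0'(\sR^+\setminus\sR_{\sJ_0})\subset -\sR^+$, so that $(w_0')^{-1}(\sR^+\setminus\sR_{\sJ_0})\subset -\sR^+$ and hence $\sS\cap (w_0')^{-1}(\sR^+\setminus\sR_{\sJ_0})=\emptyset$.

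There is no real obstacle here; the only mildly subtle point is keeping straight the two equivalent characterizations of $w_0'$ (minimal in $\sW_{\sJ_0}w_0$ versus the identity $w_0'(\sR^+)\cap\sR^+=\sR_{\sJ_0}\cap\sR^+$) and checking that they give the ``converse'' direction of part~\eqref{it:co:partition of S2} cleanly. Everything else is formal bookkeeping with cosets and the Bruhat order.
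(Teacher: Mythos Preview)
Your approach is exactly the paper's (which records only a \qed): intersect each clause of the lemma with the minimality condition defining $\sW'$. The argument for part~\eqref{it:co:partition of S1} and the forward direction of part~\eqref{it:co:partition of S2} are clean and correct.

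One small sloppiness in your converse for part~\eqref{it:co:partition of S2}: the step ``$w_0'(\sR^+\setminus\sR_{\sJ_0})\subset -\sR^+$, so that $(w_0')^{-1}(\sR^+\setminus\sR_{\sJ_0})\subset -\sR^+$'' is a non-sequitur as written --- the second containment does not follow from the first without further argument (it is true, but requires e.g.\ writing $(w_0')^{-1}=w_0 w_{\sJ_0,0}$ and using that $w_{\sJ_0,0}$ permutes $\sR^+\setminus\sR_{\sJ_0}$). More to the point, this detour is unnecessary: you have already recalled that $w_0'\in\sW_{\sJ_0}w_0$, so the lemma \emph{directly} gives $\sS_{w_0'}^+=\emptyset$. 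Both directions of part~\eqref{it:co:partition of S2} thus reduce to the single observation that $\sW'\cap\sW_{\sJ_0}w_0=\{w_0'\}$.
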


\sssec{} Let now $P'$ be another parabolic subgroup (not necessarily a standard one). 
Consider $w(P_0,P')\in\sW$. Clearly, $w(P_0,P')\in\sW'$. We need the following 
easy observation.

\begin{lem} \label{l:unip radical}
Let $U(P_0)\subset P_0$ be the unipotent radical, and let $\fp'$ and $\fu(P_0)$ be the Lie algebras of
$P'$ and $U(P_0)$, respectively. Then $w(P_0,P')=w_0'$ if and only if $\fp'\cap \fu(P_0)=\{e\}$.
\end{lem}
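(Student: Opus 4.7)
The plan is to translate the condition $\fp'\cap\fu(P_0)=\{0\}$ into root-system combinatorics and compare it with the characterization of $w_0'$ recorded in \secref{ss:Weylcombo}. Set $w:=w(P_0,P')\in\sW'$ and let $\sJ'\subset\sS$ denote the type of $P'$. By definition of $w(P_0,P')$, there exists a Borel $B'\subset P'$ with $w(B,B')=w$, where $B\subset P_0$ is the standard Borel. Via the Bruhat decomposition, $B'=\Ad_{b_1\dot w}(B)$ for some $b_1\in B\subset P_0$ and a lift $\dot w\in N(T)$ of $w$, so $P'=\Ad_{b_1\dot w}(P_{\sJ'})$. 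Since $b_1$ normalizes $U(P_0)\triangleleft P_0$, we have
\[\fp'\cap\fu(P_0)=\Ad_{b_1}\bigl(\Ad_{\dot w}(\fp_{\sJ'})\cap\fu(P_0)\bigr),\]
so it suffices to prove the lemma for the model $P'=\Ad_{\dot w}(P_{\sJ'})$ (the choice of lift $\dot w$ is irrelevant, as $T$ acts by scalars on each $\fg_\alpha$).

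Write $\sR^+_{\sJ_0}:=\sR_{\sJ_0}\cap\sR^+$, and similarly for $\sJ'$. The paper's conventions give
\[
\fp'=\ft\oplus\bigoplus_{\alpha\in w(\sR^+\cup\sR_{\sJ'})}\fg_\alpha,\qquad
\fu(P_0)=\bigoplus_{\beta\in\sR^+\setminus\sR^+_{\sJ_0}}\fg_\beta,
\]
whence
\[
\fp'\cap\fu(P_0)=\bigoplus_{\alpha\in(w(\sR^+)\cup w(\sR_{\sJ'}))\cap(\sR^+\setminus\sR^+_{\sJ_0})}\fg_\alpha.
\]
The minimality condition $w(\sJ')\subset\sR^+$ forces $w(\sR^+_{\sJ'})\subset\sR^+$, and hence $w(\sR_{\sJ'})\cap\sR^+=w(\sR^+_{\sJ'})\subset w(\sR^+)$, so the indexing set collapses to $(w(\sR^+)\cap\sR^+)\setminus\sR^+_{\sJ_0}$. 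Therefore $\fp'\cap\fu(P_0)=\{0\}$ if and only if $w(\sR^+)\cap\sR^+\subset\sR^+_{\sJ_0}$.

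Finally, the reverse inclusion $\sR^+_{\sJ_0}\subset w(\sR^+)\cap\sR^+$ is automatic for $w\in\sW'$: each $\alpha\in\sR^+_{\sJ_0}$ is a non-negative $\BZ$-combination of elements of $\sJ_0$, each of which lies in $w(\sR^+)$ by $w^{-1}(\sJ_0)\subset\sR^+$, so $w^{-1}(\alpha)$ is a root expressible as a non-negative sum of positive roots and is therefore itself positive. Combining the two, $\fp'\cap\fu(P_0)=\{0\}$ is equivalent to the equality $w(\sR^+)\cap\sR^+=\sR^+_{\sJ_0}$, which by \secref{ss:Weylcombo} characterizes $w=w_0'$. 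The main obstacle is the careful normalization of $B'$ as $\Ad_{\dot w}(B)$ (reducing to the model $P'=\Ad_{\dot w}(P_{\sJ'})$) and the bookkeeping on root-space decompositions; once these are in hand, matching the indexing sets is immediate.
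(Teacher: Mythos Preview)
Your root-system computation is correct, but the normalization step contains a genuine gap. The claim ``there exists a Borel $B'\subset P'$ with $w(B,B')=w$'' (with $B$ the \emph{standard} Borel) is false in general. For instance, take $G=SL_3$, $\sJ_0=\{\alpha_1\}$, $\sJ'=\{\alpha_2\}$, and $P'=\Ad_{\dot s_1}(P_{\sJ'})$: then $w(P_0,P')=e$ (since $\dot s_1\in P_0$), yet the Borels $B'\subset P'$ satisfy $w(B,B')\in s_1\sW_{\sJ'}=\{s_1,s_1s_2\}$, so none has $w(B,B')=e$.

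The fix is immediate and your argument survives it. By the definition of $w(P_0,P')$ there exist Borels $B_1\subset P_0$ and $B'\subset P'$ with $w(B_1,B')=w$; equivalently, using the Bruhat decomposition of $P_0\backslash G/P_{\sJ'}$, one can write $P'=\Ad_{p_1\dot w}(P_{\sJ'})$ with $p_1\in P_0$ (not merely $p_1\in B$). Since every element of $P_0$ normalizes $U(P_0)$, the reduction to the model $P'=\Ad_{\dot w}(P_{\sJ'})$ still goes through, and the remainder of your computation is unchanged: both minimality conditions you use (namely $w^{-1}(\sJ_0)\subset\sR^+$ and $w(\sJ')\subset\sR^+$) are properties of the minimal double-coset representative.

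The paper's proof is a one-liner invoking the criterion $\sS_w^+=\emptyset\iff w=w_0'$; your argument supplies the root-combinatorial content behind it, going instead through the characterization $w_0'(\sR^+)\cap\sR^+=\sR^+_{\sJ_0}$ recorded in \secref{ss:Weylcombo}. The two routes are essentially the same.
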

\begin{proof} Follows from Corollary~\ref{co:partition of S}\eqref{it:co:partition of S2}.
\end{proof}

\sssec{} Let us now fix $\sJ\in\Par(G)$, and consider the flag variety $\Fl_\sJ$. 
For $w\in\sW$, define the \emph{Schubert stratum} in $\Fl_\sJ$ as
follows:
\begin{align*}
\Fl_\sJ^w&:=\{P'\in\Fl_\sJ:w(P_0,P')=w\}\subset\Fl_\sJ.\\
\intertext{Also, put}
\Fl_\sJ^{\le w}&:=\{P'\in\Fl_\sJ:w(P_0,P')\le w\}\subset\Fl_\sJ\\
\intertext{and}
\Fl_\sJ^{<w}&:=\{P'\in\Fl_\sJ:w(P_0,P')<w\}\subset\Fl_\sJ.
\end{align*}
(One again, we omit the parabolic subgroup $P_0$ from the notation.) 

\begin{rem} 
We emphasize that in the definition of $\Fl_\sJ^w$, the equality $w(P_0,P')=w$ takes place in $W$ and \emph{not} in
$W_{\sJ_0}\backslash W/W_\sJ$ (and similarly for $\Fl_\sJ^{\le w}$ and $\Fl_\sJ^{< w}$). 

Hence, if $w\not\in\sW'$, then $\Fl_\sJ^w=\emptyset$ and 
$\Fl_\sJ^{\le w}=\Fl_\sJ^{<w}$. Also, $\Fl_\sJ^{\le w'_0}=\Fl_\sJ$. 

\end{rem}

\sssec{}
Suppose $\tilde\sJ\subset \sJ$ in $\Par(G)$. Consider the natural map $f:\Fl_{\tilde\sJ}\to\Fl_\sJ$. 
Clearly, 
\[
f(\Fl_{\tilde\sJ}^{\le w})\subset\Fl_\sJ^{\le w}\quad\text{and}\quad
f(\Fl_{\tilde\sJ}^{<w})\subset\Fl_\sJ^{<w};
\]
however, it is not true in general that $f(\Fl_{\tilde\sJ}^w)\subset\Fl_\sJ^w$.

\begin{lem} \label{lm:sch} Fix $w\in\sW$ (and recall that $\sJ\in\Par(G)$ is also fixed).
\begin{enumerate}
\item\label{it:lm:sch1} If $\sJ\cap\sS^-_w\ne\emptyset$, then $\Fl_\sJ^w=\emptyset$.
\item\label{it:lm:sch2} Put $\tilde\sJ=\sJ\setminus \sS^+_w$. Then the map $f:\Fl_{\tilde\sJ}\to\Fl_\sJ$ induces an isomorphism 
$\Fl^w_{\tilde\sJ}\simeq\Fl_\sJ^w$.
\end{enumerate}
\end{lem}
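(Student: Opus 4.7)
\emph{Plan.} The plan is to realise both $\Fl_\sJ^w$ and $\Fl_{\tilde\sJ}^w$ as transitive $P_0$-orbits, identify them with $P_0$ modulo stabilizers, and show that for $\tilde\sJ=\sJ\setminus\sS_w^+$ the two stabilizers coincide. Part~(1) will follow immediately from the characterization recalled just before the lemma: $w$ is the minimal element of $\sW_{\sJ_0}w\sW_\sJ$ iff $w(\sJ)\subset\sR^+$ and $w^{-1}(\sJ_0)\subset\sR^+$. Since $w(P_0,P')$ is by definition the minimal representative of its double coset, the existence of $\alpha\in\sJ\cap\sS_w^-$ (giving $w(\alpha)\in-\sR^+$) obstructs the equality $w(P_0,P')=w$, forcing $\Fl_\sJ^w=\emptyset$.

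\emph{Part (2), setup.} For part~(2), the case $\Fl_\sJ^w=\emptyset$ reduces to part~(1) applied to $\tilde\sJ$: emptiness is caused either by $w\notin\sW'$ (in which case $\Fl_{\tilde\sJ}^w$ is also empty) or by $\sJ\cap\sS_w^-\ne\emptyset$, in which case $\tilde\sJ\cap\sS_w^-=\sJ\cap\sS_w^-\ne\emptyset$ because $\sS_w^-\cap\sS_w^+=\emptyset$. Assume therefore that $\Fl_\sJ^w\ne\emptyset$; then $w\in\sW'$ and $\sJ\cap\sS_w^-=\emptyset$, so $\sJ\subset\sS_w^0\cup\sS_w^+$ and $\tilde\sJ=\sJ\cap\sS_w^0$. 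Writing $\Fl_\sJ=G/P_\sJ$, the Schubert stratum $\Fl_\sJ^w$ is the $P_0$-orbit through $\dot w P_\sJ$, so
\[\Fl_\sJ^w\simeq P_0/(P_0\cap\dot w P_\sJ\dot w^{-1}),\qquad \Fl_{\tilde\sJ}^w\simeq P_0/(P_0\cap\dot w P_{\tilde\sJ}\dot w^{-1}),\]
with $f$ being the natural $P_0$-equivariant map induced by $P_{\tilde\sJ}\subset P_\sJ$. The stabilizer for $\tilde\sJ$ is trivially contained in the one for $\sJ$, so it suffices to establish the reverse containment, which in characteristic zero (by connectedness of intersections of parabolics) reduces to the Lie algebra inclusion
\[\Ad_{\dot w^{-1}}(\fp_{\sJ_0})\cap\fp_\sJ\;\subset\;\fp_{\tilde\sJ}.\]

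\emph{Main obstacle.} The substantive step will be checking this inclusion via a root-theoretic computation. Every root $\beta$ of the intersection lies in $[w^{-1}(\sR^+)\cup w^{-1}(\sR_{\sJ_0})]\cap[\sR^+\cup\sR_\sJ]$; the only interesting case is $\beta\in -\sR_\sJ\cap(-\sR^+)$, where writing $\beta=\sum_{\alpha\in\sJ}c_\alpha\alpha$ with $c_\alpha\le 0$ one must show $c_\alpha=0$ for $\alpha\in\sJ\cap\sS_w^+=\sJ\setminus\tilde\sJ$. First, $w\in\sW'$ forces $w(\sS_w^0)\subset\sR_{\sJ_0}\cap\sR^+$ (else $w^{-1}$ would send some element of $\sJ_0$ to a negative root), so $w(\alpha)\in\sR^+$ for all $\alpha\in\sJ$. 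If $\beta\in w^{-1}(\sR^+)$, then $w(\beta)$ is a positive root expressed as a nonpositive combination of positive roots $w(\alpha)$, which is impossible unless all $c_\alpha=0$. If $\beta\in w^{-1}(\sR_{\sJ_0})$, then $w(\beta)\in\Span(\sJ_0)$; projecting onto $\Span(\sS\setminus\sJ_0)$ kills the contributions from $\alpha\in\sJ\cap\sS_w^0$ but extracts, for $\alpha\in\sJ\cap\sS_w^+$, nonzero vectors with nonnegative coefficients (since $w(\alpha)\in\sR^+\setminus\sR_{\sJ_0}$), and vanishing of the resulting nonpositive combination forces $c_\alpha=0$ for all $\alpha\in\sJ\cap\sS_w^+$. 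This combinatorial bookkeeping is the only non-formal step; the rest of the argument is the standard dictionary between $P_0$-orbits on partial flag varieties, minimal double-coset representatives, and Schubert cells.
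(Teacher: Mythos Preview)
Your proof is correct. Part~(1) matches the paper's argument exactly. For part~(2), the paper takes a different but closely related route: it writes down an explicit inverse map
\[P'\in\Fl_\sJ^w\;\longmapsto\;(P'\cap P_0)\,U(P')\subset P',\]
where $U(P')$ is the unipotent radical of $P'$, and asserts (without details) that this lands in $\Fl_{\tilde\sJ}^w$ and is inverse to $f$. Your argument and the paper's are essentially the same computation viewed from two sides: you show that the stabilizers $P_0\cap\dot wP_\sJ\dot w^{-1}$ and $P_0\cap\dot wP_{\tilde\sJ}\dot w^{-1}$ coincide by a root calculation; the paper's inverse, at the base point $\dot wP_\sJ$, amounts to the observation that $(P_0\cap\dot wP_\sJ\dot w^{-1})\cdot\dot wU(P_\sJ)\dot w^{-1}=\dot wP_{\tilde\sJ}\dot w^{-1}$, which is equivalent. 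The paper's formulation is slicker and coordinate-free (it works uniformly over the orbit without choosing a base point), while yours makes the verification fully explicit and would be easier for a reader to check line by line. Neither approach has a real advantage in generality.
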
     
\begin{proof}
\eqref{it:lm:sch1} Indeed, if $\sJ\cap\sS^-_w\ne\emptyset$, then $w(\sJ)\not\subset\sR^+$ and $w$ is not the minimal 
element of $\sW_{\sJ_0}w\sW_\sJ$.

\eqref{it:lm:sch2} The inverse map sends $P'\in\Fl_\sJ$ to the parabolic subgroup $(P'\cap P_0)U(P')\subset P'$, 
where $U(P')\subset P'$ is the unipotent radical. 
\end{proof}      

\ssec{Proof of \thmref{t:contr2}: setting up the induction}

\sssec{}\label{sss:JM}

Recall that in \thmref{t:contr2} we fix a $G$-local system $\sigma$ and a \emph{non-zero} horizontal section $A$
of $\fg_\sigma$. 

\medskip

By the Jacobson-Morozov Theorem, $A$ determines a canonical reduction of $\sigma$ to a standard 
parabolic subgroup, which we denote $P_0$. Moreover, $A$ belongs to the nilradical
of this reduction, in the sense that $A$ lies in 
$\fu(P_0)_\sigma\subset\fg_\sigma$. (Here we abuse the notation slightly by writing $\sigma$ for the reduction to $P_0$.)
Equivalently, the reduction corresponds to a point of $\Spr^{\sigma, A}_{P_0,\on{unip}}$.
In particular, since $A\neq 0$, we have $\fu(P_0)\ne 0$ and hence $P_0\neq G$.

\begin{rem}
For most of the argument, we only need to know that $\sigma$ is reduced to a proper parabolic. The fact that
$A$ belongs to the nilradical of the reduction is used only in \secref{sss:largestisempty}.
\end{rem}

\sssec{}
Set $P_0=P_{\sJ_0}$; that is, $\sJ_0$ is the type of the standard parabolic $P_0$. Let us use the formalism of
\secref{ss:Weylcombo} for this choice of $\sJ_0$.

\medskip

Each of the schemes $\Spr^{\sigma,A}_P$ comprising $\Spr^{\sigma,A}_{\on{Glued}}$ acquires a 
stratification by the set $\sW'$, where $\sW'$ is given by \eqref{eq:W'}; denote the corresponding subschemes by 
$$\Spr^{\sigma,A,<w}_P \subset \Spr^{\sigma,A,\leq w}_P \supset \Spr^{\sigma,A,w}_P.$$
Explicitly, the stratification is determined by the relative position the reduction of $\sigma$ to $P$ 
(corresponding to a point of $\Spr^{\sigma,A}_P$) and the fixed reduction of $\sigma$ to $P_0$.

\medskip

Consider the corresponding prestacks
\begin{align*}
\Spr^{\sigma,A,<w}_{\on{Glued}}&=\colim_{P\in\Par'(G)}\Spr^{\sigma,A,<w}_P\\
\Spr^{\sigma,A,\le w}_{\on{Glued}}&=\colim_{P\in\Par'(G)}\Spr^{\sigma,A,\le w}_P.
\end{align*} 
(Note that the schemes $\Spr^{\sigma,A,w}_P$ do not form a diagram indexed by $P\in\Par'(G)$.)

\medskip

Consider also the quotients 
$$\Spr^{\sigma,A,\leq w}_P/\Spr^{\sigma,A,< w}_P:=
\Spr^{\sigma,A,\leq w}_P \underset{\Spr^{\sigma,A,< w}_P}\sqcup \on{pt}$$
and 
$$\Spr^{\sigma,A,\leq w}_{\on{Glued}}/\Spr^{\sigma,A,< w}_{\on{Glued}}:=
\Spr^{\sigma,A,\leq w}_{\on{Glued}} \underset{\Spr^{\sigma,A,< w}_{\on{Glued}}}\sqcup \on{pt},$$
the latter being the same as
$$\underset{P\in \Par'(G)}{\on{colim}}\, \Spr^{\sigma,A,\leq w}_P/\Spr^{\sigma,A,< w}_P,$$
since the category $\Par'(G)$ is contractible (having an initial object). 

\medskip

In what follows we  also use the notation
$$\Spr^{\sigma,A,<w}_\sJ:=\Spr^{\sigma,A,<w}_P \text{ for } P=P_\sJ,$$
etc. 

\sssec{}

We need to show that the trace map
$$\on{C}_*(\Spr^{\sigma,A}_{\on{Glued}})\to k$$
is an isomorphism.

\medskip

We will prove that for every $w\in \sW'$, the trace map
\begin{equation} \label{e:contr w}
\on{C}_*(\Spr^{\sigma,A,\leq w}_{\on{Glued}})\to k
\end{equation} 
is an isomorphism. (That is, $\Spr^{\sigma,A,\leq w}_{\on{Glued}}$ is a homologically contractible $k$-prestack.) Applying this to $w=w'_0$, 
we obtain the desired result.

\sssec{}  

The proof that \eqref{e:contr w} is an isomorphism uses the following two statements, proved
in Sections \ref{ss:base} and \ref{ss:step}, respectively:

\smallskip

\noindent {\bf Case $w=1$:} the trace map 
$$\on{C}_*(\Spr^{\sigma,A,1}_{\on{Glued}})\to k$$ is an isomorphism;

\smallskip

\noindent {\bf Case $w\neq 1$:} For any $1\neq w\in \sW'$, the trace map 
$$\on{C}_*(\Spr^{\sigma,A,\leq w}_{\on{Glued}}/\Spr^{\sigma,A,< w}_{\on{Glued}})\to k$$ is an isomorphism.

\medskip

Let us show how the combination of these two statements implies that \eqref{e:contr w} is an isomorphism. 
This will be completely formal. 

\medskip

We argue by induction on the poset $\sW'$. The base of the induction is the statement in
Case $w=1$. Let us now perform the induction step, so take $w\neq 1$. 

\medskip

We have a push-out square of prestacks
$$
\CD
\Spr^{\sigma,A,\leq w}_{\on{Glued}} @>>>  \Spr^{\sigma,A,\leq w}_{\on{Glued}}/\Spr^{\sigma,A,< w}_{\on{Glued}} \\
@AAA    @AAA   \\
\Spr^{\sigma,A,< w}_{\on{Glued}} @>>>  \on{pt}.
\endCD
$$
and hence a cofiber square in $\Vect$:
$$
\CD
\on{C}_*(\Spr^{\sigma,A,\leq w}_{\on{Glued}}) @>>>  \on{C}_*(\Spr^{\sigma,A,\leq w}_{\on{Glued}}/\Spr^{\sigma,A,< w}_{\on{Glued}}) \\
@AAA    @AAA   \\
\on{C}_*(\Spr^{\sigma,A,< w}_{\on{Glued}}) @>>>  \on{C}_*(\on{pt})\simeq k.
\endCD
$$
Taking into account the statement in Case $w\neq 1$,  it suffices to show that the trace map
$$\on{C}_*(\Spr^{\sigma,A,< w}_{\on{Glued}})\to k$$
is an isomorphism. This is done below.

\sssec{}

Consider the prestack
$$\underset{w_1<w}{\on{colim}}\, \Spr^{\sigma,A,\leq w_1}_{\on{Glued}}.$$
We have an isomorphism
$$\underset{w_1<w}{\on{colim}}\, \Spr^{\sigma,A,\leq w_1}_{\on{Glued}}\to \Spr^{\sigma,A,< w}_{\on{Glued}},$$
and hence an isomorphism
$$\on{C}_*\left(\underset{w_1<w}{\on{colim}}\, \Spr^{\sigma,A,\leq w_1}_{\on{Glued}}\right)\to 
\on{C}_*(\Spr^{\sigma,A,< w}_{\on{Glued}}).$$

Hence, it remains to show that the trace map
$$\on{C}_*\left(\underset{w_1<w}{\on{colim}}\, \Spr^{\sigma,A,\leq w_1}_{\on{Glued}}\right)\to k$$
is an isomorphism. We have
$$\on{C}_*\left(\underset{w_1<w}{\on{colim}}\, \Spr^{\sigma,A,\leq w_1}_{\on{Glued}}\right)\simeq 
\underset{w_1<w}{\on{colim}}\, \on{C}_*(\Spr^{\sigma,A,\leq w_1}_{\on{Glued}}).$$

Now, by the induction hypothesis, for every $w_1<w$, the trace map 
$$\on{C}_*(\Spr^{\sigma,A,\leq w_1}_{\on{Glued}})\to k$$
is an isomorphism. Hence, the assertion follows from the fact that the index category, i.e.,
$w_1$ with $w_1<w$, is contractible (it contains an initial element $w_1=1$). 

\ssec{Verifying Case $w=1$}  \label{ss:base}

\sssec{}

Let us show that the prestack $\Spr^{\sigma,A,1}_{\on{Glued}}$ itself is isomorphic to $\on{pt}$. By definition,
\[\Spr^{\sigma,A,1}_{\on{Glued}}=\colim_{\sJ\in\Par'(G)}\Spr^{\sigma,A,1}_\sJ.\]

\medskip

Let $M_0$ denote the Levi quotient of $P_0$. Let $\Par(M_0)$ be the poset of all standard parabolics of $M_0$.
We identify $\Par(M_0)$ with the poset of all subsets of $\sJ_0$ (including $\sJ_0$ itself). 

\medskip

The inclusion
\begin{equation} \label{e:par in par}
\Par(M_0)\hookrightarrow \Par'(G)
\end{equation}
admits a right adjoint, given by 
$$\sJ\mapsto \sJ\cap \sJ_0.$$

\medskip

Note now that for any $\sJ\subset \sS$, the map
$$\Fl_{\sJ\cap \sJ_0}^{1}\to \Fl_\sJ^{1}$$
is an isomorphism. Indeed, this is a special case of \lemref{lm:sch}(2).
Therefore, the map
$$\Spr^{\sigma,A,1}_{\sJ\cap \sJ_0}\to \Spr^{\sigma,A,1}_\sJ$$
is an isomorphism as well.

\sssec{}  \label{sss:adj lemma}

We have the following general assertion:

\medskip

Let $I$ be an index category, and $I'\overset{\phi}\hookrightarrow I$ a full subcategory such that the 
inclusion $\phi$ admits a right adjoint, which we denote by $\psi$. 

\medskip

Let $\sF:I\to \bD$ be a functor with values in some $\infty$-category $\bD$. Assume that for every $i\in I$,
the co-unit of the adjunction
$$\phi\circ \psi(i)\to i$$
induces an isomorphism 
$$\sF\circ \phi\circ \psi(i)\to \sF(i).$$

\medskip

\begin{lem}  \label{l:adj cofin}
Under the above circumstances, the canonical map
$$\underset{i'\in I'}{\on{colim}}\, \sF\circ \phi \to \underset{i\in I}{\on{colim}}\, \sF$$
is an isomorphism. \qed
\end{lem}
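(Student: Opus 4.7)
The plan is to reduce this to a cofinality statement for the right adjoint $\psi$ and invoke Joyal's cofinality theorem. The key observation is that the hypothesis makes $\sF$ canonically isomorphic to $\sF\circ\phi\circ\psi$, so computing the colimit of $\sF$ amounts to computing the colimit of $(\sF\circ\phi)\circ\psi$, which by cofinality equals the colimit of $\sF\circ\phi$ over $I'$.

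First, I would observe that applying $\sF$ to the counit of the adjunction $\phi\dashv\psi$ yields, by hypothesis, a natural isomorphism of functors
\[
\sF\circ\phi\circ\psi \;\overset{\sim}{\longrightarrow}\; \sF \colon I\to\bD.
\]
Setting $\sG:=\sF\circ\phi\colon I'\to\bD$, this gives a canonical identification
$\underset{i\in I}{\colim}\,\sF \simeq \underset{i\in I}{\colim}\,(\sG\circ\psi)$.

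Next, I would show that the functor $\psi:I\to I'$ is cofinal. By the standard characterization (Lurie, HTT~4.1.1.3), this amounts to showing that for every $i'\in I'$ the $\infty$-category $I\times_{I'}(I')_{i'/}$ of pairs $(i,\,i'\to\psi(i))$ is weakly contractible. Using the adjunction $\phi\dashv\psi$, the mapping space $\Maps_{I'}(i',\psi(i))$ is canonically equivalent to $\Maps_I(\phi(i'),i)$, so this $\infty$-category is equivalent to the under-category $I_{\phi(i')/}$ of pairs $(i,\,\phi(i')\to i)$. The latter has an initial object $(\phi(i'),\on{id})$ and is therefore weakly contractible.

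Finally, by the cofinality of $\psi$, the canonical map
\[
\underset{i\in I}{\colim}\,(\sG\circ\psi) \;\longrightarrow\; \underset{i'\in I'}{\colim}\,\sG
\]
is an isomorphism, and combining with the first step gives the desired equivalence
$\underset{i'\in I'}{\colim}\,\sF\circ\phi \simeq \underset{i\in I}{\colim}\,\sF$.
There is no real obstacle here: the content of the lemma is entirely formal and reduces to verifying the initial-object condition for the under-category $I_{\phi(i')/}$, which is immediate once one recognizes the adjunction-induced equivalence of comma categories. The only mild care required is keeping track of the direction of cofinality conventions.
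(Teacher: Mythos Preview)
Your proof is correct. The paper itself provides no argument for this lemma—it is marked with a bare \qed immediately after the statement, treating it as a standard fact about colimits. Your approach (identifying $\sF$ with $\sF\circ\phi\circ\psi$ via the hypothesis, then invoking the cofinality of the right adjoint $\psi$) is exactly the natural justification.

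One small point worth making explicit: you produce an isomorphism between $\colim_{I'}\sF\phi$ and $\colim_I\sF$, but the lemma asks that the \emph{canonical} comparison map (induced by $\phi$) be an isomorphism. This follows because the canonical comparison map for the composite $\phi\psi:I\to I$ factors as the comparison map for $\psi$ followed by the comparison map for $\phi$; the former is an isomorphism by cofinality, and the composite coincides with the map induced by the counit $\phi\psi\to\id_I$ (hence is an isomorphism after applying $\sF$, by hypothesis). Therefore the comparison map for $\phi$ is an isomorphism as well. This is routine, but it closes the loop between ``the two colimits are abstractly isomorphic'' and ``the specific map in the statement is an isomorphism.''
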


\sssec{}

Applying \lemref{l:adj cofin} to \eqref{e:par in par} and the functor 
$$\sJ\rightsquigarrow \Spr^{\sigma,A,1}_\sJ,$$
we see that $\Spr^{\sigma,A,1}_{\on{Glued}}$ is isomorphic to the prestack 
\begin{equation} \label{e:smaller colimit}
\underset{\sJ\subset \sJ_0}{\on{colim}}\, \Spr^{\sigma,A,1}_\sJ.
\end{equation}

Now, the index category of subsets of $\sJ_0$ has a final object (namely, $\sJ=\sJ_0$),
and $\Spr^{\sigma,A,1}_{\sJ_0}=\on{pt}$. Hence, the colimit in \eqref{e:smaller colimit}
is isomorphic to $\on{pt}$. 

\ssec{Verifying Case $w\neq 1$}  \label{ss:step}

\sssec{}

We need to show that for $w\neq 1$, the trace map 
\begin{equation} \label{e:colim w}
\underset{P\in \Par'(G)}{\on{colim}}\, \on{C}_*(\Spr^{\sigma,A,\leq w}_\sJ/\Spr^{\sigma,A,< w}_\sJ)\to k
\end{equation}
is an isomorphism. Consider the case of $w\neq w'_0$ first. 

\medskip 

Put 
\[\Par'_w(G):=\{\sJ\in\Par'(G)\,|\,\sJ\subset \sS^0_w\cup \sS^-_w\}\subset \Par'(G).\]
Recall (see \secref{sss:pos neg}) that $\sJ\subset\sS^0_w\cup \sS^-_w$ means that for every simple root $\alpha\in \sJ$, $w(\alpha)$ is either negative, or a root of $\sR_0$. 

\sssec{}

We claim that the inclusion $\Par'_w(G)\hookrightarrow\Par'(G)$
satisfies the conditions of \lemref{l:adj cofin} for the
functor 
$$\sJ\mapsto \on{C}_*(\Spr^{\sigma,A,\leq w}_\sJ/\Spr^{\sigma,A,< w}_\sJ).$$

\medskip

Indeed, note that the inclusion $\Par'_w(G) \hookrightarrow \Par'(G)$ admits a right adjoint given by 
$$\sJ\mapsto \tilde\sJ:=\sJ\setminus \sS^+_w.$$

\medskip

Now, we claim that for $\sJ$ and $\tilde\sJ$ as above, the map
$$\Spr^{\sigma,A,\leq w}_{\tilde\sJ}/\Spr^{\sigma,A,< w}_{\tilde\sJ}\to \Spr^{\sigma,A,\leq w}_\sJ/\Spr^{\sigma,A,< w}_\sJ$$
induces an isomorphism on homology. 

\medskip

This follows by \lemref{lm:sch}(2) from the following general assertion:

\begin{lem}
Let $f:Y_1\to Y_2$ be a proper map between schemes. Let $Y'_i\subset Y_i$ for $i=1,2$ be closed subschemes such that
$f(Y'_1)\subset Y'_2$, and $f$ induces an isomorphism $Y_1\setminus Y'_1\to Y_2\setminus Y'_2$. Then the
induced map
$$\on{C}_*(Y_1\underset{Y'_1}\sqcup \on{pt})\to \on{C}_*(Y_2\underset{Y'_2}\sqcup \on{pt})$$
is an isomorphism.
\end{lem}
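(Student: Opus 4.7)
The plan is to reduce the pushout formulation to a mapping-cone statement, and then apply an excision argument for the closed/open decomposition of $Y_n$ by $Y'_n$ and its complement $U_n := Y_n \setminus Y'_n$. Note that the hypotheses imply $f^{-1}(Y'_2) = Y'_1$ as underlying topological spaces, and $f$ restricts to an isomorphism $f|_{U_1}: U_1 \xrightarrow{\sim} U_2$.

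First, since $\on{C}_* = (-)_{\dr,!}(\omega_{-})$ is a left adjoint, it commutes with colimits of prestacks, so
\[ \on{C}_*(Y_n \underset{Y'_n}\sqcup \on{pt}) \simeq \on{C}_*(Y_n) \underset{\on{C}_*(Y'_n)}\sqcup k \]
in the stable $\infty$-category $\Vect$. In a stable setting a pushout square has equal horizontal and vertical cofibers, so we obtain a functorial cofiber sequence
\[ k \to \on{C}_*(Y_n) \underset{\on{C}_*(Y'_n)}\sqcup k \to \on{cofib}\!\left(\on{C}_*(Y'_n) \to \on{C}_*(Y_n)\right). \]
Hence it suffices to show that the map
\[ \on{cofib}\!\left(\on{C}_*(Y'_1) \to \on{C}_*(Y_1)\right) \to \on{cofib}\!\left(\on{C}_*(Y'_2) \to \on{C}_*(Y_2)\right) \]
induced by $f$ is an isomorphism.

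Next, let $i_n : Y'_n \hookrightarrow Y_n$ and $j_n : U_n \hookrightarrow Y_n$ denote the closed embedding and its open complement. The standard recollement exact triangle
\[ i_{n,\dr,*}(\omega_{Y'_n}) \to \omega_{Y_n} \to j_{n,\dr,*}(\omega_{U_n}) \]
in $\Dmod(Y_n)$, pushed to a point along the (proper) structure map $Y_n \to \on{pt}$, realizes $\on{cofib}(\on{C}_*(Y'_n) \to \on{C}_*(Y_n))$ canonically as $(f_{Y_n})_{\dr,*}(j_{n,\dr,*}(\omega_{U_n}))$, which depends only on $U_n$.

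Finally, to verify that the map induced by $f$ agrees with the natural identification coming from $U_1 \simeq U_2$, I would apply $f_{\dr,*}$ to the recollement triangle on $Y_1$ and use the base-change identities $f \circ i_1 = i_2 \circ f|_{Y'_1}$ and $f \circ j_1 = j_2 \circ f|_{U_1}$, together with the fact that $(f|_{U_1})_{\dr,*}(\omega_{U_1}) = \omega_{U_2}$ since $f|_{U_1}$ is an isomorphism. This produces a natural map from the pushforward of the $Y_1$-triangle to the $Y_2$-triangle which is the identity on the rightmost term; pushing further to a point and passing to cofibers then yields the desired isomorphism. I do not anticipate any serious obstacle, as this is essentially the classical excision property of relative homology transcribed into the D-module formalism, with the only bookkeeping being the compatibility of the recollement triangles under proper direct image.
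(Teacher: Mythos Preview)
Your proof is correct and follows essentially the same route as the paper: reduce to the cofiber of $\on{C}_*(Y'_n) \to \on{C}_*(Y_n)$, identify this via the recollement/excision triangle with the pushforward to a point of $j_{n,\dr,*}(\omega_{U_n})$, and use properness of $f$ together with $f|_{U_1}$ being an isomorphism to match these. One small slip: the structure map $Y_n \to \on{pt}$ is not assumed proper in the lemma, so you should write $(p_{Y_n})_{\dr,!}$ rather than $(p_{Y_n})_{\dr,*}$ (as the paper does); this does not affect the argument, and in the intended application the schemes are in fact proper.
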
 

\begin{proof} 

It is enough to show that the map
$$\on{Cone}\left(\on{C}_*(Y'_1)\to \on{C}_*(Y_1)\right) \to \on{Cone}\left(\on{C}_*(Y'_2)\to \on{C}_*(Y_2)\right),$$
defined by $f$, is an isomorphism. 

\medskip

Let $\iota_i$ (resp. $j_i$) denote the closed embedding $Y_i'\hookrightarrow Y_i$ 
(resp. the open embedding $(Y_i\setminus Y'_i)\hookrightarrow Y_i$). From the excision exact triangle
\[(\iota_i)_{\dr,*}(\omega_{Y'_i})\to\omega_{Y_i}\to(j_i)_{\dr,*}(\omega_{Y_i\setminus Y_i'})\]
we obtain an isomorphism
$$\on{Cone}\left(\on{C}_*(Y'_i)\to \on{C}_*(Y_i)\right)\simeq(p_{Y_i})_{\dr,!}((j_i)_{\dr,*}(\omega_{Y_i\setminus Y'_i})),$$
where $p_{Y_i}:Y_i\to \on{pt}$ is the projection to the point.

\medskip

Now, the fact that $f$ is proper and the assumption of the lemma imply that 
$$f_{\dr,!}((j_1)_{\dr,*}(\omega_{Y_1\setminus Y'_1}))\simeq (j_2)_{\dr,*}(\omega_{Y_2\setminus Y'_2}),$$
implying the desired isomorphism. 

\end{proof} 

\begin{rem}
The above argument involves the excision exact triangle. For this reason, it does not imply 
that the prestack $\Spr^{\sigma,A,\leq w}_\sJ/\Spr^{\sigma,A,< w}_\sJ$ itself is isomorphic to $\on{pt}$
(and we do not know whether this is true). 
\end{rem}

\sssec{}

Thus, by \lemref{l:adj cofin}, the colimit in \eqref{e:colim w} is isomorphic to the colimit
$$\underset{P\in \Par'_w(G)}{\on{colim}}\, \on{C}_*(\Spr^{\sigma,A,\leq w}_\sJ/\Spr^{\sigma,A,< w}_\sJ),$$
and it suffices to show that the trace map from the latter to $k$ is an isomorphism.
Let us show that the prestack 
\begin{equation} \label{e:colim w'}
\underset{P\in \Par'_w(G)}{\on{colim}}\, \Spr^{\sigma,A,\leq w}_\sJ/\Spr^{\sigma,A,< w}_\sJ
\end{equation}
itself is isomorphic to $\on{pt}$. 

\medskip

By the assumption that $w\neq w'_0$ and \corref{co:partition of S}, the poset $\Par'_w(G)$
contains a maximal element, namely, $\sJ=\sS^-_w\cup \sS^0_w$. Hence, the colimit 
\eqref{e:colim w'} is isomorphic to 
$$\Spr^{\sigma,A,\leq w}_{\sS^-_w\cup \sS^0_w}/\Spr^{\sigma,A,< w}_{\sS^-_w\cup \sS^0_w}.$$

Now, by the assumption that $w\neq 1$ and \lemref{lm:sch}(1), we have 
$$\Spr^{\sigma,A,\leq w}_{\sS^-_w\cup \sS^0_w}=\Spr^{\sigma,A,< w}_{\sS^-_w\cup \sS^0_w},$$and so
$$\Spr^{\sigma,A,\leq w}_{\sS^-_w\cup \sS^0_w}/\Spr^{\sigma,A,< w}_{\sS^-_w\cup \sS^0_w}\simeq \on{pt}.$$

\sssec{}\label{sss:largestisempty}

Finally, we consider the case of $w=w'_0$. We claim that in this case the prestack 
$$\underset{P\in \Par'(G)}{\on{colim}}\, \Spr^{\sigma,A,\leq w'_0}_\sJ/\Spr^{\sigma,A,< w'_0}_\sJ$$
is isomorphic to $\on{pt}$. In fact, we claim that for every $\sJ$, we have 
$$\Spr^{\sigma,A,w'_0}_\sJ=\emptyset,$$
and so 
$$\Spr^{\sigma,A,\leq w'_0}_\sJ/\Spr^{\sigma,A,< w'_0}_\sJ\simeq \on{pt}.$$

\medskip 

Indeed, the fact that $\Spr^{\sigma,A,w'_0}_\sJ$ is empty follows from \lemref{l:unip radical}
and the fact that $A$ is a horizontal section of $\fu(P_0)_\sigma$, while $A\neq 0$
by assumption. 

\section{A proof via the Grothendieck-Springer correspondence}  \label{s:Springer}

In this section we  give an alternative proof of \thmref{t:contr2} in the special case of the trivial
local system $\sigma$.

\ssec{Making the nilpotent vary}

\sssec{}

As was mentioned above, in this section the local system is trivial. Hence, we can think of $A$ as a nilpotent 
element of the Lie algebra $\fg$, and $\Spr^{\sigma,A}_P$ is thus the usual parabolic Springer fiber
$$\Spr^A_P:=\{P'\in\Fl_P\,|\, A\in \fp'\}.$$

\sssec{}

For an element $P\in \Par(G)$, let 
\[\wt\fg_P:=\{(x,P')\in\fg\times \Fl_P\,|\, x\in \fp'\}\subset\fg\times \Fl_P\]
be the \emph{parabolic Grothendieck-Springer} variety.
Denote by $\pi_P$ the tautological projection $\wt\fg_P\to \fg$, and put
$$\CS_P:=(\pi_P)_{\dr,!}(\omega_{\Fl_P})\in \Dmod(\fg).$$

\medskip

The assignment
$$P\rightsquigarrow \CS_P$$
is a functor $\Par(G)\to \Dmod(\fg)$.
Consider the colimit
$$\CS_{\on{Glued}}:=\underset{P\in \Par'(G)}{\on{colim}}\, \CS_P\in \Dmod(\fg).$$

\sssec{}

Let $\on{Nilp}_\fg\overset{i}\hookrightarrow \fg$ be the subvariety of nilpotent elements.
Consider the object
$$i^!(\CS_{\on{Glued}})\in \Dmod(\on{Nilp}_\fg).$$

\medskip

By construction, the assertion of \thmref{t:contr2} is equivalent to the following:

\begin{prop}  \label{p:springer glued}
The trace map 
$$i^!(\CS_{\on{Glued}})\to \omega_{\on{Nilp}_\fg}$$
is an isomorphism away from $0\in \on{Nilp}_\fg$.
\end{prop}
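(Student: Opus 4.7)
The plan is to apply the Springer correspondence to each $\CS_P$ restricted to $\CN$, and thereby reduce the computation of $i^!\CS_{\on{Glued}}$ to a combinatorial question about $\sW$-invariants.

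First, we would apply the decomposition theorem to the (semismall) Springer resolution $\wt\CN \to \CN$, together with Springer's construction of the $\sW$-action, to obtain a canonical decomposition
\[
i^!\CS_B \simeq \bigoplus_{\chi \in \on{Irr}(\sW)} V_\chi \otimes \IC_\chi
\]
(up to shifts), where $V_\chi$ is the Springer multiplicity of the irreducible representation $\chi$, and $\IC_\chi$ is the IC extension from a nilpotent orbit closure equipped with an equivariant local system. For a proper parabolic $P = P_\sJ$, the map $\wt\fg_B \to \wt\fg_{P_\sJ}$ is a smooth proper bundle with fiber $\Fl_{M_\sJ}$, and since $H^*(\Fl_{M_\sJ})$ realizes the regular representation of $\sW_\sJ$, pushing forward identifies $\CS_{P_\sJ}$ with $\CS_B^{\sW_\sJ}$. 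Restricting to $\CN$ gives
\[
i^!\CS_{P_\sJ} \simeq \bigoplus_\chi V_\chi^{\sW_\sJ} \otimes \IC_\chi,
\]
with the transition maps associated with inclusions $\sJ_1\subset\sJ_2$ corresponding to the averaging projections $V_\chi^{\sW_{\sJ_1}} \to V_\chi^{\sW_{\sJ_2}}$.

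Taking the colimit over $\sJ \in \Par'(G)$, we would obtain
\[
i^!\CS_{\on{Glued}} \simeq \bigoplus_\chi K_\chi \otimes \IC_\chi, \qquad K_\chi := \colim_{\sJ \in \Par'(G)} V_\chi^{\sW_\sJ},
\]
the trace map $i^!\CS_{\on{Glued}} \to \omega_\CN$ identifying, on each $\chi$-component, with the canonical map $K_\chi \to V_\chi^\sW$ arising from the value at the added terminal object $\sS \in \Par(G)$. For the trivial representation $\chi_0$, we have $V_{\chi_0}^{\sW_\sJ} = k$ for every $\sJ$ with identity transition maps, so $K_{\chi_0} = k$; moreover, $\IC_{\chi_0} \simeq \omega_\CN$ (up to shift) since $\chi_0$ corresponds via Springer to the regular nilpotent orbit, whose closure is $\CN$. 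This yields the desired identification on the trivial-rep component.

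The remaining task, and the main obstacle, is to show that all other $K_\chi \otimes \IC_\chi$ contributions vanish when restricted to $\CN\setminus\{0\}$. Components for which $\IC_\chi$ is supported at $\{0\}$ (such as the sign representation) vanish on $\CN\setminus\{0\}$ automatically, regardless of $K_\chi$. For non-trivial $\chi$ whose corresponding orbit is non-zero, we must establish the purely Weyl-group-theoretic vanishing $K_\chi = 0$. The plan is to compare the colimit over $\Par'(G)$ to the colimit over the full poset $\Par(G) = \Par'(G) \cup \{\sS\}$ (which equals $V_\chi^\sW = 0$ for non-trivial $\chi$), producing a cofiber sequence governed by alternating sums of induced representations $\Ind_{\sW_\sJ}^\sW k$ via Solomon's formula. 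Combined with the contractibility of $N(\Par'(G))$ (which has $\emptyset$ as initial object) and a bar-complex analysis, this would reduce the claim to showing that the Steinberg-type alternating sum has vanishing multiplicity in $V_\chi$ for $\chi$ corresponding to non-zero orbits—a fact that can in principle be checked via the Springer correspondence tables. We expect this combinatorial step to be the hardest part of the argument.
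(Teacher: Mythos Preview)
Your reduction to a Weyl-group computation is essentially the same as the paper's: both identify $\CS_{P_\sJ}$ with the $\sW_\sJ$-coinvariants (equivalently, invariants) of $\CS_B$ and then take the colimit over $\sJ\in\Par'(G)$. Where you diverge is in the last step. You propose to handle the non-trivial $\chi$ with non-zero Springer orbit by a Solomon/Steinberg alternating-sum argument together with a case check against Springer tables, and you flag this as the hardest part. The paper bypasses this entirely: it computes the whole $\sW$-module
\[
\sM \;=\; \underset{\sJ\in\Par'(G)}{\colim}\, k[\sW/\sW_\sJ]
\]
in one stroke, by observing that the homotopy colimit $\colim_{\sJ\in\Par'(G)}\sW/\sW_\sJ$ is $\sW$-equivariantly the boundary of the permutohedron (the convex hull of a generic $\sW$-orbit in $\ft_\BR$), hence a sphere of dimension $\rk(\fg)-1$. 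This gives $\sM\simeq k\oplus\sign[\rk(\fg)-1]$, so \emph{every} non-trivial $\chi$ other than $\sign$ has $K_\chi=0$, and the $\sign$ piece is then killed on $\CN\setminus\{0\}$ because its Springer support is $\{0\}$.

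So the gap in your proposal is precisely this missing combinatorial identification. Your proposed route through Springer tables is not wrong in principle, but it is both harder and unnecessary: the vanishing you want holds for a uniform geometric reason (the Coxeter complex is a sphere), independent of which irreducibles correspond to which orbits. Once you see the sphere, the argument is a few lines; without it, you are left with a case analysis you have not carried out.
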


\ssec{Interpretation via the Springer theory}

In this subsection we  recall some basic facts about the Springer theory.

\sssec{}

Put
$$\CS:=\CS_B.$$

It is well known that $\CS[-\dim(\fg)]$ lies in the heart of the t-structure (note that the usual t-structure for
$D$-modules corresponds to the perverse t-structure under the Riemann-Hilbert correspondence), and that it
carries a canonically defined action of $\sW$. 

\medskip

Here are some well-known facts regarding $\CS$:

\begin{lem}   \label{l:springer} \hfill

\smallskip

\noindent{\em(a)} The trace map $\CS\to \omega_{\fg}$ induces an isomorphism $\on{coinv}(\sW,\CS)\to \omega_{\fg}$.
Here $\on{coinv}(\sW,\CS)$ is the $D$-module of coinvariants of the action of $\sW$ on $\CS$.

\smallskip

\noindent{\em(b)} 
Let $\on{anti-inv}(\sW,\CS)$ be the sign isotopic component in $\CS$. 
Then the !-restriction of $\on{anti-inv}(\sW,\CS)$ to $\on{Nilp}_\fg$ vanishes outside
of $0\in \on{Nilp}_\fg$.

\smallskip

\noindent{\em(c)} For a parabolic $P=P_\sJ$, we have $\CS_P\simeq \on{coinv}(\sW_\sJ,\CS)$, and for $\sJ_1\subset \sJ_2$
the natural map $\CS_{P_1}\to \CS_{P_2}$ is induced by the inclusion $\sW_{\sJ_1}\subset \sW_{\sJ_2}$. 

\end{lem}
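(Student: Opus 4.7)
The plan is to derive all three parts from the classical Grothendieck-Springer theory, with (c) being the fundamental statement from which (a) falls out as a special case, and (b) following from the Springer correspondence.

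First I would prove part (c). For each $\sJ \in \Par(G)$, the natural projection
$$q_\sJ : \wt\fg_B \to \wt\fg_{P_\sJ}, \quad (x, B') \mapsto (x, P'), \text{ where } B' \subset P' \text{ of type } \sJ,$$
is proper and factors $\pi_B$ as $\pi_B = \pi_{P_\sJ} \circ q_\sJ$. The fiber of $q_\sJ$ over $(x, P')$ is precisely the full Springer fiber of (the image of) $x$ in the Levi quotient $\fm_{P'}$. Applying the Grothendieck-Springer setup to the Levi $M_{P'}$, one obtains a canonical $\sW_\sJ$-action on $(q_\sJ)_{\dr,!}(\omega_{\wt\fg_B})$ whose coinvariants are $\omega_{\wt\fg_{P_\sJ}}$: this can be checked over the locus where the image of $x$ in $\fm_{P'}$ is regular semisimple, where $q_\sJ$ is an étale $\sW_\sJ$-torsor, and then extended by proper base change together with the fact that $q_\sJ$ is semismall in the appropriate relative sense. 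Pushing forward along $\pi_{P_\sJ}$, the $\sW_\sJ$-action coincides with the restriction of the total $\sW$-action on $\CS$ (via the inclusion $\sW_\sJ \hookrightarrow \sW$ in the simultaneous resolution), and one obtains $\on{coinv}(\sW_\sJ, \CS) \simeq \CS_{P_\sJ}$. For $\sJ_1 \subset \sJ_2$, the factorization $q_{\sJ_2} = q'_{\sJ_1, \sJ_2} \circ q_{\sJ_1}$ through the intermediate projection gives the compatibility of the induced maps with $\sW_{\sJ_1} \subset \sW_{\sJ_2}$.

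Part (a) is then obtained as the special case $\sJ = \sS$ of part (c). Indeed, $P_\sS = G$, so $\wt\fg_G = \fg$, the map $\pi_G$ is the identity, and therefore $\CS_G \simeq \omega_\fg$. The trace map $\CS \to \omega_\fg$ of the statement is tautologically identified with the natural projection to coinvariants $\CS \to \on{coinv}(\sW, \CS) \simeq \CS_G$.

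For part (b), I would invoke the classical Springer correspondence. Let $i : \on{Nilp}_\fg \hookrightarrow \fg$ be the inclusion; the !-restriction $i^!(\CS)$ is (up to shift) the classical Springer sheaf on $\on{Nilp}_\fg$. The Springer correspondence gives a decomposition
$$i^!(\CS)[-\dim\fg] \simeq \bigoplus_{\rho \in \widehat\sW} \rho \otimes \on{IC}(\O_\rho, \L_\rho)$$
where $(\O_\rho, \L_\rho)$ is the pair of nilpotent orbit and irreducible local system attached to $\rho \in \widehat{\sW}$. Under our conventions the sign representation $\sign$ corresponds to the zero orbit $\{0\} \subset \on{Nilp}_\fg$ with trivial local system, so the sign isotypic component of $i^!(\CS)$ equals $\on{IC}(\{0\}, k) = \delta_0$, which is supported at $0$. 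This is exactly the assertion that $\on{anti-inv}(\sW, \CS)|_{\on{Nilp}_\fg \setminus \{0\}}$ vanishes.

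The main obstacle will be part (c): constructing the $\sW_\sJ$-action on $(q_\sJ)_{\dr,!}(\omega_{\wt\fg_B})$ in a manner functorially compatible with the global $\sW$-action on $\CS$ and with the inclusions $\sW_{\sJ_1} \subset \sW_{\sJ_2}$. The cleanest route is to realize all of these actions simultaneously through the Grothendieck alteration $\wt\fg \to \fg \times_{\ft /\!/ \sW} \ft$, which is small and generically a $\sW$-torsor, and then observe that the intermediate variety $\wt\fg_P$ fits into an analogous alteration for the Levi $M_P$; the $\sW_\sJ$-equivariance of the factorization then yields the compatibility.
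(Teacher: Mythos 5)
The paper does not actually prove Lemma \ref{l:springer}: it quotes these statements as well-known facts from classical Springer theory, so there is no argument of the authors to compare yours against. Your reconstruction is correct and is the standard one: (c) via the factorization $\pi_B=\pi_{P_\sJ}\circ q_\sJ$, the smallness of the (relative) Grothendieck alteration and the generic $\sW_\sJ$-torsor structure, with the compatibility between the fiberwise $\sW_\sJ$-action and the restriction of the global $\sW$-action checked over the regular semisimple locus (legitimate precisely because smallness makes $\CS$, up to shift, the intermediate extension of its restriction there, so endomorphisms and group actions are determined generically); (a) as the case $\sJ=\sS$, since the structure map $\CS_B\to\CS_G=\omega_\fg$ is exactly the trace map; and (b) via base change, which identifies $i^!\CS$ with the Springer sheaf on $\on{Nilp}_\fg$, plus the Springer correspondence. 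The one point that genuinely requires care is the normalization in (b): the two classical $\sW$-actions on the Springer sheaf differ by the sign character, and only for the action obtained by restriction from the Grothendieck--Springer sheaf (the action relevant here) does the sign representation correspond to the zero orbit. Your convention is the right one, and it can be confirmed without quoting the full correspondence: the stalk of $\CS$ at $0$ is $H^\bullet(\Fl)$ with the coinvariant-algebra action, whose top degree is one-dimensional of sign type, and only $\IC$ of the zero orbit can contribute in that degree; this pins down that the sign-isotypic piece over $\on{Nilp}_\fg$ is supported at $0$. With that caveat made explicit, your proposal is a complete and correct proof of the lemma.
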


\sssec{}

In view of the above lemma, \propref{p:springer glued} follows from the next more precise result:

\begin{prop} \label{p:springer glued bis}
There exists a canonical isomorphism in $\Dmod(\on{Nilp}_\fg)$:
$$\CS_{\on{Glued}}\simeq \on{coinv}(\sW,\CS) \oplus \on{anti-inv}(\sW,\CS)[\rk(\fg)-1].$$
\end{prop}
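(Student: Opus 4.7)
My plan is to factor the colimit through the category of $\sW$-modules, thereby separating the Weyl-group combinatorics from the Grothendieck--Springer sheaf $\CS$. By Lemma~\ref{l:springer}(c), for $P=P_\sJ$ with $\sJ\in\Par'(G)$ we have $\CS_P\simeq\on{coinv}(\sW_\sJ,\CS)$, and the transitions are the tautological quotient maps. Writing $\on{coinv}(\sW_\sJ,\CS)\simeq k[\sW_\sJ\backslash\sW]\otimes_{k[\sW]}\CS$ and using that $(-)\otimes_{k[\sW]}\CS$ is continuous, I obtain
\[
\CS_{\on{Glued}}\simeq M'\otimes_{k[\sW]}\CS, \qquad M':=\underset{\sJ\in\Par'(G)}{\colim}\,k[\sW_\sJ\backslash\sW],
\]
where the colimit on the right is taken in the derived category of right $k[\sW]$-modules. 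Since $k_{\on{triv}}\otimes_{k[\sW]}\CS\simeq\on{coinv}(\sW,\CS)$ and $k_{\on{sign}}\otimes_{k[\sW]}\CS\simeq\on{anti-inv}(\sW,\CS)$, the proposition will follow from the purely combinatorial claim that $M'\simeq k_{\on{triv}}\oplus k_{\on{sign}}[\rk(\fg)-1]$.

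The main step is to identify $M'$ with the cellular chain complex of the Coxeter complex $\Sigma=\Sigma(\sW,\sS)$. Recall that $\Sigma$ is a $\sW$-equivariant triangulation of $S^{\rk(\fg)-1}$ whose simplices are indexed by pairs $(\sJ,w\sW_\sJ)$ with $\sJ\subsetneq\sS$, the simplex of type $\sJ$ having dimension $|\sS|-|\sJ|-1$, and with $\sW$ acting by right multiplication on cosets. Its barycentric subdivision $\Sigma'$ therefore has $n$-simplices indexed by chains $\sJ_0\subsetneq\sJ_1\subsetneq\cdots\subsetneq\sJ_n\subsetneq\sS$ together with a coset $w\sW_{\sJ_0}$ (the remaining cosets are forced by consistency). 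On the other hand, the Bousfield--Kan formula for the homotopy colimit over the poset $\Par'(G)$ expresses $M'$ by a complex whose degree-$n$ piece is
\[
\bigoplus_{\sJ_0\subsetneq\cdots\subsetneq\sJ_n\subsetneq\sS}k[\sW_{\sJ_0}\backslash\sW],
\]
with differential an alternating sum of face maps, where $d_0$ applies the transition map $k[\sW_{\sJ_0}\backslash\sW]\to k[\sW_{\sJ_1}\backslash\sW]$ and the remaining faces simply drop the corresponding $\sJ_i$. This complex matches $\sC_\bullet(\Sigma')$ termwise and, once signs and face maps are checked to agree, $\sW$-equivariantly; since $\Sigma'\cong\Sigma$ as $\sW$-CW complexes, we conclude $M'\simeq\sC_\bullet(\Sigma)$.

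To finish, I invoke the classical computation of the homology of the Coxeter complex: $H_0(\Sigma;k)=k_{\on{triv}}$, the top homology $H_{\rk(\fg)-1}(\Sigma;k)=k_{\on{sign}}$ carries the sign representation (Solomon), and all intermediate homology vanishes as $\Sigma$ is a sphere. In characteristic zero $k[\sW]$ is semisimple, so $\sC_\bullet(\Sigma)$ is formal and quasi-isomorphic to $k_{\on{triv}}\oplus k_{\on{sign}}[\rk(\fg)-1]$; tensoring with $\CS$ over $k[\sW]$ yields the asserted decomposition. The main obstacle I expect is the bookkeeping in the second step, namely verifying that the Bousfield--Kan complex is $\sW$-equivariantly isomorphic to $\sC_\bullet(\Sigma')$ with the correct differentials (including signs from simplicial orientation and the role of the transition map in $d_0$). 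A conceptually cleaner alternative is to realize $M'$ directly as the $\sW$-equivariant chain complex of the presheaf-weighted realization of the nerve $N(\Par'(G))$ against $\sJ\mapsto k[\sW_\sJ\backslash\sW]$, and to recognize this realization as $\Sigma'$ with its standard $\sW$-action.
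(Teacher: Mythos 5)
Your reduction to the combinatorial claim $M'\simeq k_{\on{triv}}\oplus k_{\on{sign}}[\rk(\fg)-1]$, with $M'=\colim_{\sJ\in\Par'(G)}k[\sW_\sJ\backslash\sW]$, is exactly the paper's reduction (the paper's $\sM$ is your $M'$ up to $w\leftrightarrow w^{-1}$). The two proofs diverge only in how $M'$ is computed. The paper realizes $\sW_{\on{Glued}}:=\colim_{\sJ\in\Par'(G)}\sW/\sW_\sJ$ directly as a topological sphere: take the convex hull $\sB_\gamma$ of a generic $\sW$-orbit in $\ft_\BR$, observe that the $j$-faces of $\sB_\gamma$ are exactly the cosets $\sW/\sW_\sJ$ with $|\sJ|=j$, and deduce a $\sW$-equivariant homotopy equivalence $|\sW_{\on{Glued}}|\simeq\partial\sB_\gamma\cong S^{\rk(\fg)-1}$, with the sign representation appearing as the $\sW$-action on orientations. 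You instead compare the Bousfield--Kan resolution of $M'$ to the cellular chain complex of the barycentric subdivision of the Coxeter complex and cite Solomon's computation of its homology. These are dual cell models of the same sphere (the Coxeter complex is dual to the boundary complex of $\sB_\gamma$, so the two barycentric subdivisions coincide), and your route is correct. The paper's geometric realization, however, sidesteps the sign and face-map bookkeeping that you flag as the main obstacle, and is precisely the ``conceptually cleaner alternative'' you gesture at in your final sentence: exhibit the presheaf-weighted realization of the nerve of $\Par'(G)$ directly as the relevant $\sW$-sphere.
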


\ssec{Proof of \propref{p:springer glued bis}}

\sssec{}

In view of \lemref{l:springer}(c), the object $\CS_{\on{Glued}}$ has the form
$$\sM \underset{k[\sW]}\otimes \CS,$$
for $\sM\in \Rep(\sW)$ equal to
$$\underset{\sJ\in \Par'(G)}{\on{colim}}\, k[\sW]\underset{k[\sW_\sJ]}\otimes k.$$

\medskip

Thus, it remains to show that
\begin{equation} \label{e:shape of M}
\sM\simeq k\oplus \sign[\rk(\fg)-1],
\end{equation}
viewed as representations of $\sW$. 

\sssec{}

Instead of proving the isomorphism \eqref{e:shape of M} directly, let us provide a more elegant geometric argument. 

\medskip

Consider the diagram of finite sets equipped with an action of $\sW$:
$$\sJ\mapsto \sW/\sW_\sJ.$$

Consider the homotopy type
$$\sW_{\on{Glued}}:=\underset{\sJ\in \Par'(G)}{\on{colim}}\, \sW/\sW_\sJ.$$

We have:
$$\sM=\on{C}_*(\sW_{\on{Glued}}).$$

\sssec{}

We claim that the geometric realization of $\sW_{\on{Glued}}$ is $\sW$-equivariantly homotopy 
equivalent to a $(\rk(\fg)-1)$-dimensional sphere in the Euclidean 
space $$\ft_\BR:=\Lambda \underset{\BZ}\otimes \BR.$$
Indeed, fix a generic point $\gamma\in\ft_\BR$, and let $\sB_\gamma$ be the convex hull of the orbit $\sW\gamma$.
For each $j=0,\dots,\rk(\fg)$, the $j$-faces of the polytope $\sB_\gamma$ are indexed by the union \[\coprod_{|\sJ|=j}\sW/\sW_\sJ.\] From this, we obtain a $\sW$-equivariant homotopy equivalence between
$\sB_\gamma$ and the geometric realization of
\[\underset{\sJ\in \Par(G)}{\on{colim}}\, \sW/\sW_\sJ,\]
and also between the boundary $\partial(\sB_\gamma)$ (which is homeomorphic to a sphere)
and the geometric realization of $\sW_{\on{Glued}}$.

\medskip 

The sign representation $\sW\to\{\pm 1\}$ identifies with the action of $\sW$
on the torsor of orientations of $\ft_\BR$, and hence also on the torsor of orientations of $\partial(\sB_\gamma)$. 

\medskip

This implies the desired formula for $\on{C}_*(\sW_{\on{Glued}})$.

\end{document}